\newtheorem{cla}{Claim}
\newtheorem{hypothesis}{Hypothesis}
\newtheorem{lemma}{Lemma}
\newtheorem{theorem}{Theorem}
\newtheorem{proposition}{Proposition}
\newtheorem{corollary}{Corollary}
\newcommand{\rright}{\right}
\newcommand{\lleft}{\left}
\newcommand{\rrVert}{\Vert}
\newcommand{\rrvert}{\vert}
\newcommand{\llVert}{\Vert}
\newcommand{\llvert}{\vert}
\def\tbeta{\widetilde{\beta}}
\def\hc{\hat{c}}
\def\tS{\widetilde{S}}
\def\tv{\widetilde{v}}
\def\tZ{\widetilde{Z}}
\def\oom{\overline{\omega}}
\def\sT{\mathsf{T}}
\def\id{\mathrm{I}}
\def\tR{\widetilde{R}}
\def\tsigma{\widetilde{\sigma}}
\def\tq{\tilde{q}}
\def\cF{\mathcal{F}}
\def\tcF{\widetilde{\mathcal{F}}}
\def\hcF{\widehat{\mathcal{F}}}
\def\reals{\mathbb{R}}
\def\naturals{\mathbb{N}}
\def\E{\mathbb{E}}
\def\tA{\tilde{A}}
\def\normal{\mathsf{N}}
\def
\def\ex{\setminus}
\def\cT{\mathcal{T}}
\def\cTall{\overline{\mathcal{T}}}
\def\cU{\mathcal{U}}
\def\cUall{\overline{\mathcal{U}}}
\def\root{\textrm{root}}
\def\gA{\tilde{A}}
\def\vv{\mathbf{v}}
\def\vx{\mathbf{x}}
\def\vm{\mathbf{m}}
\def\vk{\mathbf{k}}
\def\tvx{\widetilde{\mathbf{x}}}
\def\tvz{\widetilde{\mathbf{z}}}
\def\tg{\widetilde{g}}
\def\v1{\mathbf{1}}
\def\vu{\mathbf{u}}
\def\vy{\mathbf{y}}
\def\vz{\mathbf{z}}
\def\ve{\mathbf{e}}
\def\vc{\mathbf{c}}
\def\vd{\mathbf{d}}
\def\cB{\mathcal{B}}
\def\vG{\mathbf{G}}
\def\ind{\mathbf{1}}
\def\root{\circ}
\def\F{\mathsf{F}}
\def\Ons{\mathsf{B}}
\def\POns{\mathsf{D}}
\def\div{\operatorname{div}}
\def\tx{\tilde{x}}
\def\bx{\bar{x}}
\def\bbx{\mathbf{\bar{x}}}
\def\tvx{\mathbf{\tilde{x}}}
\def\tvz{\mathbf{\tilde{z}}}
\def\cQ{\mathcal{Q}}
\def\cZ{\mathcal{Z}}
\def\cX{\mathcal{X}}
\def\face{\mathfrak{F}}
\def\de{\mathrm{d}}
\def\eps{{\varepsilon}}
\def\supp{\operatorname{supp}}
\def\hx{\widehat{x}}
\def\oS{\overline{S}}
\def\sign{\operatorname{sign}}
\def\cE{\mathcal{E}}
\def\Var{\operatorname{Var}}
\def\hSigma{\widehat{\Sigma}}
\def\tR{\widetilde{R}}
\newcommand{\eqref}[1]{(\ref{#1})}
\newcommand{\binom}[2]{{{#1}\choose{#2}}}
\renewcommand{\emptyset}{\varnothing}
\newcommand{\fracc}[2]{{#1}/(#2)}
\newcommand{\fraca}[2]{{#1}/{#2}}
\newcommand{\fracb}[2]{(#1)/{#2}}
\begin{document}
\begin{frontmatter}

\title{Universality in polytope phase transitions and
message passing algorithms}
\runtitle{Universality in polytope phase transitions and algorithms\hspace*{7pt}}

\begin{aug}
\author[A]{\fnms{Mohsen}~\snm{Bayati}}, 
\author[B]{\fnms{Marc}~\snm{Lelarge}\thanksref{T1}} 
\and
\author[C]{\fnms{Andrea}~\snm{Montanari}\corref{}\thanksref{T2}\ead[label=e3]{montanari@stanford.edu}\corref{}}
\runauthor{M. Bayati, M. Lelarge and A. Montanari}
\affiliation{Stanford University, INRIA and ENS, and Stanford University}
\address[A]{M. Bayati\\
Graduate School of Business\\
Stanford University\\
Knight Management Center\\
655 Knight Way, E363\\
Stanford, California 94305\\
USA} 
\address[B]{M. Lelarge\\
INRIA\\
and\\
ENS\\
23, avenue d'Italie\\
75214, Paris\\
France}
\address[C]{A. Montanari\\
Department of Electrical Engineering\\
\quad and Department of Statistics\\
Stanford University\\
350 Serra Mall\\
Stanford, California 94305-9505\\
USA\\
\printead{e3}}
\end{aug}
\thankstext{T1}{Supported by the French Agence Nationale de la
Recherche (ANR) under reference ANR-11-JS02-005-01 (GAP project).}
\thankstext{T2}{Supported in part by  NSF CAREER award
CCF-0743978,  NSF Grant DMS-08-06211, and  AFOSR Grant
FA9550-10-1-0360.}

\received{\smonth{7} \syear{2012}}
\revised{\smonth{12} \syear{2013}}

%
\begin{abstract}
We consider a class of nonlinear mappings $\mathsf{F}_{A,N}$ in $\mathbb{R}^N$
indexed by
symmetric random matrices $A\in\mathbb{R}^{N\times N}$ with independent
entries. Within spin glass theory, special cases of these mappings
correspond to
iterating the TAP equations and were studied by
Bolthausen [\textit{Comm. Math. Phys.} \textbf{325} (2014) 333--366].
Within information theory, they are known as
``approximate message passing'' algorithms.

We study the high-dimensional (large $N$) behavior of the iterates of
$\mathsf{F}$ for polynomial functions $\mathsf{F}$, and prove that it is universal;
that is, it depends only on the
first two moments of the entries of $A$, under a sub-Gaussian tail condition.
As an application, we prove the universality of a certain phase
transition arising in
polytope geometry and compressed sensing. This solves, for a broad
class of random projections, a conjecture by David Donoho and Jared
Tanner.
\end{abstract}

%
\begin{keyword}[class=AMS]
\kwd[Primary ]{60F05}
\kwd[; secondary ]{68W40}
\end{keyword}
\begin{keyword}
\kwd{Universality}
\kwd{random matrices}
\kwd{message passing}
\kwd{compressed sensing}
\kwd{polytope neighborliness}
\end{keyword}

\end{frontmatter}

\section{Introduction and main results}

Let $A\in\reals^{N\times N}$ be a random Wigner matrix, that is, a
symmetric random
matrix with i.i.d. entries $A_{ij}$ satisfying $\E\{A_{ij}\}=0$ and
$\E\{A_{ij}^2\}=1/N$. Considerable effort has been devoted to
studying the distribution of the eigenvalues of such a matrix \cite
{Guionnet,BaiSilverstein,TaoVuReview}.
The \emph{universality phenomenon} is a striking recurring theme in
these studies. Roughly speaking, many asymptotic properties of the
joint eigenvalues' distribution are independent of the entries'
distribution, as long as the latter has the prescribed first two
moments and
satisfies certain tail conditions. We refer to \cite
{Guionnet,BaiSilverstein,TaoVuReview} and references
therein for a selection of such results. Universality is extremely
useful because it allows us to compute asymptotics for one
type of distribution
of the entries (typically, for Gaussian entries) and then export the results
to a broad class of distributions.

In this paper we are concerned with random matrix universality,
albeit we do not focus on eigenvalues properties. Given
$A\in\reals^{N\times N}$, and an initial condition $x^0\in\reals^N$
independent of $A$, we consider the sequence $(x^t)_{t\ge0}$ $t\in
\naturals$ defined by letting, for $t\ge0$,
%
\begin{equation}\label{eq:SimpleIteration}
\hspace*{15pt}x^{t+1} = A f\bigl(x^t;t\bigr) -
\mathsf{b}_t f\bigl(x^{t-1};t-1\bigr),\qquad
\mathsf{b}_t \equiv\frac{1}{N}\div\bigl(f(x;t)\bigr)
\bigg|_{x=x^t},  %
\end{equation}
where, by convention, $\mathsf{b}_0=0$. Here for each $t\ge0$, $f( \cdot;t)\dvtx \reals^N\to\reals^N$ is a separable function, that is, $f(z;t) =
(f_1(z_1;t),\dots,f_N(z_N;t))$.
We also assume that the functions $f_i( \cdot;t)\dvtx \reals\to\reals$ are
polynomials of bounded degree.
In addition, \textit{div} denotes the divergence operator, and in
particular, $\mathsf{b}_t = N^{-1}\sum_{i=1}^Nf'_i(x_i^t;t)$.

The present paper is concerned with the asymptotic distribution
of $x^t$ as $N\to\infty$ with $t$ fixed, and establishes the
following results:

\begin{universality*} As $N\to\infty$, the finite-dimensional marginals
of the
distribution of $x^t$ are asymptotically insensitive to the
distribution of the entries of $A_{ij}$.
\end{universality*}

\begin{state*} The entries of $x^t$ are
asymptotically Gaussian with zero mean, and variance that can be
explicitly computed through a one-dimensional recursion that we
will refer to as \emph{state evolution}.
\end{state*}

\begin{phase*}
As an application, we use state evolution to prove
universality of a phase transition on polytope geometry, with
connections to compressed sensing. This solves, for a broad class of
random matrices with independent entries, a conjecture put
forward by Donoho and Tanner
\cite{Donoho2005b,DonohoTannerUniversality}.
\end{phase*}

In order to illustrate the usefulness of the first two technical
results, we will start the presentation of our results from the third one.

Before stating our results, it is useful to comment on the
special form of the iteration (\ref{eq:SimpleIteration}), and in
particular on the role of the memory term $\mathsf{b}_t f(x^{t-1};t-1)$
(which is inspired from the so-called ``Onsager correction'' in
statistical physics \cite
{thouless1977solution,mezard1987spin,bolthausen2012iterative}).
The function of this term is to cancel, to leading order, the
effect of
correlations
between $x_i^{t+1}$ and $\{x^s_{i}\dvtx  s\le t\}$. This cancelation is
particularly transparent in our proof technique, whereby $x^{t}_i$ is
expressed as a sum of monomials in $A_{jk}$, with $1\le j,k\le n$,
and is indexed by labeled trees. The memory term effectively cancels
the contribution of
``one-step reversing'' trees.

Without such memory term, the properties of the resulting
iteration change crucially. In particular, it is no longer true that
$x^t_i$ is approximately Gaussian as $N\to\infty$; see Section~\ref
{sec:Sketch} for further clarification on this point.

\subsection{Universality of polytope neighborliness}

A polytope $Q$ is said to be \emph{centrosymmetric} if $x\in Q$
implies $-x\in Q$.
Following \cite{Donoho2005b,Donoho2005a} we say that such a polytope is
\emph{$k$-neighborly} if the condition below holds:
\begin{longlist}[(I)]
\item[(I)] Every subset of $k$ vertices of $Q$
which does not contain an antipodal pair, spans a
$(k-1)$-dimensional face.
\end{longlist}
The \emph{neighborliness} of $Q$ is the largest value of $k$ for which
this condition holds. The prototype of neighborly polytope is the
$\ell_1$ ball $C^n \equiv\{x\in\reals^n \dvtx \llVert x\rrVert _1\le1\}$, whose
neighborliness is indeed equal to $n$.

It was shown in a series of papers \cite
{Donoho2005a,Donoho2005b,DoTa05a,DoTa05b,DoTa08}
that polytope neighborliness has tight connections with the geometric properties
of random point clouds, and with sparsity-seeking methods to solve
underdetermined systems of linear equations. The latter are in turn
central in a number of applied domains, including model selection for
data analysis and compressed sensing. For the reader's convenience,
these connections will be
briefly reviewed in Section~\ref{sec:Polytope}.

Intuitive images of low-dimensional polytopes suggest that ``typical''
polytopes are not neighborly: already selecting $k=2$ vertices does
lead to a segment that connects them and passes through the interior
of $Q$. This conclusion is spectacularly wrong
in high dimension. Natural random constructions lead to
polytopes whose neighborliness scales \emph{linearly} in the dimension.
Motivated by the above applications, and following
\cite{Donoho2005a,Donoho2005b,DoTa05a,DoTa05b},
we focus here on a weaker notion
of neighborliness. Roughly speaking, this corresponds to the largest
$k$ such that \emph{most} subsets of $k$ vertices of $Q$ span a
$(k-1)$-dimensional face. In order to formalize this notion, we denote
by $\face(Q;\ell)$ the number of $\lfloor\ell\rfloor$-dimensional faces
of $Q$.

\begin{definition}
Let $\cQ= \{Q^n\}_{n\ge0}$ be a sequence of centrosymmetric
polytopes indexed by $n$ where $Q_n$ has $2n$ vertices and has
dimension $m=m(n)$:
$Q^n\subseteq\reals^m$. We say that $\cQ$ has \emph{weak neighborliness}
$\rho\in(0,1)$ if for any $\xi>0$,
\begin{eqnarray*}
\lim_{n\to\infty}\frac{\face
(Q^n; m(n)\rho(1-\xi))}{\face(C^{n}; m(n)\rho(1-\xi))} &=& 1,
\\
\lim_{n\to\infty}\frac{\face
(Q^n; m(n)\rho(1+\xi))}{\face(C^{n}; m(n)\rho(1+\xi))} &=& 0 .
\end{eqnarray*}
If the sequence $\cQ$ is random, we say that $\cQ$ has \emph{weak
neighborliness}
$\rho$ \emph{(in probability)} if the above limits hold \emph{in
probability}.
\end{definition}

In other words, a sequence of polytopes $\{Q^n\}_{n\ge0}$ has weak
neighborliness $\rho$,
if for large $n$ the $m$-dimensional polytope $Q^n$ has close to the
maximum possible number of $k$
faces, for all $k<m\rho(1-\xi)$.

\begin{note}\label{note:neighborly}
Note that previously the neighborliness of a polytope was defined to be
the largest integer $k$ satisfying
condition (I). However, in our definition, weak neighborliness refers
to the fraction $k/n$. This is due to the fact that
weak neighborliness is defined in the limit $n\to\infty$.
\end{note}

The existence of weakly neighborly polytope sequences is clear when
$m(n)=n$ since in this case we can take $Q^n=C^n$ with $\rho=1$, but
the existence is highly
nontrivial when $m$ is only a fraction of $n$.

It comes indeed as a surprise that this is a generic situation as
demonstrated by the following construction. For a matrix
$A\in\reals^{m\times n}$ and $S\subseteq\reals^n$, let
$AS\equiv\{Ax\in\reals^m \dvtx  x\in S\}$. In particular, $AC^n$ is
the centrosymmetric $m$-dimensional polytope obtained by projecting
the $n$-dimensional $\ell_1$ ball to $m$ dimensions. The following
result was proved in \cite{Donoho2005b}.

\begin{theorem}[(Donoho \cite{Donoho2005b})]\label{thm:Donoho2005}
There exists a function $\rho_*\dvtx (0,1)\to(0,1)$ such that the
following holds.
Fix $\delta\in(0,1)$. For each $n\in\naturals$, let $m(n) =
\lfloor n\delta\rfloor$ and define $A(n)\in\reals^{m(n)\times n}$ to
be a random matrix with i.i.d. Gaussian entries.

Then, the sequence of polytopes $\{A(n)C^n\}_{n\ge0}$ has weak
neighborliness $\rho_*(\delta)$ in probability.
\end{theorem}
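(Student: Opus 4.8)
\medskip
\noindent\emph{Proof strategy.}\
The plan is to pass from the combinatorial quantity $\face(AC^n;\cdot)$ to the probability that $\ell_1$ minimization recovers a sparse vector, to evaluate the relevant expectation exactly by integral geometry, to analyze that expression asymptotically so as to locate the threshold, and finally to upgrade convergence of expectations to convergence in probability. For the first step I would use the face--recovery correspondence of Donoho and Tanner: for $m<n$ every face of $AC^n$ is the image of a face of $C^n$, a $(k-1)$-face $F$ of $C^n$ is determined by a support $T\subseteq\{1,\dots,n\}$ with $|T|=k$ together with a sign pattern on $T$, and for a generic Gaussian $A$ the image $AF$ is again a $(k-1)$-face of $AC^n$ exactly when every point in the relative interior of $F$ is the \emph{unique} minimizer of $\|z\|_1$ subject to $Az=Ax$. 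Summing over faces of $C^n$ and using that the Gaussian ensemble is invariant under signed coordinate permutations,
\[
\frac{\E\,\face(AC^n;k-1)}{\face(C^n;k-1)}=\prob\big\{\ell_1\text{ minimization recovers a uniformly random }k\text{-sparse signed vector}\big\}\, .
\]
Thus it suffices to prove that, with $m=\lfloor n\delta\rfloor$ and $k/m\to\alpha$, this probability has a sharp threshold at $\alpha=\rho_*(\delta)$: the limit is $1$ for $\alpha<\rho_*(\delta)$ and $0$ for $\alpha>\rho_*(\delta)$. Applying this with $\alpha=\rho_*(\delta)(1\mp\xi)$ then yields the two displayed limits in the definition of weak neighborliness.

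To compute $\E\,\face(AC^n;k-1)$ at finite $n$ I would use that, for Gaussian $A$, the face numbers of $AC^n$ have the same law as those of the orthogonal projection of $C^n$ onto a uniformly random $m$-dimensional subspace, and invoke the Affentranger--Schneider expected-face-number formula for such random projections. This writes the expectation as $\face(C^n;k-1)$ minus an alternating sum, over chains $F\subseteq G$ of faces of $C^n$, of products of an internal angle $\beta(F,G)$ and an external angle $\gamma(G,C^n)$. Because $C^n$ is invariant under signed permutations of coordinates, these angles depend only on the dimensions of $F$ and $G$, and each reduces to a one-dimensional (Grassmann-angle) Gaussian integral with an explicit integrand.

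The analytic heart of the proof is the asymptotic evaluation of this alternating angle sum as $n\to\infty$ with $k/m\to\alpha$ and $m/n\to\delta$. Using a Laplace/large-deviations analysis, the sum has an exponential growth rate, relative to $\face(C^n;k-1)$, given by a variational quantity $\Psi(\alpha,\delta)$ assembled from the rate functions of the external- and internal-angle integrals; one \emph{defines} $\rho_*(\delta)$ to be the value of $\alpha$ at which $\Psi(\alpha,\delta)$ changes sign. For $\alpha<\rho_*(\delta)$ the correction term is exponentially smaller than $\face(C^n;k-1)$, so the ratio tends to $1$; for $\alpha>\rho_*(\delta)$ the correction dominates, and a careful accounting of signs in the alternating sum forces the ratio down to $0$. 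The explicit formula for $\rho_*(\delta)$ is just the solution of $\Psi(\alpha,\delta)=0$.

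It remains to replace the statements about expectations by statements in probability. Here one writes $\face(AC^n;k-1)$ as a sum of $\{0,1\}$ indicators, one per $(k-1)$-face of $C^n$, each depending on $A$ only through the feasibility of a fixed low-dimensional linear program; Gaussian concentration (or a bounded-differences inequality) in the entries of $A$ then gives exponentially small fluctuations of $\face(AC^n;k-1)$ about its mean, which transfers the limits above to convergence in probability. The main obstacle is the third step: the exact asymptotics of the alternating angle sum, in particular controlling the cancellations that collapse the ratio to $0$ immediately above the threshold and matching the variational quantity $\Psi$ to a closed form for $\rho_*(\delta)$; by comparison the other three steps are bookkeeping and standard concentration. (An alternative route, which is the one this paper will take for non-Gaussian ensembles, replaces the last two steps by running the iteration \eqref{eq:SimpleIteration} with soft-thresholding nonlinearities, whose fixed point solves the $\ell_1$ problem, and reading off $\rho_*(\delta)$ from the phase transition of the one-dimensional state evolution recursion.)
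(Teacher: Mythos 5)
The paper does not re-derive this theorem; it attributes it to \cite{Donoho2005b} and merely sketches the strategy there (the Affentranger--Schneider and Vershik--Sporyshev expected face-count formulas, exploited via the rotational invariance of the Gaussian ensemble), which is exactly what your main argument fleshes out. Independently, the statement also follows as a special case of Theorem~\ref{thm:Polytope} by writing a Gaussian $A$ as $\tA+\nu_0 G$ with $\tA$ and $G$ independent Gaussians, and the proof of Theorem~\ref{thm:Polytope} in Section~\ref{sec:Polytope} is the AMP/state-evolution route that you mention only parenthetically at the end. So your proposal is correct and matches the classical route; the trade-off between the two is that the angle-sum argument yields exact finite-$n$ expectations but hinges on full rotation invariance, whereas the AMP argument yields only asymptotics but extends to general subgaussian ensembles. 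One imprecision worth flagging in your last step: passing from expectations to convergence in probability is much cleaner by Markov than by concentration. Bounded differences and Gaussian concentration are both problematic here, since perturbing a single column of $A$ can change $\face(AC^n;\ell)$ by a combinatorially large amount. Instead, observe that $\face(AC^n;\ell)\le\face(C^n;\ell)$ holds deterministically (every face of the projected polytope is the image of a face of $C^n$), so the nonnegative quantity $\face(C^n;\ell)-\face(AC^n;\ell)$ has vanishing relative expectation below the threshold and $\face(AC^n;\ell)$ itself has vanishing relative expectation above; Markov's inequality applied to each then gives convergence in probability with no further concentration machinery.
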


A characterization of the curve $\delta\mapsto\rho_*(\delta)$ was provided
in \cite{Donoho2005b}, but we omit it here since a more explicit expression
will be given below.

The proof of Theorem~\ref{thm:Donoho2005} is based on exact
expressions for the number of faces $\face(A(n)C^n;\ell)$. These are in
turn derived from earlier works in polytope geometry by Affentranger
and Schneider
\cite{Affentranger} and by Vershik and Sporyshev
\cite{VershikSporyshev}. This approach
relies in a fundamental way on the invariance of the distribution of
$A(n)$ under rotations.

Motivated by applications to data analysis and signal processing,
Donoho and Tanner \cite{DonohoTannerUniversality} carried out
extensive numerical simulations for random polytopes of the form $A(n)C^n$
for several choices of the distribution of $A(n)$. They formulated a
\emph{universality hypothesis} according to which the
conclusion of Theorem~\ref{thm:Donoho2005} holds for a far broader
class of random matrices. The results of their numerical simulations were
consistent with this hypothesis.

\setcounter{footnote}{2}

Here we establish the first rigorous result indicating universality
of polytope neighborliness for a broad class of random
matrices. Define the curve $(\delta,\rho_*(\delta))$, $\delta\in
(0,1)$, parametrically
by letting, for $\alpha\in(0,\infty)$,
%
\begin{eqnarray}
\label{eq:PhaseBoundaryApp1}
\delta& = &\frac{2\phi(\alpha)}{\alpha+2(\phi(\alpha)-\alpha
\Phi
(-\alpha))},
\\
\label{eq:PhaseBoundaryApp2}
\rho& = &1-\frac{\alpha\Phi(-\alpha)}{\phi(\alpha)}, %
\end{eqnarray}
where $\phi(z) = e^{-z^2/2}/\sqrt{2\pi}$ is the Gaussian density and
$\Phi(x)\equiv\int_{-\infty}^x\phi(z)\, \de z$ is the Gaussian
distribution.
Explicitly, if the above functions on the right-hand side of
equations (\ref{eq:PhaseBoundaryApp1}), (\ref{eq:PhaseBoundaryApp2})
are denoted by $f_{\delta}(\alpha)$, $f_{\rho}(\alpha)$,
then\footnote{It is easy to
show that $f_{\delta}(\alpha)$ is strictly decreasing in
$\alpha\in[0,\infty)$, with $f_{\delta}(0)=1$,
$\lim_{\alpha\to\infty}f_{\delta}(\alpha) = 0$, and
hence $f^{-1}_{\delta}$ is well defined on $[0,1]$. Further properties
of this curve can be found in \cite{DMM09,NSPT}.}
$\rho_*(\delta) \equiv f_{\rho}(f_{\delta}^{-1}(\delta))$.

Here we extend the scope of Theorem~\ref{thm:Donoho2005} from Gaussian
matrices to matrices with independent sub-Gaussian\footnote{See equation
\eqref{eq:def-subgaussian}
for the definition of sub-Gaussian random variables.} entries (not
necessarily identically distributed).

\begin{theorem}\label{thm:Polytope}
Fix $\delta\in(0,1)$. For each $n\in\naturals$, let $m(n) =
\lfloor n\delta\rfloor$ and define $A(n)\in\reals^{m(n)\times n}$ to
be a random matrix with independent sub-Gaussian entries,
with zero mean, unit variance and common scale factor $s$ independent
of $n$.
Further assume $A_{ij}(n) = \tA_{ij}(n)+\nu_0 G_{ij}(n)$ where $\nu_0>0$
is independent of $n$ and $\{G_{ij}(n)\}_{i\in[m],j\in[n]}$ is a
collection of
i.i.d. $\normal(0,1)$ random variables independent of $\tA(n)$.

Then the sequence of polytopes $\{A(n)C^n\}_{n\ge0}$ has weak
neighborliness $\rho_*(\delta)$ in probability.
\end{theorem}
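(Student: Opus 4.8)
The plan is to reduce the assertion about face counts to a statement about $\ell_1$ recovery, and then to prove universality of that statement by running an approximate message passing (AMP) iteration of the form \eqref{eq:SimpleIteration} and invoking the universality and state-evolution results quoted above.

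Step 1: reduction to $\ell_1$ recovery. I would first invoke the face-counting dictionary of Donoho and Tanner. Since $A(n)$ has a nondegenerate continuous law (the summand $\nu_0 G$ guarantees this), it is almost surely in general position, so every $\ell$-dimensional face of $A(n)C^n$ is the image of a unique $\ell$-dimensional face of $C^n$; in particular $\face(A(n)C^n;\ell)\le\face(C^n;\ell)$. A face of $C^n$ is indexed by a support $S\subseteq[n]$ with $|S|=\ell+1$ and a sign pattern $\sigma\in\{\pm1\}^S$, and it survives the projection exactly when $\ell_1$ minimization subject to $A(n)x=A(n)x_0$ recovers every $x_0$ with $\mathrm{supp}(x_0)=S$ and $\mathrm{sign}(x_0)|_S=\sigma$ -- a property of $(S,\sigma)$ only, independent of the magnitudes. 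Taking expectations over $A(n)$ and over a uniformly random $(S,\sigma)$ gives $\E[\face(A(n)C^n;\ell)]/\face(C^n;\ell)=p_n(\rho)$, the probability that $\ell_1$ recovers a random $\lceil m(n)\rho\rceil$-sparse signal. As the ratio $\face(A(n)C^n;\ell)/\face(C^n;\ell)$ lies in $[0,1]$, Markov's inequality reduces Theorem \ref{thm:Polytope} to the two claims $p_n(\rho_*(\delta)(1-\xi))\to1$ and $p_n(\rho_*(\delta)(1+\xi))\to0$.

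Step 2: AMP and state evolution. Basis pursuit (the $\lambda\downarrow0$ limit of the LASSO) is solved by the AMP iteration \eqref{eq:SimpleIteration} with a soft-threshold nonlinearity and a calibrated threshold schedule. Since the hypotheses here concern polynomial $f$, I would approximate the soft threshold by polynomials on an interval that grows slowly with $N$, controlling the truncation error with the subgaussian tails together with a priori $\ell_\infty$ and $\ell_2$ bounds on the iterates -- here subgaussianity, and plausibly the Gaussian piece $\nu_0 G$, are used. Then the universality theorem followed by state evolution shows that, for each fixed $t$, the joint empirical law of $(x^t_i,x_{0,i})_{i\le N}$ converges as $N\to\infty$ to a limit depending on the entries of $A(n)$ only through their first two moments -- hence agreeing with the i.i.d. Gaussian case -- and governed by a scalar recursion $\tau_{t+1}^2=\Psi(\tau_t^2)$. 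Consequently the normalized error $N^{-1}\|x^t-x_0\|_2^2$ and the fixed-point structure of $\Psi$ are universal, and the transition between $\tau_t\downarrow0$ and $\tau_t\to\tau_*>0$ occurs precisely at the curve \eqref{eq:PhaseBoundaryApp1}--\eqref{eq:PhaseBoundaryApp2}, i.e. at $\rho=\rho_*(\delta)$ -- the same computation already carried out for Gaussian matrices.

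Step 3 and the main obstacle. For $\rho<\rho_*(\delta)$ state evolution gives $\tau_t\downarrow0$, so the AMP estimate becomes arbitrarily close to $x_0$ in normalized $\ell_2$ once $N$ and then $t$ are large; I would promote this to exact $\ell_1$ recovery w.h.p. by using the AMP fixed-point relations to construct an approximate dual certificate for basis pursuit and correcting it via a lower bound on $\sigma_{\min}(A_S)$, which holds w.h.p. because for $|S|=\lceil m\rho\rceil<m$ the matrix $A_S$ is a well-conditioned tall random matrix. For $\rho>\rho_*(\delta)$ the stable fixed point has $\tau_*>0$, so AMP cannot reproduce $x_0$; combining this with a comparison between AMP and the LASSO (a strong-convexity estimate again controlled by subgaussian spectral bounds and the $\nu_0 G$ term) and the passage $\lambda\downarrow0$ shows that basis pursuit fails -- equivalently, the null-space property is violated -- w.h.p. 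The routine parts are the face-counting reduction and the state-evolution computation; the crux is the polynomial approximation of the soft threshold with error bounds uniform in $N$, reconciling the universality theorem (polynomial $f$, fixed $t$) with the order of limits ($N\to\infty$ then $t\to\infty$) needed for exact recovery, together with the two passages in Step 3 from quantitative AMP statements to the sharp $0/1$ events of $\ell_1$ recovery and of its failure, which rest on standard subgaussian random-matrix estimates rather than on rotation invariance.
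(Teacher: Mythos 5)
Your high-level strategy matches the paper's: reduce face counting to $\ell_1$ recovery via the Donoho face-counting dictionary, then analyze $\ell_1$ recovery through an AMP iteration whose state evolution is universal, and pass from state-evolution convergence to exact success/failure of recovery. But there are two places where your sketch skips over exactly the difficulties that drive the technical structure of the paper, and one place where you propose a heavier argument than needed.

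First, and most seriously, your treatment of $\sigma_{\min}(A_S)$ in the success direction ($\rho<\rho_*$) is wrong as stated. The relevant submatrix in the dual-certificate argument (the paper's Lemma \ref{lemma:Subgradient}, hypothesis \ref{H:Key}) is not $A_S$ with $S=\supp(x_0)$ of size $\lceil m\rho\rceil$; it is $A_{S(c_1)\cup S'}$ where $S(c_1)=\{i:|v_i|\ge 1-c_1\}$ and $v$ is constructed from the AMP iterates. This set \emph{depends on} $A$. So ``$A_S$ is a well-conditioned tall random matrix'' is not available: you would be selecting columns using the very randomness you want to average over, and standard subgaussian singular-value bounds for fixed or uniformly-random supports do not apply. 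This is precisely where the $\nu_0\,G$ perturbation is used, and not incidentally: the paper runs the AMP on $\tA$ alone, producing $\tS_{t_*}(2c_1)$ \emph{independent of $G$}, and then invokes the Buergisser--Cucker smoothed-analysis bound on the smallest singular value of a deterministic matrix plus a Gaussian, combined with a union bound over $S'$ and a control on $|S_t(c_1)\setminus \tS_{t_*}(2c_1)|$ via Lemma \ref{lemma:Continuity} and Proposition \ref{propo:AMPtoSEell1}.4. Your ``plausibly the Gaussian piece $\nu_0 G$'' attributed to the polynomial-approximation step is misplaced; the Gaussian component earns its keep here, not there.

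Second, your polynomial approximation plan (approximate $\eta$ on an interval growing slowly with $N$, control truncation via $\ell_\infty$/$\ell_2$ a priori bounds on the iterates) is not what the paper does and is likely harder to push through, since uniform $\ell_\infty$ control of AMP iterates under subgaussian entries is nontrivial. The paper instead uses weighted polynomial approximation (Theorem \ref{thm:WeightedApproximation}): for each fixed accuracy $\xi$ one gets a \emph{fixed} polynomial $\eta_t$ with $|\eta(x;\alpha\sigma_t)-\eta_t(x)|\le \xi\exp(x^2/(16\max(\sigma_t^2,s^2)))$ on all of $\reals$, so the error is absorbed by Gaussian moments, no truncation and no $N$-dependence needed. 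The mismatch between ``fixed polynomial degree'' and ``$t\to\infty$ after $N\to\infty$'' is resolved by an interleaving/diagonal argument (the $\beta_\ell\to 0$, $n_\ell$ rapidly increasing construction around Eqs.\ (\ref{eq:FirstBetak})--(\ref{eq:ThirdBetak})), which your sketch does not supply and which is essential to obtain Proposition \ref{propo:AMPtoSEell1} for the true soft threshold rather than for its polynomial proxies.

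Finally, your proposed failure direction ($\rho>\rho_*$) via an AMP-to-LASSO comparison and a $\lambda\downarrow 0$ passage is heavier than the paper's argument, which is direct: state evolution gives $\sigma_t\to\sigma_*>0$ and $\lim_t\E\{|\eta(X+\sigma_tZ;\alpha\sigma_t)|\}<\E\{|X|\}$ (Lemma \ref{lemma:AsympSE}.$(b3)$), so one takes the AMP iterate $x^t$, projects it onto the affine constraint set $\{x: Ax=y\}$, and checks directly that the result has strictly smaller $\ell_1$ norm than $x_0$; no LASSO, no strong convexity. Your route could plausibly be made to work, but it introduces the LASSO analysis as an unnecessary dependency.
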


It is likely that this theorem can be improved in two
directions. First, a milder tail condition than sub-Gaussianity is
probably sufficient. Second, we are assuming that the distribution of
$A_{ij}$ has an arbitrarily small Gaussian component. This is not
necessary for the upper bound on neighborliness, and appears to be an
artifact of the proof of the lower bound.

The proof of Theorem \ref{thm:Polytope} is provided in Section~\ref{sec:Polytope}.
By comparison, the most closely related result toward universality
is by Adamczak, Litvak, Pajor, and Tomczak-Jaegermann \cite{Adamczak}.
For a class of matrices $A(n)$ with i.i.d. columns, these authors prove that
$A(n)C^n$ has neighborliness scaling linearly with $n$. This, however,
does not suggest that a limit weak neighborliness exists, and is
universal, as established instead in Theorem~\ref{thm:Polytope}.

At the other extreme, universality of compressed sensing phase
transitions can be conjectured from the results of the nonrigorous
replica method \cite{KabashimaTanaka,RanganFletcherGoyal}.

\subsection{Universality of iterative algorithms}\label{sec:UniversalityResults}

We will consider here and below a setting that is
somewhat more general than the one described by
equation (\ref{eq:SimpleIteration}).
Following the terminology of \cite{DMM09}, we will refer to such an
iteration as to the approximate message passing (AMP)
iteration/algorithm.

We generalize iteration (\ref{eq:SimpleIteration}) to take place in the
vector space
$\mathcal{V}_{q,N}\equiv(\reals^q)^N\simeq\reals^{N\times q}$.
Given a
vector $x\in\mathcal{V}_{q,N}$, we shall most often regard it as an
$N$-vector with entries in $\reals^q$, namely $x =
(\vx_1,\dots,\vx_N)$, with $\vx_i\in\reals^q$.
Components of
$\vx_i\in\reals^q$ will be indicated as $(x_i(1),\dots,x_i(q))\equiv\vx_i$.

There are several motivations for considering such a generalization. On
one hand, it is necessary for the application to high-dimensional
polytope geometry presented in the previous section. The reader might
have noticed that the random matrix in Theorem~\ref{thm:Polytope}
is rectangular. This is a different setting from that of iteration
(\ref{eq:SimpleIteration}), whereby the random matrix $A$ is square and
symmetric. The generalization to $x\in\mathcal{V}_{q,N}$ introduced
here, with
$A$ square and symmetric, covers the case of rectangular matrices as
well through a suitable reduction. In a nutshell, given a rectangular
random matrix $A'$, the reduction
consists of constructing a symmetric matrix that has $A'$ as
submatrix; cf. Section~\ref{sec:NonSymmetric} for details.

Additional motivations for the generalization introduced here come
from the application of AMP algorithms to a variety of problems in
signal processing. For instance the authors of
\cite{krzakala2012statistical,donoho2013information} study compressed
sensing reconstruction for ``spatially coupled'' sensing matrices. These
are random matrices with independent but not identically distributed
entries. As already discussed in \cite
{donoho2013information,javanmard2013state} for the
case of Gaussian entries, a rigorous analysis of this algorithm
requires generalizing the setting of
(\ref{eq:SimpleIteration}).

Several other applications require a generalization of iteration
(\ref{eq:SimpleIteration}), including the analysis of generalized AMP
algorithms
\cite{RanganGAMP}, AMP reconstruction of block-sparse signals
\cite{donoho2013accurate}, the analysis of phase retrieval algorithms
\cite{schniter2012compressive}
and so on. All of these applications can be treated within the setting
introduced here, although our rigorous analysis requires the use of
polynomial nonlinearities.

A brief sketch of some proof ideas for the ``scalar'' case of equation
(\ref{eq:SimpleIteration})
can be found in Section~\ref{sec:Sketch}.

Given a matrix $A\in\reals^{N\times N}$, we let it act on
$\mathcal{V}_{q,N}$ in the natural way, namely for $v', v\in\mathcal
{V}_{q,N}$ letting
$v'=Av$ be given by
$\vv'_i = \sum_{j=1}^NA_{ij}\vv_j$ for all $i\in[N]$.
Here and below $[N]\equiv\{1,\dots,N\}$ is the set of first $N$ integers.
In other words we identify $A$ with the
Kronecker product $A\otimes\id_{q\times q}$.

\begin{definition}\label{def:GeneralDef}
An \emph{AMP instance} is a triple $(A,\cF,x^0)$ where:
\begin{longlist}[(3)]
\item[(1)] $A\in\reals^{N\times N}$ is a symmetric matrix with $A_{i,i}=0$
for all $i\in[N]$.
\item[(2)] $\cF= \{f^k\dvtx  k\in[N]\}$ is a collection of mappings
$f^k\dvtx \reals^q\times\naturals\to\reals^q$, $(\vx,t)\mapsto f^k(\vx,t)$
that are locally Lipschitz in
their first argument.
\item[(3)] $x^0\in\mathcal{V}_{q,N}$ is an initial condition.
\end{longlist}
Given $\cF= \{f^k\dvtx  k\in[N]\}$, we define $f( \cdot;t)\dvtx \mathcal
{V}_{q,N}\to
\mathcal{V}_{q,N}$ that maps $v$ to $v'=f(v;t)$, and is
given by $\vv'_i = f^i(\vv_i;t)$ for all $i\in[N]$.
\end{definition}

\begin{definition}
The \emph{approximate message
passing orbit} corresponding to the instance $(A,\cF,x^0)$ is the
sequence of vectors $\{x^t\}_{t\ge0}$, $x^t\in\mathcal{V}_{q,N}$
defined as follows, for $t\ge0$,
%
\begin{equation}\label{eq:AMPGeneralDef}
x^{t+1} = A f\bigl(x^t;t\bigr) -
\Ons_t f\bigl(x^{t-1};t-1\bigr).  %
\end{equation}
Here $\Ons_t\dvtx  \mathcal{V}_{q,N}\to\mathcal{V}_{q,N}$ is the linear
operator that maps $v$ to
$v' = \Ons_t v$, and is defined by
%
\begin{equation}
\vv'_i = \biggl(\sum_{j\in[N]}A_{ij}^2
\frac{\partial f^j}{\partial
\vx
}\bigl(\vx^t_j,t\bigr) \biggr)
\vv_i,
\end{equation}
with $\frac{\partial f^j}{\partial\vx}$ denoting the Jacobian matrix
of $f^j( \cdot;t)\dvtx \reals^q\to\reals^q$.
\end{definition}

The above definition can also be summarized by the following
expression for the evolution of a single coordinate under AMP:
%
\begin{equation}\label{eq:AMPGeneralDef_bis}
\vx^{t+1}_i = \sum
_{j\in[N]}A_{ij}f^{j}\bigl(
\vx^t_j,t\bigr)- \sum_{j\in[N]}A_{ij}^2
\frac{\partial
f^j}{\partial\vx}\bigl(\vx^{t}_j,t\bigr) f^{i}
\bigl(\vx^{t-1}_i,t-1\bigr).%
\end{equation}
%
Notice that equation (\ref{eq:SimpleIteration}) corresponds to the special
case $q=1$, in which we replaced $A_{ij}^2$ by $\E\{A_{ij}^2\} = 1/N$
for simplicity of exposition. The term $\Ons_t f(x^{t-1};t-1)$ in
equation (\ref{eq:AMPGeneralDef})
is the correct generalization of the term $\mathsf{b}_t f(x^{t-1};t-1)$
introduced in the $q=1$ case;
cf. equation (\ref{eq:SimpleIteration}). Namely it cancels, to leading
order, the correlations between $\vx^{t+1}_i$ and $\{\vx^{s}_i, s\le
t\}$.

Recall that a centered random variable $X$ is \emph{sub-Gaussian} with scale
factor $\sigma^2$ if, for all $\lambda>0$, we have
%
\begin{equation}\label{eq:def-subgaussian}
\E \bigl( e^{\lambda X} \bigr)\leq e^{{\sigma^2\lambda
^2}/{2}}.
\end{equation}

\begin{definition}
Let $\{(A(N),\cF_N,x^{0,N})\}_{N\ge1}$ be a sequence of AMP instances
indexed by
the dimension $N$, with $A(N)$ a random matrix and $x^{0,N}$ a random
vector. We say that the sequence is $(C,d)$-\emph{regular} (or, for short,
\emph{regular}) \emph{polynomial} sequence if:
\begin{longlist}[(3)]
\item[(1)] For each $N$, the entries $(A_{ij}(N))_{1\le i<j\le N}$ are
independent centered random variables. Further they are sub-Gaussian
with common scale factor $C/N$ [explicitly, there exists an
$N$-independent $C>0$ such
that $\log\E(e^{\lambda A_{ij}})\le(C\lambda)^2/(2N^2)$, cf.
equation (\ref{eq:def-subgaussian})].
\item[(2)] For each $N$, the functions $f^i( \cdot;t)$ in $\cF_N$
[possibly random, as long as they are independent from $A(N)$,
$x^{0,N}$] are polynomials with maximum
degree $d$ and coefficients bounded by $C$.
\item[(3)] For each $N$, $A(N)$ and $x^{0,N}$ are independent. Further,
we have\break $\sum_{i=1}^N\exp\{\llVert \vx_i^{0,N}\rrVert _2^2/C\}\le NC$
with probability converging to one as $N\to\infty$.
\end{longlist}
\end{definition}

We state now our universality result for the algorithm (\ref
{eq:AMPGeneralDef}).

\begin{theorem}\label{thm:Universality}
Let $(A(N),\cF_N,x^{0,N})_{N\ge1}$ and $(\gA(N),\cF
_N,x^{0,N})_{N\ge1}$
be any two $(C,d)$-regular polynomial sequences of instances, that
differ only in the
distribution of the random matrices $A(N)$ and $\gA(N)$.

Denote by $\{x^t\}_{t\ge0}$, $\{\tx^t\}_{t\ge0}$ the corresponding AMP
orbits.
Assume further that for all $N$ and all $i<j$, $\E\{A_{ij}^2\} = \E\{
\gA
_{ij}^2\}$.
Then, for any set of\vspace*{1pt} polynomials $\{p_{N,i}\}_{N\ge0, 1\le i\le N}$
$p_{N,i}\dvtx \reals^q\to\reals$, with
degree bounded by $d$ and coefficients bounded by a constant $B$ for
all $N$ and
$i\in[N]$, we have
%
\begin{equation}
\lim_{N\to\infty} \frac{1}{N}\sum
_{i=1}^N \bigl\{ \E p_{N,i}\bigl(\vx
^t_i\bigr) - \E p_{N,i}\bigl(
\tvx^t_i\bigr) \bigr\} = 0 . %
\end{equation}
\end{theorem}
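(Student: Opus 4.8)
The plan is to prove universality via the Lindeberg swapping method, replacing the entries of $A(N)$ by those of $\gA(N)$ one at a time (or, more efficiently, one symmetric pair $\{(i,j),(j,i)\}$ at a time), and controlling the error introduced at each step by a third-order Taylor expansion. The key observation is that, fixing $t$, the quantity $\frac{1}{N}\sum_i \E p_{N,i}(\vx^t_i)$ is, after unrolling the recursion \eqref{eq:AMPGeneralDef_bis} for $t$ steps, a fixed \emph{polynomial} $\Psi$ in the entries $\{A_{k\ell}\}_{k<\ell}$ (and in $x^{0,N}$, which is independent of $A$ and can be conditioned upon) whose degree is bounded in terms of $t$, $d$, $D$ — crucially, \emph{not} growing with $N$. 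So the first step is to make this explicit: show that each coordinate $\vx^t_i$ is a polynomial of bounded degree in the entries of $A$, with a bound on the sum of squares (or exponential moments) of its coefficients that is uniform in $N$. This is where the polynomial assumption (degree $\le d$, coefficients $\le C$) and the subgaussian initial condition are used; one propagates, by induction on $t$, a bound of the form $\frac1N\sum_i \E\exp\{\|\vx_i^t\|^2/C_t\}\le C_t$ for constants $C_t$ depending only on $C,d,t$. The Onsager correction term and the $A_{ij}^2$ weights are harmless here since they only contribute bounded-degree polynomial factors.

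Once the bounded-degree polynomial structure is in hand, the swapping argument is standard. Order the pairs $1\le k<\ell\le N$ arbitrarily, and for each pair define a hybrid matrix agreeing with $\gA$ on pairs already swapped and with $A$ on the rest. Write the target functional as a telescoping sum over these $\binom{N}{2}$ swaps. For a single swap at pair $(k,\ell)$, isolate the dependence of $\Psi$ on the variable $u=A_{k\ell}=A_{\ell k}$: since $\Psi$ is polynomial of bounded degree, Taylor-expand $\E[\Psi(\dots,u,\dots)]$ in $u$ to second order around $u=0$, with an explicit third-order remainder controlled by $\sup |\partial_u^3\Psi|$ times $\E|u|^3$. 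Because $A_{k\ell}$ and $\gA_{k\ell}$ have the same first two moments (mean zero, and by hypothesis equal second moments), the zeroth-, first-, and second-order terms cancel between the two matrices, leaving only the difference of the two third-order remainders. Each such remainder is $O\big(\E|A_{k\ell}|^3 \cdot \E[B_{k\ell}]\big)$ where $B_{k\ell}$ is a polynomial in the remaining entries bounding $|\partial_u^3\Psi|$; subgaussianity with scale $C/N$ gives $\E|A_{k\ell}|^3 = O(N^{-3/2})$, and the coefficient-bound / moment propagation from the first step gives $\E[B_{k\ell}]=O(1)$ after averaging appropriately, or more precisely $\frac1N\sum_i$ of the relevant derivative has bounded expectation. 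Summing $\binom{N}{2}=O(N^2)$ swaps, each of size $O(N^{-3/2})\cdot O(N^{-1})$ (the extra $N^{-1}$ coming from the $\frac1N\sum_i$ prefactor and the fact that each fixed coordinate $\vx^t_i$ depends only on $O(1)$-degree monomials, so a single entry $A_{k\ell}$ enters the coefficients of $\vx^t_i$ with a weight that is itself $O(N^{-1/2})$ per appearance), yields a total error $o(1)$.

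The main obstacle — and the step requiring genuine care rather than routine bookkeeping — is obtaining the \emph{uniform-in-$N$} third-derivative bound $\E[B_{k\ell}]=O(1)$ after the $\frac1N\sum_i$ average, and more generally tracking how many monomials in the unrolled expression $\vx^t_i$ actually contain a given entry $A_{k\ell}$ and with what coefficient size. Naively, $\vx^t_i$ is a sum over walks of length $\le t$ in the complete graph on $[N]$ emanating from $i$, and there are $\sim N^{t}$ of them, so one must use the $N^{-1/2}$ normalization of each $A$-entry together with the fact that $\partial_u^3$ kills all but the monomials containing $u=A_{k\ell}$ to at least the third power, drastically reducing the count. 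The combinatorics of controlling these walk sums — showing that the contributions that survive differentiation and do not cancel are $O(N^{-3/2})$ per entry, summably — is the technical heart of the argument; it is closely analogous to moment-method proofs of Wigner-matrix universality, but must be carried through for the nonlinear, Onsager-corrected iteration rather than for traces of powers. I expect this to be handled by a careful induction on $t$ that maintains, alongside the moment bound on $\vx^t_i$, a bound on the ``sensitivity'' $\partial \vx^t_i/\partial A_{k\ell}$ and its higher derivatives, ensuring all the error terms in the telescoping sum are uniformly summable.
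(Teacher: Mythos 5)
Your high-level route is genuinely different from the paper's. The paper never swaps entries one at a time: it derives an exact tree-sum representation for the iterates (Lemma~\ref{lem:treerep}), writes $\E[(\vz^t_i)^{\vm}]$ as a sum over $m$-tuples of trees, and compares $A$ with $\gA$ all at once by observing that $\E[\prod_\ell A(T_\ell)]=\prod_{i<j}\E[A_{ij}^{\sum_\ell\phi(T_\ell)_{ij}}]$ depends only on second moments when every edge multiplicity is $0$ or $2$, while the remaining terms (some multiplicity $\ge 3$) are shown to contribute $O(N^{-1/2})$ via the counting estimates in Eqs.~(\ref{eq:uppsubg})--(\ref{eq:S2(A)}) and Lemma~\ref{lem:tree}; a separate reduction (Proposition~\ref{prop:amp}) transfers this from the message-passing iterates $z^t$ to the AMP iterates $x^t$. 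A Lindeberg swap is a legitimate alternative organization, and morally the two bookkeeping schemes isolate the same bad set (monomials where some $A_{k\ell}$ appears at least three times). That said, your proposal does not actually carry out the step that makes either approach work.

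The gap is precisely where you flag it: the bound $\E\big|\partial^3_{A_{k\ell}}\Psi\big|=O(N^{-1})$ is asserted via ``each entry enters with weight $O(N^{-1/2})$ per appearance,'' but that is a typical-size heuristic, not a deterministic bound. Taking absolute values inside the walk/tree expansion, the number of depth-$t$ walks from $i$ through $(k,\ell)$ at least three times is of order $N^{t-6}$, each contributing a weight of order $N^{-(t-3)/2}$, so the crude bound on $|\partial^3_{A_{k\ell}}\vx^t_i|$ is $N^{(t-9)/2}$ and diverges for $t\ge 9$. Getting $O(N^{-3/2})$ (for generic $i$) requires the sign cancellations, i.e.\ a second-moment computation $\E[(\partial^3_{A_{k\ell}}\vx^t_i)^2]=O(N^{-3})$, and establishing this, uniformly over the $\binom{N}{2}+1$ hybrid matrices, is exactly the tree-multiplicity combinatorics the paper develops. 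So the Lindeberg framing does not avoid that machinery; it only repackages where it is used, and as written the proposal has a placeholder at the crux. Separately, the proposed invariant $\frac1N\sum_i\E\exp\{\|\vx_i^t\|_2^2/C_t\}\le C_t$ for $t\ge 1$ is almost certainly false once $d\ge 2$: already $\vx^2_i=\sum_j A_{ij}f^j(\vx^1_j)+\cdots$ is a sum of subgaussian entries times degree-$d$ polynomials of subgaussian variables, whose squares do not have exponential moments. The paper sidesteps this by propagating only polynomial moments (Lemma~\ref{lem:tree} gives $|\E[z^t_i(r)^m]|\le K$), which is what a completed Lindeberg argument would also need to rely on.
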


%
\subsection{State evolution}\label{sec:StateEvolutionResults}

Theorem~\ref{thm:Universality} establishes that the behavior of the
sequence $\{x^t\}_{t\ge0}$ is, in the high-dimensional limit, insensitive to the distribution of the entries of
the random matrix $A$. In order to characterize this limit, we need to
make some assumption on the collection of functions $\cF_N$. In
particular, we need to relate the functions $\cF_N$ to the functions
$\cF_{N'}$ in order to have a high-dimensional ($N\to\infty$) limit.

Informally, we define a \emph{converging sequence} by requiring that
for each $N$, there exists a partition $[N] = C^N_1\cup
C^N_2\cup\cdots\cup C^N_k$ (with $k$ a fixed integer independent of
$N$), and independent random variables $Y(i)$
taking values in $\reals^{\tq}$, indexed by
$i\in[N]$, such that:
\begin{itemize}
\item the function $f^i$ only depends on the partition index of $i\in
[N]$, and on the value of $Y(i)$;
\item the distribution of $Y(i)$ only depends on the partition index
of $i\in[N]$;
\item the fractional size $\llvert C^N_a\rrvert /N$ is $N$-independent for large $N$.
\end{itemize}
There are a few points to make precise, and this is done in the
definition below.

\begin{definition}\label{def:Converging}
We say that the sequence of AMP instances $\{(A(N), \cF_N,\break x^{0,N})\}
_{N\ge0}$
is \emph{polynomial and converging} (or simply \emph{converging}) if
it is
$(C,d)$-regular and there exists:
(i) an integer $k$;
(ii) a symmetric matrix $W\in\reals^{k\times k}$ with nonnegative entries;
(iii) a function $g\dvtx \reals^q\times\reals^{\tq}\times[k]\times
\naturals\to\reals^q$, with $g(\vx,Y,a,t) =
(g_1(\vx,Y,a,t),\dots,g_q(\vx,Y,a,t))$ and, for each
$r\in[q]$, $a\in[k]$, $t\in\naturals$, $g_r( \cdot,Y,a,t)$ a
polynomial with degree $d$ and coefficients bounded by $C$;
(iv) $k$ probability measures $P_1$, \dots, $P_k$ on $\reals^{\tq}$,
with $P_a$ a finite mixture of (possibly degenerate) Gaussians for each
$a\in[k]$;
(v) for each $N$, a finite partition $C^N_1\cup C^N_2\cup\cdots\cup
C^N_k=[N]$; (vi) $k$ positive semidefinite matrices
$\hSigma^0_1,\dots,\hSigma^0_k\in\reals^{q\times q}$, such that the
following happens:
\begin{longlist}
\item[(1)] for each $a\in[k]$, we have $\lim_{N\to\infty}
\llvert C^N_a\rrvert /N =
c_a\in(0,1)$;
\item[(2)] for each $N\ge0$, each $a\in[k]$ and each $i\in C_a^N$, we
have $f^i(\vx,t) = g(\vx,Y(i),a,t)$ where $Y(1),\dots,Y(N)$ are
independent random variables with $Y(i)\sim P_a$ whenever
$i\in C^N_a$ for some $a\in[k]$;
\item[(3)] for each $N$, the entries $\{A_{ij}(N)\}_{1\leq i<j\leq N}$ are
independent sub-\break Gaussian random variables with scale factor $C/N$,
$\E A_{ij}=0$, and, for $i\in C^N_a$ and $j\in C^N_b$, $\E
\{A_{ij}^2\} = W_{ab}/N$;\vspace*{1pt}
\item[(4)] for each $a\in[k]$, in probability,
%
\begin{equation}\label{eq:InitialSE}
\lim_{N\to\infty} \frac{1}{\llvert C_a^N\rrvert }\sum
_{i\in C_a^N}g \bigl(\vx ^0_i,Y(i),a,0
\bigr) g \bigl(\vx^0_i,Y(i),a,0 \bigr)^{\sT} =
\hSigma^0_a.%
\end{equation}
\end{longlist}
\end{definition}

With a slight abuse of notation, we will sometimes denote a converging
sequence by $\{(A(N),g,x^{0,N})\}_{N\ge0}$. We use capital letters to
denote the $Y(i)$'s to emphasize that they are random and do not
change across iterations.

Our next result establishes that the low-dimensional marginals of $\{
x^t\}$ are
asymptotically Gaussian.
\emph{State evolution} characterizes the covariance of these
marginals. For each $t\ge1$, state evolution defines a set of $k$
positive semidefinite matrices $\Sigma^t =
(\Sigma^t_1,\Sigma^t_2,\dots,\Sigma^t_k)$, with
$\Sigma^t_a\in\reals^{q\times q}$. These are obtained by letting, for
each $t\ge1$,
%
\begin{eqnarray}\label{eq:GeneralSE} %
\Sigma^{t}_a &=& \sum
_{b=1}^k c_b
W_{ab} \hSigma^{t-1}_b,
\\
\hSigma^t_a &=& \E \bigl\{ g\bigl(Z^t_a,Y_a,a,t
\bigr) g\bigl(Z^t_a,Y_a,a,t
\bigr)^{\sT} \bigr\}, %
\end{eqnarray}
for all $a\in[k]$.
Here $Y_a\sim P_a$, $Z^t_a\sim\normal (0, \Sigma^t_a  )$ and
$Y_a$ and $Z^t_a$ are independent.

\begin{theorem}\label{thm:SE}
Let $(A(N),\cF_N,x^0)_{N\ge0}$ be a polynomial and converging sequence
of AMP
instances, and denote by $\{x^t\}_{t\ge0}$ the corresponding AMP
sequence. Then for each $t\ge1$, each $a\in[k]$ and each locally
Lipschitz function
$\psi\dvtx \reals^q\times\reals^{\tq}\to\reals$ such that $\llvert \psi(\vx,y)\rrvert \le
K(1+\llVert y\rrVert _2^2+\llVert \vx\rrVert _2^2)^K$, we
have, in probability,
%
\begin{equation}
\lim_{N\to\infty} \frac{1}{\llvert C_a^N\rrvert }\sum
_{j\in C^N_a} \psi\bigl(\vx ^t_j,Y(i)\bigr)
= \E\bigl\{\psi(Z_a,Y_a)\bigr\}, %
\end{equation}
where $Z_a\sim\normal(0,\Sigma_a^t)$ is independent of $Y_a\sim P_a$.
\end{theorem}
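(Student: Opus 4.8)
The plan is to deduce Theorem~\ref{thm:SE} by combining the universality result (Theorem~\ref{thm:Universality}) with the classical state-evolution analysis of AMP for \emph{Gaussian} matrices. The first step is to reduce to the Gaussian case: given a polynomial and converging sequence $(A(N),\cF_N,x^0)_{N\ge 0}$, construct the companion sequence $(\gA(N),\cF_N,x^0)_{N\ge 0}$ in which $\gA_{ij}(N)\sim\normal(0,W_{ab}/N)$ for $i\in C^N_a$, $j\in C^N_b$, and $\gA_{ii}=0$. This companion sequence is still $(C',d)$-regular and converging, with the same $W$, $g$, $P_a$, $c_a$, and $\hSigma^0_a$, and it has matching first two moments of the entries. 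Hence Theorem~\ref{thm:Universality} applies: for every family of polynomials $p_{N,i}$ of bounded degree and bounded coefficients, the averaged expectations $\frac1N\sum_i\E p_{N,i}(\vx^t_i)$ and $\frac1N\sum_i\E p_{N,i}(\tvx^t_i)$ have the same limit.

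The second step is to upgrade this ``expectation of average'' statement to the ``average converges in probability'' statement of Theorem~\ref{thm:SE}, and to remove the restriction to polynomial $\psi$. For concentration, I would apply Theorem~\ref{thm:Universality} (or a direct variance computation for the Gaussian model, where it is standard) to the polynomials $p_{N,i}$ together with the products $p_{N,i}p_{N,j}$, obtaining that $\frac1{|C^N_a|}\sum_{j\in C^N_a}p(\vx^t_j,Y(j))$ has vanishing variance; together with a Borel--Cantelli / subsequence argument this gives convergence in probability for polynomial test functions. To pass from polynomials to a general locally Lipschitz $\psi$ with the stated polynomial growth, I would use a polynomial approximation argument on compact sets controlled by the uniform bound $\sum_i\exp\{\|\vx^0_i\|_2^2/C\}\le NC$ and by a priori $L^p$ bounds on $\|\vx^t_i\|_2$ that follow from the regularity assumptions (each iterate is a bounded-degree polynomial in subgaussian/Gaussian quantities, hence has uniformly bounded moments of all orders after averaging). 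The tail of the approximation error is then controlled by these moment bounds, so the limit for $\psi$ equals the limit for its polynomial approximants.

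The third step — which is really the heart of the matter once universality is in hand — is to identify the limit in the Gaussian case with $\E\{\psi(Z_a,Y_a)\}$, $Z_a\sim\normal(0,\Sigma^t_a)$. For Gaussian $\gA$ this is the Bolthausen conditioning technique / Bayati--Montanari state-evolution argument: one reveals $\gA$ iteratively, writes the conditional distribution of $\gA$ given $\vx^0,\dots,\vx^t$ as the sum of a rank-constrained correction plus a fresh Gaussian, and shows by induction on $t$ that each iterate $\vx^t_j$, $j\in C^N_a$, behaves like an independent $\normal(0,\Sigma^t_a)$ sample (plus asymptotically negligible terms), with $\Sigma^t_a$ given by the recursion \eqref{eq:GeneralSE}; the Onsager term $-\Ons_t f(x^{t-1};t-1)$ is exactly what cancels the otherwise-present memory contribution, leaving the clean Gaussian law. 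The block structure (the partition $C^N_1,\dots,C^N_k$ and the weights $W_{ab}$) enters only through bookkeeping: the covariance picked up by a coordinate in block $a$ is the $c_b W_{ab}$-weighted sum of the second-moment matrices $\hSigma^{t-1}_b$ of the ``input'' coordinates in block $b$, which is precisely the first line of \eqref{eq:GeneralSE}.

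The main obstacle I expect is the polynomial-to-Lipschitz approximation combined with the need for uniform-in-$N$ moment control: Theorem~\ref{thm:Universality} is stated only for polynomial test functions and only for averaged expectations, so one must carefully verify that the a priori bounds on $\frac1N\sum_i\E\|\vx^t_i\|_2^{2p}$ (coming from items~1 and~3 of Definition~\ref{def:RegularPolynomial} and a degree-$d$ induction on $t$) are strong enough that the truncation error in approximating $\psi$ by polynomials vanishes uniformly in $N$. The Gaussian state-evolution step itself is essentially a (by now standard) adaptation of the conditioning argument, so the writing there should mostly be a matter of tracking the block indices and the $q$-dimensional covariances through the induction.
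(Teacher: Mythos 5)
Your proposal takes a genuinely different route from the paper. You propose to (i) invoke Theorem~\ref{thm:Universality} to reduce to a Gaussian matrix, (ii) cite the Bolthausen/Bayati--Montanari conditioning argument for Gaussian state evolution, and (iii) upgrade to in-probability convergence and to locally Lipschitz $\psi$. The paper does not reduce to the Gaussian case at all. Instead, it introduces two auxiliary iterations with an explicit tree representation (Section~\ref{sec:TreeRep}): the ``message passing'' variables $z^t_{i\to j}$ of \eqref{eq:rec}--\eqref{eq:defz}, which discard the Onsager term, and the iid-matrix variables $y^t_{i\to j}$ of \eqref{eq:rec-iid}--\eqref{eq:defy}, which use a fresh matrix $A^t$ at each step. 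Propositions~\ref{prop:mom-equ}, \ref{prop:mom-equ-iid} and \ref{prop:amp} show all three iterations have matching moments up to $O(N^{-1/2})$, and the CLT is then applied \emph{directly} to $y^t$ (Proposition~\ref{prop:SE}), where the fresh randomness at each step makes the Gaussian limit immediate — no conditioning argument needed. Concentration (Proposition~\ref{propo:InProbMoment}) is obtained not by a variance computation for $x^t$ but by an embedding trick: append a block of dummy coordinates driven by a fresh Gaussian row and compare two expressions for the fourth moment of the new coordinate. Your approach buys a conceptually cleaner ``universality $+$ Gaussian reference model'' decomposition; the paper's buys a self-contained argument with a single combinatorial engine (trees) powering both universality and state evolution.

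That said, two steps of your proposal as written have genuine gaps. First, the concentration step: you suggest applying Theorem~\ref{thm:Universality} ``to the polynomials $p_{N,i}$ together with the products $p_{N,i}p_{N,j}$'' — but Theorem~\ref{thm:Universality} is stated only for single-coordinate test functions $p_{N,i}:\reals^q\to\reals$ evaluated at $\vx^t_i$, whereas a variance bound for $\frac{1}{|C^N_a|}\sum_j \psi(\vx^t_j,Y(j))$ requires control of two-coordinate correlations $\E[\psi(\vx^t_i,Y(i))\psi(\vx^t_j,Y(j))]$, which is not covered by the theorem as stated. You would need to prove a two-coordinate extension of universality, which amounts to redoing the tree-sum estimates (essentially the content of Eq.~\eqref{eq:BoundCorrelation} in the paper's Proposition~\ref{prop:SE}). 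Second, the ``Gaussian reference'' step is not a pure citation: the existing Gaussian state-evolution results do not cover the block partition $C^N_1,\dots,C^N_k$ with weights $W_{ab}$, the vector-valued ($q>1$) setting, the polynomial (rather than Lipschitz) $f$, or the precise Onsager operator of Definition~2, which uses $A_{ij}^2$ rather than $\E\{A_{ij}^2\}=1/N$. Adapting the conditioning argument to that generality would be a substantial piece of work in its own right — roughly comparable in length to the paper's Propositions~\ref{prop:mom-equ}--\ref{prop:amp}. So while the high-level plan is coherent, filling it in would require either proving the Gaussian SE in the paper's generality or extending Theorem~\ref{thm:Universality} to two-coordinate observables, and you have not flagged the latter as an issue.
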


We conclude by mentioning that, following \cite{DMM09},
generalizations of algorithm (\ref{eq:AMPGeneralDef}) were
studied by several groups
\cite{SchniterTurbo,RanganGAMP,MalekiComplex}, for a number of
applications. Universality results analogous to the one proved here
are expected to hold for such generalizations as well.

\subsection{Outline of the paper}

The paper is organized as follows.
Before delving into the details of the analysis, Section~\ref
{sec:Sketch} provides
an informal discussion of the main proof ideas for the case $q=1$. After
some preliminary facts and notations in Section~\ref{sec:Notations},
Section~\ref{sec:Polynomial}
considers the AMP iteration (\ref{eq:AMPGeneralDef}) and proves
Theorems~\ref{thm:Universality} and~\ref{thm:SE}. In order to achieve
our goal, we introduce two
different iterations whose analysis provides useful intermediate
steps. We also prove a generalization of Theorem~\ref{thm:SE} to
estimate functions of messages at two distinct times
$\psi(\vx_i^t,\vx_i^{s},Y(i))$.

Section~\ref{sec:NonSymmetric} proves a generalization of Theorem~\ref{thm:SE} to the case of rectangular (nonsymmetric) matrices $A$.
This is achieved by effectively embedding the rectangular
matrix, into a larger symmetric matrix and applying our results for
symmetric matrices.

The generalization to rectangular matrices is finally used in Section~\ref{sec:Polytope} to prove our result on the universality of polytope
neighborliness, Theorem~\ref{thm:Polytope}. This is done via a
correspondence with compressed sensing reconstruction established in
\cite{Donoho2005b}, and a sharp analysis of an AMP iteration that
solves this reconstruction problem.
%
%
\section{Universality of iterative algorithms: Sketch of main ideas}\label{sec:Sketch}

In this section we sketch some key ideas in the proof of Theorems
\ref{thm:Universality} and \ref{thm:SE}. For the sake of clarity,
we shall focus on the special scalar recursion
(\ref{eq:SimpleIteration}) with $f(x;t) = f(x)$ kept constant across
iterations and $(A_{ij})_{i<j}$ independent centered sub-Gaussian, with
$\E\{A_{ij}^2\} = 1/N$. As in the statement of Theorems \ref
{thm:Universality} and
\ref{thm:SE}, we further assume that $f( \cdot)$ is separable and
polynomial.
Finally, we shall only consider the initial condition $x^0=\v1$ (the
all-ones vector).
While this setting is significantly more restrictive than the one of
Theorems \ref{thm:Universality} and \ref{thm:SE}, it is sufficient to
elucidate all the main ideas. For a complete
treatment of the general case, we refer the reader to Section~\ref
{sec:Polynomial}.

In order to clarify the role of the memory term in equation (\ref
{eq:SimpleIteration}), it is instructive to
first consider the case $k=1$, $f(x)$ equal to the identity function
[i.e., $f(x)=((\vx_1),(\vx_2),\dots,(\vx_n))]$,
and drop the memory term, thus defining the
sequence $\bx^t\in\reals^N$ by
%
\begin{equation}
\bx^{t+1} = A \bx^t. %
\end{equation}
Let us focus on, say, coordinate $1$ of $\bx^t$.
An explicit calculation yields (recall that, by convention, $A_{ii}=0$)
%
\begin{eqnarray}
\bbx^1_1 & =& \sum
_{i\in[N]}A_{1i},
\\\label{eq:T=2}
\bbx^2_1 & = &\sum_{i\in[N]}
\sum_{j\in[N]}A_{1i}A_{ij} = \sum
_{i\in[N]} A_{1i}^2 + \sum
_{i\in[N]}\sum_{j\in[N]\setminus
1}A_{1i}A_{ij}. %
\end{eqnarray}
Consider first $t=1$.
Under our assumptions, $\bx^1_1$ is a sum of i.i.d. random variables
with mean $0$ and variance $1/N$. By the central limit theorem, it
converges in distribution to a standard Gaussian random variable, as
predicted by Theorem~\ref{thm:SE}.

Consider next $t=2$. In equation (\ref{eq:T=2}) we decomposed the sum over
$\{i,j\}$ in a sum over terms with $j=1$, and a sum over terms with
$j\neq1$. The first sum converges almost surely to $1$ by the law of
large numbers.
It is easy to see that the second sum has expectation equal to zero
and variance equal to $(N-1)/N$ that converges to $1$. Indeed, a
slightly more complicated
calculation shows that it converges to a standard Gaussian. Overall,
$\bbx^{2}_1$ converges in distribution to a Gaussian with mean $1$ and
variance $1$, unlike what is predicted by Theorem~\ref{thm:SE} for
$\vx^2_1$. (Theorem~\ref{thm:SE} always predicts $\vx^t_i$ to have
asymptotically zero mean.)

Notice that the terms in the sum (\ref{eq:T=2}) are indexed by an
ordered triple $(1,i,j)$ with $i,j\in[N]$, $1\neq i$, $i\neq j$.
We can identify such a triple with a length $2$ rooted (directed
acyclic) path with vertices labeled by $1$ (the root), $i$, $j$: $j\to
i\to1$.
The terms that lead to a nonzero mean are those corresponding to
$j=1$, that is, with a one-step reversal in the order in which they visit
labels of $[N]$. These are paths of the form $1\to i\to1$.

Consider now adding back the memory term $\mathsf{b}_t f(x^{t-1};t-1)$.
It is easy to check that, in the present case [namely $f(x;t) = x$],
equation (\ref{eq:SimpleIteration}) reduces to $x^{t+1} = Ax^t-x^{t-1}$
and, in particular
\[
x^{2} = A^2\v1 -\v1 . %
\]
Comparing with equation (\ref{eq:T=2}), we see that the memory term
asymptotically cancels the effect
of one-step reversing paths.
The same analysis can be developed, with additional labor, to
subsequent iterations. At each $t$, the memory term cancels the effect
of one-step reversing paths, and the residual terms match the
prediction of Theorem~\ref{thm:SE}.

The proof follows a similar argument for a general polynomial $f(x)$.
As in the linear case, each coordinate $x^t_i$ can be expressed as a
sum of monomials in the independent random variables
$(A_{ij})_{i<j}$. The main difference is that now these monomials are
indexed by rooted \emph{trees} instead of rooted \emph{paths} with
vertex labels
in $[N]$. To see this, consider the special case
%
\begin{equation}
f(x) = \bigl((\vx_1)^3,(
\vx_2)^3,\dots,(\vx_n)^3\bigr).
\end{equation}
Then, a direct calculation of iteration (\ref{eq:SimpleIteration})
yields
%
\begin{eqnarray}
\hspace*{25pt}\vx^1_1 &=& \sum
_{i\in[N]}A_{1i},
\\\label{eq:X2Example}
\hspace*{25pt}\vx^2_1 &=& \sum_{i\in[N]}\sum
_{j,k,l\in
[N]}A_{1i}A_{ij}A_{ik}A_{il}-
\mathsf{b}_1 ,\qquad \mathsf{b}_1 = \frac{1}{N}\sum
_{i\in[N]}3 \biggl(\sum_{j\in[N]}
A_{ij} \biggr)^{2}.  %
\end{eqnarray}
The monomials in
the sum appearing in the expression for $\vx^2_1$ in equation (\ref
{eq:X2Example})
can be associated to rooted directed trees as per Figure~\ref{fig:Tree}.
In this simple example it is easy to check that the memory term
exactly cancels the contribution of one-step reversing trees, that is, the
terms in the sum with labels $j=1$, or $k=1$, or $l=1$.
The other terms in the sum correspond to nonreversing trees (cf.
Section~\ref{sec:Polynomial}),
and their total contribution is asymptotically Gaussian with mean $0$
and variance as predicted\footnote{In the present case, since $q=k=1$,
state
evolution is a recursion for the single scalar $\Sigma^t$. We have
$\Sigma^1=1$,
$\Sigma^{2}= \E\{g(Z^1)^2\}$ for $Z^1\sim\normal(0,1)$ and $g(x) =
x^3$, whence $\Sigma^2 = 15$.} by state evolution,
equation (\ref{eq:GeneralSE}). Since this sum is a polynomial in the
independent random variables $(A_{ij})_{i<j}$, the method of moments
provides a natural path to prove the last statement.

\begin{figure}

\includegraphics{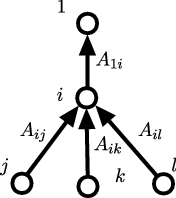}

\caption{Graphical representation of a term in
equation (\protect\ref{eq:X2Example}).}\label{fig:Tree}
\end{figure}

The actual proof of Theorems
\ref{thm:Universality} and \ref{thm:SE} in Section~\ref{sec:Polynomial}
follows the same intuition as
above, but of course, requires several technical steps:
\begin{longlist}[(6)]
\item[(1)] We introduce new quantities $\vz^t_{i}\in\reals^q$,
$i\in
[N]$ that are exactly equal to a sum of monomials in
the independent random variables $(A_{ij})_{i<j}$, indexed by labeled
nonreversing trees; see Lemma~\ref{lem:treerep}. (We refer to
Section~\ref{sec:Polynomial}
for a precise definition of ``nonreversing trees.'')
\item[(2)] We prove that, for our purposes, the distribution of the random
variable $\vx^t_i$ is accurately approximated by the distribution
of $\vz^t_i$; see Proposition~\ref{prop:amp}.
\item[(3)] We prove, under the same assumptions as in our universality result,
Theorem~\ref{thm:Universality}, the distribution of $\vz^t_i$ is
insensitive to the distribution of the matrix entries
$(A_{ij})_{i<j}$; cf. Proposition~\ref{prop:mom-equ}. This is done by
the moment method. Any moment of
$\vz^t_i$ is written as the expectation of a polynomial in the
$(A_{ij})_{i<j}$. We show that the only terms that matter are the ones
in which each $A_{ij}$ appears with degree at most two. Hence the
expectation only depends on the first two moments of the matrix
entries, which are fixed by assumption.

Together with the previous point, this immediately implies Theorem~\ref{thm:Universality}.
\item[(4)] In other to prove Theorem~\ref{thm:SE}, we introduce a third
sequence $\vy^t_{i}\in\reals^q$, $i\in[N]$ that is analogous to
the $\vz^t_i$ except for the fact that an independent copy of the
random variables $(A_{ij})_{i<j}$ is used at each generation in the
tree. This is analogous to drawing an independent copy of $A$ at each
iteration of a certain message passing algorithm (both $z^t$ and $y^t$
admit an iterative definition).
\item[(5)] In Proposition~\ref{prop:mom-equ-iid}, we prove that the
distribution of $\vz_i^t$ (and hence $\vx_i^t$) is, for our
purposes, accurately approximated by the distribution of $\vy^t_i$.
\item[(6)] Finally we exploit the fact that a fresh matrix $A$ is sampled
at each iteration to prove that state evolution holds for $\vy^t_i$;
cf. Proposition~\ref{prop:SE}.

By the previous point, this implies Theorem~\ref{thm:SE}.
\end{longlist}

In the next section we introduce some basic facts and notation. We
will implement the above strategy in Section~\ref{sec:Polynomial}.

%
%
\section{Notations and basic simplifications}\label{sec:Notations}

We will always view vectors as column vectors. The transpose of vector
$v$ is the row vector indicated by $v^{\sT}$.
Analogously, the transpose of a matrix
(or vector) $M$ is denoted by $M^{\sT}$.
For a vector $v\in\reals^m$, we denote its $\ell_p$ norm, $p\ge1$
by $\llVert v\rrVert _p \equiv(\sum_{i=1}^m\llvert v_i\rrvert ^p)^{1/p}$. This is extended in
the usual way to $p=\infty$. We will often omit the subscript if $p=2$.
For a
matrix $M$, we denote by $\llVert  M \rrVert _p$ the corresponding $\ell_p$
operator norm. The standard scalar product of $u,v\in\reals^m$ is
denoted by $ \langle u,v \rangle = \sum_{i=1}^m u_iv_i$.
Given $v\in\reals^m$, $w\in\reals^n$, we denote by $[v,w]\in\reals
^{m+n}$ the
(column) vector obtained by
concatenating $v$ and $w$.
The identity matrix is
denoted by $\id$, or $\id_{m\times m}$ if the dimensions need to be
specified.
The indicator function is $\ind( \cdot)$.
%
The set of first $m$ integers is indicated by $[m]= \{1,\dots,m\}$.
Finally, given $\vx= (x(1),x(2),\dots,x(q))\in\reals^q$ and $\vm=
(m(1),\dots,m(q))\in\naturals^q$, we write
%
\begin{equation}
\vx^{\vm} \equiv\prod_{r=1}^q
x(r)^{m(r)}. %
\end{equation}

Following the common practice, degenerate Gaussian distributions will
be considered Gaussian, without further qualification. In particular,
any distribution with finite support in $\reals^k$ is a finite mixture
of Gaussians.

In our proof of Theorem~\ref{thm:SE}
we will make use of the following simplification that lightens
somewhat the notation.

\begin{remark}\label{remark:NonRandom}
For proving Theorem~\ref{thm:SE}, it is sufficient to consider the
case in which $g\dvtx (\vx,Y,a,t)\mapsto g(\vx,Y,a,t)$ is independent of
$Y$.
\end{remark}

\begin{pf}
The basic idea of the construction is to enlarge $q$ in such a way to
keep track of the value of $Y(i)$ in the a subset of the coordinates of
$\vx_i^t$.

First of all, we can assume without loss of generality that the
measures $P_a$ are Gaussian.
Indeed if, for instance, $P_a$ is a mixture of $\ell$ Gaussians, $P_a =
w_1 P_{a,1}+w_2 P_{a,2}+\cdots+w_{\ell}P_{a,\ell}$, then
we can replace effectively the partition element $C^N_a$ by a finer partition
$C_{a,1}^N,\dots,C_{a,\ell}^N$ whereby $C_{a,1}^{N}\cup\cdots\cup
C_{a,\ell}^N=C_a^N$ and $\llvert C_{a,1}^N\rrvert,\dots,\llvert C_{a,\ell}^N\rrvert $ are
multinomial with parameters $(w_1,\dots,w_{\ell})$. Notice that this
finer partition is random, but $\llvert C_{a,i}^N\rrvert /N\to c_aw_{i}$ almost
surely, and therefore the theorem applies.

Assume therefore that the $P_a$ are Gaussian.
By replacing $g(\vx,Y,a,t)$ by $g'(\vx,Y,a,t)
=g(\vx,Q_aY+v_a,a,t)$ for suitable matrices $Q_a$, and vectors $v_a$,
we can always assume $Y_a\sim\normal(0,\id_{\tq\times\tq})$
for all $a$. Assume therefore $Y_a\sim\normal(0,\id_{\tq\times\tq})$.
Enlarge the space by letting $k'=k+\tq$, $N'=(\tq+1)N$ and
$C^{N'}_{a} =
\{N\ell+1,\dots,N(\ell+1)\}$, for $a=k+\ell>k$, while $C^{N'}_a =
C^N_a$ for $a\le k$. We further let $q'=q+\tq$
and define new functions $g':\reals^{q'}\times\reals^{\tq}\times
[k']\times
\naturals\to\reals^{q'}$ independent of the second argument ($Y$) as follows.
For $\vx\in\reals^q$, $\tvx\in\reals^{\tq}$, we let
\begin{eqnarray*}
g'_r \bigl((\vx,\tvx),Y,a,t \bigr) & = &
g_r(\vx,\tvx,a,t) \qquad\mbox{for } r\in\{1,\dots,q\}, a\in\{1,\dots,k
\},
\\
g'_{r} \bigl((\vx,\tvx),Y,a,t \bigr) &= &
0\qquad \mbox{for } r\in\{q+1,\dots,q+\tq\}, a\in\{1,\dots,k\},
\\
g'_{r} \bigl((\vx,\tvx),Y,a,t \bigr) & =
&0 \qquad\mbox{for } r\in\{1,\dots,q\}, a\in\{k+1,\dots,k+\tq\},
\\
g'_{q+\ell} \bigl((\vx,\tvx),Y,k+\ell',t
\bigr) & = & \ind\bigl(\ell =\ell'\bigr) \qquad\mbox{for } \ell,
\ell'\in\{1,\dots,\tq\}. %
\end{eqnarray*}
We further use matrix $A'$ constructed as follows:
$A'_{ij}= A_{ij}$ for $i,j\le N$ and $A_{ij}\sim\normal(0,1/N)$ if
$i>N$ or $j>N$. [Notice that $\E\{(A'_{ij})^2\} = 2/N'$, but this
amounts just to an overall rescaling and is of course immaterial.]
Clearly the functions $g'$ do not depend on $Y$ as claimed.
Further, $\tvx\sim\normal(0,\id_{\tq\times\tq})$ at all
iterations. Hence
the new iteration is identical to the original one when restricted on
$\{x_i(r)\dvtx  i\le N, r\le q\}$.
\end{pf}

%
%
\section{Proofs of Theorems \texorpdfstring{\protect\ref{thm:Universality}}{3} and \texorpdfstring{\protect\ref{thm:SE}}{4}}\label{sec:Polynomial}

In this section we consider the AMP iteration
(\ref{eq:AMPGeneralDef}), and prove
Theorem~\ref{thm:Universality} and Theorem~\ref{thm:SE}, and indeed
generalize the latter.

We extend the state evolution (\ref{eq:GeneralSE}) by defining for
each $t\geq s\geq0$ and for all $a\in[k]$, a positive semidefinite matrix
$\Sigma^{t,s}_a\in\reals^{(2q)\times(2q)}$ as follows. For boundary
conditions, we set
%
\begin{eqnarray}
\hspace*{30pt}\hSigma^{0,0}_a = \pmatrix{
\hSigma^0_a & \hSigma^0_a\vspace*{3pt}
\cr
\hSigma^0_a & \hSigma^0_a },\qquad
\hSigma^{t,0}_a = \pmatrix{ \hSigma^t_a
& 0
\cr
0& \hSigma^0_a }, \qquad\hSigma^{0,t}_a
= \pmatrix{ \hSigma^0_a & 0
\cr
0& \hSigma^t_a
}, %
\end{eqnarray}
with $\hSigma^t_a$ defined per equation (\ref{eq:GeneralSE}).
For any $s,t\ge1$, we set recursively
%
\begin{eqnarray}\label{eq:GeneralstSE}
\Sigma^{t,s}_a&=&\sum
_{b=1}^k c_bW_{ab}
\hSigma^{t-1,s-1}_b,
\\
\label{eq:GeneralstSE2}
\hSigma^{t,s}_a& = &\E \bigl\{ X_aX_a^{\sT}
\bigr\},\nonumber\\[-8pt]\\[-8pt]
X_a &\equiv&\bigl[g\bigl(Z^t_a,Y_a,a,t
\bigr),g\bigl(Z^s_a,Y_a,a,s\bigr)\bigr] ,
\bigl(Z_a^t,Z_a^s\bigr)\sim
\normal\bigl(0,\Sigma^{t,s}_a\bigr).\nonumber
\end{eqnarray}
Recall that $[g(Z^t_a,Y_a,a,t),g(Z^s_a,Y_a,a,s)]\in\reals^{2q}$ is the
vector obtained by concatenating $g(Z^t_a,Y_a,a,t)$ and $g(Z^s_a,Y_a,a,s)$.
Note that taking $s=t$ in (\ref{eq:GeneralstSE}), we recover the
recursion for $\Sigma^t_a$ given by equation (\ref{eq:GeneralSE}). Namely,
for all $t$ we have
%
\begin{eqnarray}\label{eq:OneTimeMatch}
\Sigma^{t,t}_a = \pmatrix{
\Sigma^t_a & \Sigma^t_a\vspace*{2pt}
\cr
\Sigma^t_a & \Sigma^t_a }.%
\end{eqnarray}

\begin{theorem}\label{thm:PolySE}
Let $\{(A(N),\cF_N,x^{0,N})\}_{N\ge1}$ be a polynomial and converging
sequence of instances and denote by $\{x^t\}_{t\ge0}$ the
corresponding AMP
orbit.

Fix $s,t\ge1$. If $s\neq t$,
further assume that the initial condition $x^{0,N}$ is
obtained by letting $\vx_i^{0,N}\sim Q_{a}$ independent and
identically distributed, with $Q_a$ a finite mixture of Gaussians for
each $a$.
Then, for each $a\in[k]$, and each locally Lipschitz function
$\psi\dvtx \reals^q\times\reals^q\times\reals^{\tq}\to\reals$ such that
$\llvert \psi(\vx,\vx',y)\rrvert \le K(1+\llVert y\rrVert _2^2+\llVert \vx\rrVert _2^2+\llVert \vx'\rrVert _2^2)^K$, we
have, in
probability,
\[
\lim_{N\to\infty} \frac{1}{\llvert C_a^N\rrvert }\sum
_{j\in C^N_a} \psi\bigl(\vx^t_j,
\vx^s_j,Y(j)\bigr) = \E \bigl[ \psi\bigl(Z^t_a,Z^s_a,Y_a
\bigr) \bigr],
\]
where $(Z^t_a,Z^s_a)\sim\normal(0,\Sigma^{t,s}_a)$ is independent of
$Y_a\sim P_a$.
\end{theorem}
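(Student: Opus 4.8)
The plan is to prove Theorem~\ref{thm:PolySE} by a Gaussian-conditioning argument (in the spirit of Bolthausen's approach and its extension in \cite{DMM09,BM-MPCS-2011}), combined with the universality result, Theorem~\ref{thm:Universality}, which lets us restrict attention to a convenient matrix ensemble. Concretely, by Theorem~\ref{thm:Universality} it suffices to establish the claim when $A(N)$ has independent Gaussian entries with the prescribed variances $W_{ab}/N$: the theorem transfers the limit of $N^{-1}\sum_i \E\, p_{N,i}(\vx^t_i)$ across ensembles for polynomial test functions $p_{N,i}$, and a truncation/approximation argument reduces the locally Lipschitz, polynomially-bounded $\psi$ to polynomials (using the subgaussian scale-factor control in Definition~\ref{def:Regular}, which gives uniform exponential-moment bounds on the iterates, hence uniform integrability). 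The first step, therefore, is to record these moment bounds: by induction on $t$ one shows $N^{-1}\sum_{i}\E\exp\{\|\vx^t_i\|_2^2/C_t\}\le NC_t$ for constants $C_t$ depending only on $(C,d,t,k,q)$, using the subgaussianity of the $A_{ij}$, the polynomial form of $f$, and the Onsager correction's boundedness; this both justifies the polynomial-approximation reduction and controls error terms in the conditioning argument.

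Second, with Gaussian $A$, I would run the standard conditioning induction. Let $F^t$ denote the $\sigma$-algebra generated by $x^0,\dots,x^t$ (equivalently, by the linear images $A f(x^s;s)$ for $s<t$). Conditioning $A$ on $F^t$, the symmetric Gaussian matrix decomposes as a sum of a term depending only on its action on the span of the $f(x^s;s)$'s and an independent symmetric Gaussian matrix acting on the orthogonal complement; feeding this decomposition into \eqref{eq:AMPGeneralDef}, one finds that $x^{t+1}$ is, conditionally, a deterministic function of the past plus a Gaussian vector whose coordinates, within each block $C^N_a$, are (asymptotically) i.i.d.\ $\normal(0,\Sigma^{t+1}_a)$ — the Onsager term $\Ons_t f(x^{t-1};t-1)$ being exactly what cancels the bias coming from the projection onto the past. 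The two-time statement requires carrying the joint covariance: the induction hypothesis is the two-time empirical law on block $C^N_a$ converging to $\normal(0,\Sigma^{t,s}_a)$ for all $s\le t$, and one propagates it to time $t+1$ via \eqref{eq:GeneralstSE}–\eqref{eq:GeneralstSE2}. The boundary cases $\hSigma^{t,0}_a$, $\hSigma^{0,t}_a$ are where the hypothesis "$\vx_i^{0,N}\sim Q_a$ i.i.d." enters: when $s\ne t$ one needs independence of the initial messages from the freshly-revealed Gaussian randomness so that $g(\vx^0_i,\cdot)$ and $g(Z^t_a,\cdot)$ decorrelate as stated in the block-diagonal form of $\hSigma^{t,0}_a$.

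The main obstacle, as usual in these arguments, is controlling the conditional law uniformly: after conditioning, the "deterministic part" involves the Gram matrix of the vectors $\{f(x^s;s)\}_{s\le t}$ and its inverse, and one must show (i) this Gram matrix is, with high probability, nondegenerate and converges (its limit being expressible through the state-evolution matrices), and (ii) the residual Gaussian field, though not exactly i.i.d.\ because of the symmetry constraint $A=A^{\sT}$ and the conditioning, has empirical two-time marginals converging to the claimed Gaussian. Handling (ii) rigorously means approximating $g$ (polynomial) applied to a near-Gaussian vector by $g$ applied to an exactly-Gaussian vector and bounding the discrepancy via the polynomial-growth/moment bounds from the first step — this is the technically heaviest part, since one must track $\ell_2$-distances between the true conditional Gaussian and the idealized one through each polynomial nonlinearity, and the degeneracy of the Gram matrix must be ruled out (via an anti-concentration or explicit-variance argument) to invert it. Once these estimates are in place, testing against locally Lipschitz $\psi$ follows by the reduction to polynomials together with the dominated-convergence consequence of the exponential moment bounds, completing the induction and hence the proof.
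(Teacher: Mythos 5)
Your proposal takes a genuinely different route from the paper. The paper never invokes a Gaussian conditioning argument: its entire proof is a combinatorial moment method. It expands the iterates as weighted sums over labeled trees (Lemma~\ref{lem:treerep}), shows that only contributions in which the union graph $\vG$ is a tree with every edge covered exactly twice survive as $N\to\infty$ (Lemma~\ref{lem:tree}), uses this to compare the AMP orbit $x^t$ to the message-passing iterates $z^t$ and then to iterates $y^t$ driven by fresh i.i.d.\ matrices at every step (Propositions~\ref{prop:mom-equ}, \ref{prop:mom-equ-iid}, \ref{prop:amp}), and finally obtains Gaussian moments for $y^t$ by a conditional CLT with respect to the filtration generated by $A^0,\dots,A^{t-1}$ (Proposition~\ref{prop:SE}). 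The paper also reduces the two-time $s\neq t$ statement to the one-time $s=t$ case by a state-space doubling trick (Remark~\ref{remark:T=S}, enlarging $\reals^q$ to $\reals^{2q}$ and time-shifting $g$), whereas you propagate the joint covariance $\Sigma^{t,s}_a$ through the conditioning induction directly. Your plan — restrict to Gaussian $A$ via universality, then run the Bolthausen/Bayati--Montanari conditioning — is a viable alternative architecture, and carries the advantage of being more conceptual and closer to the existing literature on state evolution with i.i.d.\ Gaussian designs; the paper's tree method, in turn, gives universality and state evolution in one shot and never needs to control the conditional Gaussian law or the Gram matrix of past messages.

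There is, however, a genuine gap in your reduction step. Theorem~\ref{thm:Universality} as stated only transfers \emph{expectations} $N^{-1}\sum_i \E\,p_{N,i}(\vx_i^t)$ across matrix ensembles for single-coordinate polynomial test functions. The conclusion you want is an \emph{in-probability} limit of the empirical average, and knowing both that (a) the Gaussian-ensemble empirical average concentrates and (b) the non-Gaussian-ensemble expectation has the same limit does not imply (c) the non-Gaussian-ensemble empirical average concentrates. To get (c) you must control
\[
\Var\Big(\frac{1}{|C_a^N|}\sum_{i\in C_a^N}(\vx_i^t)^{\vm}\Big),
\]
which involves cross-moments $\E\big[(\vx_i^t)^{\vm}(\vx_j^t)^{\vm}\big]$ for $i\neq j$, and these are not within the scope of Theorem~\ref{thm:Universality} as written. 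The paper closes exactly this gap with the augmentation device of Proposition~\ref{propo:InProbMoment}: it appends $N$ dummy coordinates $C^N_{k+1}$ with $g'(\cdot,k+1,\cdot)=0$ so that a fourth moment $\E\{x_i^{t+1}(1)^4\}$ at a fresh index $i\in C^N_{k+1}$ encodes the desired cross-term $\sum_{j_1,j_2}\E\{\varphi(\vx_{j_1}^t)^2\varphi(\vx_{j_2}^t)^2\}$, turning the variance bound back into a single-coordinate expectation to which the moment machinery applies. Your proposal needs some analogue of this (or a strengthened universality statement covering two-point functions) before the Gaussian reduction can be made rigorous; as it stands, that step is asserted but not supported.
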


Throughout this section, we will assume that
$\{(A(N),\cF_N,x^{0,N})\}$, $\{(\gA(N),\break \cF_N,x^{0,N})\}$, etc. are
$(C,d)$-\emph{regular polynomial} sequences of AMP
instances.
We will often omit explicit mention of this hypothesis.
Notice that Theorem~\ref{thm:Universality} holds \emph{per
realization} of
the functions $\cF_N$.
Because of this and because of Remark~\ref{remark:NonRandom}, we will
consider hereafter
$\cF_N$ to be nonrandom.

The rest of this section is organized as follows. In Section~\ref
{sec:MPDefinition} we introduce two new iterations that are useful
intermediary steps for our analysis. We show that the corresponding
variables admit representations as sums over trees in
Section~\ref{sec:TreeRep} and use them to prove basic properties of
these recursions in Sections \ref{sec:ProofMom}, \ref{sec:ProofMomIID}
and \ref{sec:PolyAMP}.
Theorems \ref{thm:Universality} and \ref{thm:PolySE}
are then proved in Sections \ref{sec:ProofUniversalityPolynom},
\ref{sec:ProofPolySE}. Because of equation (\ref{eq:OneTimeMatch}),
Theorem~\ref{thm:SE} follows as a special case of Theorem~\ref{thm:PolySE}.
Indeed, we will show that both
statements are equivalent through a reduction argument. Depending on
the application, Theorem~\ref{thm:PolySE} might be a more convenient
formulation of the state evolution and will be used in Section~\ref
{sec:NonSymmetric}.

%
%
\subsection{Message passing iteration}\label{sec:MPDefinition}

We define two new message passing sequences corresponding
to the instance $(A,\cF,x^{0,N})$.
For each $i\in[N]$ we use the short notation $[N]\ex i$ to denote the
set $[N]\ex\{i\}$.
We now define the sequence of vectors $(\vz^{t}_{i\to j})_{t\in
\naturals}$, where for each $i\neq j\in[N]$, $\vz^{t}_{i\to j}$ is
a vector in $\reals^q$ or equivalently for each $t\in\naturals$, we
can see $(\vz^t_{i\to j})$ as an $N\times N$ matrix with entries in
$\reals^q$ (diagonal elements are never used). The initial condition is
denoted by $\vz^0_{i\to j}\in\reals^q$ for any $i,j\in[N]$ and is
independent of $j$, such that $\vz^0_{i\to j}=\vx^{0,N}_{i}$ for all
$j\neq i$.
The $r$th coordinate of the vector $\vz_{i\to j}^{t+1}$ is defined by
the following recursion for $t\geq0$:
%
\begin{eqnarray}\label{eq:rec}z_{i\to j}^{t+1}(r)&=& \sum
_{\ell\in[N]\ex j} A_{\ell i} f^\ell_r
\bigl(\vz_{\ell\to i}^{t},t\bigr),
\end{eqnarray}
where $f^\ell_r(\cdot,t)\dvtx \reals^q\to\reals$ is the $r$th
coordinate of
$f^\ell(\cdot,t)$.

We also define for each $i\in[N]$ and $t\geq0$, the vector $\vz
^{t+1}_i\in\reals^q$ by
%
\begin{eqnarray}\label{eq:defz}z^{t+1}_i(r) &=& \sum
_{\ell\in[N]} A_{\ell i} f^\ell _r
\bigl(\vz_{\ell\to i}^{t},t\bigr).
\end{eqnarray}
Our first result establishes universality of the moments of
$\vz^t_{i\to j}$ for polynomial sequences of
instances.

\begin{proposition}\label{prop:mom-equ}
Let $(A(N),\cF_N,x^{0,N})_{N\ge1}$ and $(\gA(N),\cF
_N,x^{0,N})_{N\ge1}$
be any two $(C,d)$-regular polynomial sequences of AMP instances, that
differ only in the
distribution of the random matrices $A(N)$ and $\gA(N)$.
Assume that for all $N$ and all $i<j$, $\E\{A_{ij}^2\} = \E\{\gA
_{ij}^2\}$.
Denote by $\vz^t_i$ the orbit (resp., $\tilde{\vz}^t_i$) defined by
(\ref{eq:defz}) while iterating (\ref{eq:rec}) with matrix $A$
(resp., $\gA$).
Then for any $t\geq1$ and any $\vm=(m(1),\dots, m(q))\in\naturals
^q$, there
exists $K$ independent of $N$ such that, for any $i\in[N]$,
%
\begin{equation}\label{eq:mom-equ} \bigl\llvert \E \bigl[\bigl(\vz_{i}^t
\bigr)^{\vm} \bigr] -\E \bigl[ \bigl(\tvz_{i}^t
\bigr)^{\vm} \bigr]\bigr\rrvert \leq KN^{-1/2}.
\end{equation}
\end{proposition}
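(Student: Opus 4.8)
The plan is to prove \eqref{eq:mom-equ} by expanding the moments $\E[(\vz_i^t)^{\vm}]$ as sums over labeled tree structures and then comparing the two expansions term by term. The key observation is that the ``message'' recursion \eqref{eq:rec} is designed precisely so that $\vz^t_{i\to j}$ depends on $A$ only through entries on a tree rooted at $i$ of depth $t$ (the exclusion of $j$ in $[N]\ex j$ prevents backtracking), and similarly $\vz^t_i$ from \eqref{eq:defz} is a sum over such trees. Since each $f^\ell_r$ is a polynomial of degree $\le d$, the quantity $(\vz^t_i)^{\vm}$ can be written, after fully expanding the polynomials and the powers, as a finite sum of monomials in the entries $A_{\ell i}$, each monomial being indexed by a finite labeled tree (or, more precisely, by a walk on a tree with prescribed multiplicities). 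The number of such tree shapes is bounded by a constant $K=K(C,d,t,\vm)$ independent of $N$; only the labeling of vertices by elements of $[N]$ varies, contributing the $N$-dependence.

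First I would set up the tree representation announced in Section~\ref{sec:TreeRep}: express $\vz_i^t$ (and $\vz^t_{i\to j}$) as a sum over rooted trees of depth $\le t$ with edge-labels in $[N]$, weighting each tree by a product of matrix entries $\prod A_{\ell \ell'}$ times a product of polynomial coefficients times the appropriate power of $x^{0,N}$-entries at the leaves. Then, taking expectations, independence of the $A_{ij}$ (item 1 of regularity) means $\E$ factorizes over \emph{distinct} edges of the tree; an edge appearing with multiplicity $1$ contributes $\E\{A_{\ell\ell'}\}=0$ and kills the term, so only trees in which every edge is traversed at least twice survive. Next I would argue that the \emph{dominant} surviving terms — those of order $N^0$ — come from configurations in which every edge has multiplicity exactly $2$, and for such configurations the expectation $\E\{A_{\ell\ell'}^2\}=\E\{\tA_{\ell\ell'}^2\}$ is by hypothesis identical for the two matrix ensembles. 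Hence the leading contributions to $\E[(\vz_i^t)^{\vm}]$ and $\E[(\tvz_i^t)^{\vm}]$ coincide exactly, and the difference is entirely accounted for by ``degenerate'' trees where some edge has multiplicity $\ge 3$ or two vertices collide, each of which carries a factor of at least $N^{-1/2}$ relative to the leading term because it reduces the number of free vertex labels by at least one (while each such collision saves only bounded factors from the moment bounds). Summing the $O(1)$-many tree shapes, each contributing $O(N^{-1/2})$, gives \eqref{eq:mom-equ}.

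The main technical obstacle — and the place where the subgaussianity and the initial-condition bound (item 3 of regularity) enter — is controlling the higher moments $\E\{A_{\ell\ell'}^p\}$ and $\E\{\exp(\|\vx^{0,N}_i\|^2/C)\}$ uniformly: a degenerate tree with an edge of multiplicity $p$ contributes $\E\{A^p\}=O(N^{-p/2})$ but there are correspondingly more such configurations, and one must check that the combinatorial gain never outruns the $N^{-p/2}$ decay. Subgaussianity with scale $C/N$ gives $\E\{|A_{ij}|^p\}\le (C'p/N)^{p/2}$, which is exactly what is needed to dominate the sum over tree shapes of bounded size; likewise the exponential moment bound on $x^{0,N}$ controls the leaf contributions when a leaf is visited with high multiplicity. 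A secondary bookkeeping point is that when $s\neq t$ or when one compares $\vz^t_i$ with $\tvz^t_i$ one must be careful that the $f^\ell$ (though possibly random) are fixed and common to both instances, so the only randomness being compared is that of $A$ versus $\tA$; this is why the proposition is stated per realization of $\cF_N$. Once these estimates are in place, the bound $KN^{-1/2}$ follows, with $K$ depending on $C,d,t$ and $\vm$ but not on $N$ or $i$.
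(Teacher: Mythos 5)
Your proposal follows essentially the same route as the paper: expand $(\vz_i^t)^{\vm}$ as a sum over $m$-tuples of labeled trees, kill multiplicity-$1$ edges via $\E\{A_{ij}\}=0$, observe that configurations with all multiplicities exactly $2$ give identical contributions for $A$ and $\gA$ because second moments agree, and show that configurations with a multiplicity $\ge 3$ edge are $O(N^{-1/2})$ by balancing the subgaussian moment decay $\E\{|A_{ij}|^s\}=O(N^{-s/2})$ against the vertex-labeling count. One small imprecision: terms where all edge multiplicities are exactly $2$ but the union graph has a cycle (``vertices collide'') cancel \emph{exactly} between the two ensembles rather than being merely $O(N^{-1/2})$, so they do not enter the difference at all — but since they are indeed $O(N^{-1/2})$ this does not affect your bound.
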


The proof of this proposition is provided in Section~\ref{sec:ProofMom}.

\begin{note}
In this statement and in the rest of this section, $K$ is always
understood as a function of
$d,t,q,m,C$ which may vary from line to line, but which is independent
of $N$.
\end{note}

Our second message passing sequence is defined as follows: for a
$(C,d)$-regular sequence of instances $(A(N),\cF_N,x^{0,N})_{N\ge1}$, we
define for each $N$, an i.i.d. sequence of $N\times N$ random matrices
$\{A^t\}_{t\in\naturals}$ such that $A^0=A(N)$. Then we define $(\vy
^t_{i\to j})$ by $\vy^0_{i\to j}=\vx^{0,N}_{i}$ and for $t\geq0$
%
\begin{eqnarray}\label{eq:rec-iid} y^{t+1}_{i\to j}(r) &=& \sum
_{\ell\in[N]\ex j} A^{t}_{\ell i} f_r^{\ell
}
\bigl(\vy_{\ell\to i}^{t},t\bigr),
\end{eqnarray}
and
%
\begin{eqnarray}\label{eq:defy} y^{t+1}_i(r) &=&\sum
_{\ell\in[N]} A^{t}_{\ell i} f_r^{\ell}
\bigl(\vy _{\ell
\to i}^{t},t\bigr).
\end{eqnarray}
The asymptotic analysis of $y^t$ is particularly simple because an
independent random matrix $A^t$ is used at each iteration. In
particular, it is easy to establish state evolution for $y^t$.
Our next result shows that $y^t$ provides a good approximation for~$z^t$.

\begin{proposition}\label{prop:mom-equ-iid}
Let $(A(N),\cF_N,x^{0,N})_{N\ge1}$ be a $(C,d)$-regular polynomial
sequence of
instances.
Let $\vz^t_{i}$ and $\vy^t_{i}$ be the sequences of vectors obtained by
iterating (\ref{eq:rec})--(\ref{eq:defz}) and (\ref
{eq:rec-iid})--(\ref
{eq:defy}), respectively.
Then for any $t\geq1$ and any $\vm=(m(1),\dots, m(q))\in\naturals
^q$, there
exists $K$ independent of $N$ such that, for any $i\in[N]$,
\[
\bigl\llvert \E \bigl[ \bigl(\vz_{i}^t
\bigr)^{\vm} \bigr]- \E \bigl[ \bigl(\vy_{i}^t
\bigr)^{\vm} \bigr]\bigr\rrvert \leq KN^{-1/2}.
\]
\end{proposition}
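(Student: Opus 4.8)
The plan is to use the tree representation of the non-backtracking iterates announced in Section \ref{sec:TreeRep}, which expresses each polynomial $(\vz^t_{i\to j})^{\vm}$ and $(\vy^t_{i\to j})^{\vm}$ as a sum over (decorated, non-backtracking) trees of depth at most $t$ rooted at $i$, where each edge of the tree carries a factor $A_{\ell i}$ (respectively $A^s_{\ell i}$ at the appropriate level $s$) and each vertex carries a monomial in the coefficients of the $f^\ell$'s and in the initial data $\vx^{0,N}$. Expanding the powers, $\E[(\vz^t_i)^{\vm}]$ and $\E[(\vy^t_i)^{\vm}]$ become sums, over collections of trees glued at the root, of expectations of products of matrix entries times (deterministic, by the non-randomness reduction) coefficient factors times moments of the initial condition. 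The key structural point is that the iteration $(\ref{eq:rec})$ is \emph{non-backtracking}: $z^{t+1}_{i\to j}$ omits the index $j$ from the sum. Consequently every edge $\{\ell,i\}$ that appears in a contributing tree, when it appears, must appear an even number of times within that tree — otherwise the corresponding centered, independent matrix entry $A_{\ell i}$ contributes a vanishing expectation. The same holds for $y^t$ with the one subtlety that there the matrices $A^0,A^1,\dots$ at distinct levels are independent, so repeated edges at \emph{different} levels also decouple; but a short counting argument shows this only kills more terms, and the terms that survive for $y^t$ are exactly the ones in which each edge is used an even number of times \emph{at a single level}.

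First I would set up the tree/forest bookkeeping and classify the surviving diagrams by their number of distinct vertices versus the total number of edge-slots. Because each distinct edge must be covered at least twice, a forest of trees with a total of $L$ edge-slots contributing a nonzero expectation has at most $L/2$ distinct edges, hence at most $L/2 + 1$ distinct vertices, which (with $i$ fixed as the common root) means the free summation indices number at most $L/2$. Each of the at least $L/2$ distinct matrix entries contributes a variance factor of order $C/N$, and using the subgaussian (hence uniformly bounded moment) hypothesis from Definition \ref{def:Regular}, higher even moments of an entry are $O((C/N)^{\text{multiplicity}/2})$ up to constants depending only on $d,t,q,\vm,C$; similarly $\sum_i \exp(\|\vx^{0,N}_i\|^2/C)\le NC$ bounds all the initial-data moment sums that arise by $N$ times a constant. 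Putting these together, a diagram with $v$ free indices and $e\ge v$ (counting with the root adjustment) distinct edges contributes $O(N^{v} \cdot N^{-e}) = O(N^{v-e})$, and the \emph{leading} diagrams are precisely those that are ``trees of doubled edges'' with $v = e$, i.e. the ones where no edge is used more than twice and the doubled-edge graph is itself a tree. Crucially, these leading diagrams are identical for $z^t$ and for $y^t$ (a doubled edge at a single level survives in both, and by hypothesis $\E A_{ij}^2 = \E (A^s_{ij})^2$ trivially since $A^s$ is an i.i.d. copy of $A$), so they cancel in the difference $\E[(\vz^t_i)^{\vm}] - \E[(\vy^t_i)^{\vm}]$. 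What remains are the subleading diagrams with $v \le e-1$, each $O(N^{-1})$ in magnitude, and the number of diagrams is bounded by a constant depending only on $d,t,q,\vm$ (the depth and degree cap the tree size). Summing over the $O(1)$ subleading diagrams gives the claimed $O(N^{-1})$ bound — comfortably inside the stated $O(N^{-1/2})$, and indeed I would keep the weaker $N^{-1/2}$ to match Proposition \ref{prop:mom-equ} and absorb any slack from edges of odd-but-$\ge 3$ multiplicity in $y^t$ where the cross-level decoupling is used.

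The main obstacle I anticipate is the careful combinatorial accounting that shows the leading-order (doubled-edge tree) diagrams for $z^t$ and $y^t$ coincide and cancel, so that the residual is genuinely $O(N^{-1})$ rather than $O(1)$. This requires: (i) checking that the non-backtracking constraint in $(\ref{eq:rec})$ versus $(\ref{eq:rec-iid})$ does not create a mismatch at leading order — the excluded index $j$ affects only a vanishing fraction of summation terms, an $O(N^{-1})$ correction; (ii) verifying that the level-dependent independence of the $A^s$ in the $y$-iteration removes exactly the same leading diagrams, which it does because a doubled edge in a leading tree necessarily connects two vertices at adjacent levels and is therefore used at a \emph{single} level in any case; and (iii) controlling the coefficient/initial-data factors uniformly, for which the polynomial degree bound $d$, the coefficient bound $C$, and the initial-condition moment bound in Definition \ref{def:Regular}(3) together give a constant $K = K(d,t,q,\vm,C)$ independent of $N$ and of $i$. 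Once the cancellation of leading diagrams is established, the rest is the routine power-counting sketched above.
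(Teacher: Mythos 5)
Your proposal correctly identifies the tree-representation framework and the power-counting that isolates the leading diagrams (those whose glued graph $\vG$ is a tree with every edge covered exactly twice). That much matches the paper's strategy, which reuses the bounds from Lemma~\ref{lem:tree} to discard all other diagrams and then reduces to showing $T(A)=T(\overline{A})$.

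The gap is in your step~(ii), which is the actual content of the proposition. You assert that in a leading diagram ``a doubled edge \dots\ necessarily connects two vertices at adjacent levels and is therefore used at a \emph{single} level in any case.'' The first clause is vacuous (every tree edge joins adjacent generations within its own tree), and it does not imply the second: the two tree-edges covering a given edge of $\vG$ could a priori lie at \emph{different} generations $g_1\neq g_2$ of the original trees $T_1,\dots,T_m$, and in that case $\E[\prod_\ell\overline A(T_\ell,t)]=0$ while $\E[\prod_\ell A(T_\ell)]\neq 0$, so the two sums would not cancel. The paper closes exactly this gap: it argues that if the two occurrences of an edge with types $\{i,j\}$ are at generations $g_1<g_2$ — whether on the same branch of one tree, on different branches, or in different trees $T_\ell$ — then the non-backtracking constraint forces the two edges to be non-adjacent, and the paths from them to the (common, identified) root produce a cycle in $\vG$, contradicting that $\vG$ is a tree. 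The non-backtracking property is therefore essential here, and your proposal invokes it only in point~(i), where it is not actually the issue: both $z^t_{i\to j}$ and $y^t_{i\to j}$ exclude the same index $j$, so there is no ``excluded index'' mismatch between the two iterations. Without the cycle argument, the claimed cancellation of leading diagrams is unsupported, and the proof is incomplete.
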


The proof of this proposition is provided in Section~\ref{sec:ProofMomIID}.

Finally, recall that we defined the sequences $(\vx^t_i)_{t\in
\naturals
}$ with $\vx^t_i\in\reals^q$, by $\vx^0_i$ and for $t\geq0$,
\[
x^{t+1}_i(r) = \sum_{\ell}A_{\ell i}f^{\ell}_r
\bigl(\vx^t_\ell,t\bigr)- \sum
_{\ell
}A_{\ell i}^2\sum
_s f^{i}_s\bigl(\vx^{t-1}_i,t-1
\bigr) \frac{\partial f^\ell
_r}{\partial x(s)}\bigl(\vx^{t}_\ell,t\bigr).
\]

\begin{proposition}\label{prop:amp}
Let $(A(N),\cF_N,x^{0,N})_{N\ge1}$ be a $(C,d)$-regular polynomial
sequence of
instances. Denote by $\{x^t\}_{t\geq0}$ the corresponding AMP
sequence 
and by $\{z^t\}_{t\geq0}$ the sequence defined
by (\ref{eq:defz}) while iterating (\ref{eq:rec}).
Then for any
$t\geq1$ and $m(1),\dots, m(q)\geq0$, there exists $K$ independent
of $N$ such that, for any $i\in[N]$,
\[
\bigl\llvert \E \bigl[ \bigl(\vx^t_i
\bigr)^{\vm} \bigr]-\E \bigl[ \bigl(\vz ^t_i
\bigr)^{\vm
} \bigr]\bigr\rrvert \leq K N^{-1/2}.
\]
\end{proposition}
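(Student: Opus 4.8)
The plan is to show that the AMP orbit $\{\vx^t_i\}$ and the message-passing orbit $\{\vz^t_i\}$ differ, coordinatewise and in expectation of monomials, by $O(N^{-1/2})$, by exhibiting a termwise correspondence between the two recursions. The key observation is that the message passing variables $\vz^t_{i\to j}$ are, for $t$ fixed, close to the ``non-cavity'' variables $\vz^t_i$: since $\vz^t_i(r)-\vz^t_{i\to j}(r) = A_{ji}f^j_r(\vz^{t-1}_{j\to i},t-1)$ involves a single entry $A_{ji}$ of typical size $N^{-1/2}$, one has $\vz^t_i \approx \vz^t_{i\to j}$ up to such corrections. Feeding this back into the recursion \eqref{eq:rec}, I would Taylor-expand $f^\ell_r(\vz^{t-1}_{\ell\to i},t-1)$ around $f^\ell_r(\vx^{t-1}_\ell,t-1)$. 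The first-order term, $\sum_s \frac{\partial f^\ell_r}{\partial x(s)}(\vx^{t-1}_\ell,t-1)\bigl(z^{t-1}_{\ell\to i}(s)-x^{t-1}_\ell(s)\bigr)$, when multiplied by $A_{\ell i}$ and summed over $\ell$, produces precisely the Onsager correction term $-\sum_\ell A_{\ell i}^2 \sum_s f^i_s(\vx^{t-2}_i,t-2)\frac{\partial f^\ell_r}{\partial x(s)}(\cdots)$ that appears in the AMP recursion, up to the diagonal-term shift in the time index that is the hallmark of these ``cavity'' calculations. The remaining (higher-order, and cross) terms should be negligible.

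The proof would proceed by induction on $t$. I would set up a quantitative inductive hypothesis of the form: for each monomial exponent $\vm$ and each $i$, $\bigl|\E[(\vx^t_i)^{\vm}] - \E[(\vz^t_i)^{\vm}]\bigr| \le K N^{-1/2}$, strengthened as needed to control also the cavity differences $\E[\,\prod (z^t_{i\to j}(r) - x^t_i(r))^{\cdots}\,]$ and moments of the $\vz$'s and $\vx$'s individually. The latter uniform-moment bounds are essential and follow from the tree-representation machinery developed in Sections \ref{sec:TreeRep}--\ref{sec:PolyAMP} together with the subgaussian tail assumption in Definition \ref{def:Regular}; I would cite those bounds rather than rederive them. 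Given such moment control, each error term can be classified by how many distinct matrix entries $A_{\ell i}$ it contains relative to how many free summation indices it ranges over: a term that ``reuses'' an entry, i.e. contains some $A_{\ell i}^2$ or higher power beyond what the counting of independent indices allows, contributes an extra $N^{-1}$ relative to the number of sums, hence is $O(N^{-1/2})$ after the expansion is organized.

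The main obstacle, and the place where care is genuinely needed, is the bookkeeping of the second-order Taylor term. One must check that summing $A_{\ell i}^2 \,\bigl[\,\partial^2 f / \partial x^2\,\bigr]\,(z^{t-1}_{\ell\to i}-x^{t-1}_\ell)^{\otimes 2}$ over $\ell$ contributes only $O(N^{-1/2})$ -- this uses that $z^{t-1}_{\ell\to i}(s)-x^{t-1}_\ell(s)$ is itself $O(N^{-1/2})$ in the relevant norm, so the term is $O(N^{-1})$ per $\ell$ but there are $N$ values of $\ell$; and simultaneously that replacing the true Onsager coefficient $A_{\ell i}^2$ by its expectation is \emph{not} needed here (the AMP recursion as stated in the excerpt keeps $A_{\ell i}^2$), which actually simplifies matters compared to the classical derivation. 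A secondary technical point is the index mismatch: the natural cavity expansion produces $f^i(\vx^{t-2}_i,t-2)$ where AMP has $f^i(\vx^{t-1}_i,t-1)$; reconciling this requires one further expansion step, controlling $f^i(\vx^{t-1}_i,t-1)-f^i(\vx^{t-2}_i,t-2)$ against $\vx^{t-1}_i - \vx^{t-2}_i$, which is again $O(1)$ in magnitude but multiplied by $A_{\ell i}^2 \sim N^{-1}$ and summed over $\ell$, hence harmless. Throughout, $K$ is allowed to depend on $d,t,q,\vm,C$ but not $N$, consistent with the Note following Proposition \ref{prop:mom-equ}.
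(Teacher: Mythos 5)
The high-level strategy you propose---Taylor-expand the message-passing iterate around the AMP iterate, recognize the Onsager term as the first-order Taylor contribution, and bound the remainder by power counting---is the same strategy the paper follows; the paper's Lemma~\ref{lem:treeAMP} is precisely this Taylor expansion carried out by induction on $t$, just recorded in the tree formalism of Section~\ref{sec:TreeRep} so that the error terms can be identified unambiguously. So you have the right idea. That said, there are two concrete problems in your writeup.

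First, the ``index mismatch'' you flag as a secondary technical point does not exist, and invoking it reveals a misreading of the AMP recursion. For the $t$-th iterate, Eq.~\eqref{eq:AMPGeneralDef_bis} (shifted by one step) reads
\begin{eqnarray*}
\vx^{t}_i = \sum_{\ell}A_{\ell i}f^{\ell}(\vx^{t-1}_\ell,t-1)- \sum_{\ell}A_{\ell i}^2\,\frac{\partial f^\ell}{\partial \vx}(\vx^{t-1}_\ell,t-1)\, f^{i}(\vx^{t-2}_i,t-2)\, ,
\end{eqnarray*}
so the Onsager coefficient already carries $f^i(\vx^{t-2}_i,t-2)$, exactly matching what the cavity/Taylor expansion produces (namely $f^i(\vz^{t-2}_{i\to\ell},t-2)\approx f^i(\vx^{t-2}_i,t-2)$). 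There is no further expansion step to perform, and no $t-1$ vs.\ $t-2$ discrepancy to reconcile. This is not fatal to the plan, but it shows the mechanism of the Onsager correction was not fully internalized.

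Second, the crucial bound on the remainder terms is left at the level of heuristic arithmetic (``$O(N^{-1})$ per $\ell$ but there are $N$ values of $\ell$''), and this is exactly where a naive argument runs into trouble: the summands in $\sum_\ell A_{\ell i}(\cdots)$ are not independent, and one has to track correlations between $A_{\ell i}$ and the iterate it multiplies, which accumulate through the recursion. The paper resolves this by organizing the expansion so that every remainder contribution corresponds to a tree containing a \emph{backtracking path of length $3$} or a \emph{backtracking star} (the class $\cB^t_i$ in Lemma~\ref{lem:treeAMP}); any such tree, together with its partners in the moment computation, produces an edge of multiplicity at least $3$ in the quotient graph $\vG$, and Lemma~\ref{lem:tree} then yields the $O(N^{-1/2})$ decay. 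Your sketch gestures at this (``reuses an entry'') but does not supply the structural reason why every error term necessarily reuses an entry, nor why the bound survives the interaction with the other $m-1$ trees when taking moments. You explicitly say you would lean on the tree machinery for the uniform moment bounds; the honest assessment is that you would need the tree machinery for the error classification itself, at which point you are reproducing the paper's Lemma~\ref{lem:treeAMP} in substance.
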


The proof of this proposition is provided in Section~\ref{sec:PolyAMP}.

%
\subsection{Tree representation}\label{sec:TreeRep}

By assumption of Proposition~\ref{prop:mom-equ}, we have for each
$\ell
\in[N]$ and $r\in[q]$,
%
\begin{equation}\label{eq:fpoly} f_r^\ell(\vz,t) = \sum
_{i_1+\cdots+i_q\leq d}{c^\ell_{i_1,\dots,
i_q}(r,t)} \prod
_{s=1}^q {z(s)^{i_s}},
\end{equation}
where each coefficient $c^\ell_{i_1,\dots, i_q}(r,t)$ belongs to
$\reals
$ and has absolute value bounded by $C$ (uniformly in $\ell\in[N]$,
$i_1,\dots, i_q$, and $t\in\naturals$).

We now introduce families of finite rooted labeled trees that will
allow us to get a simple expression for the $z^t_{i\to j}(r)$'s and
$z^t_i(r)$; see Lemma~\ref{lem:treerep} below.
For a vertex $v$ in a rooted tree $T$ different from the root, we
denote by $\pi(v)$ the parent of $v$ in $T$.
We denote the root of $T$ by $\root$.
We consider that the edges of $T$ are directed towards the root and
write $(u\to v)\in E(T)$ if $\pi(u)=v$.
The unlabeled trees that we consider are such that the root and the
leaves have degree one; each other vertex has degree at most $d+1$,
that is, has at most $d$ children.
We now describe the possible labels on such trees. The label of the
root is in $[N]$, the label of a leaf is in $[N]\times[q]\times
\naturals^q$ and all other vertices have a label in $[N]\times[q]$.
For a vertex $v$ different from the root or a leaf, we denote its label
by $(\ell(v),r(v))$ and call $\ell(v)$ its type and $r(v)$ its mark.
The label (or type) of the root is also denoted by $\ell(\circ)$; the
label of a leaf $v$ is denoted by $(\ell(v),r(v), v[1],\dots, v[q])$.
For a vertex $u\in T$, we denote $\llvert u\rrvert $ its generation in the tree, that
is, its graph-distance from the root. Also for a vertex $u\in T$ (which
is not a leaf), we denote by $u[r]$ the number of children of $u$ with
mark $r\in[q]$ (with the convention $u[0]=0$). The children of such a
node are ordered with respect to their mark: the labels of the children
of $u$ are then $(\ell^1,1),\dots, (\ell^{u[1]},1),(\ell^{u[1]+1},2),\dots,(\ell^{u[1]+\cdots+u[q]},q)$, where each
$(\ell^{u[0]+\cdots+u[i]},\dots, \ell^{u[0]+\cdots+u[i+1]-1})$ is a
$u[i+1]$-tuple with coordinates in $[N]$.
We denote by $L(T)$ the set of leaves of a tree $T$, that is, the set of
vertices of $T$ with no children. For $v\in L(T)$, its label $(\ell
(v),r(v), v[1],\dots, v[q])$
is such that for all $i\in[q]$, $v[i]\in\naturals$ and $v[1]+\cdots
+v[q]\leq
d$. We will distinguish between two types of leaves: those with
maximal depth $t=\max\{\llvert v\rrvert, v\in L(T)\}$ and the remaining ones. If
$v\in L(T)$ and $\llvert v\rrvert \leq t-1$, then we impose
$v[1]=\cdots=v[q]=0$. This case corresponds to ``natural'' leaves, and
since they have no children, the notation is consistent with the
notation introduced for other nodes of the tree. For all other leaves,
we do not make this assumption so that $v[1]+\cdots+v[q]$ can take any
value in $[d]$. These leaves are ``artificial'' and can be thought of as
leaves resulting from cutting a larger tree after generation $t$ so
that the vector of the $v[r]$'s keeps the information on the number of
children with mark $r$ in the original tree.

\begin{definition}
We denote by $\cTall^t$ the set of labeled trees $T$ with $t$
generations as above.
We let $\cT^t\subseteq\cTall^t$ denote the subset of such trees
that satisfy the following additional condition:
\begin{longlist}[(1)]
\item[(1)] If $v_1=\circ,v_2,\dots,v_k$ is a path starting from the
root [i.e., with $\pi(v_{i+1})=v_i$ for $i\geq1$], then the
corresponding sequence of types $\ell(v_i)$ is nonbacktracking. That
is, for any $1\leq i\leq k-2$, the three labels $\ell(v_i),\ell
(v_{i+1})$ and $\ell(v_{i+2})$ are distinct.
\end{longlist}
We also let $\cU^t$ be the same set of trees from which marks have been
removed (i.e., we identify any two trees that differ in the marks but
not on type). Analogously, $\cUall^t$ is the set of trees in which
marks have been removed, but do not necessarily satisfy the
nonbacktracking condition 1.
\end{definition}

For a labeled tree $T\in\cT^t$ and a set of coefficients $\vc
=(c^\ell
_{i_1,\dots,i_q}(r,t))$, we define three weights:
\begin{eqnarray*}
A(T) &=& \prod_{(u\to v)\in E(T)}A_{\ell(u) \ell(v)},
\\
\Gamma(T,\vc,t) &=&\prod_{(u\to v)\in E(T)} c^{\ell(u)}_{u[1],\dots,u[q]}
\bigl(r(u),t-\llvert u\rrvert \bigr),
\\
x(T) &=& \prod_{v\in L(T)}\prod
_{s=1}^q{ \bigl(x^{0,N}_{\ell
(v)}(s)
\bigr)^{v[s]}}.
\end{eqnarray*}

We define:
\begin{longlist}[(b)]
\item[(a)] $\cT^t_{i\to j}(r)\subset\cT^t$ the family of trees
such that:
(i) The root has type $i$; (ii) The type of the child of the
root, denoted by $v$, is $\ell(v)\notin\{ i,j\}$ and its
mark is $r(v)=r$.
\item[(b)] $\cT^t_{i}(r)\subset\cT^t$ the family of trees such
that: (i) the root has type $i$; (ii) the type of the child of the root,
denoted by $v$, is $\ell(v)\neq i$, and
its mark is $r(v) = r$.
\end{longlist}
The sets of trees $\cU_i^t(r)$ and $\cU_{i\to j}^t(r)$ are obtained
from $\cT^t_{i}(r)$ and $\cT^t_{i\to j}(r)$ by removing marks.

\begin{lemma}\label{lem:treerep}
Let $(A(N),\cF_N,x^{0,N})_{N\ge1}$ be a polynomial sequence of AMP instances.
Denote by $\vz^t_i$ the orbit defined by
(\ref{eq:defz}) while iterating (\ref{eq:rec}) with matrix $A$.
Then
%
\begin{eqnarray}\label{eq:tree1}z^{t}_{i\to j}(r) &=& \sum
_{T\in\cT^t_{i\to j}(r)} A(T)\Gamma(T,\vc,t)x(T),
\\
\label{eq:tree2}z^{t}_{i}(r) &=& \sum
_{T\in\cT^t_{i}(r)} A(T)\Gamma (T,\vc,t)x(T).
\end{eqnarray}
\end{lemma}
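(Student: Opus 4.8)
The plan is to prove \eqref{eq:tree1} and \eqref{eq:tree2} together by induction on $t\ge 1$, expanding the recursions \eqref{eq:rec}, \eqref{eq:defz} with the help of the explicit polynomial form \eqref{eq:fpoly} of $f^\ell_r$. The guiding principle is that one step of the recursion ``unfolds'' a tree by one generation, and that this unfolding is an exact bijection onto $\cT^{t+1}_{i\to j}(r)$ (resp.\ $\cT^{t+1}_{i}(r)$) under which the three weights $A(\cdot)$, $\Gamma(\cdot,\vc,\cdot)$, $x(\cdot)$ multiply.

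For the base case $t=1$, I would note that $A_{ii}=0$ turns \eqref{eq:rec} into $z^1_{i\to j}(r)=\sum_{\ell\notin\{i,j\}}A_{\ell i}f^\ell_r(\vx^{0,N}_\ell,0)$; inserting \eqref{eq:fpoly} expands this into a sum, over $\ell\notin\{i,j\}$ and over exponent vectors $(i_1,\dots,i_q)$ with $i_1+\dots+i_q\le d$, of $A_{\ell i}\,c^\ell_{i_1,\dots,i_q}(r,0)\prod_{s}(x^{0,N}_\ell(s))^{i_s}$. Each term corresponds to a unique $T\in\cT^1_{i\to j}(r)$ — a root of type $i$ with a single child $v$ that, being at depth $1=t$, is an artificial leaf of type $\ell$, mark $r$, and data $v[s]=i_s$ — and for this tree $A(T)=A_{\ell i}$, $\Gamma(T,\vc,1)=c^\ell_{i_1,\dots,i_q}(r,0)$, $x(T)=\prod_s(x^{0,N}_\ell(s))^{i_s}$, so summing over all such trees gives \eqref{eq:tree1}. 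The identity \eqref{eq:tree2} is the same computation with the constraint on the child's type relaxed from $\ell\notin\{i,j\}$ to $\ell\ne i$.

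For the inductive step, assuming \eqref{eq:tree1} at level $t\ge1$ for every directed pair and mark, I would combine \eqref{eq:rec}, \eqref{eq:fpoly} and the inductive hypothesis to write
\[
z^{t+1}_{i\to j}(r)=\sum_{\ell\notin\{i,j\}}A_{\ell i}\sum_{i_1+\dots+i_q\le d}c^\ell_{i_1,\dots,i_q}(r,t)\prod_{s=1}^q\Big(\sum_{T'\in\cT^t_{\ell\to i}(s)}A(T')\Gamma(T',\vc,t)x(T')\Big)^{i_s},
\]
then expand the products to obtain a sum over $\ell$, over $(i_1,\dots,i_q)$, and over families $(T'_{s,m})$ with $T'_{s,m}\in\cT^t_{\ell\to i}(s)$, $s\in[q]$, $1\le m\le i_s$. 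To such data I attach the tree $T$: root of type $i$, one child $v$ of type $\ell$ and mark $r$, and below $v$ the subtrees rooted at the (unique) children of the roots of the $T'_{s,m}$, ordered by mark so that $v$ has exactly $i_s$ children of mark $s$ (if all $i_s=0$, $v$ is itself a natural leaf carrying the constant term of $f^\ell_r(\cdot,t)$). The remaining steps are: (i) check that $T\mapsto(\ell,(i_1,\dots,i_q),(T'_{s,m}))$ is a bijection onto $\cT^{t+1}_{i\to j}(r)$ — the non-backtracking ``window of three'' at $\circ$, $v$, and a child of $v$ is precisely the statement that a tree in $\cT^t_{\ell\to i}(s)$ has its root-child of type $\notin\{\ell,i\}$, while deeper it is inherited, and the natural/artificial dichotomy at depths $\le t$ and $=t+1$ matches the one in the $T'_{s,m}$ after a uniform depth shift by one; (ii) check that the weights factor, namely $A(T)=A_{\ell i}\prod_{s,m}A(T'_{s,m})$, $x(T)=\prod_{s,m}x(T'_{s,m})$, and — using that every vertex $u$ of $T'_{s,m}$ sits at depth $|u|+1$ in $T$, so the argument $t-|u|$ in $\Gamma(T'_{s,m},\vc,t)$ becomes $(t+1)-(|u|+1)$ in $\Gamma(T,\vc,t+1)$ — that $\Gamma(T,\vc,t+1)=c^\ell_{i_1,\dots,i_q}(r,t)\prod_{s,m}\Gamma(T'_{s,m},\vc,t)$, the extra factor coming from the new edge $v\to\circ$ since $v[s]=i_s$ and $t+1-|v|=t$. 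Summing the products back recovers $z^{t+1}_{i\to j}(r)$, and \eqref{eq:tree2} follows the same way with $\ell$ ranging over $[N]\setminus\{i\}$.

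The main obstacle is purely bookkeeping: no single computation is hard, but one must set up the bijection so that the three combinatorial constraints defining $\cT^{t+1}$ (the three-term non-backtracking rule and the natural-versus-artificial leaf conditions tied to depths $\le t$ and $=t+1$) are exactly those produced by one unfolding of the recursion, handle uniformly the degenerate branch where $v$ has no children (constant term of the polynomial, empty product in $x(T)$), and carefully track the depth shift that converts the time index appearing inside $\Gamma$.
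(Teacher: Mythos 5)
Your proposal is correct and follows essentially the same approach as the paper's proof: induction on $t$, with the base case identifying each monomial with a one-generation tree, and the inductive step establishing the bijection between $\cT^{t+1}_{i\to j}(r)$ and the data $(\ell,(i_1,\dots,i_q),(T'_{s,m}))$. Your write-up is slightly more explicit than the paper's about the depth shift in $\Gamma$ and the weight factorization, but the underlying argument is the same.
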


\begin{pf}
We first prove (\ref{eq:tree1}) by induction on $t$.
For $t=1$ we have, by definition,
\begin{eqnarray*}
z^1_{i\to j}(r) &=& \sum_{\ell\in[N]\ex j}
\sum_{i_1+\cdots+i_q\leq
d}A_{\ell i} c^\ell_{i_1,\dots,i_q}(r,0)
\prod_{s=1}^q{ \bigl( x^{0,N}_{\ell\to i}(s)
\bigr)^{i_s}}.
\end{eqnarray*}
This expression corresponds exactly to equation (\ref{eq:tree1}) since
trees in $\cT^1_{i\to j}(r)$ have a root with label $i$ and with one
child with label $(\ell,r,i_1,\dots,i_q)$ for some $\ell\notin\{
i,j\}$
and $i_1+\cdots+i_q\leq d$.

To prove the induction, we start with equation (\ref{eq:rec}), which yields
\begin{eqnarray*}
z^{t+1}_{i\to j} (r) &=& \sum_{\ell\in[N]\ex j}A_{\ell i}
\sum_{i_1+\cdots+i_q\leq d}c^\ell_{i_1,\dots, i_q}(r,t)
\prod_{s=1}^q \bigl( z^t_{\ell\to i}(s)
\bigr)^{i_s}.
\end{eqnarray*}
Using the induction hypothesis, we get
\begin{eqnarray*}
\prod_{s=1}^q \bigl(
z^t_{\ell\to i}(s) \bigr)^{i_s} &=& \prod
_{s=1}^q \biggl(\sum_{T\in\cT^t_{\ell\to i}(s)}
A(T)\Gamma (T,\vc,t)x(T) \biggr)^{i_s}
\\
&=& \sum_{ [\cT^t_{\ell\to i}(s)]^{i_1+\cdots+i_q}} \prod
_{s=1}^q\prod
_{k=1}^{i_s} A\bigl(T^s_k
\bigr)\Gamma\bigl(T^s_k,\vc,t\bigr)x\bigl(T^s_k
\bigr),
\end{eqnarray*}
where the last expression is a sum over all
$(i_1+\cdots+i_q)$-tuples of trees with the first $i_1$ trees in
$\cT^t_{\ell\to i}(1)$, the
following $i_2$ in $\cT^t_{\ell\to i}(2)$, and so on.

Hence, we get
%
\begin{eqnarray}
\label{eq:indt+1}z^{t+1}_{i\to j} (r) = \sum
_{\ell\in[N]\ex
j}\sum_{i_1,\dots, i_q}\sum
_{[\cT^t_{\ell\to i}(s)]^{i_1+\cdots+i_q}}&& A_{\ell
i} c^\ell_{i_1,\dots, i_q}(r,t)\nonumber\\[-8pt]\\[-8pt]
&&{}\times
\prod_{s=1}^q\prod
_{k=1}^{i_s} A\bigl(T^s_k
\bigr)\Gamma\bigl(T^s_k,\vc,t\bigr)x\bigl(T^s_k
\bigr).\nonumber
\end{eqnarray}
The claim now follows by observing that the set of trees in
$\cT^{t+1}_{i\to j}(r)$ is in bijection with the set of pairs
constituted by a label $(\ell,r)$ with $\ell\notin\{i,j\}$ and a
$(i_1+\cdots+i_q)$-tuple of trees with exactly $i_s$ trees belonging
to $\cT^k_{\ell\to i}(s)$ for $s\in[q]$. Indeed, take a root with
label $i$ and one child, say $v$, with label $(\ell,r)$ for some
$\ell\notin\{ i,j\}$ and with a $(i_1+\cdots+i_q)$-tuple of trees
with exactly $i_s$ trees belonging to $\cT^t_{\ell\to i}(s)$ for
$s\in
[q]$. Now take $v$ as the root of these $(i_1+\cdots+i_q)$ trees, the
order in the tuple giving the order of the subtrees of $v$. Note that
the root of each subtree in $\cT^t_{\ell\to i}(s)$ has type $\ell$ and
in the resulting tree will get mark $r$.

The proof of (\ref{eq:tree2}) follows by the same argument, the only
change is that in the sum in (\ref{eq:indt+1}), we need now to include
$\ell=j$.
\end{pf}

%
\subsection{Proof of Proposition \texorpdfstring{\protect\ref{prop:mom-equ}}{1}}
\label{sec:ProofMom}

We are now in position to prove Proposition~\ref{prop:mom-equ}.

\begin{pf}
For notational simplicity, we consider the case $m(r) = m$, and
$m(s)=0$ for all $s\in[q]\setminus r$.
Thanks to Lemma~\ref{lem:treerep}, we have
%
\begin{equation}
\label{eq:mom} %
\E \bigl[ \bigl(z^t_{i}(r)
\bigr)^m \bigr] = \sum_{T_1,\dots, T_m\in\cT^t_{i}(r)} \Biggl[
\prod_{\ell=1}^{m}\Gamma(T_\ell,
\vc,t) \Biggr]\E \Biggl[\prod_{\ell
=1}^m
x(T_\ell) \Biggr]\E \Biggl[\prod_{\ell=1}^{m}A(T_\ell)
\Biggr] .\hspace*{-40pt} %
\end{equation}
Since $\vc$ is fixed in this section, we omit to write it in
$\Gamma(T,t)$. Notice that the general case $\vm=
(m(1),\dots,m(q))\in\naturals^q$ admits a very similar representation
whereby the sum over $T_1,\dots,T_m\in\cT^t_{i}(r)$ is replaced by
sums over $T_1,\dots,T_{m(1)}\in\cT^t_i(1)$,
$T_1,\dots,T_{m(2)}\in\cT^t_i(2)$,
\dots, $T_1,\dots,T_{m(q)}\in\cT^t_i(q)$. The argument goes through
essentially unchanged.

We have $\Gamma(T_\ell,t) \leq C^{d^{t+1}}$.
We first concentrate on the term $\E [\prod_{\ell
=1}^{m}A(T_\ell
) ]$.
Recall that, from sub-Gaussian property of entries of $A$: $\E (
e^{\lambda A_{ij}} )\leq e^{\fracc{C\lambda^2}{2N}}$.
Now using Lemma~\ref{lem:x_s_ineq} from Appendix \ref{app:calculus} we
get for all $i< j\in[N]$,
%
\begin{equation}
\label{eq:uppsubg} %
\E \bigl[ \llvert A_{ij}\rrvert
^s \bigr]\leq2 \biggl( \frac{s}{e} \biggr)^{s}
\lambda ^{-s}e^{\fracc{C\lambda^2}{2N}} \leq2C^{\fraca{s}{2}} \biggl(
\frac
{s}{e} \biggr)^{\fraca{s}{2}}N^{-\fraca{s}{2}}, %
\end{equation}
obtained by taking $\lambda= \sqrt{Ns/C}$.

For a labeled tree $T$, we define $\phi(T)=\{\phi(T)_{ij}\in
\naturals,
i\leq j\in[N]\}$ where $\phi(T)_{ij}$ is the number of occurrences in
$T$ of an edge $(u\to v)$ with endpoints having types $\ell(u),\ell
(v)\in\{i,j\}$.
Hence we have
%
\begin{eqnarray}
\label{eq:prodA}
 A(T) &=& \prod_{i < j\in[N]}A_{ij}^{\phi(T)_{ij}}
\quad\mbox{and}\nonumber\\[-8pt]\\[-8pt]
\E \Biggl[\prod_{\ell=1}^{m}A(T_\ell)
\Biggr]&=& \prod_{i<j\in[N]}\E \bigl[A_{ij}^{\sum_{\ell=1}^m\phi(T_\ell
)_{ij}}
\bigr].\nonumber
\end{eqnarray}

Since the mean of each entry of the matrix $A$ is zero, in equation
(\ref{eq:mom}), we can restrict the sum to $T_1,\dots, T_m$ such that
for all $i<j\in[N]$, $\sum_{\ell=1}^m\phi(T_\ell)_{ij}< 2$ implies
$\sum_{\ell=1}^m\phi(T_\ell)_{ij}=0$.
For such a $m$-tuple $T_1,\dots, T_m$, we denote $\mu=\mu(T_1,\dots,
T_m) =\sum_{i<j}\sum_{\ell=1}^m\phi(T_\ell)_{ij}$.
Using equation (\ref{eq:uppsubg}), we get
%
\begin{eqnarray}\label{eq:1exp}
\Biggl\llvert \E \Biggl[\prod_{\ell=1}^{m}A(T_\ell)
\Biggr]\Biggr\rrvert &\leq& \prod_{i<j\in[N]}\E \bigl[
\llvert A_{ij}\rrvert ^{\sum_{\ell=1}^m\phi(T_\ell
)_{ij}} \bigr]
\nonumber
\\[-8pt]\\[-8pt]
&\le& \biggl(2C^{\fraca{\mu}{2}} \biggl( \frac{\mu}{e}
\biggr)^{\fraca
{\mu
}{2}} \biggr)^{\mu/2}N^{-\fraca{\mu}{2}},\nonumber
\end{eqnarray}
since in the product on the right-hand side of (\ref{eq:prodA}), there
are at most $\mu/2$ terms different from one.

We now compute an upper bound on
\[
\sum^{(\mu)}_{T_1,\dots,T_m}\E \Biggl[ \Biggl\llvert
\prod_{\ell=1}^m x(T_\ell )\Biggr
\rrvert \Biggr] ,
\]
where the sum ${\sum}^{(\mu)}$
ranges
on $m$-tuple of trees in $\cT^t_{i}(r)$ such that\break
$\sum_{i<j}\sum_{\ell=1}^m\phi(T_\ell)_{ij}=\mu$, and moreover there\vspace*{1pt}
exists $i<j\in[N]$ such that $\sum_{\ell=1}^m\phi(T_\ell)_{ij}\geq
3$. Let $\vG$ be the graph obtained by taking the union of~the
$T_{\ell}$'s and identifying vertices $v$ with the same type
$\ell(v)$.
We define $e(T_1,\dots,\break T_m) = \sum_{i<j}\ind(\sum_{\ell=1}^m \phi
(T_\ell)_{ij}\geq1)$ which is the number of edges counted without
multiplicity in $\vG$. Since there exists $i<j$ with $\sum_{\ell
=1}^m\phi(T_{\ell})_{ij}\geq3$, we have $3+2(e(T_1,\dots,
T_m)-1)\leq
\mu$, that is, $e(T_1,\dots, T_m)\leq\frac{\mu-1}{2}$.

Now note that for any $\vx\in\reals^q$, we have for any $p\geq2$,
\[
\llVert \vx\rrVert _p^p\leq\llVert \vx\rrVert
_2^p\leq\max \bigl(\exp\bigl( \llVert \vx\rrVert
_2^2\bigr), p^p \bigr).
\]
Hence the condition $\frac{1}{N} \sum_{i=1}^N \exp(\llVert \vx^{0,N}_i\rrVert
_2^2/C)\leq
C$ ensures that for any $p\geq2$,
\[
\frac{1}{N} \sum_{i=1}^N\bigl
\llVert \vx^{0,N}_i\bigr\rrVert _p^p
\leq C_p.
\]
Therefore,
%
\begin{eqnarray}\label{eq:2exp}
\sum_{T_1,\dots, T_m}^{(\mu)}\prod
_{\ell=1}^m \bigl\llvert x(T_\ell)\bigr
\rrvert &\leq& \Biggl(q^m \sum_{j=1}^N
\sum_{s=1}^q \bigl(1+\bigl\llvert
x^{0,N}_j(s)\bigr\rrvert +\cdots +\bigl\llvert
x^{0,N}_j(s)\bigr\rrvert ^{md} \bigr)
\Biggr)^{\fracb{\mu-1}{2}}
\nonumber
\\
&=& \bigl(q^mN\bigr)^{\fracb{\mu-1}{2}} \Biggl(q+\sum
_{k=1}^{md}\frac{1}{N}\sum
_{j=1}^N\bigl\llVert \vx^{0,N}_j
\bigr\rrVert _k^k \Biggr)^{\fracb{\mu-1}{2}}
\\
&\leq& \Biggl(q^m\Biggl(q+\sum
_{k=1}^{md}C_k\Biggr)
\Biggr)^{\fracb{\mu
-1}{2}}N^{\fracb
{\mu-1}{2}},
\nonumber
\end{eqnarray}
where the last inequality is valid for $N\geq C$. To see why (\ref
{eq:2exp}) is true, note that
the graph $\vG$ is connected since all trees $T_1,\dots,T_m$ have the
same type $i$ at the root. Therefore,
the number of vertices in $\vG$ is at most
$e(T_1,\dots,T_m)+1\leq\frac{\mu-1}{2}+1$. Since all $T_{\ell}$'s have
the same root which has type $i$, $\vG$ has at most
$\frac{\mu-1}{2}$ distinct vertices which are distinct from the one
associated to the root. In particular, all trees $T_1,\dots,T_m$
together have at most
$\frac{\mu-1}{2}$ distinct types among their leaves. The factor $q^m$
comes from the fact that
for each type $j$ there are at most $q^m$ choices for its $m$
marks $r$ corresponding to the $m$ trees. Now each leaf with
type $j$ will contribute a factor
$\prod_{s=1}^q (x^{0,N}_j(s) )^{n_s}$ with $\sum_s n_s\leq md$.\vspace*{1pt}

It is now easy to conclude, since we can decompose the sum in (\ref
{eq:mom}) in two terms, the first term say $S_1(A)$ consists of the
contribution of the $m$-tuples $T_1,\dots, T_m$ such that for all
$i,j$, $\sum_{\ell=1}^m\phi(T_{\ell})_{ij}\in\{0,2\}$, while the
second term denoted by $S_2(A)$ consists of the remaining contribution.
We have $S_1(A)=S_1(\tilde{A})$ and, using (\ref{eq:1exp}) and (\ref
{eq:2exp}), we get
%
\begin{equation}\label{eq:S2(A)}\bigl\llvert S_2(A)\bigr\rrvert \leq\sum
_{\mu\leq md^{t+1}} C^{d^{t+1}} C'N^{\fracb{\mu-1}{2}}C''N^{-\fraca{\mu}{2}}
= O \bigl( N^{-\fraca
{1}{2}} \bigr),
\end{equation}
which completes the proof Proposition~\ref{prop:mom-equ}. Here we used\vspace*{1pt}
the fact that
all values $\mu, q$ and $\{C_k\}_{k=0}^{md}$ are independent of $N$.
\end{pf}

We end this section by showing that the term $S_1(A)$ can be further reduced.
This result will be useful in the sequel, and we state it as
the following lemma.

\begin{lemma}\label{lem:tree}
Recall that we denoted by $S_1(A)$ the term in the sum (\ref{eq:mom}),
consisting of the contribution of the $m$-tuples $T_1,\dots, T_m$
such that for all $i,j$, $\sum_{\ell=1}^m\phi(T_{\ell})_{ij}\in
\{0,2\}$. We further decompose $S_1(A)=T(A)+R(A)$ in two terms where
the first term $T(A)$ corresponds to the sum over trees $T_1,\dots,
T_m$ such that the resulting graph $\vG$ obtained by taking the union
of the $T_{\ell}$'s and identifying vertices $v$ with the same type
$\ell(v)$, is a tree (each edge having multiplicity two). Then there
exists $K$ (independent of $N$) such that
\begin{eqnarray*}
  \bigl\llvert \E \bigl[ z^t_i(r)^m
\bigr]-T(A)\bigr\rrvert &=& KN^{-1/2},
\\
  \bigl\llvert \E \bigl[ z^t_i(r)^m \bigr]
\bigr\rrvert &\leq& K ,\\ \bigl\llvert \E \bigl[ z^t_{i\to j}(r)^m
\bigr]\bigr\rrvert &\leq& K. %
\end{eqnarray*}
\end{lemma}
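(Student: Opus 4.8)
The plan is to refine, by one power of $N$, the bookkeeping already carried out in the proof of Proposition~\ref{prop:mom-equ}, reusing the tree expansion~(\ref{eq:mom}) verbatim. Recall that there we wrote $\E[z^t_i(r)^m]=S_1(A)+S_2(A)$ and proved $|S_2(A)|=O(N^{-1/2})$ in~(\ref{eq:S2(A)}). Since $R(A)=S_1(A)-T(A)$, it suffices to establish $|R(A)|=O(N^{-1})$ and $|T(A)|\le K$: then $|\E[z^t_i(r)^m]-T(A)|\le|R(A)|+|S_2(A)|=O(N^{-1/2})$ and $|\E[z^t_i(r)^m]|\le|T(A)|+|R(A)|+|S_2(A)|\le K$, while the estimate for $z^t_{i\to j}(r)^m$ follows from the identical argument applied to $\cT^t_{i\to j}(r)$ (the constraint $\ell(v)\notin\{i,j\}$ replacing $\ell(v)\neq i$ is irrelevant to all the bounds below).

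Fix an $m$-tuple $(T_1,\dots,T_m)$ contributing to $S_1(A)$, so every edge of the union graph $\vG$ has multiplicity exactly two, $\mu=\mu(T_1,\dots,T_m)=2e$ with $e$ the number of distinct edges, and $\vG$ is connected (all $T_\ell$ share the root, of type $i$). The key elementary dichotomy is: if $\vG$ is a tree it has $e+1$ vertices, hence $p:=\#\{\text{vertices}\neq\text{root}\}=e=\mu/2$; if $\vG$ is connected but not a tree it has at most $e$ vertices, hence $p\le e-1=\mu/2-1$. I would then bound the three weights exactly as in the proof of Proposition~\ref{prop:mom-equ}: $\prod_\ell|\Gamma(T_\ell,\vc,t)|\le C^{md^{t+1}}$; by independence of the entries and $\E\{A_{ij}^2\}\le C/N$, $|\E\prod_\ell A(T_\ell)|=\prod_{\text{edges }ij}\E\{A_{ij}^2\}\le(C/N)^{\mu/2}$; and, exactly as in~(\ref{eq:2exp}), the sum over type assignments weighted by $\E\prod_\ell|x(T_\ell)|$ is at most $(\mathrm{const})^{p}N^{p}$, using the high-probability event $\frac1N\sum_i\exp(\|\vx^{0,N}_i\|_2^2/C)\le C$ under which $\frac1N\sum_i\|\vx^{0,N}_i\|_k^k\le C_k$.

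Multiplying and summing over the $O_{d,t,q,m}(1)$ many shapes of $(T_1,\dots,T_m)$ together with their vertex-identification patterns, and over $\mu\le md^{t+1}$: the tree contribution $T(A)$ has $p=\mu/2$, giving per pattern $(\mathrm{const})^{\mu/2}N^{\mu/2}\cdot(C/N)^{\mu/2}\cdot C^{md^{t+1}}=O(1)$, hence $|T(A)|\le K$; the non-tree contribution $R(A)$ has $p\le\mu/2-1$, giving per pattern $(\mathrm{const})^{\mu/2-1}N^{\mu/2-1}\cdot(C/N)^{\mu/2}\cdot C^{md^{t+1}}=O(N^{-1})$, hence $|R(A)|=O(N^{-1})$. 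This proves the lemma.

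The only genuine point is the combinatorial observation that a connected graph which is not a tree has at least as many edges as vertices, so the non-tree terms carry exactly one fewer power of $N$ than the tree terms; everything else is a routine repackaging of the three estimates from the proof of Proposition~\ref{prop:mom-equ}, including the handling of the high-probability event that controls the moments of $\vx^{0,N}$.
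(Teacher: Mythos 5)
Your proof is correct and follows essentially the same route as the paper's: decompose $\E[z^t_i(r)^m]=T(A)+R(A)+S_2(A)$, reuse the bound on $S_2(A)$ from Proposition~\ref{prop:mom-equ}, and then exploit that a connected union graph $\vG$ with every edge of multiplicity two has $e=\mu/2$ edges and at most $e+1$ vertices (with equality iff it is a tree), so the non-tree terms carry one fewer free vertex index and hence one fewer factor of $N$, giving $|T(A)|=O(1)$ and $|R(A)|=O(N^{-1})$. Your explicit remark that the same counting applies verbatim to $\cT^t_{i\to j}(r)$ (and the clean statement of the vertex-count dichotomy) are minor presentational improvements, but the argument is the same as the paper's.
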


\begin{pf}
We have, by definition,
$\E [ (z^t_i(r)^m ) ]=T(A)+R(A)+S_2(A)$, so that
thanks to (\ref{eq:S2(A)}), we need only to show that
$R(A)=O (N^{-1/2} )$.

For any $m$-tuple $T_1,\dots, T_m$ such that for all $i,j$, $\sum_{\ell
=1}^m\phi(T_{\ell})_{ij}\in
\{0,2\}$, we have with the same notation as above, $e(T_1,\dots, T_m)=
\frac{\mu}{2}$. The number of vertices in $\vG$ is at most
$1+e(T_1,\dots, T_m)$ with equality if and only if $\vG$ is a tree
(remember that $\vG$ is always connected as all trees $T_\ell$'s share
the same root). Hence for the cases that $\vG$ is not a tree it has at
most $\frac{\mu}{2}-1$ vertices that
serve as leaves of a tree among $T_1,\dots, T_m$.
By the same argument as above we get
%
\begin{eqnarray}
\label{eq:T(A)}\bigl\llvert T(A)\bigr\rrvert &\leq&\sum
_{\mu\leq md^{t+1}} KN^{\fraca{\mu
}{2}}N^{-\fraca{\mu}{2}}=O(1),
\\
\label{eq:R(A)}\bigl\llvert R(A)\bigr\rrvert &\leq& \sum
_{\mu\leq md^{t+1}} KN^{\fraca{\mu
}{2}-1} N^{-\fraca{\mu}{2}}=O
\bigl(N^{-1}\bigr), %
\end{eqnarray}
and the claim follows.
\end{pf}

%
\subsection{Proof of Proposition \texorpdfstring{\protect\ref{prop:mom-equ-iid}}{2}}
\label{sec:ProofMomIID}

The proof follows the same approach as for Proposition~\ref
{prop:mom-equ}. For notational simplicity, we consider the case $m(r) =
m$ and $m(s)=0$ for all $s\in[q]\setminus r$. The general case follows
by the same argument.
For $\vy$, we are using a different matrix at each iteration and we
need to define a new weight associated to trees $T\in\cT^t$ as follows:
%
\begin{eqnarray}
\label{eq:defA}\overline{A}(T,t) &=& \prod_{(u\to v)\in
E(T)}A^{t-\llvert u\rrvert }_{\ell(u) \ell(v)}.
\end{eqnarray}
In the particular case where the sequence $\{A^t\}_{t\in\naturals}$ is
constant (i.e., equals to $A$), this expression reduces to $A(T)$
defined previously.
Similar to Lemma~\ref{lem:treerep} for $\vx$, we have now
\begin{eqnarray*}
\label{eq:tree-iid1}y^{t}_{i\to j}(r) &=& \sum
_{T\in\cT^t_{i\to j}(r)} \overline{A}(T,t)\Gamma(T,\vc,t)x(T),
\\
\label{eq:tree-iid2}y^{t}_{i}(r) &=& \sum
_{T\in\cT^t_{i}(r)} \overline {A}(T,t)\Gamma(T,\vc,t)x(T),
\end{eqnarray*}
so that we get
%
\begin{eqnarray}
\label{eq:mom-iid}&&\E \bigl[ \bigl(y^t_{i}(r)
\bigr)^m \bigr] \nonumber\\[-8pt]\\[-8pt]
&&\qquad= \sum_{T_1,\dots, T_m\in\cT^t_{i}(r)} \Biggl[
\prod_{\ell=1}^{m}\Gamma (T_\ell,
\vc,t) \Biggr]\E \Biggl[\prod_{\ell=1}^{m}x(T_\ell)
\Biggr]\E \Biggl[\prod_{\ell=1}^{m}
\overline{A}(T_\ell,t) \Biggr].\nonumber
\end{eqnarray}
For a labeled tree $T$, we define $\varphi(T)=\{\varphi(T)^g_{ij}\geq
0, i\leq j\in[N], d\geq1\}$ where $\varphi(T)^g_{ij}$ is the number
of occurrences in $T$ of an edge $(u\to v)$ with endpoints having
labels $\ell(u),\ell(v)\in\{i,j\}$ and with generation $\llvert u\rrvert =g$. In
particular, we have $\sum_g \varphi(T)^g_{ij}=\phi(T)_{ij}$ which was
defined in the proof of Proposition~\ref{prop:mom-equ}.
Hence we have with $\mu=\sum_{i<j}\sum_{\ell=1}^m\phi(T_\ell)_{ij}$,
%
\begin{eqnarray}\label{eq:prodA-iid}
\Biggl\llvert \E \Biggl[\prod_{\ell=1}^{m}
\overline{A}(T_\ell,t) \Biggr]\Biggr\rrvert &\stackrel{\mathrm{(a)}} {=}&\prod
_{i<j\in[N]}\prod_{g}\bigl
\llvert \E \bigl[A_{ij}^{\sum
_{\ell=1}^m\varphi(T_\ell)^g_{ij}} \bigr]\bigr\rrvert
\nonumber
\\
&\le&\prod_{i<j\in
[N]}\prod
_{g}\E \bigl[|A_{ij}|^{\sum_{\ell=1}^m\varphi(T_\ell
)^g_{ij}} \bigr]
\\
&\stackrel{\mathrm{(b)}} {\leq}&
\biggl(2C^{\fraca{\mu}{2}} \biggl(
\frac{\mu}{e} \biggr)^{\fraca{\mu}{2}} \biggr)^{(\mu-1)/2}N^{-\fraca
{\mu
}{2}},
\nonumber
%
\end{eqnarray}
%
%
%
where (a) holds since $\{A^t\}_{t\in\naturals}$ is an i.i.d. sequence
with the same distribution as $A(N)$, and (b) follows by the same
argument as in (\ref{eq:1exp}).
Inequality \eqref{eq:prodA-iid} implies that bounds (\ref{eq:S2(A)})
and (\ref{eq:R(A)}) are still valid
with the weight of a tree given by (\ref{eq:defA}) (the term $\E
[\prod_{\ell=1}^{m}x(T_\ell) ]$ can be treated as in previous section).

As in the proof of Proposition~\ref{prop:mom-equ}, we define the graph
$\vG$ obtained by taking the union of the $T_\ell$'s and identifying
vertices $v$ with the same type $\ell(v)$.
By Lemma~\ref{lem:tree}, we need only to concentrate on the term
$T(\overline{A})$ corresponding to $m$-tuples $T_1,\dots, T_m$ such
that each edge in $\vG$ has multiplicity $2$ and such that $\vG$ is a
tree. Indeed, the proposition will follow once we prove
%
\begin{equation}
\label{eq:equal-iid}T(A)=T(\overline{A}),
\end{equation}
where $T(A)$ was defined in Lemma~\ref{lem:tree}, and $T(\overline{A})$
is the corresponding term with the weight of a tree given by (\ref{eq:defA}).
First note that for any $T_1,\dots, T_m$ such that $\E [\prod_{\ell
=1}^{m}\overline{A}(T_\ell,t) ]\neq0$, we have
\[
\E \Biggl[\prod_{\ell=1}^{m}
\overline{A}(T_\ell,t) \Biggr]=\E \Biggl[\prod
_{\ell=1}^{m}{A}(T_\ell) \Biggr].
\]
Now suppose that we have $\E [\prod_{\ell=1}^{m}{A}(T_\ell
)
]\neq0=\E [\prod_{\ell=1}^{m}\overline{A}(T_\ell,t) ]$.
This can only happen, if an edge in $\vG$ connecting types say $i$ and
$j$ has multiplicity $2$ but appears at different generations in the
original trees $T_\ell$'s. Suppose this edge appears twice in say $T_1$
at on the same branch and at different generations; that is, there
exists $(a\to b)$ and $(c\to d)\in E(T_1)$ with $\{\ell(a),\ell(b)\}
=\{
\ell(c),\ell(d)\}=\{i,j\}$, $\llvert a\rrvert < \llvert c\rrvert $, and the edge $(a\to b)$ is on
the path that
connects $c,d$ to the root. Thanks to the nonbacktracking property,
these two edges cannot be adjacent,
that is, $a\neq d$. But then these edges create a cycle in $\vG$,
contradiction.
Suppose now that these edge appears in $T_{1}$ and $T_{2}$ in different
generations,
that is, there exists $(a\to b)\in E(T_1)$ and $(c\to d)\in E(T_2)$
with $\{\ell(a),\ell(b),\ell(c),\ell(d)\}=\{i,j\}$ and $\llvert a\rrvert < \llvert c\rrvert $.
Then the same reasoning shows that they will create a cycle in $\vG$
since $b$ and $d$ are connected to the roots of $T_1$ and $T_2$
respectively which both identify to a single vertex in $\vG$. The latter
argument can be used for the case where both edges belong to the same
tree $T_1$, but they lie in different branches. Hence we obtain again a
contradiction.

%
\subsection{Proof of Proposition \texorpdfstring{\protect\ref{prop:amp}}{3}}
\label{sec:PolyAMP}

As in the proof of Proposition~\ref{prop:mom-equ}, we will rely on a
representation of $x^t_i(r)$ based on labeled trees defined as in
Section~\ref{sec:TreeRep}. In the present case, it is, however, more
convenient to work with trees from which marks have been removed,
that is, we identify any two trees in which the vertex marks are
different, but the types are the same.
Notice that equations (\ref{eq:tree1}), (\ref{eq:tree2}) imply
%
\begin{eqnarray}\label{eq:tree1BIS}
z^{t}_{i\to j}(r) &=& \sum_{T\in\cU^t_{i\to j}(r)}
A(T)\Gamma '(T,\vc,t)x(T),
\\
\label{eq:tree2BIS}
z^{t}_{i}(r) &=& \sum_{T\in\cU^t_{i}(r)}
A(T)\Gamma'(T,\vc,t)x(T),
\end{eqnarray}
where $\Gamma'(T,\vc,t)$ is obtained by summing $\Gamma(T,\vc,t)$ over
all trees $T$ that coincide up to marks. In the following, with a
slight abuse of notation, we will write $\Gamma(T,\vc,t)$ instead of
$\Gamma'(T,\vc,t)$.

In a directed labeled graph, we define a backtracking path of length
$3$ as a path $a\to b\to c\to d$ such that $\ell(a)=\ell(c)$ and
$\ell
(b)=\ell(d)$. We define a backtracking star as a set of vertices $a\to
b\to c$ and $a'(\neq a)\to b$ such that $\ell(a)=\ell(a')=\ell(c)$.
We define ${\cB}^t$ as the set of rooted labeled trees $T$ in $\cUall
^t$, that satisfy the following conditions:
\begin{itemize}
\item If $u\to v \in E(T)$, then $\ell(u)\neq\ell(v)$ and there exists
in $T$ at least one backtracking path of length $3$ or one backtracking star.
\end{itemize}
Then, we define $\cB^t_i$ as the subset of trees in $\cB^t$ with root
having type $i$ and only one child with type $\ell$ with $\ell\neq i$.

\begin{lemma}\label{lem:treeAMP}
Under the same assumptions as in Proposition~\ref{prop:amp}, we have
\begin{eqnarray*}
x^t_i(r) &=& z^t_i(r) +\sum
_{T\in\cB^t_i}A(T) \widetilde{\Gamma}(T,t,r)x(T),
\end{eqnarray*}
for some $\widetilde{\Gamma}(T,t,r)$ which is bounded uniformly as
$\llvert \widetilde{\Gamma}(T,t,r)\rrvert \le K(d,C,t)$.
\end{lemma}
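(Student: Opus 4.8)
\textbf{Proof plan for Lemma \ref{lem:treeAMP}.}

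The plan is to prove the identity by induction on $t$, closely mirroring the induction used in Lemma \ref{lem:treerep} for $z^t_{i}(r)$, but now carrying the extra Onsager correction term $-\sum_{\ell}A_{\ell i}^2\sum_s f^i_s(\vx^{t-1}_i,t-1)\,\partial_{x(s)}f^\ell_r(\vx^t_\ell,t)$ along the recursion. First I would set up the induction so that the inductive hypothesis is an expansion of $x^t_{i\to j}(r)$ (the cavity version, with an excluded neighbor $j$) as $z^t_{i\to j}(r)$ plus a sum over trees in an appropriately defined cavity version $\cB^t_{i\to j}$ of $\cB^t_i$, with bounded coefficients $\widetilde\Gamma$. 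The base case $t=1$ is immediate since the Onsager term involves $f^i(\vx^{-1}_i,-1)$ and there is no backtracking structure at depth one, so $x^1=z^1$ and $\cB^1_i=\emptyset$.

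For the inductive step I would substitute the AMP update $x^{t+1}_i(r)=\sum_\ell A_{\ell i}f^\ell_r(\vx^t_\ell,t)-\sum_\ell A_{\ell i}^2\sum_s f^i_s(\vx^{t-1}_i,t-1)\partial_{x(s)}f^\ell_r(\vx^t_\ell,t)$, expand each $f^\ell_r$ and $\partial_{x(s)}f^\ell_r$ as polynomials in the coordinates of $\vx^t_\ell$ (degree $\le d$, coefficients bounded by $C$), and then plug in the inductive expansion $\vx^t_\ell = \vz^t_\ell + (\text{tree sum over }\cB^t_\ell)$. Expanding the products and collecting terms, the purely-$z$ contributions reassemble, via Lemma \ref{lem:treerep}, into $z^{t+1}_i(r)$; the remaining terms are sums $A(T)\widetilde\Gamma(T,t,r)x(T)$ over labeled trees $T$ built by attaching subtrees at the child $v$ of the root. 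The key structural point is to check that every tree appearing in the remainder lies in $\cB^{t+1}_i$, i.e.\ contains a backtracking path of length $3$ or a backtracking star: a term inherited from some $\cB^t_\ell$ already contains such a structure, while a \emph{newly} created backtracking structure can only come from the Onsager term, where the factor $A_{\ell i}^2$ forces a doubled edge between types $i$ and $\ell$, and the factor $f^i_s(\vx^{t-1}_i,t-1)$ re-introduces type $i$ one generation below $\ell$, producing exactly a backtracking star (or, after one more generation, a length-$3$ backtracking path) rooted near the top. I would also have to verify the bound on $\widetilde\Gamma$: each coefficient is a product of at most $d^{t+2}$ factors each bounded by $C$ (from the $c^\ell$'s and the derivative coefficients, which are bounded by $dC\le$ const), so $|\widetilde\Gamma(T,t,r)|\le K(d,C,t)$ holds uniformly.

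The main obstacle I anticipate is bookkeeping rather than conceptual: one must be careful that when the inductive expansion $\vx^t_\ell=\vz^t_\ell+(\cB^t_\ell\text{-sum})$ is raised to powers and multiplied across the $q$ coordinates, the cross-terms (mixing $z$-subtrees with $\cB$-subtrees) are all correctly absorbed into $\cB^{t+1}_i$ with the right bounded weights, and that the condition $\ell(u)\neq\ell(v)$ on every edge of trees in $\cB^{t+1}$ is preserved — this requires noting that the polynomials $f^\ell_r$ expanded at the cavity value never create an edge from a type to itself because the cavity recursion for $z$ already excludes the parent type, and the Onsager term's doubled $i$-$\ell$ edge still has $i\neq\ell$. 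A secondary subtlety is handling the boundary convention $f^i(\vx^{-1}_i,-1)$ at the first step so the Onsager term genuinely vanishes there; once that is pinned down, the induction runs cleanly. I would then remark that restricting the root to have a single child of type $\ell\neq i$ (the definition of $\cB^{t+1}_i$) matches the first step of the AMP update $x^{t+1}_i(r)=\sum_\ell A_{\ell i}(\cdots)$, completing the induction.
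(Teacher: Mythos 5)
Your plan misses the central structural point of this lemma: the Onsager term does not ``carry along'' and manufacture backtracking structures; it is there precisely to \emph{cancel} a family of length-two backtracking terms that would otherwise ruin the claim. Concretely, your step ``the purely-$z$ contributions reassemble, via Lemma~\ref{lem:treerep}, into $z^{t+1}_i(r)$'' is false as written: if you expand $\vx^t_\ell = \vz^t_\ell + (\cB^t_\ell\text{-sum})$ and keep the $\vz^t_\ell$ piece, the resulting term $\sum_\ell A_{\ell i}f^\ell_r(\vz^t_\ell,t)$ is \emph{not} equal to $z^{t+1}_i(r) = \sum_\ell A_{\ell i}f^\ell_r(\vz^t_{\ell\to i},t)$. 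The two differ by $\vz^t_\ell - \vz^t_{\ell\to i} = A_{i\ell}f^i(\vz^{t-1}_{i\to\ell},t-1)$, and when this is substituted into the first-order Taylor term you obtain $\sum_\ell A_{\ell i}^2\sum_s f^i_s(\vz^{t-1}_{i\to\ell})\,\partial_{x(s)}f^\ell_r(\vz^t_{\ell\to i})$. The trees produced by this term have only a length-two backtracking pattern $i\to\ell\to i$ (an edge covered twice) and therefore do \emph{not} belong to $\cB^{t+1}_i$. If your proof ended here, the remainder term would not be absorbed into $\cB^{t+1}_i$ and the statement would fail.

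What saves the argument --- and what the paper actually does --- is to expand $\vx^t_\ell$ around the \emph{cavity} value $\vz^t_{\ell\to i}$, not around $\vz^t_\ell$, so that $\vx^t_\ell=\vz^t_{\ell\to i}+\vd^t_{\ell,i}$ with $\vd^t_{\ell,i}=\vz^t_{\ell,i}+\ve^t_\ell$ splitting the error into the diagonal cavity term $\vz^t_{\ell,i}$ and the genuine $\cB$-correction $\ve^t_\ell$. Then the zeroth-order piece gives $z^{t+1}_i(r)$ exactly, the first-order piece in $\vz^t_{\ell,i}$ gives the problematic length-two backtracking term above, and this is \emph{exactly cancelled} by the leading part of the Onsager term (with $\vx^{t-1}_i,\vx^t_\ell$ replaced by their cavity counterparts). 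Only the residual Onsager differences (the terms in which one expands $f^i_s(\vx^{t-1}_i)\partial_{x(s)}f^\ell_r(\vx^t_\ell)$ minus its cavity version) and the pieces involving $\ve^t_\ell$ survive, and these are what give rise to the length-three backtracking paths and backtracking stars that define $\cB^{t+1}_i$. Your description of the Onsager term as ``producing exactly a backtracking star'' inverts this logic. A smaller issue: you propose to run the induction on a cavity quantity $x^t_{i\to j}$, but the AMP recursion defines no such object; the paper keeps the induction on $\vx^t_\ell$ itself and lets the cavity decomposition of $\vz$ do the work. Your degree/coefficient bookkeeping for $|\widetilde\Gamma|\le K(d,C,t)$ and the base case $\cB^1_i=\emptyset$ are fine, but without the Onsager cancellation the core of the induction does not go through.
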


\begin{pf}
Following the same argument as in Lemma~\ref{lem:treerep}, we first
prove by induction on $t$ that we can find $\widetilde{\Gamma}(T,t,r)$
such that
%
\begin{equation}\label{eq:x=sumTrees}
x^t_i(r) = \sum_{T\in\cUall_i^t}A(T)
\widetilde{\Gamma }(T,t,r)x(T),
\end{equation}
with $\llvert \widetilde{\Gamma}(T,t,r)\rrvert \le K(d,C,t)$.
The terms $A_{i\ell}f^\ell_r(\vx^t_\ell,t)$ can be handled exactly as
in Lemma~\ref{lem:treerep}. Concerning the terms
$A_{i\ell}^2f^{i}_s(\vx^{t-1}_i,t-1) \frac{\partial f^\ell
_r}{\partial
x(s)}(\vx^{t}_\ell,t)$, it can be interpreted as a sum on the
following trees in $\cUall$: the type of the root is $i$ and the root
has one child with type $\ell$. This child has at most $d-1$ subtrees
in $\cUall^t$ coming from the term $\frac{\partial f^\ell
_r}{\partial
x(s)}(\vx^{t}_\ell,t)$ (which is a polynomial with degree at most
$d-1$) and one child say $u$ with type $i$. This child $u$ is the root
of at most $d$ subtrees in $\cUall^{t-1}$ coming from the term
$f^{i}_s(\vx^{t-1}_i,t-1)$. We see that the resulting tree is in
$\cUall
^{t+1}$. Now to see that $\llvert \widetilde{\Gamma}(T,t,r)\rrvert \le K(d,C,t)$,
note that each polynomial $f^\ell_r( \cdot,t)$ [resp., $\frac
{\partial
f^\ell_r}{\partial x(s)}( \cdot,t)$] has coefficients bounded by $C$
(resp., $dC$) so that taking into account the contribution of each term
in decomposition (\ref{eq:x=sumTrees}), we easily get
\[
\bigl\llvert \widetilde{\Gamma}(T,t+1,r)\bigr\rrvert \le dC^2
\bigl[K(d,C,t)^d+K(d,C,t)^{d-1}K(d,C,t-1)^{d}
\bigr] .
\]

It remains to prove that $\widetilde{\Gamma}(T,t,r)$ agrees with the
expression in Lemma~\ref{lem:treerep},
[cf. equations (\ref{eq:tree1BIS}), (\ref{eq:tree2BIS})], for $T\in
\cU
^{t}_i(r)$ and is zero for trees in $\cUall^t\ex\cB^t_i$.
The proof of this fact will proceed by induction on $t$. The cases
$t=0,1$ are clear since $\cB^t_i=\emptyset$.
For $t\geq1$, we define
\begin{eqnarray*}
z^t_{\ell,i}(r) &=& A_{i\ell}f^{i}_r
\bigl(\vz^{t-1}_{i\to\ell},t-1\bigr),\qquad e^t_\ell(r)
= \sum_{T\in\cB^t_\ell}A(T)\widetilde{\Gamma}(T,t,r)x(T),
\\
d^t_{\ell,i}(r) &=& z^t_{\ell,i}(r)+e^t_\ell(r)
\end{eqnarray*}
so that we have by the induction hypothesis, $\vx^t_\ell=\vz^t_{\ell
\to i}+\vz^t_{\ell,i}+\ve^t_\ell=\vz^t_{\ell\to i}+\mathbf
{d}^t_{\ell,i}$.

Since $f^{\ell}_r( \cdot,t)$ is a polynomial, we have
\begin{eqnarray*}
f^{\ell}_r\bigl(\vx^t_\ell,t\bigr)
&=& f^\ell_r\bigl(\vz^t_{\ell\to i},t
\bigr)+\sum_s \bigl( z^t_{\ell,i}(s)+e^t_\ell(s)
\bigr)\frac{\partial f^\ell
_r}{\partial
x(s)}\bigl(\vz^{t}_{\ell\to i},t\bigr)
\\
&&{}+ \sum_{n_1+\cdots+n_q\geq2}\prod_{s=1}^q
\frac{ (d^t_{\ell,i}(s) )^{n_s}}{n_s!}\frac{\partial^{n_1+\cdots+n_q}f^\ell
_r}{\partial x(1)^{n_1}\cdots\partial z(q)^{n_q}}\bigl(\vz^t_{\ell\to i},t
\bigr),
\end{eqnarray*}
where the last sum contains a finite number of nonzero terms.

Multiplying by $A_{i\ell}$ and summing over $\ell\in[N]$, the first
term on the right-hand side gives exactly $z^{t+1}_{i}(r)$. The second
term gives
\[
\sum_\ell A_{\ell i}^2\sum
_s f^i_s\bigl(
\vz^{t-1}_{i\to
\ell},t-1\bigr)\frac{\partial f^\ell_r}{\partial x(s)}\bigl(
\vz^{t}_{\ell\to
i},t\bigr)+\sum_\ell
A_{\ell i}\sum_s e^t_{\ell}(s)
\frac{\partial
f^\ell_r}{\partial x(s)}\bigl(\vz^{t}_{\ell\to i},t\bigr).
\]
From now on and to lighten the notation, we omit the second argument of
the functions $f^\ell_r$.
Hence we have
%
\begin{eqnarray}\label{eq:zx}
x^{t+1}_i(r) &=& z^{t+1}_i(r)
-\sum_\ell A_{\ell i}^2\sum
_s \biggl(f^i_s\bigl(
\vx^{t-1}_{i}\bigr)\frac{\partial f^\ell_r}{\partial
x(s)}\bigl(\vx
^{t}_{\ell}\bigr)-f^i_s\bigl(
\vz^{t-1}_{i\to\ell}\bigr)\frac{\partial f^\ell
_r}{\partial x(s)}\bigl(
\vz^{t}_{\ell\to i}\bigr) \biggr)\nonumber
\\
&&{}+\sum_\ell A_{\ell i}\sum
_s e^t_{\ell}(s)
\frac
{\partial
f^\ell_r}{\partial x(s)}\bigl(\vz^{t}_{\ell\to i}\bigr)
\\
&&{}+\sum_\ell A_{\ell i}\sum
_{n_1+\cdots+n_q\geq2}\prod_{s=1}^q
\frac{ (d^t_{\ell,i}(s) )^{n_s}}{n_s!}\frac
{\partial
^{n_1+\cdots+n_q}f^\ell_r}{\partial x(1)^{n_1}\cdots\partial
x(q)^{n_q}}\bigl(\vz^t_{\ell\to i}\bigr).
\nonumber
\end{eqnarray}
We now show that each contribution on the right-hand side [except
$z^{t+1}_i(r)$] can be
written as a sum of terms $A(T)\widetilde{\Gamma}(T,t+1,r,x^0)$ over
trees $T\in\cB^{t+1}_i$ that we construct explicitly.

First consider the terms of the form $A_{\ell i} e^t_\ell(s)\frac
{\partial f^\ell_r}{\partial x(s)}(\vz^{t}_{\ell\to i})$. By
definition, $e^t_\ell(s)$ can be written as a sum over trees in $\cB
^t_{\ell}$, and by Lemma~\ref{lem:treerep}, the $r$th component of
$\vz
^t_{\ell\to i}$ can be written as a sum over trees in $\cU^t_{\ell
\to
i}(r)$. Hence by the same argument as in the proof of Lemma~\ref
{lem:treerep}, we see that $A_{\ell i} e^t_\ell(s)\frac{\partial
f^\ell
_r}{\partial x(s)}(\vx^{t}_{\ell\to i})$ can be written as a sum over
trees with root having type $i$, one child say $v$ with type $\ell$.
This vertex $v$ is the root of a tree in $\cB^t_\ell$ [corresponding to
the factor $e^t_\ell(s)$] and a set of trees in $\cU^t_{\ell\to
i}(1),\dots,\cU^t_{\ell\to i}(q)$ [corresponding to the factor
$\frac
{\partial f^\ell_r}{\partial x(s)}(\vz^{t}_{\ell\to i})$]. This tree
clearly belongs to $\cB^{t+1}_i$.

We now treat the terms in the first line.
Again, we have
\[
f^i_s\bigl(\vx^{t-1}_{i}\bigr)
\frac{\partial f^\ell_r}{\partial x(s)}\bigl(\vx ^{t}_{\ell}\bigr)=f^i_s
\bigl(\vz^{t-1}_{i\to\ell}\bigr)\frac{\partial f^\ell
_r}{\partial x(s)}\bigl(
\vz^{t}_{\ell\to i}\bigr)+g\bigl(\vd^{t-1}_{i,\ell},
\vd ^t_{\ell,i},\vz^{t-1}_{i\to\ell},
\vz^{t}_{\ell\to i}\bigr),
\]
where $g$ is a polynomial with either a positive power of a component
of $\vd^{t-1}_{i,\ell}$ or of $\vd^t_{\ell,i}$.
Hence, we only need to construct trees in $\cB^{t+1}_i(r)$
corresponding to terms of the following form: for $\sum_s(
a_s+b_s)\geq1$,
\[
A_{\ell i}^2\prod_s \bigl(
d^{t-1}_{i,\ell}(s) \bigr)^{a_s} \bigl(
d^{t}_{\ell,i}(s) \bigr)^{b_s} \bigl(z^{t-1}_{i\to\ell}(s)
\bigr)^{c_s} \bigl(z^{t}_{\ell\to i}(s)
\bigr)^{d_s}.
\]
Let us first consider the term $A_{\ell i}^2\prod_s
(z^{t-1}_{i\to
\ell}(s) )^{c_s} (z^{t}_{\ell\to i}(s) )^{d_s}$.
It can be interpreted as a sum on the following family of trees: the
type of the root is $i$, and the root has one child with type $\ell$.
This child has $d_s$ subtrees in $\cU^{t}_{\ell\to i}(s)$ and one
child denoted $u$ with type $i$. This child $u$ has $c_s$ subtrees in
$\cU^{t-1}_{i\to\ell}(s)$. Note that the only backtracking path in
such a tree is the path from $u$ to the root with types $i,\ell, i$. In
particular such a tree does not belong to $\cB^t_i(r)$.

We assume now that there exists $s$ with $a_s\geq1$. We need to
interpret the multiplication by
$d^{t-1}_{i,\ell}(s)=z^{t-1}_{i,\ell}(s)+e^{t-1}_i(s)$. First consider
the case of $e^{t-1}_i(s)$, this corresponds to add a subtree in
$\cB^{t-1}_i$ to the vertex $u$. As in previous analysis, we clearly
obtain a tree in $\cB^{t+1}_i$. The term $z^{t-1}_{i,\ell}(s)$
corresponds to adding a child of type $\ell$ to the vertex $u$ which
is the root of a subtree in $\cU^{t-2}_{\ell\to i}(s)$, in particular
we introduce a backtracking path of length $3$ so that again the
resulting tree is in $\cB^{t+1}_i$.
Similarly if $b_s\geq1$, the multiplication by $d^{t}_{\ell,i}(s)$
will correspond to add a subtree to the child of the root, resulting in
either adding a backtracking path of length $3$ or adding a
backtracking star.

The last term of the form
\[
A_{\ell i}\prod_{s=1}^q
\frac{ (d^t_{\ell,i}(s)
)^{n_s}}{n_s!}\frac{\partial^{n_1+\cdots+n_q}f^\ell_r}{\partial
x(1)^{n_1}\cdots\partial x(q)^{n_q}}\bigl(\vz^t_{\ell\to i}\bigr),
\]
with $n_1+\cdots+n_q\geq2$ can be analyzed by the same kind of
argument by noticing that the factor $A_{i\ell}z^t_{\ell,i}(s)z^t_{\ell,i}(s')$ corresponds to a backtracking star.
\end{pf}

The proof of Proposition~\ref{prop:amp} follows from the same
arguments as in the proof of Proposition~\ref{prop:mom-equ}. Once
more, for simplicity, we only consider the case $m(r)=m$ and $m(s)=0$
for $s\neq r$, the general case of $\vm=
(m(1),m(2),\dots,m(q))\in\naturals^q$ being completely analogous.
We represent both moments $\E[x_i^t(r)^m]$ and $\E[z_i^t(r)^m]$ using
Lemma~\ref{lem:treerep} [of the form given in equations (\ref{eq:tree1BIS}),
(\ref{eq:tree2BIS})] and Lemma~\ref{lem:treeAMP}.
The expectation $\E[x_i^t(r)^m]$ is represented as a sum over trees
$T_1,\dots, T_m\in\cU_i^t(r)\cup\cB_i^t(r)$, while $\E
[z_i^t(r)^m]$ is
given by a sum over trees $T_1,\dots, T_m\in\cU_i^t(r)$.
In order to complete the proof we need to show that the contribution
of terms that have at least one tree in $\cB_i^t(r)$ vanishes as
$N\to\infty$.

The factor $\prod_{\ell=1}^m \widetilde{\Gamma}(T_\ell,t,r)$ is
bounded by
$K(d,C,t)^m$, which is independent of $N$.
Hence, we only need to prove that
%
\begin{equation}
\label{eq:backt} \qquad\sum_{T_1\in\cB^t_i(r)}\sum
_{T_j\in\cT^t_{i}(r_j)\cup\cB^t_{i}(r_j),
j\in[2,m]}\E \Biggl[ \prod_{\ell=1}^m
A(T_\ell)x(T_\ell) \Biggr]=O \bigl(N^{-\fraca{1}{2}}
\bigr).
\end{equation}
This statement directly follows from previous analysis, since in the
graph $\vG$ obtained by taking the union of the $T_{\ell}$'s and
identifying vertices $v$ with the same type $\ell(v)$, there is at
least one edge with multiplicity $3$, due to the backtracking path of
length $3$ or the backtracking star in $T_1$. So that previous analysis
shows that the term in (\ref{eq:backt}) is of order $O (N^{-\fraca
{1}{2}} )$.

%

%
\subsection{Proof of Theorem \texorpdfstring{\protect\ref{thm:Universality}}{3}}
\label{sec:ProofUniversalityPolynom}

Let $\{p_{N,i}\}_{N\ge0, 1\le i\le N}$ be a collection of
multivariate polynomials $p_{N,i}\dvtx \reals^q\to\reals$ with degrees
bounded by $D$, and coefficients bounded in magnitude by $B$,
%
\begin{equation}
p_{N,i}(\vx) = \sum_{m(1)+\cdots+m(q)\le D}c_{m(1),\dots,m(q)}^{N,i}
x(1)^{m(1)}\cdots x(q)^{m(q)}. %
\end{equation}
By Propositions \ref{prop:mom-equ} and \ref{prop:amp}, we have
%
\begin{eqnarray}
\hspace*{10pt}\bigl\llvert \E p_{N,i}\bigl(\vx^t_i
\bigr) - \E p_{N,i}\bigl(\tvx^t_i\bigr)\bigr
\rrvert &\le&\sum_{m(1)+\cdots+m(q)\le D} \bigl\llvert
c^{N,i}_{m(1),\dots,m(q)}\bigr\rrvert \bigl\llvert \E\bigl[\bigl(
\vx^t_i\bigr)^\vm\bigr] - \E\bigl[\bigl(
\tvx^t_i\bigr)^{\vm}\bigr]\bigr\rrvert\nonumber\\[-8pt]\\[-8pt]
\hspace*{10pt}& \le& K
D^qB N^{1/2} \nonumber%
\end{eqnarray}
whence the thesis follows.
%
\subsection{Proof of Theorem \texorpdfstring{\protect\ref{thm:PolySE}}{5}}
\label{sec:ProofPolySE}

An important simplification is provided by the following.

\begin{remark}\label{remark:T=S}
It is sufficient to prove Theorem~\ref{thm:PolySE} for $t=s$.
(Hence, Theorem~\ref{thm:SE} implies Theorem~\ref{thm:PolySE}.)
\end{remark}

\begin{pf}
Indeed, consider a converging sequence $\{(A(N),\cF_N,x^{0,N})\}_{N\ge
1}$, and
fix $h=t-s>0$. For the sake of simplicity, and in view of Remark~\ref{remark:NonRandom}, we can assume $\cF_N$ to be given by
the polynomial function $g\dvtx \reals^q\times\reals^{\tq}\times
[k]\times\naturals\to\reals^q,
(\vx,Y,a,t)\mapsto g(\vx,Y,a,t)$ that does
not depend on the random variable $Y$. With an abuse of notation we
will write $g(\vx,a,t)$ in place of $g(\vx,Y,a,t)$.

We will construct a new converging sequence of instances
$\{(A(N),\tcF_N,\allowbreak\tx^{0,N})\}_{N\ge1}$ with variables
$\tvx_i^t\in\reals^{2q}$
and such that, letting $\tvx_i^t = (\vu_i^t, \vv^t_i)$,
$\vu_i^t,\break \vv^t_i\in\reals^q$, the pair $(\vu_i^t,\vv^t_i)$ is
distributed as $(\vx^t_i,\vx^{t-h}_i)$ asymptotically as $N\to\infty$.

The new sequence of initial conditions is constructed as follows:
\begin{longlist}[(3)]
\item[(1)] The initial condition is given by $\tvx^0_i =
(0,0)$.
\item[(2)] The independent randomness is given by $Y(i)=\vx^0_i$.
Notice that, for $i\in C_a^N$, we have $Y(i)\sim_{\mathrm{i.i.d.}} Q_a$, and
hence we let $P_a=Q_a$.
\item[(3)] The partitions $C^N_a$, $a\in[k]$ and matrices $A(N)$ are kept
unchanged.
\item[(4)] The collection of functions in $\tcF_N$
is determined by the polynomial function $\tg\dvtx \reals^{2q}\times
\reals^{\tq}\times[k]\times\naturals\to\reals^{2q}$,
$(\tvx,Y,a,t)\mapsto\tg(\tvx,Y,a,t)$. Writing
$\tg( \cdot)= [\tg^{(1)}( \cdot),\tg^{(2)}( \cdot)]$, with
$\tg^{(1)}( \cdot), \tg^{(2)}( \cdot)\in\reals^q$, we let, for
$\vu,\vv\in\reals^q$.
%
\begin{eqnarray}
g^{(1)} \bigl((\vu,\vv),Y,a,t \bigr) & = & \cases{
g(Y,a,t) &\quad if $t=0$,
\cr
g(\vu,a,t)&\quad if $t>0$,}
\\
g^{(2)} \bigl((\vu,\vv),Y,a,t \bigr) & = & \cases{
g(Y,a,t) &\quad if $t\le h$,
\cr
g(\vv,a,t)&\quad if $t>h$. }
\end{eqnarray}
\end{longlist}
As a consequence of this construction, $\vu^t_i = \vx^t_i$ for all
$i\in[N]$, $t\ge1$, and $\vv^t_i=\vx^{t-h}_i$ for all $t\ge h+1$.
This completes the reduction.
\end{pf}

As a consequence of this remark, it is sufficient to prove Theorem~\ref{thm:SE}, and by Remark~\ref{remark:NonRandom}, we can limit ourselves to the case in which
$g\dvtx (\vx,Y,a,t)\mapsto g(\vx,Y,a,t)$
does not depend on $Y$, and hence this argument will be dropped.
We begin by considering the expectation of moments of $\vx_i^t$.

\begin{proposition}\label{prop:SE}
Let $(A(N),\cF_N,x^0)_{N\ge0}$ be a polynomial and converging sequence
of AMP
instances, and denote by $\{x^t\}_{t\ge0}$ the corresponding AMP
orbit.
Then we have for any $i=i(N)\in C^N_a$, $t\geq1$, $\vm= (m(1),\dots,
m(q))\in\naturals^q$,
\[
\lim_{N\to\infty}\E \bigl[ \bigl(\vx_{i}^t
\bigr)^{\vm} \bigr] = \E \bigl[ \bigl(Z_{a}^t
\bigr)^{\vm} \bigr] ,
\]
where $Z^t_a\sim\normal (0, \Sigma^t_a  )$.
\end{proposition}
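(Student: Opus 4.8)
Combining Propositions \ref{prop:amp}, \ref{prop:mom-equ-iid} and \ref{prop:mom-equ}, one has $\E[(\vx^t_i)^{\vm}]=\E[(\vy^t_i)^{\vm}]+O(N^{-1/2})$, and these quantities depend on the law of $A(N)$ only through the block covariances $\{W_{ab}\}$; since those propositions are built from the tree representation of Lemma \ref{lem:treerep}, the same applies to the off--diagonal variables $\vy^t_{i\to j}$ and to joint moments of several coordinates. Hence we may assume the matrices $A^t$ in \eqref{eq:rec-iid}--\eqref{eq:defy} are independent copies of a Gaussian matrix with $A^t_{\ell\ell}=0$ and $A^t_{\ell k}\sim\normal(0,W_{ab}/N)$ for $\ell\in C^N_a$, $k\in C^N_b$. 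Let $\cG_t:=\sigma(x^{0,N},A^0,\dots,A^{t-1})$, so $\cG_0=\sigma(x^{0,N})$; from \eqref{eq:rec-iid}, every $\vy^t_{\ell\to i}$ is $\cG_t$--measurable while $A^t$ is independent of $\cG_t$. We prove by induction on $t\ge1$ the statement $(\mathrm{P}_t)$: for any finite set of \emph{distinct} indices $i_1,\dots,i_p$ with $i_u\in C^N_{a_u}$, any exponents $\vm_1,\dots,\vm_p$, and with each factor $(\vy^t_{i_u})^{\vm_u}$ allowed to be replaced by $(\vy^t_{i_u\to j_u})^{\vm_u}$ for an arbitrary $j_u\neq i_u$,
\[
\E\Big[\prod_{u=1}^p(\vy^t_{i_u})^{\vm_u}\Big]\;\longrightarrow\;\prod_{u=1}^p\E\big[(Z^t_{a_u})^{\vm_u}\big],\qquad Z^t_a\sim\normal(0,\Sigma^t_a),
\]
uniformly over the indices, and moreover $\sup_N\E[\|\vy^t_i\|_2^{k}]<\infty$ and $\sup_N\E[\|\vy^t_{i\to j}\|_2^{k}]<\infty$ for every $k$. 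The boundedness part is immediate from Lemma \ref{lem:tree} (proved simultaneously for $z^t_i$ and $z^t_{i\to j}$) and Proposition \ref{prop:mom-equ-iid}. Taking $p=1$ in $(\mathrm{P}_t)$ is exactly Proposition \ref{prop:SE}.

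\textbf{Base case.} Here $\vy^1_i=\sum_{\ell}A^0_{\ell i}\,g(\vx^0_\ell,b(\ell),0)$, $b(\ell)$ the block of $\ell$. Conditioning on $x^{0,N}$ (independent of $A^0$), $\vy^1_i$ is centered Gaussian with conditional covariance $\frac1N\sum_b W_{ab}\sum_{\ell\in C^N_b}g(\vx^0_\ell,b,0)g(\vx^0_\ell,b,0)^{\sT}$, which by \eqref{eq:InitialSE} and $|C^N_b|/N\to c_b$ converges in probability, hence (by the boundedness part) in every $L^p$, to $\sum_b c_b W_{ab}\hSigma^0_b=\Sigma^1_a$; this limit is the same random-variable limit for every row index, so the convergence is uniform. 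For distinct $i_1,i_2$ the conditional cross--covariance of $\vy^1_{i_1},\vy^1_{i_2}$ involves $\E[A^0_{\ell i_1}A^0_{\ell i_2}]$, nonzero for at most one $\ell$ (the one making the two entries coincide by symmetry, the diagonal of $A^0$ being zero), hence it is $O(1/N)$. Since $\vy^1_i$ is conditionally exactly Gaussian, $\E[(\vy^1_i)^{\vm}\mid x^{0,N}]$ is, by Wick's theorem, a fixed polynomial in the entries of the conditional covariance; taking expectations and using the $L^p$ convergence with uniform integrability gives $(\mathrm{P}_1)$ for $p=1$, and the $O(1/N)$ bound on cross--covariances gives it for general $p$. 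Deleting an index $j$ from the sum perturbs covariances by $O(1/N)$, handling the $i\to j$ variables.

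\textbf{Inductive step.} Assume $(\mathrm{P}_t)$ and write $y^{t+1}_i(r)=\sum_{\ell}A^t_{\ell i}\,g_r(\vy^t_{\ell\to i},b(\ell),t)$. Conditionally on $\cG_t$ this is centered Gaussian, with conditional covariance
\[
Q_i\;\equiv\;\frac1N\sum_{b}W_{ab}\sum_{\ell\in C^N_b}g(\vy^t_{\ell\to i},b,t)\,g(\vy^t_{\ell\to i},b,t)^{\sT}\in\reals^{q\times q},
\]
and with $O(1/N)$ cross--covariances between $\vy^{t+1}_{i_u},\vy^{t+1}_{i_{u'}}$ for $i_u\neq i_{u'}$, as in the base case. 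Since $g(\,\cdot\,,b,t)g(\,\cdot\,,b,t)^{\sT}$ is a matrix of polynomials, $(\mathrm{P}_t)$ applied (with $p=1$, uniformly) to the variables $\vy^t_{\ell\to i}$ gives $\E[Q_i]\to\sum_b c_b W_{ab}\,\E[g(Z^t_b)g(Z^t_b)^{\sT}]=\sum_b c_b W_{ab}\hSigma^t_b=\Sigma^{t+1}_a$, while $(\mathrm{P}_t)$ with $p=2$ (asymptotic independence of $\vy^t_{\ell\to i}$ and $\vy^t_{\ell'\to i}$ for $\ell\neq\ell'$) gives $\Var(Q_i)\to0$; with the boundedness part, $Q_i\to\Sigma^{t+1}_a$ in every $L^p$. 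Finally $\E[(\vy^{t+1}_i)^{\vm}\mid\cG_t]$ is, by Wick's theorem, a fixed polynomial in the entries of $Q_i$, so taking expectations and passing to the limit using $Q_i\to\Sigma^{t+1}_a$ in every $L^p$ with uniformly bounded moments yields $\E[(\vy^{t+1}_i)^{\vm}]\to\E[(Z^{t+1}_a)^{\vm}]$; the joint and $i\to j$ statements follow identically from the $O(1/N)$ cross--covariance estimates. This proves $(\mathrm{P}_{t+1})$, hence the proposition.

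\textbf{Main obstacle.} The crux is propagating the \emph{joint} part of $(\mathrm{P}_t)$ --- in particular the asymptotic independence of $\vy^t_{\ell\to i}$ and $\vy^t_{\ell'\to i}$ across distinct $\ell$ --- since this is exactly what forces $\Var(Q_i)\to0$ and collapses the random conditional covariance to its deterministic state--evolution value $\Sigma^{t+1}_a$; one must also check that all convergences in $(\mathrm{P}_t)$ are uniform over the summation index $\ell$, so that they survive the average defining $Q_i$. The uniform moment bounds coming from Lemma \ref{lem:tree} are what allow one to upgrade convergence in probability of conditional covariances to convergence of the moments themselves.
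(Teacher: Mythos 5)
Your proof is correct and reaches the same conclusion by a genuinely different route. The paper's proof also reduces to the iteration $\{y^t\}$ via Propositions~\ref{prop:mom-equ-iid} and~\ref{prop:amp}, but then works directly with the original (subgaussian) entries, applying the central limit theorem to $y^{t+1}_{i\to j}$ conditionally on $\mathfrak{F}_t=\sigma(A^0,\dots,A^{t-1})$; the induction hypothesis is split into a moment statement~\eqref{eq:momyZ} and an in-probability empirical-average statement~\eqref{eq:probZ}, the latter established by bounding $|\E[(\vy^t_{i\to j})^{\vm}(\vy^t_{k\to j})^{\vm}]-\E[(\vy^t_{i\to j})^{\vm}]\E[(\vy^t_{k\to j})^{\vm}]|$ directly through the tree representation, cf.~\eqref{eq:BoundCorrelation}. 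You instead invoke universality (Proposition~\ref{prop:mom-equ}) to replace the $A^t$ by Gaussian matrices \emph{before} doing state evolution, so that $y^{t+1}$ is \emph{exactly} conditionally Gaussian given $\cG_t$, and then Wick's theorem reduces everything to the $L^p$ convergence of the conditional covariance $Q_i$. This trades the CLT step (which the paper applies somewhat tersely, implicitly needing a Lindeberg condition plus uniform integrability from Lemma~\ref{lem:tree}) for an exact Gaussian computation, which is cleaner; the price is that you need Propositions~\ref{prop:mom-equ} and~\ref{prop:mom-equ-iid} to hold for \emph{joint} moments of several $\vy^t_{i_u\to j_u}$ at distinct root indices, which they do not literally assert. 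You correctly observe that the extension follows from the same tree-counting argument, but it should be stated and checked rather than asserted. Likewise, the uniformity over $\ell$ of the convergence in $(\mathrm{P}_t)$ --- which you flag as the ``main obstacle'' and which is needed for $\Var(Q_i)\to 0$ --- is indeed the crux; the paper obtains exactly this uniform bound as $\eps(N)$ in~\eqref{eq:BoundCorrelation} from the tree representation, and your inductive propagation of uniformity would need to be unwound to the same tree estimate to be airtight. In short: same skeleton (reduce to $y^t$, condition on $\cG_t$, show the conditional covariance concentrates to $\Sigma^{t+1}_a$), with a Gaussian-plus-Wick realization in place of CLT-plus-tree-variance, and with the uniformity/joint-moment extensions deferred rather than proved.
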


\begin{pf}
By Propositions \ref{prop:mom-equ-iid} and \ref{prop:amp}, we need
only to prove the statement for the AMP orbit $y^t$.
We will indeed prove by induction on $t$ that for any $i\in C^N_a$ and
any $j\neq i$,
%
\begin{eqnarray}
\label{eq:momyZ} \lim_{N\to\infty}\E \bigl[ \bigl(
\vy_{i\to
j}^t \bigr)^{\vm} \bigr] & = &\E \bigl[
\bigl(Z_{a}^t \bigr)^{\vm} \bigr] ,
\\
\label{eq:probZ} \lim_{N\to\infty}\frac{1}{\llvert C_a^N\rrvert }\sum
_{i\in C_a^N} \bigl(\vy _{i\to
j}^t
\bigr)^{\vm} & = &\E \bigl[ \bigl(Z_{a}^t
\bigr)^{\vm} \bigr]\qquad \mbox{in probability}. %
\end{eqnarray}
For $t\geq1$, let $\mathfrak{F}_t$ be the $\sigma$-algebra generated
by $A^0,\dots,A^{t-1}$.
We will show, using the central limit theorem, that the random vector
$(y_{i \to j}^{t+1}(1),\dots, y_{i \to j}^{t+1}(q))$ given $\mathfrak
{F}_t$ converge in distribution to a centered Gaussian random vector.
More precisely,
by (\ref{eq:rec-iid}) and the induction hypothesis, the following limit
holds in probability:
\begin{eqnarray*}
&&\lim_{N\to\infty}\E \bigl[ y_{i \to j}^{t+1}(r)y_{i \to
j}^{t+1}(s)
|\mathfrak{F}_t \bigr] \\
& &\qquad=\lim_{N\to\infty} \mathop{\sum_{\ell\in[N]\ex j}}_{\ell\in C_b^N}\E \bigl[ \bigl(A_{\ell
i}^t
\bigr)^2 \bigr]g_r\bigl(\vy^t_{\ell\to
i},b,t
\bigr)g_s\bigl(\vy^t_{\ell\to i},b,t\bigr)
\\
&&\qquad= \sum_{b=1}^k
c_b W_{ab} \E \bigl[ g_r
\bigl(Z^t_{b},b,t\bigr)g_s
\bigl(Z^t_{b},b,t\bigr) \bigr] = \Sigma
_a^{t+1}(r,s). %
\end{eqnarray*}
Since for all $r\in[q]$ from (\ref{eq:rec-iid}) we have $\E[ y_{i
\to
j}^{t+1}(r)]=0$,
from the central limit theorem, it follows that $\vy_{i\to j}^{t+1}$
converges to a centered Gaussian vector with covariance
$\Sigma_a^{t+1}$. Since all the moments of $\vy_{i\to j}^{t+1}$ are
bounded uniformly in $N$ by Proposition~\ref{prop:mom-equ-iid} and
Lemma~\ref{lem:tree}, the induction claim, equation (\ref{eq:momyZ})
follows, for iteration $t+1$.

In the base case $t=0$ the same conclusion holds because
\begin{eqnarray*}
\lim_{N\to\infty}\E \bigl[ y_{i \to j}^{1}(r)y_{i \to
j}^{1}(s)
\bigr] & =&\lim_{N\to\infty} \mathop{\sum_{\ell\in[N]\ex j}}_{\ell\in C_b^N}
\E \bigl[ \bigl(A_{\ell
i}^0\bigr)^2
\bigr]g_r\bigl(\vy^0_{\ell\to
i},b,0
\bigr)g_s\bigl(\vy^0_{\ell\to i},b,0\bigr)
\\[-3pt]
&=& \sum_{b=1}^k
c_b W_{ab}\hSigma_b^0(r,s),
\end{eqnarray*}
where the second identity holds by assumption.

Next consider the induction claim, equation (\ref{eq:probZ}).
Recall the representation introduced in Section~\ref{sec:ProofMomIID},
\begin{eqnarray*}
y^{t}_{i\to j}(r) &=& \sum_{T\in\cT^t_{i\to j}(r)}
\overline {A}(T,t)\Gamma(T,\vc,t)x(T),
\\[-2pt]
\overline{A}(T,t) &=& \prod_{(u\to v)\in E(T)}A^{t-\llvert u\rrvert }_{\ell(u)
\ell(v)}.
\end{eqnarray*}
Using this representation of $\vy^{t}_{i\to j}$, $\vy^{t}_{k\to j}$
it is easy to show that, for $i\neq k$, $i,k\in C_a^N$\vspace*{-1pt}
%
\begin{equation}\label{eq:BoundCorrelation}
\bigl\llvert \E \bigl[ \bigl(\vy_{i\to j}^t
\bigr)^{\vm} \bigl(\vy_{k\to
j}^t
\bigr)^{\vm} \bigr]- \E \bigl[ \bigl(\vy_{i\to j}^t
\bigr)^{\vm} \bigr]\E \bigl[ \bigl(\vy_{k\to
j}^t
\bigr)^{\vm} \bigr] \bigr\rrvert \le\eps(N),%
\end{equation}
for some function $\eps(N)\to0$ as $N\to\infty$ ar $\vm,C,d,t$ fixed.
Indeed, the above expectations can be represented as sums over
$m=m(1)+m(2)+\cdots+m(q)$ trees $T_1,\dots,T_m\in\cT^t_{i\to j}$ and
$m$ trees
$T'_1,\dots,T'_m\in\cT^t_{k\to j}$. Let $\vG$ be the simple graph
obtained by
identifying vertices of the same type in $T_1,\dots,T_m,T'_1,\dots,\break T'_m$.

By Lemma~\ref{lem:tree} and the argument in
the proof of Proposition~\ref{prop:mom-equ}, all the terms in which
$\vG$ has cycles, or an edge of $\vG$ correspond to more than 2 edges
in the union of $T_1,\dots,T_m$, $T'_1,\dots,T'_m$ add up to a
vanishing contribution in the $N\to\infty$ limit. Further, all the
terms in which $\vG$ is the union of two disconnected components (one
containing $i$, and the other containing $k$) are
identical in $\E [  (\vy_{i\to j}^t )^{\vm} (\vy
_{k\to
j}^t )^{\vm} ]$ and $\E [  (\vy_{i\to j}^t
)^{\vm
} ]\E [ (\vy_{k\to
j}^t )^{\vm} ]$ and hence cancel out.
We are therefore left with the sum over trees
$T_1,\dots,T_m,T'_1,\dots,T'_m$ such that $\vG$ is itself a connected
tree, with edges covered exactly twice. Assume, to be definite, that
$\vG$ has $\mu$ vertices and hence $\mu-1$ edges. The weight of such
a term
is bounded by
\[
K\E \Biggl\{\prod_{i=1}^m
\overline{A}(T_i,t) \prod_{i=1}^m
\overline{A}\bigl(T'_i,t\bigr) \Biggr\}\le
KN^{-\mu+1}. %
\]
On the other hand, the number of such terms is bounded by
$K N^{\mu-2}$ (because the type has to be assigned to $\mu$ vertices,
but $2$ of these are fixed to $i$ and $k$), and hence the overall
contribution of these terms vanishes as well.

From equation (\ref{eq:BoundCorrelation}), and using the fact that
$\E[(\vy_{i\to j}^t)^{2\vm}]\le K$ (because of Lemma~\ref{lem:tree}
and Proposition~\ref{prop:mom-equ-iid}), we have
\begin{eqnarray*}
\hspace*{-4pt}&&\lim_{N\to\infty}\Var \biggl\{\frac{1}{\llvert C_a^N\rrvert }\sum
_{i\in C_a^N}  \bigl(\vy _{i\to
j}^t
\bigr)^{\vm} \biggr\}
\\
\hspace*{-4pt}&&\qquad\le \lim_{N\to\infty}\frac{1}{\llvert C_a^N\rrvert ^2}\sum
_{i,k\in C_a^N} \bigl\llvert \E \bigl[ \bigl(\vy_{i\to j}^t
\bigr)^{\vm} \bigl(\vy_{k\to
j}^t
\bigr)^{\vm} \bigr]- \E \bigl[ \bigl(\vy_{i\to j}^t
\bigr)^{\vm} \bigr]\E \bigl[ \bigl(\vy_{k\to
j}^t
\bigr)^{\vm} \bigr] \bigr\rrvert = 0 . %
\end{eqnarray*}
Equation (\ref{eq:probZ}) follows for iteration $t+1$ by applying Chebyshev's
inequality to the sequence
\[
\biggl\{\frac{1}{\llvert C_a^N\rrvert }\sum_{i\in C_a^N} \bigl(
\vy_{i\to
j}^t\bigr)^{\vm} \biggr\}_{N\ge0},
\]
and using \eqref{eq:momyZ}.
\end{pf}

We are now ready to prove Theorem~\ref{thm:PolySE} in the case in
which $\psi\dvtx \reals^q\to\reals$ is a polynomial.

\begin{proposition}\label{propo:InProbMoment}
Let $(A(N),\cF_N,x^0)_{N\ge0}$ be a polynomial and converging sequence
of AMP
instances, and denote by $\{x^t\}_{t\ge0}$ the corresponding AMP
orbit.
Then we have for any $t\geq1$, $\vm= (m(1),\dots, m(q))\in\naturals^q$,
%
\begin{equation}\label{eq:VarianceClaim}
\lim_{N\to\infty}\Var \biggl\{\frac{1}{\llvert C_a^N\rrvert }\sum
_{i\in C_a^N} \bigl(\vx_{i}^t
\bigr)^{\vm} \biggr\} = 0 . %
\end{equation}
%
\end{proposition}

\begin{pf}
In order to prove (\ref{eq:VarianceClaim}), we fix $t\ge1$ and $a\in
[k]$, and construct a
modified sequence of AMP instances as follows. The new sequence has
$N'=2N$ and $k' = k+1$. The new partition of the variable indices $\{
1,\dots,N\}$ is the
same as in the original instances, with the addition of
$C_{k+1}^N=\{N+1,\dots,2N=N'\}$.
Further we set, for $\varphi\dvtx \reals^q\to\reals$ a polynomial:
\begin{longlist}[(2)]
\item[(1)] for $i,j\le N$: $A'_{ij} = A_{ij}$ and when $i> N$ or $j>N$
define $A_{ij}'\sim\normal(0,1/N)$ independently;
\item[(2)] $g'(\vx,b,t')= g(\vx,b,t')$ for $b\in[k]$, $t'\le t-1$;
$g'(\vx,b,t) = 0$ for $b\in[k]\setminus a$;
$g'_1(\vx,a,t) = \varphi(\vx)$, $g'_r(\vx,a,t) = 0$, for $r\ge
2$; $g'(\vx,k+1,t')=0$ for all
$t'$.

The definition of $g'(\vx,a,t')$ for $t'>t$ is irrelevant for our purposes.
\end{longlist}
Since $g'(\vx,k+1,t')=0$ for all $t'$, the orbit $(\vx_i^{t'}\dvtx  i\le
N, t'\le t)$ is not affected by the new variables.
Further, by the general AMP equation (\ref{eq:AMPGeneralDef_bis}), we
have, for $i\in C_{k+1}^N$,
%
\begin{equation}
x_i^{t+1}(1) = \sum
_{j\in C_a^N} A_{ij}\varphi\bigl(\vx_j^t
\bigr). %
\end{equation}
Notice that the $\{A_{ij}\}_{j\in C_a^N}$ in this equation are
independent of $\vx_j^t$. Hence
%
\begin{eqnarray}
\E\bigl\{x_i^{t+1}(1)^4\bigr\}
&=& \sum_{j_1,\dots,j_4\in C_a^N} \E\{ A_{ij_1}A_{ij_2}A_{ij_3}A_{ij_4}
\}\nonumber\\[-8pt]\\[-8pt]
&&\hspace*{45pt}{}\times \E\bigl\{\varphi\bigl(\vx_{j_1}^t\bigr) \varphi\bigl(
\vx_{j_2}^t\bigr) \varphi\bigl(\vx_{j_3}^t
\bigr) \varphi\bigl(\vx_{j_4}^t\bigr)\bigr\}\nonumber
\\\label{eq:FourthMoment}
&=& \frac{3}{N^2}\sum_{j_1,j_2\in C_a^N} \E\bigl\{\varphi
\bigl(\vx_{j_1}^t\bigr)^2 \varphi\bigl(
\vx_{j_2}^t\bigr)^2\bigr\}.
\end{eqnarray}
On the other hand, using Proposition~\ref{prop:SE} (once for
iteration $t+1$ and $i\in C_{k+1}^N$, and another time for iteration $t$
and $i\in C_{a}^N$) we get
%
\begin{eqnarray}
\lim_{N\to\infty} \E\bigl\{x_i^{t+1}(1)^4
\bigr\} &=& \E\bigl\{ \bigl(Z_{k+1}^{t+1}(1)
\bigr)^4\bigr\}
= 3\bigl(\Sigma_{k+1}^{t+1}(1,1)
\bigr)^2\nonumber\\[-8pt]\\[-8pt]
 &=& 3c^2_a\E\bigl\{\varphi
\bigl(Z^t_a\bigr)^2\bigr\}^2 ,\qquad i
\in C_{k+1}^N,\nonumber
\\
\lim_{N\to\infty} \E\bigl\{\varphi\bigl(\vx_i^t
\bigr)^2\bigr\} &=& \E\bigl\{\varphi \bigl(Z^t_a
\bigr)^2\bigr\},\qquad i\in C_{a}^N, %
\end{eqnarray}
where $Z^t_a\sim\normal(0,\Sigma^t_a)$.
Comparing these equations with equation (\ref{eq:FourthMoment}) we conclude
that
%
\begin{equation}
\hspace*{25pt}\lim_{N\to\infty}\frac{1}{N^2}\sum
_{j_1,j_2\in C_a^N} \E\bigl\{\varphi \bigl(\vx _{j_1}^t
\bigr)^2 \varphi\bigl(\vx_{j_2}^t
\bigr)^2\bigr\} = \biggl\{ \lim_{N\to\infty}
\frac{1}{N}\sum_{j\in C_a^N} \E\bigl[\varphi\bigl(
\vx_{j}^t\bigr)^2\bigr] \biggr\}^2
. %
\end{equation}
Equivalently,
%
\begin{equation}
\lim_{N\to\infty}\Var \biggl\{\frac{1}{\llvert C_a^N\rrvert }\sum
_{i\in C_a^N} \varphi\bigl(\vx_{i}^t
\bigr)^2 \biggr\} = 0 . %
\end{equation}
Taking $\varphi(\vx)=\vx^{\vk}$, we obtain equation (\ref
{eq:VarianceClaim})
for $\vm$ even.
In order to establish equation (\ref{eq:VarianceClaim}) for general
$\vm$
we take, for instance, $\varphi(\vx) = 1+\eps\vx^\vm$ and use the
fact that the
limit must vanish for all $\eps$.
\end{pf}

At this point we can prove Theorem~\ref{thm:PolySE}.

\begin{pf*}{Proof of Theorem~\ref{thm:PolySE}}
By Remark~\ref{remark:NonRandom} and Remark~\ref{remark:T=S}, we
reduced ourselves to the case $t=s$, and
$Y(i)=0$ [equivalently, $Y(i)$, is absent].

Consider the empirical measure on $\reals^q$ given by
\[
\mu^N_a=\frac{1}{\llvert C_a^{N}\rrvert } \sum
_{i\in C_a^{N}}\delta_{\vx_{i}^t}.
\]
Proposition~\ref{prop:SE} shows the convergence of expected the
moments of
$\mu^N_a$ to moments that determine the Gaussian
distribution. Proposition~\ref{propo:InProbMoment} combined with Chebyshev
inequality implies
\[
\lim_{N\to\infty}\mu^N_a \bigl( \bigl(
\vx_{i}^t \bigr)^{\vm}\bigr) = \E \bigl[
\bigl(Z_{a}^t \bigr)^{\vm} \bigr] ,
\]
in probability.
The proof follows using the relation between convergence in
probability and convergence almost sure along subsequences, together
with the moment method.
\end{pf*}

%
%
\section{Nonsymmetric matrices}\label{sec:NonSymmetric}

In this section we consider a slightly different setting that turns
out to be a special case of the one introduced in Section~\ref
{sec:StateEvolutionResults}.

\begin{definition}
A converging sequence of (polynomial) \emph{bipartite AMP instances}
$\{(A(n),f,h,x^{0,n})\}_{n\ge1}$ is defined by giving for each $n$:
\begin{longlist}[(3)]
\item[(1)] A matrix $A(n)\in\reals^{m\times n}$ with $m=m(n)$ such that
$\lim_{n\to\infty}m(n)/n= \delta>0$. Further, $A(n) =
(A_{ij})_{i\le
m,j\le n}$ is a matrix with the entries $A_{ij}$ independent sub-Gaussian
random variables with common scale factor $C/n$ and first two
moments $\E\{A_{ij}\}=0$, $\E\{A_{ij}^2\} = 1/m$.
\item[(2)] Two functions $f\dvtx  \reals^{q}\times\reals^{\tq}\times
\naturals
\to\reals^q$ and $h\dvtx  \reals^{q}\times\reals^{\tq}\times\naturals
\to\reals^q$ such that, for each $t\ge0$, $f( \cdot, \cdot,t)$ and $h( \cdot, \cdot,t)$ are polynomials.
\item[(3)] An initial condition $x^{0,n} = (\vx_1^0,\dots,\vx
_n^0)\in
\mathcal{V}_{q,n}\simeq(\reals^q)^{n}$,
with $\vx_i^0\in\reals^q$, such that, in probability,
%
\begin{eqnarray}
  \sum_{i=1}^n\exp\bigl\{
\bigl\llVert \vx_i^{0,n}\bigr\rrVert _2^2/C
\bigr\}&\le& nC ,
\\\label{eq:BipartiteInitial}
  \lim_{n\to\infty} \frac{1}{m(n)}\sum
_{i=1}^nf\bigl(\vx^0_i,Y(i),0
\bigr) f\bigl(\vx ^0_i,Y(i),0\bigr)^{\sT} &=&
\Xi^0 .%
\end{eqnarray}
\item[(4)] Two collections of i.i.d. random variables $(Y(i), i\in[n])$
and $(W(j), j\in[m])$ with $Y(i)\sim_{\mathrm{i.i.d.}} Q$ and
$W(j)\sim_{\mathrm{i.i.d.}} P$.
Here $Q$ and $P$ are finite mixture of Gaussians on $\reals^{\tq}$.
\end{longlist}
\end{definition}

Throughout this section, we will refer to nonbipartite AMP instances
as per
Definition~\ref{def:Converging}, as to \emph{symmetric instances}.
With these ingredients, we define the AMP orbit
as follows.

\begin{definition}
The \emph{approximate message
passing orbit} corresponding to the bipartite instance $(A,f,h,x^0)$
is the sequence of vectors $\{x^t, z^t\}_{t\ge0}$, $x^t\in\mathcal{V}_{q,n}$
$z^t\in\mathcal{V}_{q,m}$
defined as follows, for $t\ge0$,
%
\begin{eqnarray}\label{eq:BipartiteAMP1}
z^{t} &= & A f\bigl(x^t,Y;t\bigr) -
\Ons_t h\bigl(z^{t-1},W;t-1\bigr),
\\
\label{eq:BipartiteAMP2}
x^{t+1} & = &A^{\sT} h\bigl(z^t,W;t\bigr) -
\POns_t f\bigl(x^t,Y;t\bigr),%
\end{eqnarray}
where $f( \cdot)$, $h( \cdot)$ are applied componentwise; see
below for an explicit formulation.
Here $\Ons_t\dvtx  \mathcal{V}_{q,m}\to\mathcal{V}_{q,m}$ is the linear
operator that maps $v$
to $v' = \Ons_t v$,
and for any $j\in[m]$ satisfies,
%
\begin{equation}
\vv'_j = \biggl(\sum_{k\in[n]}A_{jk}^2
\frac{\partial f}{\partial
\vx
}\bigl(\vx^t_k,Y(k);t\bigr) \biggr)
\vv_j.
\end{equation}
Analogously $\POns_t\dvtx  \mathcal{V}_{q,n}\to\mathcal{V}_{q,n}$ is the
linear operator
defined by
letting, for $v' = \POns_t v$, and any $j\in[n]$,
%
\begin{equation}
\vv'_i = \biggl(\sum_{l\in[m]}A_{li}^2
\frac{\partial h}{\partial
\vz
}\bigl(\vz^t_l,W(l);t\bigr) \biggr)
\vv_i.
\end{equation}
\end{definition}

For the sake of clarity, it is useful to rewrite the iteration
(\ref{eq:BipartiteAMP1}), (\ref{eq:BipartiteAMP2}) explicitly, by
components
\begin{eqnarray*}
\vz_i^{t} &= & \sum
_{j\in[n]}A_{ij} f\bigl(\vx_j^t,Y(j);t
\bigr) \\
&&{}- \sum_{k\in
[n]}A_{jk}^2
\frac{\partial f}{\partial\vx}\bigl(\vx^t_k,Y(k);t\bigr) h\bigl(
\vz_i^{t-1},W(i);t-1\bigr)\qquad \mbox{for all } i\in[m],
\\
\vx_j^{t+1} & = &\sum_{i\in[m]}A_{ij}
h\bigl(\vy_i^t,W(i);t\bigr) \\
&&{}- \sum
_{l\in
[m]}A_{lj}^2\frac{\partial h}{\partial\vz}\bigl(
\vz^t_l,W(l);t\bigr) f\bigl(x_j^t,Y(j);t
\bigr)\qquad \mbox{for all } j\in[n]. %
\end{eqnarray*}
We will state and prove a state evolution result that is analogous to
Theorem~\ref{thm:PolySE} for the present case. Since the proof is by
reduction to the symmetric
case, the same argument also implies a universality statement of the
type of Theorem~\ref{thm:Universality}. However, we will not state explicitly any
universality statement in this case.
We begin by introducing the appropriate state evolution recursion.
In analogy with equation (\ref{eq:GeneralSE}), we introduce two sequences
of positive semidefinite matrices $\{\Sigma^t\}_{t\ge0}$,
$\{\Xi^t\}_{t\ge0}$ by letting $\Xi^0$ be given as per
equation (\ref{eq:BipartiteInitial}) and defining, for all
$t\ge1$,
%
\begin{eqnarray}\label{eq:SEBipartite1}
\Sigma^{t} & =& \E \bigl\{ h\bigl(Z^{t-1},W,t-1
\bigr) h\bigl(Z^{t-1},W,t-1\bigr)^{\sT
} \bigr\},\nonumber\\[-8pt]\\[-8pt]
Z^{t-1}&\sim&\normal\bigl(0,\Xi^{t-1}\bigr),\qquad W\sim P
,\nonumber
\\
\label{eq:SEBipartite2}
\Xi^{t} &=&\frac{1}{\delta} \E \bigl\{ f\bigl(X^t,Y,t
\bigr) f\bigl(X^t,Y,t\bigr)^{\sT
} \bigr\},\nonumber\\[-8pt]\\[-8pt]
 X^t
&\sim&\normal\bigl(0,\Sigma^t\bigr), \qquad Y\sim Q .\nonumber
\end{eqnarray}
We also define a two-times recursion analogous to
equations (\ref{eq:GeneralstSE}), (\ref{eq:GeneralstSE2}).
Namely, we introduce the boundary condition
%
\begin{eqnarray}
\hspace*{25pt}\Xi^{0,0} = \pmatrix{ \Xi^0 & \Xi^0
\cr
\Xi^0 & \Xi^0 },\qquad \Xi^{t,0} = \pmatrix{
\Xi^t & 0
\cr
0& \Xi^0 }, \qquad\Xi^{0,t} = \pmatrix{
\Xi^0 & 0
\cr
0& \Xi^t }, %
\end{eqnarray}
with $\Xi^t$ defined per equations (\ref{eq:SEBipartite1}), (\ref
{eq:SEBipartite2}).
For any $s,t\ge1$, we set recursively
%
\begin{eqnarray}
\Sigma^{t,s}&=&\E \bigl\{ \cZ_{t-1,s-1}
\cZ_{t-1,s-1}^{\sT} \bigr\},
\\
 \cZ_{t-1,s-1}&\equiv&\bigl[h\bigl(Z^{t-1},W,t-1\bigr),h
\bigl(Z^{s-1},W,s-1\bigr)\bigr] ,
\\
\Xi^{t,s}&=&\E \bigl\{ \cX_{t,s} \cX_{t,s}^{\sT}
\bigr\},
\\
 \cX_{t,s}&\equiv&\bigl[f\bigl(X^{t},Y,t\bigr),f
\bigl(X^{s},Y,s\bigr)\bigr] . %
\end{eqnarray}
(Recall that $[u,v]$ denotes the column vector obtained by
concatenating $u$ and $v$.)

\begin{theorem}\label{thm:BipartiteSE}
Let $\{(A(n),f,h,x^{0,n})\}_{n\ge1}$ be a polynomial and converging
sequence of \emph{bipartite AMP instances},
and denote by $\{x^t,z^t\}_{t\ge0}$ the corresponding AMP
orbit.

Fix $s,t\ge1$. If $s\neq t$, further assume that the initial condition
$x^{0,n}$ is
obtained by letting $\vx_i^{0,n}\sim R$ independent and
identically distributed, with $R$ a finite mixture of Gaussians.
Then, for each locally Lipschitz function
$\psi\dvtx \reals^q\times\reals^q\times\reals^{\tq}\to\reals$ such that
$\llvert \psi(\vx,\vx',y)\rrvert \le K(1+\llVert y\rrVert _2^2+\llVert \vx\rrVert _2^2+\llVert \vx'\rrVert _2^2)^K$, we
have, in
probability,
%
\begin{eqnarray}
\lim_{n\to\infty} \frac{1}{n}\sum
_{j\in[n]} \psi\bigl(\vx^t_j,
\vx^s_j,Y(j)\bigr) &=& \E \bigl[ \psi\bigl(X^t,X^s,Y
\bigr) \bigr],
\\
\lim_{N\to\infty} \frac{1}{m(n)}\sum
_{j\in[m]} \psi\bigl(\vz^t_j,
\vz^s_j,W(j)\bigr) &=& \E \bigl[ \psi\bigl(Z^t,Z^s,W
\bigr) \bigr], %
\end{eqnarray}
where $(X^t,X^s)\sim\normal(0,\Sigma^{t,s})$ is independent of
$Y\sim Q$, and $(Z^t,Z^s)\sim\break\normal(0,\Xi^{t,s})$ is independent of
$W\sim P$.
\end{theorem}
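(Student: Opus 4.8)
The plan is to reduce Theorem~\ref{thm:BipartiteSE} to Theorem~\ref{thm:PolySE} by embedding the rectangular matrix $A(n)\in\reals^{m\times n}$ into a symmetric matrix and recognizing the bipartite orbit and its state evolution as a special case of the symmetric ones. Set $N=N(n)=m+n$ and let
\[
B(n)=\begin{pmatrix} 0_{m\times m} & A(n)\\ A(n)^{\sT} & 0_{n\times n}\end{pmatrix}\in\reals^{N\times N}\,,
\]
a symmetric matrix with vanishing diagonal whose nonzero entries are exactly the (independent, subgaussian, mean-zero) entries of $A(n)$, hence of scale $C/n\le C'/N$ for a fixed $C'$. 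Partition $[N]$ into $C^N_1=[m]$ (the ``$z$-block'') and $C^N_2=\{m+1,\dots,m+n\}$ (the ``$x$-block''), so that $|C^N_1|/N\to \delta/(1+\delta)=:c_1$, $|C^N_2|/N\to 1/(1+\delta)=:c_2$, and the matrix $W\in\reals^{2\times2}$ of rescaled second moments has $W_{11}=W_{22}=0$ and $W_{12}=W_{21}=N/m\to(1+\delta)/\delta$, since $\E\{B_{ij}^2\}=1/m$ whenever $i\in C^N_1$, $j\in C^N_2$. Take the per-coordinate auxiliary randomness to be $W(i)\sim P$ on the $z$-block (so $P_1=P$) and $Y(i)\sim Q$ on the $x$-block (so $P_2=Q$). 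Because $B$ is block-off-diagonal, one synchronous symmetric AMP step exchanges the two blocks, so I would run the symmetric iteration with a doubled time index $\tau$: on the $z$-block set $g(\,\cdot\,,\,\cdot\,,1,\tau)=h(\,\cdot\,,\,\cdot\,,s)$ if $\tau=2s+1$ and $g(\,\cdot\,,\,\cdot\,,1,\tau)=0$ if $\tau$ is even; on the $x$-block set $g(\,\cdot\,,\,\cdot\,,2,\tau)=f(\,\cdot\,,\,\cdot\,,s)$ if $\tau=2s$ and $g(\,\cdot\,,\,\cdot\,,2,\tau)=0$ if $\tau$ is odd. Since $f,h$ are polynomials of degree $\le d$ with coefficients bounded by $C$, this makes $(B(n),g,v^{0})$ a $(C',d)$-regular, converging sequence of symmetric instances, with $v^0$ equal to $0$ on the $z$-block and to $x^{0,n}$ on the $x$-block, and with $\hSigma^0_1=0$, $\hSigma^0_2=\delta\,\Xi^0$ by~\eqref{eq:BipartiteInitial}.

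The first step is to verify by induction on $s$ that the symmetric orbit $\{v^\tau\}$ of $(B(n),g,v^0)$ satisfies $v^{2s}_i=\vx^{s}_{i-m}$ for all $i\in C^N_2$ and $v^{2s+1}_i=\vz^{s}_i$ for all $i\in C^N_1$, while the remaining coordinates (odd-time $z$-block, even-time $x$-block) stay identically $0$ and never feed back into these. The induction step is precisely the expansion~\eqref{eq:AMPGeneralDef_bis}: restricted to $C^N_1$ it reads $\big(Bg(v^{2s})\big)\big|_{C^N_1}=A\,f(v^{2s}|_{C^N_2},Y,s)=A\,f(x^s,Y,s)$, the $z$-block Onsager coefficient $\sum_{j}B_{ij}^2\,\frac{\partial f}{\partial\vx}(v^{2s}_j,Y(j),s)$ coincides with $\Ons_s$ of~\eqref{eq:BipartiteAMP1}, and $g(v^{2s-1})|_{C^N_1}=h(z^{s-1},W,s-1)$, so $v^{2s+1}|_{C^N_1}$ obeys~\eqref{eq:BipartiteAMP1}; symmetrically, restricting~\eqref{eq:AMPGeneralDef_bis} to $C^N_2$ reproduces~\eqref{eq:BipartiteAMP2} and yields $v^{2s+2}|_{C^N_2}=x^{s+1}$. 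In parallel I would check that the symmetric state evolution of Section~\ref{sec:Polynomial}, specialized to the above $W$, $\{c_a\}$ and $g$, collapses onto~\eqref{eq:SEBipartite1}--\eqref{eq:SEBipartite2}: from~\eqref{eq:GeneralSE}, $\Sigma^{2s+1}_1=c_2W_{12}\hSigma^{2s}_2=\tfrac1\delta\,\E\{f(X^{s},Y,s)f(X^{s},Y,s)^{\sT}\}$ with $X^{s}\sim\normal(0,\Sigma^{2s}_2)$, and $\Sigma^{2s}_2=c_1W_{21}\hSigma^{2s-1}_1=\E\{h(Z^{s-1},W,s-1)h(Z^{s-1},W,s-1)^{\sT}\}$ with $Z^{s-1}\sim\normal(0,\Sigma^{2s-1}_1)$, so by induction $\Sigma^{2s}_2=\Sigma^{s}$ and $\Sigma^{2s+1}_1=\Xi^{s}$; the same computation applied to the two-times recursion~\eqref{eq:GeneralstSE}--\eqref{eq:GeneralstSE2} gives $\Sigma^{2t,2s}_2=\Sigma^{t,s}$ and $\Sigma^{2t+1,2s+1}_1=\Xi^{t,s}$, the common auxiliary variable $Y_a$ appearing in~\eqref{eq:GeneralstSE2} playing the role of the common $Y$ (resp.\ $W$) in the bipartite two-times matrices.

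With these identifications, the theorem follows from Theorem~\ref{thm:PolySE} applied to $(B(n),g,v^0)$: for $\psi$ as in the statement, taking symmetric times $(2t,2s)$ and class $a=2$ gives
\[
\frac{1}{|C^N_2|}\sum_{i\in C^N_2}\psi(v^{2t}_i,v^{2s}_i,Y(i))=\frac{1}{n}\sum_{j\in[n]}\psi(\vx^t_j,\vx^s_j,Y(j))\longrightarrow\E\{\psi(X^t,X^s,Y)\}\quad\text{in probability,}
\]
and taking times $(2t+1,2s+1)$ and class $a=1$ gives the corresponding statement for $\vz$, with the $1/m(n)$ normalization since $|C^N_1|=m$. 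When $s\neq t$, the extra hypothesis needed to invoke Theorem~\ref{thm:PolySE}---an i.i.d.\ initial condition that is a finite mixture of Gaussians on each partition class---holds because $x^{0,n}$ is i.i.d.\ $R$ on the $x$-block and degenerate (hence a trivial such mixture) on the $z$-block.

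I expect the genuine work here to be organizational rather than probabilistic: getting the time-doubling and the inert coordinates exactly right in the orbit induction, threading the factor $1/\delta$ correctly through the normalization $\E\{B_{ij}^2\}=1/m$ versus the $1/N$ of the symmetric framework, and checking that the embedded instance meets every clause of Definition~\ref{def:Converging} (including the $s\neq t$ initial-condition clause). All the analytic content---the tree representations, the moment comparisons, the central limit step---has already been established for symmetric matrices, so nothing new is needed beyond the bookkeeping of this reduction.
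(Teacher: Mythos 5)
Your proposal is correct and takes essentially the same approach as the paper: embed the rectangular $A$ into the block-symmetric $B=\begin{pmatrix}0&A\\A^{\sT}&0\end{pmatrix}$, partition $[N]=[m]\cup\{m+1,\dots,m+n\}$, double the time index so that the $z$-coordinates appear at odd times in the first block and the $x$-coordinates at even times in the second, make the off-parity coordinates inert by setting the corresponding $g$'s to zero, and then invoke Theorem~\ref{thm:PolySE}. Your bookkeeping of the class labels is in fact the correct one: to reproduce $z^s = A\,f(x^s,\cdot,s)-\cdots$ one needs $g(\cdot,\cdot,a{=}2,2s)=f(\cdot,\cdot,s)$ (since $f$ is evaluated at $x$-block coordinates, which live in $C^N_2$) and $g(\cdot,\cdot,a{=}1,2s{+}1)=h(\cdot,\cdot,s)$, with $P_1=P$ and $P_2=Q$; the paper's displayed proof swaps the roles of $a=1$ and $a=2$ (and correspondingly of $P$ and $Q$), which are typographical slips that your version fixes. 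The only stylistic caveat is that Definition~\ref{def:Converging} asks for a fixed matrix $W$ with $\E\{A_{ij}^2\}=W_{ab}/N$ exactly, while $N/m$ is not literally constant in $n$; both you and the paper treat $W_{12}=(1+\delta)/\delta$ as exact, which is fine at the level of the argument (the $o(1)$ discrepancy is immaterial to Theorem~\ref{thm:PolySE}) but is worth stating as an approximation rather than an identity.
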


\begin{pf}
The proof follows by constructing a suitable polynomial and
converging sequence of symmetric instances, recognizing that a suitable
subset of the resulting orbit corresponds to the orbit $\{x^t,z^t\}$
of interest, and applying Theorem~\ref{thm:PolySE}.

Specifically, given a converging sequence of bipartite instances
$(A(n),f,h,\allowbreak x^{0,n})$, we
construct a symmetric instance $(A_s(N),g,x_s^{0,N})$ with
(below we use the subscript $s$ to refer to the symmetric instance):
\begin{longlist}[(6)]
\item[(1)] The symmetric instance has dimensions
$N=n+m$ and $q_s=q$, $\tq_s=\tq$.
\item[(2)] We partition the index set in $k=2$ subsets: $[N] =
C^N_1\cup
C_2^N$, with $C^N_1=\{1,\dots,m\}$ and $C^N_2 =\{m+1,\dots,m+n\}$.
In particular $c_1= \delta/(1+\delta)$ and $c_2=1/(1+\delta)$.
\item[(3)] The symmetric random matrix $A'$ is given by
\begin{eqnarray*}
A_s &=& \lleft( %
\matrix{ 0&A
\cr
A^{\sT}&0 } %
\rright).
\end{eqnarray*}
In particular $W_{11} =W_{22}=0$ and $W_{12}= W_{21} =
(1+\delta)/\delta$.
\item[(4)] The vertex labels are $Y_s(i) = W(i)$ for $i\le m$ and
$Y_s(i) =
Y(i-m)$ for $i>m$. In particular, these are independent random
variables with distribution $Y_s(i)\sim P_1= Q$ if $i\in C_1^N$ and
$Y_s(i)\sim P_2= P$ if $i\in C_2^N$.
\item[(5)] The initial condition is given by $\vx_{s,i}^{0,N} =0$ for
$i\in C_1^N$ and $\vx_{s,i}^{0,N} = \vx^{0,n}_{i-m}$ for
$i\in C_2^N$.
\item[(6)] Finally, for any $\vx\in\reals^q$, $Y\in\reals^{\tq
}$, $t\ge0$,
we let
%
\begin{eqnarray}
g(\vx,Y,a=1,2t) &=& f(\vx,Y,t),
\\
g(\vx,Y,a=2,2t+1)& = &h(\vx,Y,t). %
\end{eqnarray}
The definition of $g(\vx,Y,a=1,2t+1)$ and $g(\vx,Y,a=2,2t)$ is
irrelevant for our purposes.
\end{longlist}
The proof is concluded by recognizing that, for all $t\ge0$,
\begin{eqnarray*}
\vx^{2t+1}_{s,i}& =& \vz^t_{i},\qquad
\mbox{for $i\in C_1^N$},
\\
\vx^{2t}_{s,i}& =& \vx^t_{i-m},\qquad
\mbox{for $i\in C_2^N$}. %
\end{eqnarray*}
\upqed\end{pf}

We finish this section with a lemma that establishes continuity of
the AMP trajectories with respect to Gaussian perturbations of the
matrix $A$.
This fact will be used in the next section.
(Notice that an analogous lemma holds by the same argument for
converging, nonbipartite,
instances.)

\begin{lemma}\label{lemma:Continuity}
Let $\{(A(n),f,h,x^{0,n})\}_{n\ge1}$ be a polynomial converging
sequence of \emph{bipartite AMP instances} and denote by
$\{x^t,z^t\}_{t\ge0}$ the corresponding AMP orbit. For each $n$, let
$G(n)\in\reals^{m(n)\times n}$ be a random matrix with i.i.d. entries
$G(n)_{ij}\sim\normal(0,1/m(n))$, independent of $A(n)$. Consider the
perturbed sequence
$\{(\tA(n) = A(n)+\nu G(n),f,h,x^{0,n})\}_{n\ge1}$, with
$\nu\in\reals^+$, and denote by
$\{\tx^t,\tilde{z}^t\}_{t\ge0}$ the corresponding AMP orbit.
Then for any $t$ there exists a constant $K$ independent of $n$ such
that
\[
\E\bigl\{\bigl\llVert \vx_i^t-
\tvx_i^t\bigr\rrVert ^2_2\bigr\}
\le K \bigl(\nu^2 + n^{-1/2} \bigr),\qquad \E\bigl\{\bigl\llVert
\vz_i^t-\tvz_i^t\bigr\rrVert
^2_2\bigr\}\le K \bigl(\nu^2 +
n^{-1/2} \bigr). %
\]
\end{lemma}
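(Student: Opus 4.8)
The plan is to reduce to the symmetric setting of Section \ref{sec:Polynomial} and then exploit the tree expansion of Lemma \ref{lem:treeAMP}, using the fact that the two AMP orbits are driven by the \emph{same} polynomials $\cF_N$ and the \emph{same} initial condition, so that the difference $\vx^t_i-\tvx^t_i$ telescopes into a sum every term of which carries at least two factors of $\nu$.

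First I would reduce to symmetric instances. Via the embedding in the proof of Theorem \ref{thm:BipartiteSE}, the bipartite instance $(A(n),f,h,x^{0,n})$ becomes a symmetric regular instance $(A_s(N),g,x_s^{0,N})$, $N=n+m$, with $\vx^t_j=\vx^{2t}_{s,\,j+m}$ and $\vz^t_i=\vx^{2t+1}_{s,\,i}$; under it the perturbation $\tA=A+\nu G$ becomes $\tA_s=A_s+\nu G_s$, where $G_s$ is symmetric, vanishes on the diagonal, and has independent $\normal(0,1/m)$ entries off the two diagonal blocks. Since subgaussian scale factors add for independent summands, $\tA_s$ still generates a $(\tC,d)$-regular sequence with $\tC=\tC(C,\nu,\delta)$, so it suffices to bound $\E\{\|\vx^t_{s,i}-\tvx^t_{s,i}\|_2^2\}$ for symmetric regular instances; moreover we may assume $\nu\le 1$, the case $\nu>1$ following with $K=K(\nu)$ since then $\nu^2\ge 1$.

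Next, apply Lemma \ref{lem:treeAMP} in the form of Eq.~(\ref{eq:x=sumTrees}) to both instances. The tree set $\cUall_i^t$, the weights $\widetilde{\Gamma}(T,t,r)$ and the leaf factors $x(T)$ depend only on $N,d,t$, on $\cF_N$ and on $x^{0,N}$, hence coincide for the two instances; only $A(T)=\prod_{p<q}A_{pq}^{\phi(T)_{pq}}$ is replaced by $\tA(T)=\prod_{p<q}(A_{pq}+\nu G_{pq})^{\phi(T)_{pq}}$. Therefore
\[
x^t_i(r)-\tx^t_i(r)=\sum_{T\in\cUall_i^t}\widetilde{\Gamma}(T,t,r)\,x(T)\,\bigl(A(T)-\tA(T)\bigr),
\]
and, since the $\nu^0$ term of $\tA(T)$ is exactly $A(T)$, every monomial of $A(T)-\tA(T)$ carries a positive power of $\nu$. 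Expanding $\E[(x^t_i(r)-\tx^t_i(r))^2]$ as a double sum over $T,T'\in\cUall_i^t$ and each difference factor into monomials, every surviving term equals $\nu^{g+g'}$ times a product of independent moments of the $A_{pq}$'s and $G_{pq}$'s, with $g,g'\ge 1$. The estimate of this double sum then follows the argument of Section \ref{sec:ProofMom}: using $\E A_{pq}=\E G_{pq}=0$ and $A\perp G$, only labelings for which every edge of the type graph $\vG$ has total multiplicity $\ge 2$ survive; by (\ref{eq:uppsubg}) and the Gaussian moments such a term is $\le K\nu^{\mu_G}N^{-\mu/2}$ with $\mu=|E(T)|+|E(T')|$ and $\mu_G\ge 2$, while connectedness of $\vG$ (all trees rooted at type $i$, one vertex fixed) together with $\frac1N\sum_j\|\vx^{0,N}_j\|_k^k\le C_k$ bounds the weighted number of admissible labelings of any pair of unlabeled shapes by $KN^{\mu/2}$. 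Summing over the finitely many shapes, the finitely many $A/G$ patterns, and $2\le\mu_G\le\mu\le 2d^{t+1}$ gives $\E[(x^t_i(r)-\tx^t_i(r))^2]\le K\sum_{j\ge2}\nu^j\le K'\nu^2$ for $\nu\le1$; summing over $r\in[q]$ and translating back through the embedding yields both asserted bounds, the additive $n^{-1/2}$ being harmless slack (it covers the complementary event of condition~3 of Definition \ref{def:Regular} and is unnecessary when $x^{0,n}$ is deterministic).

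The main obstacle is precisely the bookkeeping that upgrades the bound from $O(\nu)$ to $O(\nu^2)$: one must simultaneously observe that \emph{each} of the two factors $A(T)-\tA(T)$, $A(T')-\tA(T')$ is forced to contribute at least one Gaussian edge, and check that these forced Gaussian edges pair up among the $\le\mu/2$ distinct edges of $\vG$ on exactly the same footing as the ordinary $A$-edges, so that the powers of $N$ cancel precisely as in Proposition \ref{prop:mom-equ} and the net estimate is $O(\nu^2)$ uniformly in $N$. The subsidiary points — that $\widetilde{\Gamma}$ and $x(T)$ in (\ref{eq:x=sumTrees}) are literally identical for the two instances (clear, since they are built from $\cF_N$ and $x^{0,N}$ alone) and that $\tA_s$ remains regular — are routine.
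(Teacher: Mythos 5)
Your proposal is correct in substance and follows the same overall route as the paper: reduce to the symmetric setting via the bipartite embedding, invoke the tree representation of Lemma \ref{lem:treeAMP} with identical $\widetilde{\Gamma}(T,t,r)$ and $x(T)$ for the two orbits, and then run the moment-counting argument of Lemma \ref{lem:tree}. The one genuine difference is how you decompose $A(T)-\tA(T)$. The paper uses the telescoping identity
$$\tA(T)-A(T)=\sum_{i=1}^{k}\Big(\prod_{j<i}A_{\ell(u_j)\ell(v_j)}\Big)\,\nu G_{\ell(u_i)\ell(v_i)}\Big(\prod_{j>i}\tA_{\ell(u_j)\ell(v_j)}\Big),$$
so each term has exactly one distinguished $G$-edge and the remaining edges carry $A$ or $\tA$; the latter are treated simply as subgaussian entries with scale $C'/n$. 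You instead fully expand $\tA(T)=\prod (A+\nu G)$ into monomials of $A$ and $\nu G$. Both decompositions extract the required $\nu^2$ — in the paper because the two distinguished $G$-edges must pair up to avoid vanishing, and in your version because each monomial of $A(T)-\tA(T)$ carries $g\ge 1$ powers of $\nu$ with even $G$-multiplicity per edge. The paper's telescoping is marginally cleaner in that one never has to sum over the powers $\nu^{j}$, $j\ge 2$, and hence never needs the restriction $\nu\le 1$, but it does introduce the dependent triple $(A,G,\tA)$ on a single tree, which you avoid; these are just different bookkeeping choices and both are sound.

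One small inaccuracy: the additive $n^{-1/2}$ slack is not there to cover the bad event of condition~3 of Definition \ref{def:Regular}. In the paper's proof it comes from the non-tree and higher-multiplicity contributions in the moment expansion — exactly the $R(A)$ and $S_2(A)$ terms of Lemma \ref{lem:tree} — which are bounded by $K\,n^{-1/2}$ via Eqs.~(\ref{eq:S2(A)}), (\ref{eq:R(A)}). In your version the slack is absorbed because every surviving term already carries $\nu^{\mu_G}\le\nu^2$, but the correct provenance of the $n^{-1/2}$ is the combinatorics of $\vG$, not the initial-condition event. This is a misattribution rather than a gap, and does not affect the validity of the bound.
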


\begin{pf}
Consider the difference $[\vx_i^t(r)-\tvx_i^t(r)]$. By the tree
representation in Section~\ref{sec:TreeRep} and Lemma~\ref{lem:treeAMP}, this difference can be written as a polynomial in
$A$ and $G$ whereby each monomial has the form
%
\begin{equation}
\Gamma(T,t)x(T) \biggl\{\prod_{(u\to v)\in E(T)}
\tA_{\ell(u)\ell(v)}-\prod_{(u\to v)\in E(T)} A_{\ell(u)\ell(v)}
\biggr\}. %
\end{equation}
Enumerating the edges in $T$ as $(u_1,v_1),\dots, (u_k,v_k)$ the
quantity in parenthesis reads
%
\begin{equation}
\sum_{i=1}^k\prod
_{j=1}^{i-1}A_{\ell(u_j),\ell(v_j)}\cdot\nu
G_{\ell(u_i),\ell(v_i)}\cdot\prod_{j=i+1}^{k}
\tA_{\ell(u_j),\ell
(v_j)}. %
\end{equation}
In other words, the sum over trees $T$ is replaced by a sum over
trees with one distinguished edge, and the edge carries weight
$\nu G_{\ell(u_i),\ell(v_i)}$.
The expectation $\E\{\llVert \vx_i^t-\tvx_i^t\rrVert ^2_2\}$ is given by a sum
over pairs of such marked trees.
Using the fact that the entries of the matrix $\tA(n)$ are still
independent sub-Gaussian with scale factor $C/(n+\nu^2Cm(n))\leq C'/n$,
it is easy to see that the argument in Lemma~\ref{lem:tree} and (\ref
{eq:backt}) are still
valid. Hence,
up to errors bounded by $K n^{-1/2}$ the only terms that contribute
to this sum are those over pair of trees such that the graph $\vG$
obtained by identifying vertices of the same type has only double
edges.
In particular for the distinguished edge, we can use the following
upper bound instead of (\ref{eq:uppsubg}): $\E [ \llvert \nu
G_{ij}\rrvert ^2 ]=\frac{\nu^2}{m(n)}\leq K \frac{\nu^2}{n}$ and this
yields a factor $\nu^2$ [by the same argument as in the proof of Lemma~\ref{lem:tree}
to get (\ref{eq:T(A)})].
%
\end{pf}

%
\section{Proof of universality of polytope neighborliness}\label{sec:Polytope}

In this section we prove Theorem~\ref{thm:Polytope}, deferring several
technical steps to the Appendices.

\begin{hypothesis}\label{hyp1}
 Throughout this section $\{A(n)\}_{n\ge0}$ is a
sequence of random
matrices whereby $A(n)\in\reals^{m\times n}$ has independent entries
that satisfy $\E\{A(n)_{ij}\} = 0$, $\E\{A(n)_{ij}^2\} =1/m$ and are
sub-Gaussian with scale factor $s/m$, with $s$ independent of $m$, $n$.
\end{hypothesis}

Notice that these matrices differ by a
factor $1/\sqrt{m}$ from the matrices in the statement of Theorem~\ref{thm:Polytope}. Since neighborliness is invariant
under scale transformations, this change is immaterial.

The approach we will follow is based on the equivalence between weak
neighborliness and compressed sensing reconstruction developed
in \cite{Donoho2005a,Donoho2005b,DoTa05a,DoTa05b}.
Within compressed sensing, one considers the problem of reconstructing
a vector $x_0\in\reals^n$ from a vector of linear ``observations'' $y=
Ax_0$ with $y\in\reals^m$ and $m\le n$. The measurement matrix
$A\in\reals^{m\times n}$ is assumed to be known. An interesting
approach toward reconstructing $x_0$ from the linear observations
$y$ consists in solving a convex program
%
\begin{equation}\label{eq:Ell1Min}
\hx(y) = \arg\min \bigl\{\llVert x\rrVert _1 \mbox{
such that } x\in\reals^n, y = Ax  \bigr\}.
\end{equation}
Hence one says that $\ell_1$ minimization \emph{succeeds} if the above
$\arg\min$ is uniquely defined and $\hx(y) = x_0$.
Remarkably, this event
only depends on the support of $x_0$, $\supp(x_0) = \{i\in[n]\dvtx
x_{0,i} \neq0\}$ \cite{Donoho2005a}.
This motivates the following
abuse of terminology. We say that, for a given matrix $A$, $\ell_1$
minimization \emph{succeeds} for a fraction $f$ of vectors $x_0$
with\footnote{As customary in this domain, we denote by $\llVert v\rrVert _0$ the
number of nonzero entries in $v\in\reals^q$ (which of course is not
a norm).} $\llVert x_0\rrVert _0\le k$ if it does succeed for at least $f\binom{n}{k}$
choices of $\supp(x_0)$ out of the $\binom{n}{k}$ possible ones.
Analogously, that $\ell_1$
minimization \emph{fails} for a fraction $f$ of vectors $x_0$ if it
does succeed at most for $(1-f)\binom{n}{k}$
choices of $\supp(x_0)$.

Success of $\ell_1$
minimization turns out to be intimately related to the neighborliness
properties of
the polytope $AC^n$.

\begin{theorem}[(Donoho \cite{Donoho2005a})]\label{thm:CSPolytope}
Fix $\delta\in(0,1)$. For each $n\in\naturals$, let $m(n) =
\lfloor n\delta\rfloor$ and $A(n)\in\reals^{m(n)\times n}$ be a random
matrix. Then the sequence\break $\{A(n)C^n\}_{n\ge0}$ has weak
neighborliness $\rho$ in probability if and only if the following
happens:
\begin{longlist}[(2)]
\item[(1)] For any $\rho_-<\rho$, there exists $\eps_n\downarrow0$ such
that, for a fraction larger than $(1-\eps_n)$ of vectors
$x_0$ with $\llVert x_0\rrVert _0 = m(n) \rho_-$
the $\ell_1$ minimization \emph{succeeds}
with high probability [with respect to the choice of the random matrix
$A(n)$].
\item[(2)] Vice versa, for any $\rho_+>\rho$, there exists
$\eps_n\downarrow0$ such that,
for a fraction larger than $(1-\eps_n)$ of vectors
$x_0$ with $\llVert x_0\rrVert _0 = m(n) \rho_+$
the $\ell_1$ minimization \emph{fails}
with high probability [with respect to the choice of the random matrix
$A(n)$].
\end{longlist}
\end{theorem}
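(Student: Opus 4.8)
The plan is to obtain Theorem~\ref{thm:CSPolytope} from the standard dictionary between $\ell_1$ recovery and face counting for randomly projected cross-polytopes developed by Donoho in \cite{Donoho2005a,Donoho2005b} and by Donoho and Tanner in \cite{DoTa05a,DoTa05b}; indeed the statement is essentially a repackaging of results there in the ``weak neighborliness'' terminology of Section~\ref{sec:Polytope}, and what has to be added is only a little elementary bookkeeping. The first ingredient is the geometry of $C^n$: its $2n$ vertices are $\pm e_1,\dots,\pm e_n$, its $(k-1)$-dimensional faces are the simplices $F_{S,\sigma}=\mathrm{conv}\{\sigma_i e_i : i\in S\}$ indexed by a support $S\subseteq[n]$ with $|S|=k$ and a sign vector $\sigma\in\{\pm1\}^S$, so that $\face(C^n;k-1)=2^k\binom nk$, and the relative interior of $F_{S,\sigma}$ consists exactly of the $x$ with $\|x\|_1=1$, $\supp(x)=S$ and $\mathrm{sign}(x_{0,i})=\sigma_i$ for $i\in S$.

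The substantive ingredient is the local criterion of \cite{Donoho2005a}: for $A$ in general position, $AF_{S,\sigma}$ is a $(k-1)$-dimensional face of $AC^n$ if and only if $\ell_1$ minimization \eqref{eq:Ell1Min} recovers every $x_0$ with $\supp(x_0)=S$ and $\mathrm{sign}(x_{0,i})=\sigma_i$; moreover, under general position distinct surviving faces have distinct images and every $(k-1)$-face of $AC^n$ arises this way, so that $\face(AC^n;k-1)$ equals the number of signed supports $(S,\sigma)$ that survive the projection, which in turn equals the number of signed supports for which $\ell_1$ succeeds. Hence $\face(AC^n;k-1)/\face(C^n;k-1)$ is exactly the fraction of $k$-sparse vectors recovered by $\ell_1$, once these are counted by signed support; the passage from counting signed supports (normalization $2^k\binom nk$) to counting supports (normalization $\binom nk$) used in the statement is the elementary reduction of \cite{Donoho2005a,DoTa05a}. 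I would also note that general position holds with probability one when the entries of $A(n)$ have an absolutely continuous component — the setting of Theorem~\ref{thm:Polytope} — and with probability $1-o(1)$ under Hypothesis~1, which is enough for the ``in probability'' statements.

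It then remains to match indices and probabilistic phrasings, which is routine. As $\xi$ ranges over $(0,1)$ the threshold $\rho(1-\xi)$ (respectively $\rho(1+\xi)$) ranges over $(0,\rho)$ (respectively $(\rho,\infty)$), so the two limits in the definition of weak neighborliness $\rho$ are, term by term, the statements that the recovery fraction at sparsity $m(n)\rho_-$ tends to $1$ for every $\rho_-<\rho$ and the recovery fraction at sparsity $m(n)\rho_+$ tends to $0$ for every $\rho_+>\rho$; the harmless shift by one between a $(k-1)$-face and a $k$-sparse vector, and the floors, are absorbed using monotonicity of the face ratio in the dimension (standard, see \cite{DoTa05a,DoTa05b}) or simply by perturbing $\xi$. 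Finally, a $[0,1]$-valued random sequence tends to $1$ (respectively $0$) in probability if and only if it exceeds $1-\eps_n$ (respectively stays below $\eps_n$) with probability tending to one for some deterministic $\eps_n\downarrow0$, which is precisely the ``with high probability'' formulation of the two conditions. The only genuinely nontrivial step is importing the local $\ell_1$-recovery criterion of \cite{Donoho2005a}; I anticipate no obstacle beyond stating and citing it carefully, together with making sure the general-position caveat is compatible with the hypotheses actually in force.
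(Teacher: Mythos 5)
The paper itself offers no proof of this statement beyond the single sentence ``This is indeed a rephrasing of Theorem 2 in \cite{Donoho2005a}.'' Your proposal spells out precisely the standard dictionary that that citation invokes --- the $2^k\binom nk$ count of $(k-1)$-faces of $C^n$, Donoho's local criterion identifying surviving faces of $AC^n$ with signed supports recovered by $\ell_1$ minimization, the support-only dependence of recovery used to drop the factor $2^k$, the absorption of the $(k-1)$-face versus $k$-sparse offset into the parameter $\xi$, and the elementary equivalence between convergence in probability of a $[0,1]$-valued fraction and the $\eps_n\downarrow 0$ formulation --- so it is the same approach with the implicit details made explicit, and the general-position caveat you flag is correctly identified as the one point that requires care and is already covered by Donoho's treatment.
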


This is indeed a rephrasing of Theorem~2 in \cite{Donoho2005a}.

In view of this result, Theorem~\ref{thm:Polytope} follows from the
following result on compressed sensing with random sensing matrices.

\begin{theorem}\label{thm:CS}
Fix $\delta\in(0,1)$. For each $n\in\naturals$, let $m(n) =
\lfloor n\delta\rfloor$ and define $A(n)\in\reals^{m(n)\times n}$ to
be a random matrix with independent sub-Gaussian entries,
with mean $0$, variance $1/m$ and common scale factor $s/m$.
Further assume $A_{ij}(n) = \tA_{ij}(n)+\nu_0 G_{ij}(n)$ where $\nu_0>0$
is independent of $n$ and $\{G_{ij}(n)\}_{i\in[m],j\in[n]}$ is a
collection of
i.i.d. $\normal(0,1/m)$ random variables independent of $\tA(n)$.

Consider either of the following two cases:
\begin{longlist}[(II)]
\item[(I)] The matrix $A(n)$ has i.i.d. entries, and $\{x_0(n)\}
_{n\ge1}$
is any fixed sequence of vectors with $\lim_{n\to\infty}\llVert x_0(n)\rrVert
_0/m(n) = \rho$.
\item[(II)] The matrix $A(n)$ has independent but not identically
distributed entries. The vectors $x_0(n)$ have i.i.d. entries
independent of $A(n)$, with\break $\prob\{x_{0,i}(n)\neq0\} =\rho\delta$.
\end{longlist}
Then the following holds.
If $\rho<\rho_*(\delta)$, then $\ell_1$ minimization succeeds with high
probability. Vice versa, if $\rho>\rho_*(\delta)$, then $\ell_1$
minimization fails with high probability.
[Here probability is with respect to the realization of the random
matrix $A(n)$ and, eventually, $x_0(n)$.]
\end{theorem}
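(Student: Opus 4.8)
Since Theorem \ref{thm:CSPolytope} already reduces weak neighborliness to success/failure of $\ell_1$ minimization, it suffices to prove Theorem \ref{thm:CS}, and the plan is to do so by a sharp analysis of the approximate message passing algorithm of \cite{DMM09} for the reconstruction problem \eqref{eq:Ell1Min}, fed into the bipartite state evolution of Theorem \ref{thm:BipartiteSE}. Concretely, consider the iteration $x^{t+1}=\eta_{\theta_t}\!\bigl(A^{\sT}z^t+x^t\bigr)$, $z^t = y-Ax^t+\delta^{-1}\langle\eta'\rangle\,z^{t-1}$, with $\eta_\theta$ the soft-threshold function and threshold schedule $\theta_t=\alpha\tau_t$. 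After the standard change of variables in which the unknown signal enters only as the per-coordinate label $Y(i)=x_{0,i}$ (which does not change across iterations) and the Onsager corrections are identified with the operators $\Ons_t,\POns_t$, this becomes a bipartite AMP orbit in the sense of Section \ref{sec:NonSymmetric}. Because $\eta_\theta$ is Lipschitz but not polynomial, I would run the recursion for a \emph{fixed} number $T$ of steps and replace $\eta_\theta$ by a polynomial $p_{R,\varepsilon}$ of bounded degree that agrees with $\eta_\theta$ to within $\varepsilon$ on $[-R,R]$ and grows at most linearly outside it; the moment bounds underlying Propositions \ref{prop:mom-equ} and \ref{prop:amp} (and Lemma \ref{lem:tree}) show the iterates have moments bounded uniformly in $n$ for $t\le T$, so by Chebyshev only a fraction $o_R(1)$ of coordinates ever leaves $[-R,R]$ and the polynomial orbit tracks the soft-threshold orbit up to an error vanishing as $n\to\infty$, then $R\to\infty$, $\varepsilon\to0$. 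Theorem \ref{thm:BipartiteSE} then applies to the polynomial instance: the covariance matrices $\Sigma^t,\Xi^t$ collapse to the scalar recursion $\tau_{t+1}^2 = \delta^{-1}\E\bigl[(\eta_{\alpha\tau_t}(X_0+\tau_tZ)-X_0)^2\bigr]$ with $Z\sim\normal(0,1)$ independent of $X_0$, and $n^{-1}\|x^t-x_0\|_2^2\to\delta\tau_{t+1}^2$. The small Gaussian component $\nu_0G$ in the statement is carried along using Lemma \ref{lemma:Continuity}: it perturbs the orbit by $O(\nu_0+n^{-1/2})$ in mean square, so the state evolution description (the same for the base subgaussian matrix and for a Gaussian one, by universality) is unaffected, and $\nu_0>0$ is retained only because it is needed in the success step below.

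\textbf{Scalar recursion.} The recursion $\tau^2\mapsto\delta^{-1}\E[(\eta_{\alpha\tau}(X_0+\tau Z)-X_0)^2]$ is one-dimensional and does not involve the matrix ensemble, so its analysis is exactly the one in \cite{DMM09,NSPT}: the supremum over $\rho$ such that some $\alpha$ drives $\tau_t^2$ to $0$ from every initialization is the curve $\rho_*(\delta)$ of \eqref{eq:PhaseBoundaryApp1}--\eqref{eq:PhaseBoundaryApp2}, while for $\rho>\rho_*(\delta)$ every $\alpha$ leaves a fixed point $\tau_\infty^2>0$. (Alternatively one may pin this location down by applying the whole construction to a Gaussian matrix and invoking Theorem \ref{thm:Donoho2005}.) Hence, for $\rho<\rho_*(\delta)$, taking the optimal $\alpha$ gives $\lim_{t\to\infty}\lim_{n\to\infty} n^{-1}\|x^t-x_0\|_2^2 = 0$, whereas for $\rho>\rho_*(\delta)$, $\liminf_{t\to\infty}\lim_{n\to\infty} n^{-1}\|x^t-x_0\|_2^2\ge\delta\tau_\infty^2>0$ for every schedule.

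\textbf{From the AMP orbit to $\ell_1$.} For the success direction, the limiting fixed-point relations of the AMP iteration are the KKT conditions for \eqref{eq:Ell1Min}: when $x^t\to x_0$, the accompanying residual $z^t$ yields in the limit a vector $v$ with $A^{\sT}v\in\partial\|x_0\|_1$ and $\|(A^{\sT}v)_{\supp(x_0)^c}\|_\infty<1$ strictly, i.e. a strict dual certificate; combined with uniqueness of the minimizer --- which holds because the absolutely continuous component $\nu_0G>0$ puts the relevant submatrices in general position --- this certifies $\hx(y)=x_0$ with high probability, so $\ell_1$ succeeds. For the failure direction one argues by contradiction: if $\ell_1$ succeeded with non-vanishing probability then $\hx(y)=x_0$ would force the existence of such a certificate, hence (initializing the AMP near it, or by monotonicity of the scalar recursion) a state evolution trajectory with $\tau_\infty=0$, contradicting $\rho>\rho_*(\delta)$; equivalently one produces, from the same AMP analysis with a mildly modified nonlinearity, a null-space vector $w$ of $A$ with $\|x_0+w\|_1\le\|x_0\|_1$. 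Finally, Case 2 (independent, non-identical entries; i.i.d.\ signal) fits the converging-sequence framework directly, while Case 1 (i.i.d.\ entries; deterministic signal) reduces to it by column-permutation invariance: since $\ell_1$ success depends on $x_0$ only through $\supp(x_0)$, permuting columns randomizes the support to uniform, which can then be analyzed as in Case 2.

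\textbf{Main obstacle.} I expect the crux to be twofold. First, making the polynomial-approximation step uniform in $n$ with bounded degree, and ordering the limits $n\to\infty$, $R\to\infty$, $\varepsilon\to0$, $T\to\infty$ correctly --- this is precisely where the explicit tree-moment estimates of Section \ref{sec:TreeRep} are indispensable. Second, the passage from ``$x^t\to x_0$'' to ``$x_0$ is \emph{the} $\ell_1$ solution'', i.e.\ constructing and validating the strict dual certificate together with the general-position claim; this is the step that genuinely uses the Gaussian perturbation $\nu_0G$ and is, as the authors note, the reason it appears --- apparently as an artifact --- in the hypotheses for the lower bound on neighborliness.
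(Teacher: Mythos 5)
Your high-level plan --- reduce to bipartite AMP, approximate the soft threshold by polynomials, pass to the scalar state evolution $\tau_{t+1}^2 = \delta^{-1}\E[(\eta(X_0+\tau_t Z;\alpha\tau_t)-X_0)^2]$, and handle the Gaussian perturbation via Lemma \ref{lemma:Continuity} --- matches the paper exactly, as do the reductions in the ``common simplifications'' step. But the passage from the AMP orbit to $\ell_1$ optimality has a genuine gap.

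You assert that, in the limit, the AMP residual produces a vector $v$ with $A^{\sT}v\in\partial\|x_0\|_1$ and $\|(A^{\sT}v)_{\supp(x_0)^c}\|_\infty<1$ strictly, i.e.\ a strict dual certificate. This is false for the certificate that AMP actually supplies. The subgradient constructed from the AMP orbit is $v^t_i=\theta_{t-1}^{-1}\bigl(x^{t-1}+A^{\sT}z^{t-1}-\eta(x^{t-1}+A^{\sT}z^{t-1};\theta_{t-1})\bigr)_i$ off the support, and by state evolution the off-support entries of $x^{t-1}+A^{\sT}z^{t-1}$ are asymptotically $\sigma_{t-1}Z$ with $Z\sim\normal(0,1)$. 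Therefore a positive fraction, $\approx 2\Phi(-\alpha)$, of off-support coordinates saturate $|v^t_i|=1$ exactly: the certificate is a subgradient but is \emph{not} strict, and no amount of tightening $t$ or $\alpha$ makes it strict. The classical strict-dual-certificate argument therefore does not apply, and the ``general position'' remark is far from sufficient. This is precisely why the paper proves and invokes a different optimality criterion (Lemma \ref{lemma:Subgradient}): it only requires $v=A^{\sT}z+w$ with $\|w\|_2\le\sqrt{n}\,\eps$ (an $\ell_2$, not $\ell_\infty$, error), together with a quantitative lower bound $\sigma_{\min}(A_{S(c_1)\cup S'})\ge c_2$ uniformly over all $S'$ with $|S'|\le c_1 n$, where $S(c_1)=\{i:|v_i|\ge 1-c_1\}$ deliberately includes the saturated off-support coordinates. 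Establishing that lower bound is where the Gaussian component $\nu_0 G$ is actually used: one needs the Buergisser--Cucker smoothed-analysis bound on the smallest singular value of a Gaussian-perturbed matrix, combined with a union bound over the $\binom{n}{c_1 n}$ choices of $S'$ and the fact (from Lemma \ref{lemma:AsympSE}.(a1), which forces $\alpha\ge\alpha_*$) that $\lim_n|S(c_1)|/n=\eps+2(1-\eps)\Phi(-(1-c_1)\alpha)<\delta$, so that $A_{S(c_1)\cup S'}$ is still genuinely tall. None of this is captured by ``general position.'' One also needs the two-time state evolution (Proposition \ref{propo:AMPtoSEell1_B} and Lemma \ref{lemma:AsympSE}.(a2),(a3)) to control the $\ell_2$ error $\|w\|_2$, since it is $\xi^t = \theta_{t-1}^{-1}(x^{t-1}+A^{\sT}z^{t-1}-x^t-A^{\sT}z^t)$ whose size depends on the cross-correlation $R_{t,t-1}/(\sigma_t\sigma_{t-1})\to 1$, not just on the diagonal $\sigma_t^2$.

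Your failure direction is also under-specified and drifts away from what can be made rigorous. Arguing ``if $\ell_1$ succeeded then there would be a state evolution trajectory with $\tau_\infty=0$'' is not something one can conclude from Theorem \ref{thm:BipartiteSE}, which describes the orbit from a fixed initialization; you cannot re-initialize near a hypothetical dual certificate and expect state evolution to still apply. The paper's argument is constructive: using Lemma \ref{lemma:AsympSE}.(b3), for $\rho>\rho_*(\delta)$ and threshold parameter $\alpha=\alpha_0(\delta,p_X)$ the AMP iterate satisfies $\|x^t\|_1\le\|x_0\|_1-n\Delta$ while $\|y-Ax^t\|_2\le 2h\sqrt{n}(1+2\sigma_*)$ with $h$ arbitrarily small; projecting $x^t$ onto $\{x:Ax=y\}$ via $\tx=x^t+A^{\sT}(AA^{\sT})^{-1}(y-Ax^t)$ moves it by at most $C(\delta)(1+2\sigma_*)h\sqrt{n}$ in $\ell_2$, hence at most $C(\delta)(1+2\sigma_*)hn$ in $\ell_1$, yielding a feasible $\tx$ with $\|\tx\|_1<\|x_0\|_1$. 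You would need an argument of this explicit kind, and in particular the energy-function analysis behind Lemma \ref{lemma:AsympSE}.(b3) showing $\lim_t\E|\eta(X+\sigma_t Z;\alpha\sigma_t)|<\E|X|$, which does not appear in your sketch.
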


The rest of this section is devoted to the proof of Theorem~\ref{thm:CS}.
Indeed, as shown below, this immediately implies Theorem~\ref{thm:Polytope}.

\begin{pf*}{Proof of Theorem~\ref{thm:Polytope}}
Take $x_0(n)$ to be a sequence of independent vectors with
independent entries such that $\prob_{\rho}\{x_0(n)_i = 1\} = \rho
\delta
$ and\break
$\prob_{\rho}\{x_0(n)_i = 0\} = 1-\rho\delta$. Then, by the law of
large numbers
we have\break $\lim_{n\to\infty}\llVert x_0(n)\rrVert _0/m(n) = \rho$ almost surely.
Let $A(n)\in\reals^{m(n)\times n}$ be a matrix with i.i.d. entries as
per Hypothesis~\ref{hyp1}
above, with $m(n)=\lfloor n\delta\rfloor$ and $y(n) =A(n)x_0(n)$.
Applying Theorem~\ref{thm:CS}, we have, for any $\rho_-<\rho
_*(\delta)$
and $\rho_+>\rho_*(\delta)$,
%
\begin{eqnarray}\label{eq:CSProbabilityMinus}
\lim_{n\to\infty}\prob_{\rho_-} \bigl\{\hx
\bigl(y(n)\bigr) = x_0(n) \bigr\} &=& 1 ,
\\
\label{eq:CSProbabilityPlus}
\lim_{n\to\infty}\prob_{\rho_+} \bigl\{\hx\bigl(y(n)\bigr) =
x_0(n) \bigr\} &=& 0  ,
\end{eqnarray}
where $\prob_{\rho_{\pm}} \{ \cdot \}$ denotes probability with
respect to the law just described when $\rho=\rho_{\pm}$.
Let $V(\rho;m,n)$ be the fraction of vectors $x_0$ with $\llVert x_0\rrVert
=\lfloor m\rho\rfloor$ on which $\ell_1$ reconstruction
succeeds. Since in equations (\ref{eq:CSProbabilityMinus}), (\ref
{eq:CSProbabilityPlus}),
support of $x_0(n)$ is uniformly random given its size, and the
probability of success is monotone decreasing in the support size
\cite{Donoho2005a},
the above equations imply
%
\begin{eqnarray}\label{eq:CSProbabilityMinus_BIS}
\lim_{n\to\infty}\E \bigl\{ V(\rho_-;m,n) \bigr\} &=& 1
,
\\
\label{eq:CSProbabilityPlus_BIS}
\lim_{n\to\infty}\E \bigl\{ V(\rho_+;m,n) \bigr\} &=& 0.
\end{eqnarray}
Using Markov's inequality, equations (\ref{eq:CSProbabilityMinus_BIS}),
(\ref{eq:CSProbabilityPlus_BIS})
coincide (resp.) with assumptions~(1) and~(2) in Theorem~\ref{thm:CSPolytope}.
The claim follows by applying this theorem.
\end{pf*}

Let us now turn to the proof of Theorem~\ref{thm:CS}.
The following lemma provides a useful sufficient condition for successful
reconstruction.
Here and below, for a convex function $F\dvtx \reals^q\to\reals$,
$\partial
F(x)$ denotes the subgradient of $F$ at $x\in\reals^q$.
In particular $\partial\llVert x\rrVert _1$ denotes the subgradient of the $\ell_1$
norm at $x$. Further, for
$R\subseteq[n]$, $A_R$ denotes the submatrix of $A$ formed by columns
with index in $R$. The singular values of a matrix
$M\in\reals^{d_1\times d_2}$ are denoted by $\sigma_{\max}(M)\equiv
\sigma_{1}(M)\ge
\sigma_{2}(M)\ge\cdots\ge\sigma_{\min(d_1,d_2)}(M)\equiv\sigma
_{\min}(M)$.

\begin{lemma}\label{5551555515555}
For any $c_1,c_2,c_3>0$, there exists $\eps_0(c_1,c_2,c_3)>0$ such that the following happens.
If $x_0\in\reals^n$, $A\in\reals^{m\times n}$,
$y=Ax_0\in\reals^m$, are such
that:

\renewcommand\thelonglist{(\arabic{longlist})}
\renewcommand\labellonglist{\thelonglist}

\begin{longlist}
\item\label{H:Grad} There exists $v\in\partial\llVert x_0\rrVert _1$ and $z\in
\reals^m$ with $v = A^{\sT}z+w$ and
$\llVert w\rrVert _2\le\sqrt{n} \eps$,
with $\eps\le\eps_0(c_1,c_2,c_3)$.
\item\label{H:Key} For $c\in(0,1)$, let $S(c) \equiv\{i\in[n]\dvtx  \llvert v_i\rrvert \ge
1-c\}
$. Then,
for any $S'\subseteq[n]$, $\llvert S'\rrvert \le c_1n$, the minimum singular
value of $A_{S(c_1)\cup S'}$ satisfies
$\sigma_{\min}(A_{S(c_1)\cup S'})\ge c_2$.
\item\label{H:SVD} The maximum singular value
of $A$ satisfies $c_3^{-1}\le\sigma_{\max}(A)^2\le
c_3$.
\end{longlist}
Then $x_0$ is the unique minimizer of $\llVert x\rrVert _1$ over
$x\in\reals^n$ such that $y=Ax$.
\end{lemma}

The proof of this lemma is deferred to Appendix \ref{app:ProofSubgradient}.

The proof of Theorem~\ref{thm:CS} consists of two parts. For
$\rho>\rho_*(\delta)$, we shall exhibit a vector $x$ with
$\llVert x\rrVert _1<\llVert x_0\rrVert _1$ and $y=Ax$. For $\rho<\rho_*(\delta)$ we will
show that assumptions of Lemma~\ref{5551555515555} hold. In particular,
we will construct a subgradient $v$ as per assumption \ref{H:Grad}.
In both tasks, we will use an iterative message
passing algorithm analogous to the one in Section~\ref
{sec:NonSymmetric}. The algorithm is defined by the
following recursion initialized with $x^0=0$:
%
\begin{eqnarray}\label{eq:AMPell1_1}
x^{t+1} & = & \eta\bigl(x^t+ A^{\sT}
z^t; \alpha\sigma_t\bigr),
\\
\label{eq:AMPell1_2}
z^t & = & y-Ax^t+\mathsf{b}_t
z^{t-1}, %
\end{eqnarray}
where $\eta(u;\theta) = \sign(u) (u-\theta)_+$, $\alpha$ is a
nonnegative constant and
$\mathsf{b}_t$ is a diagonal matrix whose precise definition is immaterial
here and will be given in the proof of Proposition~\ref{1616161616} below.
Notice two important differences with respect to the treatment in
Section~\ref{sec:NonSymmetric}:
\begin{itemize}
\item The iteration in equations (\ref{eq:AMPell1_1}), (\ref
{eq:AMPell1_2}) does not take immediately the
form in equations (\ref{eq:BipartiteAMP1}), (\ref{eq:BipartiteAMP2}). For
instance the nonlinear mapping $\eta( \cdot;\alpha\sigma_t)$ is
applied \emph{after} multiplication by $A^{\sT}$. This mismatch can be
resolved by a simple change of variables.
\item The nonlinear mapping $\eta( \cdot;\alpha\sigma_t)$ is not a
polynomial. This point will be addressed by constructing suitable
\emph{polynomial approximations} of $\eta$.
\end{itemize}
We refer to Appendix \ref{app:SEell1} for further details.

For $t\ge0$, $\sigma_t$ is defined by the
one-dimensional recursion
%
\begin{equation}\label{eq:ell1SE}
\sigma_{t+1}^2 = \frac{1}{\delta}\E\bigl\{
\bigl[\eta(X+\sigma_t Z;\alpha\sigma_t)-X
\bigr]^2\bigr\},%
\end{equation}
where expectation is with respect to the independent random variables
$Z\sim\normal(0,1)$,
$X\sim p_X$, and $\sigma_0^2 = \E\{X^2\}/\delta$.

\begin{proposition}\label{1616161616}
Let
$\{(x_0(n),A(n),y(n))\}_{n\ge0}$ be a sequence of
triples with $A(n)$ random as per Hypothesis~\ref{hyp1}, $\{x_{0,i}(n)\dvtx  i\in
[n]\}$ independent and identically distributed with $x_{0,i}(n)\sim
p_X$ a finite mixture of Gaussians on $\reals$, and $y(n) =
A(n)x_0(n)$.

Then, for each $n$ there exist a
sequence of vectors
$\{x^t(n),z^t(n)\}_{t\ge0}$, with $x^t(n)=x^t\in\reals^n$,
$z^t(n)=z^t\in\reals^m$, such that the following happens for every~$t$.
\begin{longlist}[(3)]
\item[(1)] There exists a diagonal matrix
$\mathsf{b}_t = \mathsf{b}_t(n)$ such that
%
\begin{eqnarray}\label{eq:LemmaSE_1}
z^t &=& y-Ax^t+\mathsf{b}_t
z^{t-1},
\\
\lim_{n\to\infty}\max_{i\in[m]} (
\mathsf{b}_t)_{ii} &=&\lim_{n\to
\infty}\min
_{i\in[m]} (\mathsf{b}_t)_{ii} =
\frac{1}{\delta} \prob \bigl\{\llvert X+\sigma_{t-1}Z\rrvert \ge\alpha
\sigma_{t-1} \bigr\}, %
\end{eqnarray}
where the limit holds in probability.
\item[(2)] In probability,
%
\begin{equation}\label{eq:LemmaSE_2}
\lim_{n\to\infty} \frac{1}{n} \bigl\llVert
x^{t+1}-\eta\bigl(x^t+A^{\sT}z^t;\alpha
\sigma_t\bigr)\bigr\rrVert _2^2 = 0 .
\end{equation}
\item[(3)] For any locally Lipschitz function
$\psi\dvtx \reals\times\reals\to\reals$, $\llvert \psi(x,y)\rrvert \le
C(1+x^2+y^2)$, in
probability
%
\begin{equation}\label{eq:LemmaSE_3}
\lim_{n\to\infty}\frac{1}{n}\sum
_{i=1}^n\psi \bigl(x_{0,i},x^t_i+
\bigl(A^{\sT}z^t\bigr)_i\bigr) =\E\psi(X,X+
\sigma_tZ). %
\end{equation}
\item[(4)]
There exist  two functions $o(a;c)$ and $o(a,b;c)$, with
$o(a;c)\to0$,\break $o(a,b;c)\to0$ as $c\to0$ at $a,b$ fixed, such that
the following
holds.\break Assume
{\spaceskip=0.2em plus 0.05em minus 0.02em
$A_{ij}(n) = \tA_{ij}(n)+\nu G_{ij}(n)$ where $\nu>0$
is independent of $n$ and\break $\{G_{ij}(n)\}_{i\in[m],j\in[n]}$} is a
collection of
i.i.d. $\normal(0,1/m)$ random variables independent of $\tA(n)$.
Then there exists a sequence of vectors $\{\tx^t,\tilde{z}^t\}_{t\ge
0}$ that is
independent of $G$ such that, for any $t\ge0$,
%
\begin{eqnarray}
\qquad \frac{1}{n}\sum_{i=1}^n
\E \bigl\{ \bigl(\bigl(x^t+A^{\sT}z^t
\bigr)_i-\bigl(\tx ^t+\tA^{\sT
}
\tilde{z}^t\bigr)_i \bigr)^2 \bigr\}&\le&
o(t;\nu)+o\bigl(t,\nu;n^{-1}\bigr),
\\
\frac{1}{m}\sum_{i=1}^m\E \bigl\{
\bigl(z^t_i-\tilde{z}^t_i
\bigr)^2 \bigr\}&\le& o(t;\nu)+o\bigl(t,\nu;n^{-1}\bigr).
\end{eqnarray}
\end{longlist}
\end{proposition}

The proof is deferred to Appendix \ref{app:SEell1}.

We also need a generalization of the last proposition for functions of
the estimates $x^t$, $x^s$ at two distinct iteration numbers $t\neq
s$. To this objective, we introduce the generalization of the state
evolution equation (\ref{eq:ell1SE}). Namely, we define
$\{R_{s,t}\}_{s,t\ge0}$
recursively for all $s,t\ge0$ by letting
%
\begin{equation}\label{eq:TwoTimesRecursion}
R_{s+1,t+1} = \frac{1}{\delta} \E \bigl\{\bigl[
\eta(X+Z_s;\alpha\sigma_s)-X\bigr] \bigl[
\eta(X+Z_t;\alpha\sigma _t)-X\bigr] \bigr\}.
\end{equation}
Here the expectation is with respect to $X\sim p_X$ and the independent
Gaussian vector $[Z_s,Z_t]$ with zero mean and covariance given by $\E
\{Z_s^2\}=R_{s,s}$,
$\E\{Z_t^2\}=R_{t,t}$ and $\E\{Z_tZ_s\}= R_{t,s}$. The boundary
condition is fixed by letting $R_{0,0} =\E\{X^2\}/\delta$ and
defining, for each $t\ge0$,
%
\begin{equation}
R_{0,t+1} = \frac{1}{\delta} \E \bigl\{\bigl[
\eta(X+Z_t;\alpha\sigma_t)-X\bigr] [-X] \bigr\},
\end{equation}
with $Z_t\sim\normal(0,R_{t,t})$. This uniquely determines the doubly
infinite array $\{R_{t,s}\}_{t,s\ge0}$. Notice in particular that
$R_{t,t} =\sigma_t^2$ for all $t\ge0$. (This is easily checked by
induction over $t$.)

\begin{proposition}\label{171717171717}
Under the assumptions of Proposition~\ref{1616161616} the
sequence $\{x^t(n),z^t(n)\}_{t\ge0}$ constructed there further
satisfies the following.
For any fixed $t,s\ge0$, and any Lipschitz continuous functions
$\psi\dvtx \reals\times\reals\times\reals\to\reals$,
$\phi\dvtx \reals\times\reals\to\reals$,
in probability
%
\begin{eqnarray}
&&\lim_{n\to\infty}\frac{1}{n}\sum
_{i=1}^n\psi \bigl(x_{0,i},x^s_i+
\bigl(A^{\sT
}z^s\bigr)_i,x^t_i+
\bigl(A^{\sT}z^t\bigr)_i \bigr)  \nonumber\\[-8pt]\\[-8pt]
&&\qquad=  \E
\psi(X,X+Z_s,X+Z_t),\nonumber
\\
&&\lim_{n\to\infty}\frac{1}{m}\sum
_{i=1}^n\phi\bigl(z^s_i,z^t_i\bigr)  =  \E\phi(Z_s,Z_t), %
\end{eqnarray}
where expectation is with respect to $X\sim p_X$, and the independent
Gaussian vector $(Z_s,Z_t)$ with zero mean and covariance given by $\E
\{Z_s^2\}=R_{s,s}$,
$\E\{Z_t^2\}=R_{t,t}$ and $\E\{Z_tZ_s\}= R_{t,s}$.
\end{proposition}

The proof of this proposition is in Appendix \ref{app:SEell1}.

Finally, we need some analytical estimates on the recursions
(\ref{eq:ell1SE}) and (\ref{eq:TwoTimesRecursion}). Part of these
estimates were already proved in
\cite{DMM09,NSPT,BayatiMontanariLASSO},
but we reproduce them here for the reader's convenience. Proofs of the
others are provided in Appendix~\ref{app:AsympSE}.

\begin{lemma}\label{6661666616666166616661}
Let $p_X$ be a probability measure on the real line such that
$p_X(\{0\}) = 1-\eps$ and $\E_{p_X}\{X^2\}<\infty$, fix $\delta\in
(0,1)$ and set $\rho=\delta\eps$. For this choice of parameters,
consider the
sequences $\{\sigma_t^2\}_{t\ge0}$, $\{R_{s,t}\}_{s,t\ge0}$ defined
as per equations (\ref{eq:ell1SE}), (\ref{eq:TwoTimesRecursion}).

If $\rho<\rho_*(\delta)$, then:
\begin{longlist}[(a3)]
\item[(a1)] There exists $\alpha_1(\eps,\delta)$,
$\alpha_2(\eps,\delta)$, $\alpha_*(\eps)$ with
$0<\alpha_1(\eps,\delta)<\alpha_*(\eps)<\alpha_2(\eps,\allowbreak\delta
)<\infty$,
and $\omega_*(\eps,\delta)\in
(0,1)$ such that the following happens. For each $\alpha\in
(\alpha_1,\alpha_2)$, $\sigma_t^2 = B \omega^t(1+o_t(1))$
as $t\to\infty$, with $\omega\in(0,1)$.

Further, for each $\omega\in
[\omega_*(\eps,\delta),1)$ there exists $\alpha_-\in
(\alpha_1,\alpha_*]$ and $\alpha_+\in[\alpha_*,\alpha_2)$ (distinct
as long as $\omega>\omega_*$) such that, letting $\alpha\in\{\alpha
_-,\alpha_+\}$, $\sigma_t^2 = B
\omega^t(1+o_t(1))$.

Finally, for all $\alpha\in[\alpha_*,\alpha_2)$, we have $\eps
+2(1-\eps)\Phi(-\alpha)<\delta$.
\item[(a2)] For any $\alpha\in[\alpha_*(\eps),\alpha_2(\eps,\delta
))$, we have
$\lim_{t\to\infty}R_{t,t-1}/(\sigma_t\sigma_{t-1})=1$.
\item[(a3)] Assume $p_X$ to be such that $\max(p_X((0,a)),
p_X((-a,0)))\le
Ba^{b}$ for some $B,b>0$ (in particular, this is the case if $p_X$
has an atom at $0$ and is absolutely continuous in a neighborhood of
$0$). Fixing again
$\alpha\in[\alpha_*(\eps),\alpha_2(\eps,\delta))$, and $c\in
\reals_+$,
%
\begin{equation}\label{eq:PointA3}
\lim_{t_0\to\infty}\sup_{t,s\ge t_0} \prob
\bigl\{\llvert X+Z_s\rrvert \ge c \sigma_s; \llvert
X+Z_t\rrvert <c \sigma_t \bigr\}=0 ,
\end{equation}
where $(Z_s,Z_t)$ is a Gaussian vector with $\E\{Z_s^2\} =
\sigma_s^2$, $\E\{Z_s^2\} = \sigma_s^2$, $\E\{Z_sZ_t\} =
R_{s,t}$.
\end{longlist}
Vice versa, if $\rho>\rho_*(\delta)$, then there exists $\alpha_{0}
(\delta,p_X)>\alpha_{\min}(\delta)>0$ such that:
\begin{longlist}[(b3)]
\item[(b1)] For any $\alpha>\alpha_{\min}(\delta)$, we have
$\lim_{t\to\infty}\sigma_t^2 =
\sigma_*^2>0$ and, for $\alpha\ge\alpha_{0}$, $\lim_{t\to\infty
}[R_{t,t}-2R_{t,t-1}+R_{t-1,t-1}]=0$.
\item[(b2)] Letting $\alpha= \alpha_{0}(\delta,p_X)$, we have
$\prob\{\llvert X+\sigma_*Z\rrvert \ge\alpha\sigma_*\} = \delta$.
\item[(b3)] Consider the probability distribution $p_X =
(1-\eps)\delta_0+\eps\gamma$ with $\gamma(\de x) =
\exp(-x^2/2)/\sqrt{2\pi}\, \de x$ the standard Gaussian measure. Then,
setting
$\alpha= \alpha_{0}(\delta,\allowbreak p_X)$, we have $\lim_{t\to\infty}
\E\{\llvert \eta(X+\sigma_t Z;\alpha\sigma_t)\rrvert \} < \E\{\llvert X\rrvert \}$, where
$Z\sim
\normal(0,1)$ independent of $X$.
\end{longlist}
\end{lemma}

We are now in position to prove Theorem~\ref{thm:CS}.
For greater convenience of the reader, we distinguish the cases
$\rho<\rho_*(\delta)$ and $\rho>\rho_*(\delta)$. Before considering
these cases, we will establish some common simplifications.

%
%
\subsection{Proof of Theorem \texorpdfstring{\protect\ref{thm:CS}}{8}: Common simplifications}

Consider first case (I).
By exchangeability of the columns of $A(n)$, it is sufficient to prove
the claim
for the sequence of random vectors obtained by permuting the entries
of $x_0(n)$ uniformly at random.
Hence $x_0(n)$ is a vector with a uniformly random
support $\supp(x_0(n)) = S_n$, with deterministic size $\llvert S_n\rrvert $ such
that $\llvert S_n\rrvert /n\to\eps$.
Further, the success of $\ell_1$ minimization is an event that is
monotone decreasing in the
support $\supp(x_0(n))$ \cite{Donoho2005a}.
Therefore we can replace the deterministic
support size, with a random size $\llvert S_n\rrvert \sim\operatorname
{Binom}(n,\eps
)$ (which concentrates tightly around
$n\eps$).

Finally, since success of $\ell_1$
minimization only depends on the support of $x_0(n)$ \cite
{Donoho2005a}, we can replace
the nonzero entries by arbitrary values. We will take advantage of
this fact and assume that all the nonzero entries of $x_0(n)$ are
i.i.d. $\normal(0,1)$.
We conclude that it is sufficient to prove that
$\ell_1$ minimization succeeds/fails with high probability if the vectors
$x_0(n)$ have i.i.d. entries with distribution $p_X=
(1-\eps)\delta_0+\eps\gamma$, where $\gamma(\de x) =
\exp(-x^2/2)/\sqrt{2\pi} \,\de x$.\vadjust{\goodbreak}

Consider next case (II), in which the entries of $x_0(n)$ are
i.i.d. with $\prob\{x_{0,i}(n)\neq0\}=\rho\delta=\eps$. Again,
exploiting the fact that the success of $\ell_1$ minimization depends
only on the support of $x_0(n)$, we can assume that its entries have
common distribution $p_X= (1-\eps)\delta_0+\eps\gamma$.

Summarizing this discussion, in order to prove the Theorem both in
case (I) and case (II), it will be sufficient to do so for the following
setting:

\begin{remark}
In the proof of Theorem~\ref{thm:CS}, we can assume the vectors
$x_0(n)$ to be random
with i.i.d. entries with common distribution $p_X=
(1-\eps)\delta_0+\eps\gamma$.\looseness=-1
\end{remark}

%
%
\subsection{Proof of Theorem \texorpdfstring{\protect\ref{thm:CS}}{8}, \texorpdfstring{$\rho<\rho_*(\delta)$}{$rho<rho_*(delta)$}}

Fix $\rho<\rho_*(\delta)$. We will prove that
the hypotheses \ref{H:Grad}, \ref{H:Key}, \ref{H:SVD} of Lemma~\ref{5551555515555} hold with high probability for fixed $c_1,
c_2, c_3>0$, and $\eps$ arbitrarily small. This implies the claim
(i.e., that $\ell_1$ minimization succeeds) by applying the lemma.
Notice that hypothesis \ref{H:SVD} holds with high probability for
some $c_3=c_3(\delta)$ by classical estimates on the
extreme eigenvalues of sample covariance matrices
\cite{BaiSilversteinPaper,BaiSilverstein}.

We next consider hypothesis \ref{H:Grad} of Lemma~\ref{5551555515555}. In order to construct the subgradient $v$
used there, we consider the sequence of vectors $\{x^t,z^t\}_{t\ge0}$
defined by as per Proposition~\ref{1616161616}. We fix $\alpha
\in(
\alpha_1(\eps),\alpha_2(\eps))$ as per Lemma~\ref{6661666616666166616661}(a) so that $\sigma_t^2 = A\omega^t(1+o(1))$ with
$\omega\in(0,1)$ to be chosen close enough to $1$. Also, we introduce
the notation $\theta_t\equiv\alpha\sigma_t$.
We let $v^t\in\reals^n$ be defined by
%
\begin{eqnarray}
v^t_i &=& \cases{\displaystyle \sign(x_{0,i})
&\quad if $i\in S$,
\cr
\displaystyle\frac{1}{\theta_{t-1}} \bigl(x^{t-1}+A^{\sT}z^{t-1}-
\hx^t \bigr)_i&\quad otherwise, }
\\
\hx^t & \equiv& \eta\bigl(x^{t-1}+A^{\sT}z^{t-1};
\theta_{t-1}\bigr). %
\end{eqnarray}
Notice that, by definition of the function $\eta( \cdot;
\cdot)$ we have $\llvert x^{t-1}_i-(A^{\sT}z^{t-1})_i-\hx^t_i\rrvert \le\theta_{t-1}$,
and hence $v^t\in\partial\llVert x_0\rrVert _1$. We can write
%
\begin{eqnarray}
v^t &= &\frac{1}{\theta_{t-1}} A^{\sT}z^{t}
+\xi^t +\beta^t+\zeta ^t,
\\
\xi^t&\equiv &\frac{1}{\theta_{t-1}} \bigl(x^{t-1}+A^{\sT}z^{t-1}-x^t-A^{\sT
}z^t
\bigr),
\\
\beta^t & \equiv& \frac{1}{\theta_{t-1}} \bigl(x^t-
\hx^t \bigr),
\\
\zeta^t & \equiv& \cases{ \displaystyle\sign(x_{0,i})-
\frac{1}{\theta_{t-1}} \bigl(x^{t-1}+A^{\sT
}z^{t-1}-\hx
^t \bigr)_i &\quad if $i\in S$,
\cr
0 &\quad otherwise. }
\end{eqnarray}
This part of the proof is completed by showing that there exists
$h(t)$ with $\lim_{t\to\infty}h(t) =0$ such that, for each $t$,
with high probability we have $\llVert \xi^t\rrVert _2^2/n\le
(1-\sqrt{\omega})^2/\alpha^2+h(t)$,
$\llVert \beta^t\rrVert _2^2/n\le h(t)$ and
$\llVert \zeta^t\rrVert _2^2/n\le h(t)$. Indeed, if this is true, we can
then choose $t$ sufficiently large and $\alpha\in(\alpha_*(\eps
),\alpha
_2(\eps,\delta))$
so that $\llVert \xi^t+\beta^t+\zeta^t\rrVert _2^2$ is small enough as to satisfy
the condition
\ref{H:Grad} of Lemma~\ref{5551555515555}.

First consider $\xi^t$. Applying Proposition~\ref{171717171717}
to $\psi(x,y_1,y_2) =
(y_1-y_2)^2$, we have, in probability
\begin{eqnarray*}
\lim_{n\to\infty}\frac{1}{n}\bigl\llVert
\xi^t\bigr\rrVert _2^2 &=&\lim
_{n\to\infty} \frac{1}{n\alpha^2\sigma_{t-1}^2} \bigl\llVert x^{t}+A^{\sT}z^t-x^{t-1}-A^{\sT
}z^{t-1}
\bigr\rrVert _2^2
\\
& =& \frac{1}{\alpha^2\sigma_{t-1}^2} [R_{t,t}-2R_{t,t-1}+R_{t-1,t-1}]
\\
&=& \frac{1}{\alpha^2\sigma_{t-1}^2} \bigl[\sigma_{t}^2-2\sigma
_{t}\sigma _{t-1}+\sigma^2_{t-1} \bigr]
+2\frac{\sigma_t}{\sigma_{t-1}} \biggl[1-\frac{R_{t,t-1}}{\sigma
_t\sigma
_{t-1}} \biggr]
\\
&=& \frac{1}{\alpha^2}(1-\sqrt{\omega})^2 + h(t). %
\end{eqnarray*}
Here the last equality follows from the fact that
$\sigma^2_t/\sigma^2_{t-1}\to\omega$ by Lemma~\ref{6661666616666166616661}(a1) and $R_{t,t-1}/(\sigma_t\sigma_{t-1})\to1$
by Lemma~\ref{6661666616666166616661}(a2). This implies the claim for $\xi^t$.

Next, consider $\beta^t$. By Proposition~\ref{1616161616}(2),
%
\begin{equation}
\hspace*{20pt}\lim_{n\to\infty} \frac{1}{n} \bigl\llVert
x^{t}-\hx^t\bigr\rrVert _2^2 =
\lim_{n\to\infty} \frac{1}{n} \bigl\llVert x^{t}-
\eta\bigl(x^{t-1}+A^{\sT
}z^{t-1};\alpha
\sigma_{t-1}\bigr)\bigr\rrVert _2^2 = 0 ,
\end{equation}
and hence $\llVert \beta^t\rrVert _2^2/n\le h(t)$ with high probability for any $h(t)>0$.

Finally consider $\zeta^t$, and define $R(y;\theta) =
y-\eta(y;\theta)$. We have
\begin{eqnarray*}
R(y;\theta) = \cases{ +1 &\quad for $y\ge\theta$,
\cr
y/\theta&\quad for $-
\theta<y<\theta$,
\cr
-1 &\quad for $y\le-\theta$. } %
\end{eqnarray*}
Using Proposition~\ref{1616161616}(3), we can show that
%
\begin{equation}\label{eq:LimitZetaT}
\lim_{n\to\infty}\frac{1}{n}\bigl\llVert
\zeta^t\bigr\rrVert _2^2 = \E\bigl\{\bigl[
\sign(X) - R(X+\sigma_{t-1}Z;\alpha\sigma_{t-1})
\bigr]^2\ind _{X\neq
0}\bigr\}. %
\end{equation}
Notice that this apparently requires applying Proposition~\ref{1616161616} to the function $\psi(x,y) =
[\sign(x)-R(y;\theta)]^2\ind_{x\neq0}$ which is non-Lipschitz in
$x$. However, we can define a Lipschitz approximation, with parameter
$r>0$,
%
\begin{eqnarray}
\psi_r(x,y) = \cases{ \displaystyle\bigl[x/r-R(y;\theta)
\bigr]^2 \llvert x\rrvert /r &\quad for $\llvert x\rrvert \le r$,\vspace*{2pt}
\cr
\displaystyle\bigl[1-R(y;\theta)\bigr]&\quad for $\llvert x\rrvert >r$. } %
\end{eqnarray}
Notice that $\psi_r$ is bounded and Lipschitz continuous.
We further have\break $\llvert \psi_{r}(x,y)-\psi(x,y)\rrvert \le4 \ind(x\neq0;
\llvert x\rrvert \le r)$, whence
%
\begin{eqnarray}\label{eq:BoundPsiR_1}
&&\lim\sup_{n\to\infty} \biggl|\frac{1}{n} \bigl\llVert
\zeta^t\bigr\rrVert _2^2-\frac
{1}{n}
\sum_{i=1}^n\psi_{r}
\bigl(x_{0,i},x^{t-1}_i+A^{\sT}z^{t-1}
\bigr) \biggr|\nonumber\\[-8pt]\\[-8pt]
&&\qquad\le\lim\sup_{n\to\infty} \frac{4}{n}\sum
_{i=1}^n \ind \bigl(x_{0,i}\neq0; \llvert
x_{0,i}\rrvert \le r\bigr)\le8 r.\nonumber%
\end{eqnarray}
The last inequality holds almost surely by the law of large numbers
using $\gamma([-r,r])< 2r$. Analogously,
%
\begin{eqnarray}\label{eq:BoundPsiR_2}
&& \bigl\llvert \E\psi(X,X+\sigma_{t-1}Z)- \E
\psi_r(X,X+\sigma_{t-1}Z) \bigr\rrvert 
\nonumber\\[-8pt]\\[-8pt]\nonumber
&&\qquad \le 4 \prob \bigl(X
\neq0; \llvert X\rrvert \le r\bigr) \le8r.%
\end{eqnarray}
Hence the claim (\ref{eq:LimitZetaT}) follows by applying Proposition~\ref{1616161616}(3)
to $\psi_r(x,y)$, using equations (\ref{eq:BoundPsiR_1}),
(\ref{eq:BoundPsiR_2}), and letting $r\to0$.

We conclude by noting that the right-hand side of
equation (\ref{eq:LimitZetaT}) converges to $0$ as $t\to\infty$ by
dominated convergence, since $\sigma_t\to0$. Therefore,
\[
\lim_{n\to\infty}\frac{1}{n}\bigl\llVert
\zeta^t\bigr\rrVert _2^2 \le\frac{h(t)}{2}. %
\]
This completes our proof of assumption \ref{H:Grad} of Lemma~\ref{5551555515555}.

We finally consider hypothesis \ref{H:Key}. Let $S_t(c)$ be defined as
there, for the subgradient $v^t$, namely
\begin{eqnarray*}
S_t(c) &\equiv& \bigl\{i\in[n] \dvtx  \bigl\llvert
v_i^t\bigr\rrvert \ge1-c \bigr\}
\\
& =& S\cup \bigl\{i\in[n]\setminus S \dvtx  \bigl\llvert x^{t-1}+A^{\sT}z^{t-1}
\bigr\rrvert \ge (1-c)\theta_{t-1} \bigr\}. %
\end{eqnarray*}
Recall that by assumption $A_{ij} = \tA_{ij}+\nu G_{ij}$ whereby
$G_{ij}\sim\normal(0,1/m)$ and (eventually redefining $\tA_{ij}$), we
can freely choose $\nu\in[0,\nu_0]$.
Let $\{\tx^t,\tilde{z}^t\}_{t\ge0}$ be a sequence of vectors defined
as per
Proposition~\ref{1616161616}(4), and define $\tv^t$ as $v^t$,
but
replacing $x^t,z^t, A$ by $\tx^t,\tilde{z}^t, \tA$
%
\begin{eqnarray}
\tv^t_i &=& \cases{\displaystyle \sign(x_{0,i})
&\quad if $i\in S$,
\cr
\displaystyle\frac{1}{\theta_{t-1}} \bigl(\tx^{t-1}+
\tA^{\sT}z^{t-1}-\hx^t \bigr)_i&\quad
otherwise, }
\\
\hx^t & \equiv& \eta\bigl(\tx^{t-1}+\tA^{\sT}
\tilde{z}^{t-1};\theta _{t-1}\bigr). %
\end{eqnarray}
We further define
\begin{eqnarray*}
\tS_t(c) & \equiv &\bigl\{i\in[n] \dvtx  \bigl\llvert
\tv_i^t\bigr\rrvert \ge1-c \bigr\}
\\
& =& S\cup \bigl\{i\in[n]\setminus S \dvtx  \bigl\llvert \tx^{t-1}+
\tA^{\sT}\tilde {z}^{t-1}\bigr\rrvert \ge (1-c)
\theta_{t-1} \bigr\}. %
\end{eqnarray*}
We claim that the following two claims hold for some $t_*\ge0$
independent of~$n$:

\begin{cla}\label{cl1}
There exists $c_1,\hc_2>0$ (independent of
$\nu
$) such that for
all $S'\subseteq[n]$, $\llvert S'\rrvert \le2c_1n$, the minimum singular value of
$A_{\tS_{t_*}(2c_1)\cup S'}$, satisfies $\sigma_{\min}(A_{\tS
_{t_*}(2c_1)\cup
S'})\ge\hc_2\nu$ with probability converging to $1$ as $n\to\infty$.
\end{cla}

\begin{cla}\label{cl2} For all $t\ge t_*$,
\[
\prob \bigl\{\bigl\llvert S_t(c_1)\setminus
\tS_{t_*}(2c_1)\bigr\rrvert \ge n c_1 \bigr\}
= o_1(t_*;\nu)+o_2\bigl(t_*,\nu;n^{-1}\bigr),
\]
where $o_1(t_*,\nu)$ vanishes as $\nu\to0$ at $t_*$,
$c_1$, $c_2$ fixed, and $o_2(t_*,\nu;n^{-1})$ vanishes as $n^{-1}\to
0$ at $t_*$, $\nu$,
$c_1$, $c_2$ fixed.
\end{cla}

These claims immediately imply that hypothesis \ref{H:Key} of Lemma~\ref{5551555515555} holds with probability converging to one as
$n\to\infty$. Indeed, if $\llvert S'\rrvert \le nc_1$, then (by Claim~\ref{cl2})
$S_t(c_1)\cup S'
\subseteq\tS_{t_*}(2c_1)\cup S''$ where $\llvert S''\rrvert \le2nc_1$ with
probability larger than $1-o_1(t_*;\nu)-o_2(t_*,\nu;n^{-1})$. By Claim~\ref{cl1}, we hence have
$\sigma_{\min}(A_{S_{t}(c_1)\cup S'})\ge c_2\equiv\hc_2\nu$. The
thesis follows
since $\nu$ can be chosen as small as we want.
[Notice that once $t_*$ is fixed to satisfy these claims, we can still
choose $t\ge t_*$ arbitrarily to satisfy hypothesis \ref{H:Grad} of Lemma~\ref{5551555515555}, as per the argument above.]

In order to prove Claim~\ref{cl1}, above first notice that, for any $b\ge0$
%
\begin{eqnarray}\label{eq:BigBoundSingValue}
&&\prob \Bigl\{\mathop{\min_{S'\subseteq[n]}}_{\llvert S'\rrvert \le2c_1n}\sigma_{\min
}(A_{\tS_{t_*}(2c_1)\cup
S'})<
\hc_2\nu \Bigr\}
\nonumber
\\
&&\qquad\le\prob \Bigl\{\mathop{\min_{S'\subseteq[n]}}_{\llvert S'\rrvert \le2c_1n}\sigma _{\min
}(A_{\tS_{t_*}(2c_1)\cup
S'})<
\hc_2\nu; \bigl\llvert \tS_{t_*}(2c_1)\bigr
\rrvert \leq bn \Bigr\}\nonumber\\
&&\quad\qquad{} + \prob \bigl\{\bigl\llvert \tS_{t_*}(2c_1)
\bigr\rrvert > bn \bigr\}
\\
&&\qquad\le e^{nH(2c_1)}\mathop{\max_{S'\subseteq[n]}}_{\llvert S'\rrvert \le2c_1n}\prob \bigl\{
\sigma_{\min}(A_{\tS_{t_*}(2c_1)\cup
S'})< \hc_2\nu; \bigl\llvert
\tS_{t_*}(2c_1)\bigr\rrvert \le bn \bigr\}
\nonumber\\
&&\quad\qquad{}+ \prob \bigl\{
\bigl\llvert \tS_{t_*}(2c_1)\bigr\rrvert > bn \bigr\},\nonumber%
\end{eqnarray}
where in the last
line $H(c)$ denotes the binary entropy of $b$, and we used
$\binom{n}{nc}\le\exp\{nH(c)\}$.
We want to show that $t_*$, $b$,
$c_1$, $c_2$, $\nu$ can be chosen so that both contributions vanish as
$n\to\infty$.

Consider any $b\in(0,\delta)$ and restrict $c_1\in
(0,(\delta-b)/2)$. Then the matrix $A_{\tS_{t_*}(2c_1)\cup S'}$ has
$n\delta$ rows and $n\delta-\Theta(n)$ columns. Further $A = \tA
+\nu
G$ with $\tS_{t_*}(2c_1)$ [and hence $\tS_{t_*}(2c_1)\cup S'$]
independent of $G$. We can therefore use an upper bound on the
condition number of randomly perturbed deterministic matrices proved
by Buergisser and Cucker \cite{Cucker} (see also Appendix
\ref{app:calculus}) to show that
%
\begin{equation}
\hspace*{15pt}\prob \bigl\{\sigma_{\min}(A_{\tS_{t_*}(2c_1)\cup
S'})<
\hc_2\nu; \bigl\llvert \tS_{t_*}(2c_1)\bigr
\rrvert \le bn \bigr\}\le (a_1\hc_2)^{n(\delta-b-2c_1)+1}
\end{equation}
with $a_1 = a_1((b+2c_1)/\delta)$ bounded as long as $(b+2c_1)/\delta<1$.
We can therefore select $\hc_2 = 1/(2a_1)$ and select $c_1$ small
enough so that $H(2c_1)\le(1/2)(\delta-b-2c_1)\log2$. This ensures
that the first term in equation (\ref{eq:BigBoundSingValue}) vanishes as
$n\to\infty$.

We are left with the task of selecting $b\in(0,\delta)$, $t_*\ge0$,
so that the
second term vanishes as well, since then we can take $c_1\in
(0,(\delta-b)/2)$. To this hand notice that by Proposition~\ref{1616161616} (and using the fact that $X+\sigma_{t-1}Z$ has
a density) we have, in probability,
\[
\lim_{n\to\infty}\frac{1}{n} \bigl\llvert
S_{t_*}(c)\bigr\rrvert = \prob \bigl\{\llvert X+\sigma_{t_*-1}Z
\rrvert \ge(1-c)\theta_{t_*-1}\bigr\}, %
\]
and further, since $\sigma_t\to0$ as $t\to\infty$ [cf. Lemma~\ref{6661666616666166616661}(a1)] and $\theta_t =
\alpha\sigma_t$, we have
\[
\lim_{t_*\to\infty}\prob \bigl\{\llvert X+
\sigma_{t_*-1}Z\rrvert \ge (1-c)\theta_{t_*-1}\bigr\} =\eps+ 2(1-
\eps)\Phi\bigl(-(1-c)\alpha\bigr). %
\]
On the other hand, by Lemma~\ref{6661666616666166616661}(a1), and since
$\alpha\in[\alpha_*,\alpha_2)$, we have $\eps+ 2(1-\eps)\Phi
(-\alpha
)<\delta$. Hence there exist $b_0\in
(0,\delta)$ and $c_1>0$ so that for all $t_*$ large enough
$\llvert S_{t_*}(3c_1)\rrvert \le nb_0$ with high
probability. Taking $b\in(b_0,\delta)$ and using Markov's inequality
(with $t_*'=t_*-1$)
\begin{eqnarray*}
&&\prob \bigl\{\bigl\llvert \tS_{t_*}(2c_1)
\bigr\rrvert > bn \bigr\} \\
&&\qquad\le \frac{1}{(b-b_0)n}\E\bigl\{\bigl\llvert
\tS_{t_*}(2c_1)\setminus S_{t_*}(3c_1)
\bigr\rrvert \bigr\}+ \prob \bigl\{\bigl\llvert S_{t_*}(3c_1)
\bigr\rrvert > b_0n \bigr\}
\\
&&\qquad \le\frac{1}{(b-b_0) c_1^2\theta_{t_*-1}^2
n}\sum_{i=1}^n\E
\bigl\{ \bigl(\bigl(x^{t_*'}+A^{\sT}z^{t_*'}
\bigr)_i-\bigl(\tx ^{t_*'}+\tA ^{\sT}
\tilde{z}^{t_*'}\bigr)_i \bigr)^2\ge
c_1^2\theta_{t_*'}^2 \bigr\}\\
&&\quad\qquad{}+ \prob
\bigl\{\bigl\llvert S_{t_*}(3c_1)\bigr\rrvert >
b_0n \bigr\}
\\
&&\qquad\le o_1(t_*;\nu)+o_2\bigl(t_*,\nu;n^{-1}
\bigr)+ \prob \bigl\{\bigl\llvert S_{t_*}(3c_1)\bigr\rrvert
> b_0n \bigr\}, %
\end{eqnarray*}
where the last inequality follows from Proposition~\ref{1616161616}(4).
These terms can be made arbitrarily small by choosing $\nu$ small and
$n$ large enough.

In order to complete the proof, we need to show that Claim~\ref{cl2} holds for
eventually larger $t_*$.
First notice that, applying again Proposition~\ref
{1616161616}(4), we get
%
\begin{eqnarray}\label{eq:StildeS}
&&\prob \bigl\{\bigl\llvert S_{t_*}(c_1)
\setminus\tS_{t_*}(2c_1)\bigr\rrvert \ge
nc_1/2 \bigr\}\nonumber\\
&&\qquad\le \frac{2}{nc_1}\E \bigl\{\bigl\llvert
S_{t_*}(c_1)\setminus \tS_{t_*}(2c_1)
\bigr\rrvert \bigr\}
\nonumber\\[-8pt]\\[-8pt]
&&\qquad\le\frac{2}{nc_1} \sum_{i=1}^n\E
\bigl\{ \bigl(\bigl(x^{t_*'}+A^{\sT}z^{t_*'}
\bigr)_i-\bigl(\tx ^{t_*'}+\tA ^{\sT}
\tilde{z}^{t_*'}\bigr)_i \bigr)^2\ge
c_1^2\theta_{t_*'}^2 \bigr\} \nonumber\\
&&\qquad\le
o_1(t_*;\nu)+o_2\bigl(t_*,\nu;n^{-1}\bigr).\nonumber%
\end{eqnarray}
By
Proposition~\ref{171717171717}, and using the fact that the
vector $(X+Z_{t_*},X+Z_t)$ has a density, we have, in probability,
\begin{eqnarray*}
&&\lim_{n\to\infty} \frac{1}{n}\bigl\llvert
S_t(c_1)\setminus S_{t_*}(c_1)
\bigr\rrvert \\
&&\qquad= \prob \bigl\{\llvert X+Z_{t_*-1}\rrvert
\ge(1-c_1)\sigma_{t_*-1}; \llvert X+Z_{t-1}\rrvert <
(1-c_1)\sigma_{t-1} \bigr\}\le h(t_*), %
\end{eqnarray*}
where, by Lemma~\ref{6661666616666166616661}(a3), $h(t_*)$ vanishes as
$t_*\to\infty$.
Given any $c_1>0$, we can therefore choose $t_*$ so that, with high
probability $\llvert S_t(c_1)\setminus S_{t_*}(c_1)\rrvert  \le nc_1/2$. Combining
with equation (\ref{eq:StildeS}), we obtain the desired claim.

%
%
\subsection{Proof of Theorem \texorpdfstring{\protect\ref{thm:CS}}{8}, \texorpdfstring{$\rho>\rho_*(\delta)$}{$rho>rho_*(delta)$}}

Fix a small number $h>0$. By Lemma \ref{6661666616666166616661}(b),
there exists $\Delta= \Delta(\delta,\eps)>0$ independent of $h$, such
that, for
$\alpha= \alpha_{0}(\delta,p_X)$ and $t$ large enough,
%
\begin{eqnarray}\label{eq:LargeRho1}
\biggl\llvert \frac{1}{\delta} \prob \{|X+\sigma_tZ|>
\alpha\sigma _t \} - 1\biggr\rrvert &\le&h,
\\
\label{eq:LargeRho2}
\llvert R_{t,t}-2R_{t,t-1}+R_{t-1,t-2}\rrvert &
\le&h^2 ,
\\
\label{eq:LargeRho3}
\E\bigl\{\bigl\llvert \eta(X+\sigma_t Z;\alpha\sigma_t)
\bigr\rrvert \bigr\} &<& \E\bigl\{\llvert X\rrvert \bigr\}-2\Delta
,%
\end{eqnarray}
as well as $\sigma_{t-1}^2\le2\sigma_*^2$.
By Propositions \ref{1616161616}, \ref{171717171717}
(and noting that $X+\sigma_tZ$ has a distribution that is absolutely
continuous with respect to Lebesgue measure), we have, with high
probability,
%
\begin{eqnarray}\label{eq:LargeRho1B}
\max_{i\in[m]}\bigl\llvert (\mathsf{b}_t-
1)_{ii}\bigr\rrvert &\le&2h,
\\
\label{eq:LargeRho2B}
\bigl\llVert z^t-z^{t-1}\bigr\rrVert _2&\le&2h
\sqrt{n},
\\
\label{eq:SmallerEll1Norm}
\bigl\llVert x^t\bigr\rrVert _1&\le& \llVert
x_0\rrVert _1-n\Delta,
\\
\label{eq:LargeRho4B}
\bigl\llVert z^t\bigr\rrVert _2&\le& 2\sigma_*\sqrt{n}.%
\end{eqnarray}
Namely equation (\ref{eq:LargeRho1}) implies (\ref{eq:LargeRho1B}),
equation (\ref{eq:LargeRho2}) implies (\ref{eq:LargeRho2B}),
equation (\ref{eq:LargeRho3}) implies (\ref{eq:SmallerEll1Norm}) and the
assumption $\sigma_{t-1}^2\le2\sigma_*^2$ implies (\ref{eq:LargeRho4B}).

Using equation (\ref{eq:LemmaSE_1}) together with the above, we get
%
\begin{eqnarray}
\bigl\llVert y-Ax^t\bigr\rrVert _2&\le&\bigl
\llVert z^t-z^{t-1}\bigr\rrVert _2+\max
_{i\in[m]}\bigl\llvert (\mathsf{b}_t)_{ii}-1
\bigr\rrvert \bigl\llVert z^{t-1}\bigr\rrVert _2\nonumber\\[-8pt]\\[-8pt]
&\le& 2h
\sqrt{n} (1+2\sigma_*). \nonumber%
\end{eqnarray}
Define $\tx= x^t+ A^{\sT}(AA^{\sT})^{-1}(y-Ax^t)$ (notice that the
sample covariance matrix $AA^{\sT}$ has full rank with high
probability \cite{BaiSilversteinPaper,BaiSilverstein}).
Notice that, by construction \mbox{$A\tx=y$}.
Then, with high probability,
%
\begin{equation}
\hspace*{20pt}\bigl\llVert \tx-x^t\bigr\rrVert _2\le
\sigma_{\max}(A)\sigma_{\min}(A)^{-2}\bigl\llVert
y-Ax^t\bigr\rrVert _2\le C(\delta) (1+2\sigma_*) h
\sqrt{n},
\end{equation}
where $\sigma_{\max}(A)$, $\sigma_{\min}(A)$ are the maximum and
minimum nonzero singular values of $A$. The second inequality
holds with high probability for $\delta\in(0,1)$ by standard estimates
on the singular values of random matrices \cite
{BaiSilversteinPaper,BaiSilverstein}.
Using equation (\ref{eq:SmallerEll1Norm}) together with triangular
inequality and $\llVert \tx-x^t\rrVert _1\le\sqrt{n} \llVert \tx-x^t\rrVert _2$, we finally
get
%
\begin{eqnarray}
\llVert \tx\rrVert _1&\le& \llVert x_0\rrVert
_1-n\Delta+C(\delta) (1+2\sigma_*) h n <\llVert x_0
\rrVert _1,
\end{eqnarray}
where the second inequality follows from the fact that $h>0$ can be
taken arbitrarily small (by letting $t$ large) while $\Delta$, $C$ and
$\sigma_*$ are fixed. We conclude that $x_0$ cannot be the solution of
the $\ell_1$ minimization problem (\ref{eq:Ell1Min}).

\begin{appendix}
\section{Proofs of Propositions \texorpdfstring{\protect\ref{1616161616}}{6} and
\texorpdfstring{\protect\ref{171717171717}}{7}}\label{app:SEell1}

In this Appendix we prove Propositions~\ref{1616161616} and
\ref{171717171717} by a suitable application of Theorem~\ref
{thm:BipartiteSE}.
Before passing to these proofs, we establish a corollary of Theorem~\ref{thm:BipartiteSE} that allows us to control iterations of the form
(\ref{eq:AMPell1_1}), (\ref{eq:AMPell1_2}), with
$\eta( \cdot; \cdot)$ replaced by a general polynomial.

\subsection{A general corollary}\label{app:CorollaryBipartiteSE}

For $x_0 = x_0(n)\in\reals^n$ and $A=A(n)\in\reals^{m\times n}$ as per
Hypothesis~\ref{hyp1} in Section~\ref{sec:Polytope}, we define
$y=y(n)\in\reals^m$ by
%
\begin{equation}
y = Ax_0 .
\end{equation}
Let $D\in\reals^{n\times n}$ be the diagonal matrix with
diagonal entries equal to the square column norms of $A$, that is,
$D_{ii} = \sum_{j\in[m]}A_{ji}^2$, and $D_{ij}=0$ for $i\neq
j$. Further define $u_0=u_0(n)\in\reals^n$ as follows:
%
\begin{equation}
u_{0,i}= (D_{ii}-1)x_{0,i} = \biggl(
\sum_{j\in[m]}A_{ji}^2-1 \biggr)
x_{0,i}. %
\end{equation}
Let $x^0=(I-D^{-1})x_0$ (notice that $D$ is invertible with high
probability) and define iteratively
%
\begin{eqnarray}\label{eq:AMP-CS1}
z^t & = & y-Ax^t + \mathsf{b}_t
z^{t-1},\nonumber\\[-8pt]\\[-8pt]
 (\mathsf{b}_t)_{ii} &=& \sum
_{j\in[n]}A_{ij}^2\eta'_{t-1}
\bigl(D_{jj}x^{t-1}_j +\bigl(A^{\sT}z^{t-1}
\bigr)_j-u_{0,j} \bigr),\nonumber
\\
\label{eq:AMP-CS2}
x^{t+1} & = &\eta_t\bigl(Dx^t+A^{\sT}z^t-u_0
\bigr),%
\end{eqnarray}
where, for each $t$, $\eta_t\dvtx \reals\to\reals$ is a polynomial and,
for $v\in\reals^n$, $\eta_t(v) = (\eta_t(v_1),\break\dots,\eta_t(v_n))$.
Further $\mathsf{b}_t\in\reals^{m\times m}$ is a diagonal matrix with
entries given as in equation (\ref{eq:AMP-CS1}).

We next introduce the corresponding state evolution recursion. Namely,
we define $\{\tR_{s,t}\}_{s,t\ge0}$ recursively for all $s,t\ge0$ by letting
%
\begin{equation}\label{eq:TwoTimesPolyCS}
\tR_{s+1,t+1} = \frac{1}{\delta} \E \bigl\{\bigl[
\eta_s(X+Z_s)-X\bigr] \bigl[\eta_t(X+Z_t)-X
\bigr] \bigr\}. %
\end{equation}
Here expectation is with respect to $X\sim p_X$ and the independent
Gaussian vector $[Z_s,Z_t]$ with zero mean and covariance given by $\E
\{Z_s^2\}=\tR_{s,s}$,
$\E\{Z_t^2\}=\tR_{t,t}$\vspace*{1pt} and $\E\{Z_tZ_s\}= \tR_{t,s}$. The boundary
condition is fixed by letting $\tR_{0,0} =\E\{X^2\}/\delta$ and
defining, for each $t\ge0$,
%
\begin{equation}
\tR_{0,t+1} = \frac{1}{\delta} \E \bigl\{\bigl[
\eta_t(X+Z_t)-X\bigr] [-X] \bigr\}, %
\end{equation}
{\spaceskip=0.2em plus 0.05em minus 0.04em with $Z_t\sim\normal(0,\tR_{t,t})$. This uniquely determines the doubly
infinite array\break $\{\tR_{t,s}\}_{t,s\ge0}$.}

\begin{corollary}\label{coro:SE}
Let
$\{(x_0(n),A(n),y(n))\}_{n\ge0}$ be a sequence of
triples with $A(n)$ having independent sub-Gaussian entries with
$\E\{A_{ij}\}=0$, $\E\{A_{ij}^2\}=1/m$, $\{x_{0,i}(n)\dvtx  i\in[n]\}$
independent and identically distributed with $x_{0,i}(n)\sim
p_X$, and $p_X$ a finite mixture of Gaussians.
Define $\{x^t,z^t\}_{t\ge0}$ as per equations (\ref{eq:AMP-CS1}),
(\ref{eq:AMP-CS2}).

Then, for any fixed $t,s\ge0$, and any Lipschitz continuous functions
$\psi\dvtx \reals\times\reals\times\reals\to\reals$,
$\phi\dvtx \reals\times\reals\to\reals$,
in probability
%
\begin{eqnarray}\label{eq:CoroSE_A}
&&\lim_{n\to\infty}\frac{1}{n}\sum
_{i=1}^n\psi \bigl(x_{0,i},x^s_i+
\bigl(A^{\sT
}z^s\bigr)_i,x^t_i+
\bigl(A^{\sT}z^t\bigr)_i \bigr) \nonumber\\[-8pt]\\[-8pt]
&&\qquad= \E
\psi(X,X+Z_s,X+Z_t),\nonumber
\\
\label{eq:CoroSE_B}
&&\lim_{n\to\infty}\frac{1}{m}\sum
_{i=1}^n\phi\bigl(z^s_i,z^t_i\bigr) = \E\phi(Z_s,Z_t), %
\end{eqnarray}
where expectation is with respect to $X\sim p_X$ and the independent
Gaussian vector $[Z_s,Z_t]$ with zero mean and covariance given by $\E
\{Z_s^2\}=\tR_{s,s}$,
$\E\{Z_t^2\}=\tR_{t,t}$ and $\E\{Z_tZ_s\}= \tR_{t,s}$.
\end{corollary}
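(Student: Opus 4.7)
The plan is to recast the iteration (\ref{eq:AMP-CS1})--(\ref{eq:AMP-CS2}) as a bipartite AMP instance in the sense of Section \ref{sec:NonSymmetric} and then invoke Theorem \ref{thm:BipartiteSE}. I would introduce the auxiliary variables $e^t = x^t - x_0$ and $s^t = De^t + A^{\sT}z^t$. Using $y = Ax_0$, $u_0 = (D-\id)x_0$, and writing $B \equiv A^{\sT}A - D$ (which has zero diagonal), the corollary's iteration rewrites as
\begin{align*}
e^{t+1} &= \eta_t(x_0 + s^t) - x_0,\\
z^t &= -A e^t + \ons_t z^{t-1},\\
s^t &= -B e^t + A^{\sT}\ons_t z^{t-1}.
\end{align*}

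Next I would identify this with a bipartite instance via $\tvx^{t+1} = s^t$, $\tvz^t = z^t$, $Y(j) = x_{0,j} \sim p_X$, $h(\tvz,W,t) = \tvz$, and
\[
f(\tvx, Y, t) = Y - \eta_{t-1}(Y + \tvx).
\]
Since $\partial_x f = -\eta_{t-1}'(Y+\tvx)$, the Onsager term $-\Ons_t\tvz^{t-1}$ in (\ref{eq:BipartiteAMP1}) reproduces exactly $\ons_t z^{t-1}$ on the $z$-update; and since $h = \id$ yields $\POns_t = D$, the $\tvx$-update in (\ref{eq:BipartiteAMP2}) reads $A^{\sT}z^t - D(Y - \eta_{t-1}(Y+\tvx^t)) = A^{\sT}z^t + De^t = s^t$, matching the third line above. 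The hypotheses of Theorem \ref{thm:BipartiteSE} are met because $\eta_t$ is polynomial and $p_X$ is a finite mixture of Gaussians.

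Specializing the two-time bipartite SE (\ref{eq:SEBipartite1})--(\ref{eq:SEBipartite2}) to $h = \id$ collapses it to a single recursion
\[
\Xi^{t,s} = \delta^{-1}\,\E\bigl\{(Y-\eta_{t-1}(Y+X^t))(Y-\eta_{s-1}(Y+X^s))\bigr\},\qquad (X^t,X^s)\sim\normal(0,\Xi^{t-1,s-1}),
\]
with base case $\Xi^0 = \E\{Y^2\}/\delta$; under the identification $Z_{t-1} = X^t$ and $\tR_{t,t} = \Xi^t$ this is exactly (\ref{eq:TwoTimesPolyCS}). For the empirical statistics, note $\tvx^{s+1}_i + Y_i = u^s_i = x^s_i + (A^{\sT}z^s)_i + (1-D_{ii})(x^s_i - x_{0,i})$; the correction vanishes empirically because $\max_i|1-D_{ii}|$ tends to $0$ in probability by standard subgaussian concentration of the column norms, while $n^{-1}\sum_i(x^s_i-x_{0,i})^2$ is $O(1)$ by the bipartite SE itself applied to a polynomial test function. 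Theorem \ref{thm:BipartiteSE} applied to polynomial test functions, extended to locally Lipschitz $\psi,\phi$ via polynomial approximation together with uniform moment bounds inherited from Proposition \ref{prop:mom-equ-iid} and Lemma \ref{lem:tree}, then delivers (\ref{eq:CoroSE_A}) and (\ref{eq:CoroSE_B}).

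The main obstacle is reconciling the initial condition. The prescription $x^0 = (\id - D^{-1})x_0$ forces $\tvx^1 = s^0 = BD^{-1}Y$, which would require $f(\cdot,Y,0) = D^{-1}Y$ -- a matrix-dependent and therefore inadmissible form. I would sidestep this by running the bipartite AMP with $\tvx^0 = 0$ and $f(\tvx^0,Y,0) = Y$, producing $\hat{z}^0 = AY$ and $\hat{\vx}^1 = BY$; the initial deviations $\hat{z}^0 - z^0 = A(\id - D^{-1})Y$ and $\hat{\vx}^1 - s^0 = B(\id - D^{-1})Y$ have normalized $\ell_2$-norm tending to zero in probability, because $\max_j|1-D_{jj}^{-1}| = O(\sqrt{\log n / n})$ and $n^{-1}\|Y\|_2^2 = O(1)$. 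Propagating this small initial perturbation through a fixed number of polynomial AMP steps, by the tree-representation argument of Section \ref{sec:TreeRep} exactly along the lines of Lemma \ref{lemma:Continuity}, gives $n^{-1}\|\hat{\vx}^t - s^{t-1}\|_2^2 + m^{-1}\|\hat{z}^t - z^t\|_2^2 \to 0$ in probability for every fixed $t$. Making this propagation argument fully rigorous -- controlling the tree-sum amplification of the initial error -- is the technically heaviest step, but parallels the reasoning already used in Lemma \ref{lemma:Continuity}.
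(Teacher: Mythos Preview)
Your reduction is the same as the paper's: the paper sets $\tx^{t+1}=A^{\sT}z^t+D(x^t-x_0)$ (your $s^t$), identifies the recursion as a bipartite AMP with $f(x,y;t)=y-\eta_{t-1}(y+x)$, $h(z;t)=z$, $Y(j)=x_{0,j}$, applies Theorem~\ref{thm:BipartiteSE}, and then replaces $D_{ii}(x^t_i-x_{0,i})$ by $x^t_i-x_{0,i}$ using $\max_i|D_{ii}-1|\to 0$ in probability. The identification of the bipartite two-time state evolution with the $\tR_{s,t}$ recursion is also the same.

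Two remarks. First, your polynomial-to-Lipschitz extension is unnecessary: Theorem~\ref{thm:BipartiteSE} is already stated for locally Lipschitz test functions with polynomial growth, so once the bipartite reduction is in place it applies directly to $\psi,\phi$. Second, for the initial condition the paper uses exactly the device you describe in your last paragraph---start the bipartite instance one step earlier from zero and declare $f(x,y;0)=-y$---but then simply asserts that this recovers the iteration, without working through the residual $D^{-1}$ mismatch you correctly flag. Your propagation-of-error argument is an honest way to close that small gap. Just be aware it is not literally an instance of Lemma~\ref{lemma:Continuity} (which perturbs the matrix, not the initial state, and the initial perturbation here is $A$-dependent); because the $\eta_t$ are polynomials rather than globally Lipschitz, the inductive step needs to control a few higher empirical moments of the difference $u^t-\hat u^t$ via H\"older, not just the normalized $\ell_2$ norm. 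This is routine for fixed $t$ given the uniform moment bounds you already cite.
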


\begin{pf}
Define $\tx^{t+1} = A^{\sT}z^t+Dx^t-Dx_0$. Then equations (\ref{eq:AMP-CS1}),
(\ref{eq:AMP-CS2}) read
%
\begin{eqnarray}
z^t &=& A f\bigl(\tx^t,x_0;t
\bigr) +\mathsf{b}_t h\bigl(z^{t-1};t-1\bigr),
\\
\tx^{t+1} & = & A^{\sT} h\bigl(x^t;t\bigr) +
\mathsf{d}_t f\bigl(\tx^t,x_0;t\bigr),
\end{eqnarray}
where, for $i\in[m]$, $j\in[n]$,
%
\begin{eqnarray}
f(x,y;t) &=& y-\eta_{t-1}(y+x),\qquad h(z;t) = z,
\\
(\mathsf{b}_t)_{ii} &=& {-}\sum
_{j\in[n]} A_{ij}^2f'\bigl(
\tx_j^t,x_{0,i};t\bigr),
\\
(\mathsf{d}_t)_{jj} &=& {-}\sum
_{j\in[n]} A_{ij}^2h'(z;t).
\end{eqnarray}
[Here $f'(x,y;t)$, $h'(x;t)$ denote derivatives with respect to the
first argument.]
The iteration takes the same form as in equations (\ref{eq:BipartiteAMP1}),
(\ref{eq:BipartiteAMP2})
with $Y(i)= x_{0,i}$, and $W(i) = 0$, $\Ons_t=-\mathsf{b}_t$ and $\POns
_t =
-\mathsf{d}_t$.
Further, the initial condition $x^0$ implies $\tx^0= -x_0$. Notice
that this is dependent on $Y=x_0$, but we can easily set the initial
condition at $\tx^{-1} = 0$ and define $f(x,y;t=0)=-y$.
We can therefore apply Theorem~\ref{thm:BipartiteSE} and conclude
that, in probability,
%
\begin{eqnarray}
&&\lim_{n\to\infty}\frac{1}{n}\sum
_{i=1}^n\psi \bigl(x_{0,i},D_{ii}
\bigl(x^s_i-x_{0,i}\bigr)+\bigl(A^{\sT}z^s
\bigr)_i,D_{ii}\bigl(x^t_i-x_{0,i}
\bigr)+\bigl(A^{\sT}z^t\bigr)_i \bigr)\hspace*{-36pt}  \nonumber\\[-8pt]\\[-8pt]
&&\hspace*{25pt}\qquad= \E
\psi(X,Z_s,Z_t),\nonumber
\\
&&\lim_{n\to\infty}\frac{1}{m}\sum
_{i=1}^n\phi\bigl(z^s_i,z^t_i\bigr) = \E\phi(Z_s,Z_t),
\end{eqnarray}
where expectations are defined as in the statement of the corollary.
The second of these equations coincides with equation (\ref{eq:CoroSE_B}).
For the first one, note that $\E\{D_{ii}\}=1$ and, by a
standard Chernoff bound
%
\begin{eqnarray}
\lim_{n\to\infty}\max \bigl\{D_{ii}\dvtx  i\in[n]
\bigr\} &=& 1 ,
\\
\lim_{n\to\infty}\min \bigl\{D_{ii}\dvtx  i\in[n] \bigr\} &=&
1 . %
\end{eqnarray}
We therefore get
%
\begin{eqnarray}
&&\lim_{n\to\infty}\frac{1}{n}\sum
_{i=1}^n\psi \bigl(x_{0,i},
\bigl(x^s+A^{\sT
}z^s\bigr)_i-x_{0,i},\bigl(x^t_i+A^{\sT}z^t
\bigr)_i-x_{0,i} \bigr) \nonumber\\[-8pt]\\[-8pt]
&&\qquad= \E\psi(X,Z_s,Z_t),\nonumber %
\end{eqnarray}
which coincides with equation (\ref{eq:CoroSE_A}) after a redefinition of
the function $\psi$.
\end{pf}

\subsection{Proofs of Propositions \texorpdfstring{\protect\ref{1616161616}}{6} and \texorpdfstring{\protect\ref{171717171717}}{7}}

We will start by proving Proposition~\ref{1616161616}. Since
Proposition~\ref{171717171717} follows from the same
construction, we will
only point to the necessary modifications.
Before presenting the proof, we recall a basic result in weighted
polynomial approximation (here stated for a specific case); see, for
example, \cite{Lubinsky}.

\begin{theorem}\label{thm:WeightedApproximation}
Let $f\dvtx \reals\to\reals$ be a continuous function. Then for any
$\kappa,\xi>0$ there exists a polynomial $p\dvtx \reals\to\reals$ such
that, for all $x\in\reals$,
%
\begin{equation}
\bigl\llvert f(x)-p(x)\bigr\rrvert \le\xi e^{\kappa x^2/2}.
\end{equation}
\end{theorem}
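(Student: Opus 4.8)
The plan is to give a self-contained argument in the case that matters here, namely when $f$ has at most polynomial growth; this covers the application, since the soft-threshold $\eta(\,\cdot\,;\theta)$ is $1$-Lipschitz. (The statement for a general continuous $f$ of sub-Gaussian growth — more precisely, with $f(x)e^{-\eps x^2}\to 0$ for every $\eps>0$, which is the condition under which the claim is actually correct — is part of the classical theory of weighted polynomial approximation for the Hermite weight; see \cite{Lubinsky}.) Write $W(x)=e^{-\kappa x^2/2}$. The assertion is equivalent to producing a polynomial $p$ with $\sup_{x\in\reals}|f(x)-p(x)|\,W(x)\le\xi$, and when $f$ has polynomial growth the function $g(x)\equiv f(x)W(x)$ is continuous and vanishes at $\pm\infty$.

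First I would fix a large $R>0$, to be chosen at the end, and use $g(x)\to 0$ to get $|f(x)|\le(\xi/3)\,W(x)^{-1}$ for all $|x|\ge R$. On the compact interval $[-2R,2R]$, Weierstrass' theorem gives a polynomial $q$ of some degree $d=d(R)$ with $\sup_{[-2R,2R]}|f-q|\le\xi/3$; since $W(x)^{-1}\ge 1$, this already yields $|f(x)-q(x)|\le\xi\,W(x)^{-1}$ on $[-2R,2R]$. The main point is then to control $q$ on $\{|x|>2R\}$, and here I would invoke the Chebyshev extremal property: a polynomial of degree $d$ bounded by $M_R\equiv\sup_{[-2R,2R]}|q|$ on $[-2R,2R]$ satisfies $|q(x)|\le M_R\,|T_d(x/2R)|\le M_R\,(|x|/R)^d$ for $|x|\ge 2R$, with $T_d$ the Chebyshev polynomial. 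Hence on $|x|\ge 2R$, using $|f(x)|\le(\xi/3)W(x)^{-1}$, one reduces the claim to checking $M_R\,(|x|/R)^d\le(2\xi/3)\,e^{\kappa x^2/2}$ for all $|x|\ge 2R$. Since $e^{\kappa x^2/2}/|x|^d$ is decreasing then increasing on $(0,\infty)$, with minimum at $|x|=\sqrt{d/\kappa}$, it is enough that $2R\ge\sqrt{d/\kappa}$ together with the single inequality $M_R\,2^d\le(2\xi/3)\,e^{2\kappa R^2}$. Finally, Lipschitz (more generally polynomial) growth of $f$ gives $M_R\le L(1+2R)+\xi/3$ and, by Jackson's theorem, one may take $d\le c\,LR/\xi$; then $M_R\,2^d=e^{O(R)}$ and $\sqrt{d/\kappa}=O(\sqrt R)$, dominated respectively by $e^{2\kappa R^2}$ and by $2R$ once $R$ is large enough depending only on $\kappa$, $\xi$ and the Lipschitz constant of $f$. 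Taking $p=q$ for that $R$ finishes the proof.

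The only delicate step is this last balancing act: the degree $d$ and the norm $M_R$ of the Weierstrass approximant both grow with $R$, so one must be sure that the exponential-in-$R$ term $M_R 2^d$ is beaten by the Gaussian-in-$R$ term $e^{2\kappa R^2}$ arising from the target bound at $|x|=2R$; this is precisely where polynomial growth of $f$ enters, through Jackson's theorem keeping $d$ linear in $R$. For a general continuous $f$ satisfying $f(x)e^{-\eps x^2}\to 0$ for all $\eps>0$ one would instead argue by duality: $\mathrm{span}\{x^n e^{-\kappa x^2/2}:n\ge 0\}$ is dense in $C_0(\reals)$ because, by Hahn--Banach and the Riesz representation theorem, a finite signed measure $\mu$ annihilating it produces a finite signed measure $\nu\equiv W\,\de\mu$ with all moments zero and all exponential moments finite, hence $\nu=0$ by moment-determinacy of its Jordan parts, hence $\mu=0$ --- this is the route taken in \cite{Lubinsky}.
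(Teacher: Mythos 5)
The paper does not actually prove Theorem \ref{thm:WeightedApproximation}: it records it as a ``basic result in weighted polynomial approximation'' and cites the survey \cite{Lubinsky}. So you are not taking a different route from the paper---you are supplying a proof where the paper gives only a reference. Your proof is correct, and your remark that the statement is \emph{not} literally true for an arbitrary continuous $f$ is a genuine observation: for $f(x)=e^{x^2}$ and small $\kappa$ no polynomial $p$ can satisfy the bound, because $f(x)e^{-\kappa x^2/2}\to\infty$ while $p(x)e^{-\kappa x^2/2}\to 0$. The correct hypothesis is indeed $f(x)e^{-\eps x^2}\to 0$ for every $\eps>0$ (or, for a fixed $\kappa$, $f(x)e^{-\kappa x^2/2}\to 0$), and the only use the paper makes of the theorem---Eq.~\eqref{eq:EtaApprox}---is for the $1$-Lipschitz soft threshold $\eta(\,\cdot\,;\theta)$, so your restriction to polynomial growth covers the application.

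On the substance of your constructive argument: the three ingredients (vanishing of $fW$ outside $[-R,R]$, a degree-controlled approximant $q$ on $[-2R,2R]$, and the Chebyshev growth bound $|q(x)|\le M_R\,|T_d(x/2R)|\le M_R(|x|/R)^d$ off the interval) are assembled correctly, and the balancing at $|x|=2R$ is exactly the right place to check since $e^{\kappa x^2/2}/|x|^d$ is increasing for $|x|\ge\sqrt{d/\kappa}$. One cosmetic point: you first invoke Weierstrass and only afterwards invoke Jackson to bound the degree; you should apply Jackson's theorem from the outset, since Weierstrass alone gives no control on $\deg q$ and the whole argument hinges on $d=O(LR/\xi)$ so that $M_R\,2^d=e^{O(R)}$ is beaten by $e^{2\kappa R^2}$. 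The duality sketch for the general sub-Gaussian case is also sound: the key facts are that the Jordan parts of $\nu=W\,\de\mu$ have all exponential moments finite (since $e^{c|x|}e^{-\kappa x^2/2}$ is bounded), hence are moment-determinate by Carleman/Cram\'er, and the equality of all their moments (the $n=0$ moment gives equal total mass) forces $\nu_+=\nu_-$; mutual singularity then forces $\nu=0$. This is precisely the standard route to the solution of Bernstein's weighted approximation problem for the Gaussian weight, which is what \cite{Lubinsky} presents, so the two parts of your argument together substantiate exactly what the paper asserts and uses.
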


\begin{pf*}{Proof of Propositions \ref{1616161616}}
Since the proposition holds as $n\to\infty$ at $t$ fixed, we shall
assume throughout that $t\in\{0,1,\dots,t_{\max}\}$ for some fixed
arbitrarily large $t_{\max}$.\vadjust{\goodbreak}

We claim that, for each $\beta, t_{\max}>0$, we can construct
an orbit $\{x^{\beta,t},z^{\beta,t}\}_{t\ge0}$ obeying equations
(\ref
{eq:AMP-CS1}),
(\ref{eq:AMP-CS2}) for suitable functions $\eta_t=\eta_t^{(\beta)}$
such that the
following holds (with a slight abuse of notation we will drop the
parameter $\beta$ from $x^{\beta,t},z^{\beta,t}$). For all $0\le
t\le
t_{\max}$, and
all functions $\psi$ as in the statement, we have $z^t = y-Ax^t+\mathsf{b}_t
z^{t-1}$ by construction. Further, in probability,
%
\begin{eqnarray}
\label{eq:FirstBeta}
\lim_{n\to\infty} \max_{i\in[m]}\biggl
\llvert (\mathsf{b}_t)_{ii} - \frac
{1}{\delta} \prob
\bigl\{\llvert X+\sigma_{t-1}Z\rrvert \ge\alpha\sigma_{t-1}
\bigr\}\biggr\rrvert &\le& \beta,
\\
\label{eq:SecondBeta}
\lim_{n\to\infty} \frac{1}{n} \bigl\llVert x^{t+1}-
\eta\bigl(x^t+A^{\sT}z^t;\alpha
\sigma_t\bigr)\bigr\rrVert _2^2 &\le&\beta,
\\
\label{eq:ThirdBeta}
\lim_{n\to\infty}\Biggl\llvert \frac{1}{n}\sum
_{i=1}^n\psi \bigl(x_{0,i},x^t_i+
\bigl(A^{\sT}z^t\bigr)_i\bigr) -\E\psi(X,X+
\sigma_tZ) \Biggr\rrvert &\le&\beta.%
\end{eqnarray}
Assuming this claim holds, let $\{\beta_{\ell}\}_{\ell\ge0}$ be a sequence
such that $\lim_{\ell\to\infty}\beta_\ell=0$. Denote by $\{
x^{\ell,t},z^{\ell,t}\}_{t\ge0}$ the orbit satisfying
equations (\ref{eq:FirstBeta}), (\ref{eq:SecondBeta}), (\ref{eq:ThirdBeta})
with $\beta= \beta_\ell$.
Let $\eta^{\ell}_t = \eta^{(\beta_{\ell})}_t$ be the corresponding
polynomial and
$\mathsf{b}_t^\ell$ be given per equation (\ref{eq:AMP-CS1}).
Fix an increasing sequence of instance sizes $n_1<n_2<n_3<\cdots$\,,
and let $x^t(n) = x^{\ell,t}(n)$, $z^t(n) = z^{\ell,t}(n)$ for all
$n_\ell\le n< n_{\ell+1}$.
Choosing $\{n_\ell\}_{\ell\ge0}$ that increases rapidly enough we
can ensure that, for all $n\ge n_{\ell}$,
%
\begin{eqnarray}\label{eq:FirstBetak}
\max_{i\in[m]}\biggl\llvert \bigl(
\mathsf{b}^{\ell}_t\bigr)_{ii} - \frac{1}{\delta}
\prob \bigl\{\llvert X+\sigma_{t-1}Z\rrvert \ge\alpha
\sigma_{t-1} \bigr\}\biggr\rrvert &\le& 2\beta_\ell,
\\
\label{eq:SecondBetak}
\frac{1}{n} \bigl\llVert x^{\ell,t+1}-\eta\bigl(x^{\ell,t}+A^{\sT}z^{\ell,t};
\alpha\sigma _t\bigr)\bigr\rrVert _2^2 &\le&2
\beta_\ell,
\\
\label{eq:ThirdBetak}
\Biggl\llvert \frac{1}{n}\sum_{i=1}^n
\psi\bigl(x_{0,i},x^{\ell,t}_i+\bigl(A^{\sT
}z^{\ell,t}
\bigr)_i\bigr) -\E\psi(X,X+\sigma_tZ) \Biggr\rrvert &\le&2
\beta_\ell,%
\end{eqnarray}
with probability larger than $1-\beta_{\ell}$.
Points
1, 2, 3 in the
proposition then follow since $\beta_\ell\to0$.

In order to prove equations (\ref{eq:FirstBeta}) to (\ref
{eq:ThirdBeta}) we
proceed as follows. It is easy to check that $\sigma_t>0$ for all
$t$; cf. equation (\ref{eq:ell1SE}). We use Theorem~\ref{thm:WeightedApproximation} to construct polynomials $\eta_t$ such
that
%
\begin{equation}\label{eq:EtaApprox}
\bigl\llvert \eta(x;\alpha\sigma_t)-
\eta_t(x)\bigr\rrvert \le\xi \exp \biggl\{\frac{x^2}{16\max(\sigma_t^2,s^2)} \biggr\},%
\end{equation}
for all $x\in\reals$. Here $\xi>0$ is a small parameter to be chosen
below, and $s^2$ is the smallest variance of the Gaussians that are
combined in $p_X$. Let $\tsigma_t$ be defined by
%
\begin{equation}\label{eq:ell1SEApprox}
\tsigma_{t+1}^2 = \frac{1}{\delta}\E\bigl\{
\bigl[\eta_t(X+\tsigma_t Z)-X\bigr]^2\bigr\},
\end{equation}
with $Z\sim\normal(0,1)$ independent from
$X\sim p_X$, and $\tsigma_0^2 = \E\{X^2\}/\delta$. Notice that
$\tsigma_t^2= \tR_{tt}$.
From equations (\ref{eq:ell1SE}),
(\ref{eq:EtaApprox}) and (\ref{eq:ell1SEApprox}), it is then
straightforward to show that
$\llvert \sigma_t^2-\tsigma_t^2\rrvert \le C \xi$ for some $C= C(t)$.

Given polynomials as defined by (\ref{eq:EtaApprox}), we define $\{
x^t,z^t\}_{t\ge0}$ as per
equations (\ref{eq:AMP-CS1}), (\ref{eq:AMP-CS2}), with the initial
condition given there. Equation (\ref{eq:ThirdBeta}) follows
immediately from Corollary~\ref{coro:SE} for $\xi$ sufficiently
small. Equation (\ref{eq:SecondBeta}) also follows from the same
corollary, by taking
%
\begin{equation}
\psi(x_1,x_2,x_3) = \bigl\{
\eta_t(x_3)-\eta(x_3;\alpha
\sigma_t) \bigr\}^2 , %
\end{equation}
and then using once again equation (\ref{eq:EtaApprox}) on the resulting
expression.

Finally, consider equation (\ref{eq:FirstBeta}). For economy of notation,
we write
%
\begin{equation}
\hspace*{30pt}(\mathsf{b}_t)_{ii} = \sum
_{j\in[n]}A_{ij}^2\varphi_j,\qquad
\varphi_i=\eta'_{t-1}\bigl(D_{jj}x^{t-1}_j
+\bigl(A^{\sT}z^{t-1}\bigr)_j-u_{0,j}
\bigr), %
\end{equation}
and further define
%
\begin{equation}
\mathsf{b}_t^{\mathrm{av}} =\frac{1}{m} \sum
_{j\in[n]}\varphi_j. %
\end{equation}
Then we have
\begin{eqnarray*}
&&\E \bigl\{ \bigl((\mathsf{b}_t)_{ii}-
\mathsf{b}_t^{\mathrm{av}} \bigr)^4 \bigr\} \\[-2pt]
&&\qquad= \sum
_{j_1,j_2,j_3,j_4\in[n]} \E \biggl\{ \biggl(A_{ij_1}^2-
\frac
{1}{m} \biggr) \biggl(A_{ij_2}^2-
\frac{1}{m} \biggr) \biggl(A_{ij_3}^2-
\frac{1}{m} \biggr) \\[-2pt]
&&\hspace*{142pt}{}\times\biggl(A_{ij_4}^2-
\frac{1}{m} \biggr) \varphi_{j_1}\varphi_{j_2}\varphi
_{j_3}\varphi_{j_3} \biggr\}
\\[-2pt]
&&\qquad=\sum_{j_1,j_2,j_3,j_4\in[n]} E(j_1,j_2,j_3,j_4).
\end{eqnarray*}
Using the tree representation in Section~\ref{sec:TreeRep}, it is not
hard to prove that the expectation on the right-hand side is bounded
as follows:
\begin{eqnarray*}
E(p,q,r,s) &\le&\frac{K}{n^6},\qquad p,q,r,s \mbox{ distinct,}
\\[-1pt]
E(q,q,r,s) &\le&\frac{K}{n^5},\qquad q,r,s \mbox{ distinct,}
\\[-1pt]
E(r,r,s,s) &\le&\frac{K}{n^4},\qquad r,s \mbox{ distinct,}
\\[-1pt]
E(r,r,r,s) &\le&\frac{K}{n^4},\qquad r,s \mbox{ distinct,}
\\[-1pt]
E(r,r,r,r) &\le&\frac{K}{n^3}. %
\end{eqnarray*}
Consider, for instance, the first case, $p$, $q$, $r$, $s$ distinct.
Using Lemma~\ref{lem:treeAMP}, each of $\varphi_p$, $\varphi_q$,
$\varphi_r$, $\varphi_s$ can be
represented as a sum over trees with root\vadjust{\goodbreak} type respectively at $p$,
$q$, $r$,
$s$. The weight of these trees is as in Lemma~\ref{lem:treeAMP}, times
the prefactor $(A_{ip}^2-m^{-1})\cdots(A_{is}^2-m^{-1})$. Let $\mu$
be the total number of edges in these trees, plus $8$ (two for each of
the additional factors). Then any nonvanishing contribution is of
order $n^{-\mu/2}$. Let $\vG$
be the graph obtained by identifying the vertices of the same type in
these trees, and $e(\vG)$ the number of its edges. Since each edge in
$\vG$ must be covered at least twice by the trees to get a nonzero
expectation, and the edges in $(i,p),\dots,(i,s)$ at least once, we
have $2e(\vG)+4\le\mu$.
The number of vertices in $\vG$ is
at most $e(\vG)+1$ (note that $\vG$ is connected because it includes
type $i$ connected to $p,q,r,s$). Of these vertices all but $5$
(whose type is $i$, $p$, $q$, $r$, $s$) can take an arbitrary type,
yielding a combinatorial factor of order $n^{e(\vG)+1-5}\le
n^{\mu/2-6}$. Hence the sum over trees is of order
$n^{-\mu/2}n^{\mu/2-6}= n^{-6}$ as claimed.

Summing over $j_1$, \dots, $j_4$ the above bounds we obtain $\E \{
((\mathsf{b}_t)_{ii}-\mathsf{b}_t^{\mathrm{av}}
)^4 \} \le K/n^2$ and therefore, by Markov's inequality
%
\begin{equation}
\lim_{n\to\infty} \prob \Bigl\{\max_{i\in[m]}
\bigl\llvert (\mathsf{b}_t)_{ii}-\mathsf{b}_t^{\mathrm
{av}}
\bigr\rrvert \ge n^{-1/5} \Bigr\} = 0 . %
\end{equation}
Since by standard concentration bounds $\max_{i\in[n]}D_{ii}$,
$\min_{i\in[n]}D_{ii}\to1$, we obtain, in probability,
\begin{eqnarray*}
\lim_{n\to\infty} \max_{i\in[m]} (
\mathsf{b}_t)_{ii} &=& \lim_{n\to
\infty} \min
_{i\in[m]} (\mathsf{b}_t)_{ii} = \lim
_{n\to\infty}\mathsf{b}_t^{\mathrm{av}}
\\
&=& \lim_{n\to\infty} \frac{1}{m}\sum
_{j\in[n]}\eta'_{t-1}\bigl(x^{t-1}_j
+\bigl(A^{\sT}z^{t-1}\bigr)_j\bigr)
\\
&=&\frac{1}{\delta}\E \bigl\{\eta'_{t-1}(X+
\tsigma_{t-1} Z) \bigr\},%
\end{eqnarray*}
where, in the last step, we applied Corollary~\ref{coro:SE} to the polynomials $\eta'_{t-1}$, and $X\sim p_X$,
$Z\sim\normal(0,1)$ are independent.
We are left with the task of showing that, by taking $\xi$ small
enough in equation (\ref{eq:EtaApprox}), we can ensure that
%
\begin{equation}
\bigl\llvert \E \bigl\{\eta'_{t-1}(X+
\tsigma_{t-1} Z) \bigr\}-\prob\bigl\{\llvert X+\sigma_{t-1} Z
\rrvert \ge \alpha\sigma_{t-1}\bigr\}\bigr\rrvert \le\beta\delta.
\end{equation}
Indeed integrating by parts with respect to $Z$ the above difference
can be written as (for $K$ a finite constant that can depend on $t$ and
change from line to line)
\begin{eqnarray*}
&&\biggl\llvert \frac{1}{\tsigma_{t-1}}\E \bigl\{Z\eta_{t-1}(X+
\tsigma_{t-1} Z) \bigr\}-\frac{1}{\sigma_{t-1}}\E \bigl\{Z\eta(X+
\tsigma_{t-1} Z;\alpha\sigma_{t-1}) \bigr\}\biggr\rrvert
\\
&&\qquad\le K \E\bigl\llvert Z\eta_{t-1}(X+\sigma_{t-1} Z)-Z\eta(X+
\sigma_{t-1} Z;\alpha \sigma_{t-1})\bigr\rrvert + K\llvert
\sigma_{t-1}-\tsigma_{t-1}\rrvert
\\
&&\qquad\le K \xi\E \biggl\{\exp \biggl\{\frac{X^2+\sigma_{t-1}^2X^2}{4\max
(\sigma_t^2,s^2)} \biggr\} \biggr\} +K\llvert
\sigma_{t-1}-\tsigma_{t-1}\rrvert
\\
&&\qquad\le K\xi+K\llvert \sigma_{t-1}-\tsigma_{t-1}\rrvert.
\end{eqnarray*}
The claim follows by
noting that, as argued above $\llvert \sigma_{t-1}-\tsigma_{t-1}\rrvert  \le K'\xi$.\vadjust{\goodbreak}

Consider finally point 4. First recall that we constructed the
vectors $\{x^t,z^t\}_{t\ge0}$, using a sequence of orbits $\{x^{\ell,t},z^{\ell,t}\}_{t\ge
0}$, indexed by $\ell\in\naturals$, that obey equations (\ref{eq:AMP-CS1}),
(\ref{eq:AMP-CS2}), and letting
%
\begin{equation}\label{eq:ConstructionXt}
\hspace*{35pt}x^t(n) = x^{\ell,t}(n),\qquad z^t(n) =
z^{\ell,t}(n) \qquad \mbox{for all $n$, with $n_{\ell}\le
n<n_{\ell+1}$}. %
\end{equation}
%

\begin{cla}\label{Claim:Ell}
There exists a sequence $\{\tbeta_{\ell}\}_{\ell\in\naturals}$ with
$\lim_{\ell\to\infty}\tbeta_{\ell}=0$
such that, for all $\ell'\ge\ell$,
%
\begin{eqnarray}\label{eq:ClaimEll1}
\lim_{n\to\infty}\frac{1}{n}\sum
_{i\in
[n]}\E \bigl\{ \bigl(\bigl(x^{\ell',t}+A^{\sT}z^{\ell',t}
\bigr)_i-\bigl(x^{\ell,t}+A^{\sT
}z^{\ell,t}
\bigr)_i \bigr)^2 \bigr\}&\le&\tbeta_{\ell},
\\
\lim_{n\to\infty}\frac{1}{m}\sum
_{i\in[m]}\E \bigl\{ \bigl(z^{\ell
',t}_i-z^{\ell,t}_i
\bigr)^2 \bigr\}&\le& \tbeta_{\ell}.
\end{eqnarray}
\end{cla}

The proof of this claim is presented below.
It follows from this claim that, by
eventually redefining $n_{\ell'}$ to be larger, we can ensure
\begin{eqnarray*}
\E \bigl\{ \bigl(\bigl(x^{\ell',t}+A^{\sT}z^{\ell',t}
\bigr)_I-\bigl(x^{\ell,t}+A^{\sT
}z^{\ell,t}
\bigr)_I \bigr)^2 \bigr\}&\le&2\tbeta_{\ell},
\\
\E \bigl\{ \bigl(z^{\ell',t}_J-z^{\ell,t}_J
\bigr)^2 \bigr\}&\le& 2\tbeta_{\ell} %
\end{eqnarray*}
for all $n\ge n_{\ell'}$. Here and below, expectation is taken also
with respect to $I$ uniformly random in $[n]$ and $J$ uniformly random
in $[m]$.
By equation (\ref{eq:ConstructionXt}), for all $n\ge n_{\ell}$,
we also have
\begin{eqnarray*}
\E \bigl\{ \bigl(\bigl(x^t+A^{\sT}z^t
\bigr)_I-\bigl(x^{\ell,t}+A^{\sT}z^{\ell,t}
\bigr)_I \bigr)^2 \bigr\}&\le&2\tbeta_\ell,
\\
\E \bigl\{ \bigl(z^t_J-z^{\ell,t}_J
\bigr)^2 \bigr\}&\le& 2\tbeta_\ell. %
\end{eqnarray*}
Applying Lemma~\ref{lemma:Continuity}, we can then construct
$\{\tx^t,\tilde{z}^t\}_{t\ge0}$ as in the statement at point 4, such that
\begin{eqnarray*}
\E \bigl\{ \bigl(\bigl(\tx^t+\tA^{\sT}
\tilde{z}^t\bigr)_I-\bigl(x^{\ell,t}+A^{\sT
}z^{\ell,t}
\bigr)_I \bigr)^2 \bigr\}&\le&K \bigl(
\nu^2+n^{-1/2}\bigr),
\\
\E \bigl\{ \bigl(\tilde{z}^t_J-z^{\ell,t}_J
\bigr)^2 \bigr\}&\le& K \bigl(\nu^2+n^{-1/2}
\bigr), %
\end{eqnarray*}
where $K$ depends on $\ell$ but not on $\nu$ or $n$.
The proof is finished by using triangular inequality and selecting
$\ell=\ell(\nu,t)$ diverging slowly enough as $\nu\to0$.
\end{pf*}

We now prove Claim~\ref{Claim:Ell}.

\begin{pf*}{Proof of Claim~\ref{Claim:Ell}}
To be definite we will focus on equation (\ref{eq:ClaimEll1}).

Fix $\ell$, $\ell'\in\naturals$ (not necessarily distinct).
By an immediate generalization of Corollary~\ref{coro:SE}, we have, in
probability,
%
\begin{equation}\label{eq:Qdef}
\hspace*{30pt}\lim_{n\to\infty}\frac{1}{n}\sum
_{i\in[n]}\E\bigl\{ \bigl(x^{\ell,t}+A^{\sT}z^{\ell,t}-x_0
\bigr)_i \bigl(x^{\ell',t}+A^{\sT}z^{\ell',t}-x_0
\bigr)_i\bigr\}= Q^t_{\ell,\ell'}.\vadjust{\goodbreak}
\end{equation}
Further, the quantities $Q^t_{\ell,\ell'}$ satisfy the state evolution
recursion
%
\begin{equation}
Q^{t+1}_{\ell,\ell'} = \frac{1}{\delta}\E \bigl\{
\bigl[\eta^\ell_t(X+Z_{t,\ell})-X \bigr] \bigl[\eta
^{\ell'}_t(X+Z_{t,\ell'})-X \bigr] \bigr\},
\end{equation}
with initial condition $Q^0_{\ell,\ell'} = (1/\delta)\E\{X^2\}$.
Here expectation is taken with respect to $X\sim p_X$ and the
independent centered Gaussian vector
$(Z_{t,\ell},Z_{t,\ell'})$ with covariance given by $\E\{Z_{\ell,t}^2\} =
Q^t_{\ell,\ell}$, $\E\{Z_{\ell',t}^2\} = Q^t_{\ell',\ell'}$, $\E
\{
Z_{\ell,t}Z_{\ell',t}\} =
Q^t_{\ell,\ell'}$. In order to prove the claim, it is therefore
sufficient to show that
%
\begin{equation}\label{eq:ProofClaimEll}
\lim_{\ell\to\infty}\sup_{\ell': \ell'\ge\ell} \bigl
\llvert Q_{\ell,\ell'}^t-\sigma_t^2\bigr
\rrvert = 0 ,%
\end{equation}
since this implies $\lim_{\ell\to\infty}\sup_{\ell': \ell'\ge
\ell}[Q_{\ell,\ell}^{t}-2Q^{t}_{\ell,\ell'}+Q^{t}_{\ell',\ell'}] =
0$, which in turn implies the claim via equation (\ref{eq:Qdef}).

Finally, recall that $\eta_t^{\ell}$ was constructed using Theorem~\ref{thm:WeightedApproximation} [cf. equation (\ref{eq:EtaApprox})]
in such
a way that, for all $x\in\reals$,
%
\begin{equation}\label{eq:EtaApproxBis}
\bigl\llvert \eta(x;\alpha\sigma_t)-
\eta^{\ell}_t(x)\bigr\rrvert \le\xi_{\ell} \exp \biggl
\{\frac{x^2}{16\max(\sigma_t^2,s^2)} \biggr\},%
\end{equation}
with $\xi_{\ell}\to0$ as $\ell\to\infty$. The desired estimate
(\ref
{eq:ProofClaimEll}) then follows by
recalling that $\sigma^2_{t+1} =
(1/\delta)\E \{[\eta(X+\sigma_tZ)-X]^2 \}$, and using
equation (\ref{eq:EtaApproxBis}) inductively to show that
$ |Q_{\ell,\ell'}^t-\sigma_t^2 | \le K(t) \xi_{\ell}$.
\end{pf*}

We finally sketch the proof of Proposition~\ref{171717171717}.

\begin{pf*}{Proof of Proposition~\ref{171717171717}}
The sequence $\{x^t,z^t\}_{t\ge0}$ is constructed as in the previous
statement. The proof hence follow by using Corollary~\ref{coro:SE},
and taking $\xi$ small enough in equation (\ref{eq:EtaApprox}), since
we can
ensure that $\llvert \tR_{t,s}-R_{t,s}\rrvert \le\beta'$ for any $\beta'>0$ and
any $t,s\le t_{\max}$ (as shown above for the case $t=s$).
\end{pf*}\vspace*{-9pt}

\section{Proof of Lemma \texorpdfstring{\protect\ref{5551555515555}}{5}}\label{app:ProofSubgradient}

Throughout the proof we denote by $C_1$, $C_2$, $C_3$ etc., positive
constants that depend uniquely on $c_1,\dots, c_3$.

Consider the $\ell_1$ minimization problem
\begin{eqnarray*}
&& \mbox{minimize}\qquad \llVert x\rrVert _1 ,
\\
&& \qquad\mbox{subject to}\qquad y = Ax_0 ,
\end{eqnarray*}
and denote by $\hx$ any minimizer. Further, let $v$ be a subgradient
as in the statement, and define, for some $c\in(0,1)$,
%
\begin{equation}
S(c) \equiv \bigl\{ i\in[n] \dvtx  \llvert v_i\rrvert \ge1-c
\bigr\}. %
\end{equation}
Also, let $\oS(c) = [n]\setminus S(c)$ be the complement of this set.
Notice that, by definition of subgradient, we have
$v_i = \sign(x_{0,i})$ for all $i\in S$ and $\llvert v_{0,i}\rrvert \le1$ for all
in $\oS\equiv[n]\setminus S$. This implies that $S\subseteq S(c)$.

We have
%
\begin{eqnarray}\label{eq:LemmaCSFirst}
\llVert \hx\rrVert _1 &=& \llVert x_0\rrVert
_1+ \bigl\langle v,(\hx-x_0) \bigr\rangle +
R_1 + R_2 ,
\\
R_1 & =&\llVert \hx_{S(c)}\rrVert _1-\llVert
x_{0,S(c)}\rrVert _{1} - \bigl\langle v_{S(c)}, (
\hx-x_0)_{S(c)} \bigr\rangle,
\\
R_2 & =& \llVert \hx_{\oS(c)}\rrVert _1-\llVert
x_{0,\oS(c)}\rrVert _{1} - \bigl\langle v_{\oS(c)}, (
\hx-x_0)_{\oS(c)} \bigr\rangle. %
\end{eqnarray}
Since $\oS(c)\subseteq\oS$, we have $x_{0,\oS(c)}=0$ and hence
%
\begin{eqnarray}
R_2&=& \llVert \hx_{\oS(c)}\rrVert _1-
\langle v_{\oS(c)},\hx_{\oS
(c)} \rangle = \sum
_{i\in\oS(c)} \bigl(\llvert \hx_i\rrvert
-v_i \hx_i \bigr)\nonumber\\[-8pt]\\[-8pt]
 &\ge&\sum_{i\in\oS(c)}
\bigl(\llvert \hx_i\rrvert -(1-c)\llvert \hx_i\rrvert
\bigr) = c\llVert \hx_{\oS(c)}\rrVert _1 .\nonumber %
\end{eqnarray}
On the other hand, $v_{S(c)}$ is in the subgradient of
$\llVert x_{S(c)}\rrVert _1$ at $x_{S(c)}=x_{0,S(c)}$. Hence $R_1\ge0$. It follows
that equation (\ref{eq:LemmaCSFirst}) implies $\llVert \hx\rrVert _1\ge\llVert x_0\rrVert _1+
\langle v,(\hx-x_0) \rangle +c\llVert \hx_{\oS(c)}\rrVert _1$.
Since $\hx$ is a minimizer, we
thus get
%
\begin{equation}\label{eq:BoundOutOfSupport}
\qquad \llVert \hx_{\oS(c)}\rrVert _1\le -
\frac{1}{c} \bigl\langle v,(\hx-x_0) \bigr\rangle =-
\frac
{1}{c} \bigl\langle w,(\hx-x_0) \bigr\rangle \le
\frac{\eps}{c} \sqrt{n} \llVert \hx-x_0\rrVert _2
,%
\end{equation}
where in the last step we used Cauchy--Schwarz together with
assumption \ref{H:Grad}. Hereafter we let
$r\equiv\hx-x_0$.

Let $\oS(c) = \bigcup_{\ell=1}^KS_\ell$ be a partition such that
$nc/2\le
\llvert S_{\ell}\rrvert \le nc$, and that $\llvert r_{i}\rrvert \le\llvert r_{j}\rrvert $ for each $i\in
S_{\ell}$, $j\in S_{\ell-1}$. If $\llvert \oS(c)\rrvert <nc/2$, such a partition
does not
exist, but the argument follows by an
obvious modification of the one below. Further define $\oS_+ =
\bigcup_{\ell=2}^KS_{\ell}\subseteq\oS(c)$
and $S_+=[n]\setminus\oS_+$. We have
%
\begin{eqnarray}
\llVert r_{\oS_+}\rrVert _2^2 &=& \sum
_{\ell= 2}^K \llVert r_{S_\ell}\rrVert
_2^2\le \sum_{\ell= 2}^K
\llvert S_{\ell}\rrvert \biggl(\frac{\llVert r_{S_{\ell-1}}\rrVert _1}{\llvert S_{\ell-1}\rrvert } \biggr)^2
\nonumber\\[-8pt]\\[-8pt]
&\le& \frac{4}{nc}\sum_{\ell= 1}^{K-1}
\llVert r_{S_{\ell}}\rrVert _1^2\le
\frac{4}{nc}\llVert r_{\oS(c)}\rrVert _1^2 .\nonumber
\end{eqnarray}
Fix $c= c_1$.
Since $\oS(c)\subseteq\oS$, we have $r_{\oS(c)} = \hx_{\oS(c)}$, and
using equation (\ref{eq:BoundOutOfSupport}), we conclude that there exists
$C_1 \le4/c_1^3$ such that
%
\begin{equation}\label{eq:LemmaCS2}
\llVert r_{\oS_+}\rrVert _2^2\le
C_1 \eps^2\llVert r\rrVert _2^2
. %
\end{equation}
On the other hand, by definition $Ar = 0$, and hence $A_{S_+}r_{S_+} +
A_{\oS_+}r_{\oS_+}=0$.
Since $\oS(c)\subseteq\oS$, we have
$S\subseteq S(c) \subseteq S_+$. Further $S_+\setminus S(c) = S_1$,
whence $\llvert S_+\setminus S(c)\rrvert  \le n c = nc_1$.
By assumption \ref{H:Key}, we have $\sigma_{\min}(A_{S_+})\ge c_2$
and therefore,
\[
\llVert r_{S_+}\rrVert _2\le\frac{1}{c_2}
\llVert A_{S_+}r_{S_+}\rrVert _2=
\frac{1}{c_2} \llVert A_{\oS_+}r_{\oS_+}\rrVert
_2\le \frac{c_3}{c_2}\llVert r_{\oS_+}\rrVert
_2 . %
\]
Combining this with equation (\ref{eq:LemmaCS2}), we deduce that
$\llVert r\rrVert _2\le C_2\eps\llVert r\rrVert _2$ for some $C_2 = C_2(c_1,c_2,c_3)$,
which in turns implies $r=0$ provided that $C_2\eps<1$. The
claim hence follows for $\eps_0 = 1/[2C_2(c_1,c_2,c_3)]$.

%
%
\section{Asymptotic analysis of state evolution: Proof~of~Lemma
\texorpdfstring{\protect\ref{6661666616666166616661}}{6}}\label{app:AsympSE}

Before proceeding, we introduce the following piece of notation
(following \cite{BayatiMontanariLASSO}).
Fix a probability distribution $p_X$ on $\reals$, with $p_X(\{0\}
)=1-\eps$, and $\delta>0$.
For $\theta,\sigma^2>0$, we
define
%
\begin{equation}\label{eq:SEMapL1}
\F\bigl(\sigma^2,\theta\bigr)\equiv\frac{1}{\delta} \E \bigl\{
\bigl[\eta(X+\sigma Z;\theta)-X \bigr]^2 \bigr\},
\end{equation}
where expectation is taken with respect to the independent random
variables $X\sim p_X$ and $Z\sim\normal(0,1)$.
When necessary, we will indicate the dependency on $p_X$ by $\F(\sigma
^2,\theta;p_X)$.
With this notation the state evolution recursion reads $\sigma^2_{t+1}
= \F(\sigma^2_t,\alpha\sigma_t)$. The following properties of the
function $\F$ were proved in \cite{DMM09}; see also
\cite{BayatiMontanariLASSO}, Appendix A, for a more explicit
treatment.

\begin{lemma}[(\cite{DMM09})]\label{lemma:FirstDerivativeF}
For any $\alpha>0$, the mapping $\sigma^2\mapsto
\F(\sigma^2,\alpha\sigma)$ is monotone increasing and concave with
$\F(0,0) = 0$ and
%
\begin{equation}
\frac{\de}{\de(\sigma^2)} \F\bigl(\sigma ^2,\alpha \sigma\bigr)
\bigg\rrvert _{\sigma=0} = \frac{1}{\delta} \bigl\{\eps\bigl(1+
\alpha^2\bigr) + 2(1-\eps) \E \bigl[(Z-\alpha)_+^2 \bigr]
\bigr\}.
\end{equation}
\end{lemma}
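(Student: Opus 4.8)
The plan is to reduce the statement to elementary properties of the scalar soft‑threshold risk at unit noise, using the homogeneity $\eta(au;a\theta)=a\,\eta(u;\theta)$ valid for $a>0$. With $a=\sigma\equiv\sqrt{\sigma^2}$ one has $\eta(X+\sigma Z;\alpha\sigma)-X=\sigma\big(\eta(X/\sigma+Z;\alpha)-X/\sigma\big)$, so that, writing $\mu(w)\equiv\E_Z\{[\eta(w+Z;\alpha)-w]^2\}$ with $Z\sim\normal(0,1)$ and $G(\tau)\equiv\F(\tau,\alpha\sqrt{\tau})$,
\[
G(\tau)=\frac1\delta\,\E_X\!\left\{\tau\,\mu(X/\sqrt{\tau})\right\}\,,\qquad \tau=\sigma^2\ge 0 .
\]
The identity $G(0)=\F(0,0)=0$ is immediate since $\eta(u;0)=u$. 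Splitting into the three regions $\{w+Z>\alpha\}$, $\{w+Z<-\alpha\}$, $\{|w+Z|\le\alpha\}$ gives the explicit form $\mu(w)=\rho(w)+w^2\pi(w)$, where $\pi(w)\equiv\prob_Z\{|w+Z|\le\alpha\}=\Phi(\alpha-w)-\Phi(-\alpha-w)$ and $\rho(w)\equiv\E_Z\{(Z-\alpha)^2\,\ind(w+Z>\alpha)\}+\E_Z\{(Z+\alpha)^2\,\ind(w+Z<-\alpha)\}\ge 0$; in particular $\mu$ is $C^\infty$ and bounded, since $|\eta(u;\alpha)-u|\le\alpha$ gives $|\eta(w+Z;\alpha)-w|\le\alpha+|Z|$, whence $\mu(w)\le\E\{(\alpha+|Z|)^2\}$.

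Next I would compute the derivatives of $G$ on $(0,\infty)$ and read off monotonicity and concavity. Differentiating under $\E_X$ — legitimate since all the functions of $w$ below are bounded and $X\in L^2$ — and using $\mu'(w)=2w\,\pi(w)$,
\[
G'(\tau)=\frac1\delta\,\E_X\!\left\{\rho(X/\sqrt\tau)\right\}\ge 0 , \qquad G''(\tau)=-\frac{1}{2\delta\tau}\,\E_X\!\left\{\,\frac{X}{\sqrt\tau}\,\rho'\!\big(X/\sqrt\tau\big)\right\}.
\]
Monotonicity follows from $\rho\ge 0$. For concavity one differentiates the two integrals defining $\rho$ to get $\rho'(w)=w^2\big[\phi(w-\alpha)-\phi(w+\alpha)\big]$; since $|w-\alpha|\le|w|+\alpha$ and $\phi$ is even and decreasing on $\reals_+$, this is $\ge 0$ for $w\ge 0$, and as $\rho$ is even (so $\rho'$ is odd) one gets $w\,\rho'(w)\ge 0$ for all $w$, hence $G''\le 0$ on $(0,\infty)$; continuity of $G$ at $0$ extends concavity (and monotonicity) to $[0,\infty)$.

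Finally, for the derivative at the origin: $G$ is $C^1$ on $(0,\infty)$, continuous at $0$ with $G(0)=0$, and $G'$ has a finite limit as $\tau\downarrow 0$, so $G'(0)=\lim_{\tau\downarrow0}G(\tau)/\tau=\lim_{\tau\downarrow0}G'(\tau)$. Using $p_X(\{0\})=1-\eps$: for $X\ne 0$ we have $X/\sqrt\tau\to\pm\infty$ and $\rho(w)\to\E\{(Z-\alpha)^2\}=1+\alpha^2$ as $|w|\to\infty$, while $\rho(0)=2\E\{(Z-\alpha)^2\,\ind(Z>\alpha)\}=2\E\{(Z-\alpha)_+^2\}$; bounded convergence then yields
\[
G'(0)=\frac1\delta\Big[(1-\eps)\,\rho(0)+\eps\,(1+\alpha^2)\Big]=\frac1\delta\Big\{\eps(1+\alpha^2)+2(1-\eps)\,\E\big[(Z-\alpha)_+^2\big]\Big\},
\]
which is the asserted formula. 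The main obstacle is the bookkeeping in the differentiation step: $\eta(\,\cdot\,;\alpha)$ is only piecewise $C^1$, so the clean identities for $\mu'$ and $\rho'$ should be obtained by differentiating the three‑region representation of $\mu$ directly (thereby avoiding weak derivatives), and one must verify the hypotheses for differentiating under $\E_X$ and for the bounded‑convergence passage as $\tau\downarrow 0$ — all routine here because the relevant functions of $w$ are bounded with at most polynomial‑times‑Gaussian tails.
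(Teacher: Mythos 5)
Your proof is correct. A small clerical note: the paper itself does not supply a proof of this lemma --- it invokes \cite{DMM09} (and points to Appendix~A of \cite{BayatiMontanariLASSO} for a more explicit treatment), so there is no in-paper argument to compare against. That said, your derivation is sound and self-contained: the scaling identity $\eta(\sigma u;\sigma\theta)=\sigma\,\eta(u;\theta)$ reduces $\F(\sigma^2,\alpha\sigma)$ to $\frac{\tau}{\delta}\,\E_X\{\mu(X/\sqrt{\tau})\}$ with $\tau=\sigma^2$; the decomposition $\mu=\rho+w^2\pi$ over the three soft-threshold regions is right; the cancellation $\mu(w)-\tfrac{w}{2}\mu'(w)=\rho(w)$ (using $\mu'(w)=2w\pi(w)$) gives $G'(\tau)=\frac{1}{\delta}\E_X\{\rho(X/\sqrt{\tau})\}\ge 0$; the identity $\rho'(w)=w^2[\phi(w-\alpha)-\phi(w+\alpha)]$ together with evenness of $\rho$ gives $w\,\rho'(w)\ge 0$, hence $G''\le 0$; and the boundary values $\rho(0)=2\,\E\{(Z-\alpha)_+^2\}$, $\rho(\pm\infty)=1+\alpha^2$ with bounded convergence over the atom-plus-remainder split $p_X(\{0\})=1-\eps$ yield the stated formula for $G'(0^+)$. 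The dominated-convergence justifications you flag are indeed routine because $\rho$ and $\mu$ are bounded. This is essentially the argument sketched in the cited references, and nothing is missing.
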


It is also convenient to define
%
\begin{eqnarray}\label{eq:GExpression}
G_{\eps}(\alpha) & \equiv&\eps\bigl(1+\alpha^2\bigr) + 2(1-
\eps) \E \bigl\{(Z-\alpha)_+^2 \bigr\}\nonumber\\[-8pt]\\[-8pt]
& =& \eps\bigl(1+\alpha^2\bigr) + 2(1-\eps) \bigl[\bigl(1+
\alpha^2\bigr)\Phi(-\alpha)-\alpha\phi(\alpha) \bigr] .
\nonumber
\end{eqnarray}
The first two derivatives of $\alpha\mapsto G_{\eps}(\alpha)$ will
be used in the proof
%
\begin{eqnarray}\label{eq:FirstG}
G'_{\eps} (\alpha) &= & 2\alpha\eps+4(1-\eps) \bigl[-\phi(
\alpha)+\alpha\Phi(-\alpha ) \bigr] ,
\\
\label{eq:SecondG}
G''_{\eps} (\alpha) &= & 2\eps+4(1-\eps)
\Phi(-\alpha).
\end{eqnarray}
In particular, we have the following.

\begin{lemma}\label{lemma:G}
For any $\eps\in(0,1)$, $\alpha\mapsto G_{\eps}(\alpha)$ is
strictly convex
in $\alpha\in\reals_+$, with a unique minimum on $\alpha_*(\eps
)\in
(0,\infty)$. Further $G_{\eps}(0) = 1$ and\break $\lim_{\alpha\to\infty
}G_{\eps}(\alpha)=\infty$.
Finally, the minimum value satisfies
%
\begin{equation}\label{eq:MinValueG}
G_{\eps}(\alpha_*) = \eps+ 2(1-\eps)\Phi(-\alpha_*) =
\tfrac{1}{2} G''_{\eps}(\alpha_*)
\in(0,1).%
\end{equation}
\end{lemma}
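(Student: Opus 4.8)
The plan is to read off everything from the explicit expression for $G_{\eps}$ and the formulas \eqref{eq:FirstG}, \eqref{eq:SecondG} for its first two derivatives, which are already recorded in the excerpt; no genuinely new estimate is needed. First I would establish strict convexity: since $\eps\in(0,1)$ and $\Phi(-\alpha)>0$ for every $\alpha$, the formula $G''_{\eps}(\alpha)=2\eps+4(1-\eps)\Phi(-\alpha)$ is strictly positive on all of $\reals_+$, so $G_{\eps}$ is strictly convex there. Next, direct substitution gives $G_{\eps}(0)=\eps+2(1-\eps)\Phi(0)=\eps+(1-\eps)=1$. For the behaviour at infinity I would note that in $G_{\eps}(\alpha)=\eps(1+\alpha^2)+2(1-\eps)\big[(1+\alpha^2)\Phi(-\alpha)-\alpha\phi(\alpha)\big]$ the term $\eps\alpha^2\to\infty$, while the bracket tends to $0$ because the Gaussian tail bound $\Phi(-\alpha)\le \phi(\alpha)/\alpha$ makes both $(1+\alpha^2)\Phi(-\alpha)$ and $\alpha\phi(\alpha)$ vanish as $\alpha\to\infty$; hence $G_{\eps}(\alpha)\to\infty$.

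Then I would locate the minimizer. From \eqref{eq:FirstG}, $G'_{\eps}(0)=-4(1-\eps)\phi(0)<0$, so $G_{\eps}$ is strictly decreasing at the origin; combined with strict convexity and $G_{\eps}(\alpha)\to\infty$, the function has a unique critical point $\alpha_*(\eps)\in(0,\infty)$, which is its unique global minimum and is characterized by $G'_{\eps}(\alpha_*)=0$. Writing this stationarity condition out and dividing by $2\alpha_*>0$ yields the key identity
\begin{eqnarray*}
\alpha_*\big[\eps+2(1-\eps)\Phi(-\alpha_*)\big] = 2(1-\eps)\phi(\alpha_*)\, .
\end{eqnarray*}

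Finally I would evaluate $G_{\eps}$ at $\alpha_*$. Grouping the terms of $G_{\eps}(\alpha_*)$ gives
\begin{eqnarray*}
G_{\eps}(\alpha_*) = \big[\eps+2(1-\eps)\Phi(-\alpha_*)\big] + \alpha_*\Big\{\alpha_*\big[\eps+2(1-\eps)\Phi(-\alpha_*)\big] - 2(1-\eps)\phi(\alpha_*)\Big\}\, ,
\end{eqnarray*}
and the braced quantity is exactly zero by the key identity, so $G_{\eps}(\alpha_*)=\eps+2(1-\eps)\Phi(-\alpha_*)$; comparing with \eqref{eq:SecondG} this equals $\tfrac12 G''_{\eps}(\alpha_*)$, giving \eqref{eq:MinValueG}. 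The remaining bound $G_{\eps}(\alpha_*)\in(0,1)$ is immediate: positivity is clear from the expression $\eps+2(1-\eps)\Phi(-\alpha_*)$ with $\Phi(-\alpha_*)>0$, and $G_{\eps}(\alpha_*)<1=G_{\eps}(0)$ because $\alpha_*\neq 0$ is the strict global minimizer. There is no real obstacle here — the only point that needs a word of care is the $\alpha\to\infty$ limit, where one must invoke the standard Gaussian tail estimate to see that the $\Phi(-\alpha)$ terms beat the polynomial growth; everything else is bookkeeping with the given derivative formulas.
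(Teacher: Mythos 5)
Your proof is correct and takes essentially the same approach as the paper's: strict convexity from the explicit formula $G''_{\eps}(\alpha)>0$, the unique interior minimizer from $G'_{\eps}(0)<0$ together with the behaviour at infinity, and \eqref{eq:MinValueG} by substituting the stationarity condition $G'_{\eps}(\alpha_*)=0$ back into \eqref{eq:GExpression}. You spell out a few steps the paper leaves implicit — the evaluation $G_{\eps}(0)=1$, the Gaussian tail estimate for $\alpha\to\infty$, the algebraic regrouping of $G_{\eps}(\alpha_*)$, and the bound $G_{\eps}(\alpha_*)\in(0,1)$ — but nothing is different in substance.
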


\begin{pf}
By inspection of equation (\ref{eq:SecondG}), $G''_{\eps}(\alpha)>0$ for
all $\alpha>0$, hence $G_{\eps}(\alpha)$ is strictly convex.
Further, from equation (\ref{eq:FirstG}), we have $G'_{\eps}(0) =
-4(1-\eps)\phi(0)<0$
and $G'_{\eps}(\alpha) = 2\alpha\eps+ O_\alpha(1)>0$ as
$\alpha\to\infty$. Hence $\alpha\mapsto G_{\eps}(\alpha)$ has a unique
minimum $\alpha_*(\eps)\in(0,\infty)$.

Finally, equation (\ref{eq:MinValueG}) follows immediately by using the
condition\break $G_{\eps}'(\alpha_*)=0$ in expression (\ref{eq:GExpression}).
\end{pf}

In our proof it is more convenient to use the coordinates
$(\delta,\eps)$ instead of $(\rho,\delta)$. In terms of the latter,
the phase boundary (\ref{eq:PhaseBoundaryApp1}), (\ref
{eq:PhaseBoundaryApp2}) reads
%
\begin{eqnarray}\label{eq:DeltaStar}
&&\delta_*(\eps) = \frac{2\phi(\alpha_*(\eps))}{\alpha_*(\eps)+2[\phi(\alpha
_*(\eps))
-\alpha_*(\eps)\Phi(-\alpha_*(\eps))]},
\\
\label{eq:AlphaStar}
&&\alpha_*(\eps) \quad \mbox{solves}\quad \alpha\eps+2(1-\eps) \bigl[\alpha\Phi(-\alpha)-
\phi(\alpha) \bigr] = 0 .%
\end{eqnarray}
Notice that the use of the symbol $\alpha_*(\eps)$ in the last
equations is not an abuse of notation. Indeed comparing
equation (\ref{eq:AlphaStar}) with (\ref{eq:FirstG}) we conclude that
$\alpha_*(\eps)$ is indeed the unique solution of $G'_{\eps}(\alpha)
= 0$.
Further, comparing equation (\ref{eq:DeltaStar}) with
equation (\ref{eq:GExpression}) we obtain the following.

\begin{lemma}\label{lemma:BoundaryExpression}
Let $(\delta,\rho_*(\delta))$ be the phase boundary defined by
equations (\ref{eq:PhaseBoundaryApp1}), (\ref{eq:PhaseBoundaryApp2}). Then,
for $\rho,\delta\in[0,1]$, $\rho>\rho_*(\delta)$ if and only if,
for $\eps\in(0,1)$, $\delta\in(\eps,1)$
%
\begin{equation}
\delta<\delta_*(\eps) \equiv\min_{\alpha>0}
G_{\eps}(\alpha). %
\end{equation}
Vice versa $\rho<\rho_*(\delta)$ if and only if
$\delta>\delta_*(\eps)$.
\end{lemma}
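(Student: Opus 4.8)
The plan is to prove that the two descriptions of the phase boundary — the parametric curve $\delta\mapsto\rho_*(\delta)=f_\rho(f_\delta^{-1}(\delta))$ of \eqref{eq:PhaseBoundaryApp1}–\eqref{eq:PhaseBoundaryApp2} and the threshold $\delta_*(\eps)=\min_{\alpha>0}G_\eps(\alpha)$ — coincide, and then to upgrade that identity of curves to the stated equivalences by a monotonicity argument. Throughout I use the correspondence $\eps=\rho\delta$ between the two sparsity parameters.

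The crux is the single algebraic identity $f_\delta(\alpha_*(\eps))=\delta_*(\eps)$ for every $\eps\in(0,1)$. To prove it I would start from the stationarity condition $G'_\eps(\alpha_*)=0$, which by \eqref{eq:FirstG} reads $\alpha_*\eps+2(1-\eps)\bigl[\alpha_*\Phi(-\alpha_*)-\phi(\alpha_*)\bigr]=0$, equivalently $\eps/(1-\eps)=2\phi(\alpha_*)/\alpha_*-2\Phi(-\alpha_*)$. Multiplying the value formula $\delta_*(\eps)=G_\eps(\alpha_*)=\eps+2(1-\eps)\Phi(-\alpha_*)$ of Lemma \ref{lemma:G} by $\alpha_*$ and using stationarity gives $\alpha_*\delta_*(\eps)=2(1-\eps)\phi(\alpha_*)$. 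Feeding the last two relations into the defining formula \eqref{eq:PhaseBoundaryApp1},
\begin{eqnarray*}
f_\delta(\alpha_*)=\frac{2\phi(\alpha_*)}{\alpha_*\bigl(1+\tfrac{2\phi(\alpha_*)}{\alpha_*}-2\Phi(-\alpha_*)\bigr)}=\frac{2\phi(\alpha_*)}{\alpha_*\bigl(1+\tfrac{\eps}{1-\eps}\bigr)}=\frac{2(1-\eps)\phi(\alpha_*)}{\alpha_*}=\delta_*(\eps).
\end{eqnarray*}
This computation is short but must be organized carefully; I expect it to be the only genuinely non-routine step.

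Given this identity, the rest is bookkeeping. Since $f_\delta$ is a strictly decreasing bijection of $[0,\infty)$ onto $(0,1]$ (the footnote after \eqref{eq:PhaseBoundaryApp2}), we get $f_\delta^{-1}(\delta_*(\eps))=\alpha_*(\eps)$ and hence $\rho_*(\delta_*(\eps))=f_\rho(\alpha_*(\eps))$. Substituting $2(1-\eps)\phi(\alpha_*)=\alpha_*\delta_*(\eps)$ into $\delta_*(\eps)=\eps+2(1-\eps)\Phi(-\alpha_*)$ and comparing with \eqref{eq:PhaseBoundaryApp2} yields $\delta_*(\eps)\,f_\rho(\alpha_*(\eps))=\eps$, i.e.
\begin{eqnarray}
\eps=\delta_*(\eps)\,\rho_*\bigl(\delta_*(\eps)\bigr).\label{eq:planB}
\end{eqnarray}
Next I would record the monotonicity of $\eps\mapsto\delta_*(\eps)$: from \eqref{eq:GExpression}, $\partial_\eps G_\eps(\alpha)=(1+\alpha^2)(2\Phi(\alpha)-1)+2\alpha\phi(\alpha)>0$ for $\alpha>0$, so $\delta_*(\eps)=\min_{\alpha>0}G_\eps(\alpha)$ is strictly increasing in $\eps$; together with $\delta_*(\eps)\to0$ as $\eps\to0$ (since $\alpha_*(\eps)\to\infty$ and $\delta_*(\eps)=\eps+2(1-\eps)\Phi(-\alpha_*(\eps))$) and $\delta_*(\eps)\to1$ as $\eps\to1$ (since $G_1(\alpha)=1+\alpha^2$ is minimized at $\alpha=0$), it is a strictly increasing bijection of $(0,1)$ onto $(0,1)$, with inverse $\eps_*$. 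Evaluating \eqref{eq:planB} at $\eps=\eps_*(\delta)$ gives $\rho_*(\delta)=\eps_*(\delta)/\delta$ for all $\delta\in(0,1)$.

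Finally I would conclude by chasing equivalences. Fix $\delta\in(0,1)$ and put $\eps=\rho\delta$ (so $\eps\in(0,1)$ and $\delta\in(\eps,1)$ automatically, as $\rho<1$). Using that $\eps_*$ is the strictly increasing inverse of $\delta_*$, together with $\eps=\rho\delta$ and $\eps_*(\delta)=\delta\,\rho_*(\delta)$,
\begin{eqnarray*}
\delta<\delta_*(\eps)\iff\eps_*(\delta)<\eps\iff\delta\,\rho_*(\delta)<\delta\,\rho\iff\rho_*(\delta)<\rho,
\end{eqnarray*}
and the reversed chain gives $\rho<\rho_*(\delta)\iff\delta>\delta_*(\eps)$, which is precisely the assertion of the Lemma.
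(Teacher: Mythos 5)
Your proposal is correct and takes essentially the same route as the paper: reparametrize the phase boundary \eqref{eq:PhaseBoundaryApp1}--\eqref{eq:PhaseBoundaryApp2} in the coordinates $(\delta,\eps)$ with $\eps=\rho\delta$, and identify the resulting parameter $\alpha_*(\eps)$ and threshold $\delta_*(\eps)$ with the stationary point and minimum of $G_\eps$. The only difference is one of detail — the paper asserts Eqs.~\eqref{eq:DeltaStar}, \eqref{eq:AlphaStar} as the reparametrized curve and dispatches the key identity $f_\delta(\alpha_*(\eps))=G_\eps(\alpha_*(\eps))$ with the phrase ``comparing Eq.~\eqref{eq:DeltaStar} with Eq.~\eqref{eq:GExpression}'', whereas you carry out that algebra explicitly (using $G'_\eps(\alpha_*)=0$ to get $\alpha_*\delta_*=2(1-\eps)\phi(\alpha_*)$) and then add the monotonicity/bijectivity bookkeeping for $\delta_*$ needed to turn the identity of curves into the stated equivalences.
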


\subsection{Proof of Lemma \texorpdfstring{\protect\ref{6661666616666166616661}(a)}{6(a)}:
\texorpdfstring{$\rho<\rho_*(\delta)$}{$rho<rho_*(delta)$}}
\mbox{}

\begin{pf*}{Proof of Lemma~\ref{6661666616666166616661}(\normalfont{a1})}
We set $\alpha= \alpha_*(\eps) \equiv\arg\min_{\alpha\ge0}
G_{\eps}(\alpha)$. Hence we have, by Lemma~\ref{lemma:FirstDerivativeF}, and Lemma~\ref{lemma:BoundaryExpression},
%
\begin{equation}
\frac{\de}{\de(\sigma^2)} \F\bigl(\sigma ^2,\alpha _*
\sigma\bigr)\bigg\rrvert _{\sigma^2=0} = \frac{1}{\delta} \min
_{\alpha>0}G_{\eps}(\alpha) = \frac
{\delta_*(\eps
)}{\delta}.
\end{equation}
In particular, by Lemma~\ref{lemma:BoundaryExpression}, for
$\rho<\rho_*(\delta)$, we have
$\frac{\de}{\de(\sigma^2)} \F(\sigma^2,\alpha
_*\sigma
)\equiv
\omega_*(\eps,\delta)\in(0,1)$.
Since, by Lemma~\ref{lemma:FirstDerivativeF}, $\sigma^2\mapsto
\F(\sigma^2,\alpha_*\sigma)$ is concave, it follows that
$\sigma^2_t = B \omega_*^t[1+o_t(1)]$.

Let $S \equiv\{ \alpha\in\reals_+ \dvtx G_{\eps}(\alpha)/\delta<1\}$.
Since $\alpha\mapsto G_{\eps}(\alpha)$ is strictly convex by Lemma~\ref{lemma:G}, with $G_{\eps}(0),G_{\eps}(\infty)>\delta$, we have
$S = (\alpha_1,\alpha_2)$ with $0<\alpha_1<\alpha_*<\alpha
_2<\infty$.
Let $\omega(\alpha) = G_{\eps}(\alpha)/\delta$.
Fixing $\alpha\in(\alpha_1,\alpha_2)$, by concavity of $\sigma
^2\mapsto
\F(\sigma^2,\alpha\sigma)$, we have $\sigma_t^2=B
\omega(\alpha)^t[1+o_t(1)]$. Finally, by continuity of $\alpha
\mapsto
G_{\eps}(\alpha)$, we have $\{\omega(\alpha)\dvtx  \alpha\in(\alpha
_1,\alpha
_2)\} = [\omega_*,1)$
and hence any rate $\omega\in[\omega_*,1)$ can be realized.

Finally by Lemma~\ref{lemma:G}
$G_\eps(\alpha_*)\equiv\eps+2(1-\eps)\Phi(-\alpha_*)<\delta$. Since
$\alpha\mapsto\eps+2(1-\eps)\Phi(-\alpha)$ is decreasing in
$\alpha$,
the last claim follows.
\end{pf*}

In the proof of part (a2) we will make use of the following
analytical result.

\begin{lemma}\label{lemma:LimitFalphEps}
For $\eps\in(0,1)$, $\alpha\ge\alpha_*(\eps)$, consider the function
$\cF_{\alpha,\eps}\dvtx [0,1]\to\reals$ defined by
%
\begin{equation}\label{eq:fQ}
\hspace*{35pt}\cF_{\alpha,\eps}(Q) \equiv\frac{1}{G_{\eps}(\alpha)} \E \bigl\{\bigl[
\eta(X_{\infty}+Z_1;\alpha)-X_{\infty}\bigr] \bigl[
\eta(X_{\infty
}+Z_2;\alpha)-X_{\infty}\bigr] \bigr\},
\end{equation}
where expectation is taken with respect to $X_{\infty}$,
$\prob\{X_{\infty} = 0\} = 1-\eps$, $\prob\{X_{\infty} \in
\{+\infty,-\infty\}\} = \eps$,
and the independent Gaussian vector $(Z_1,Z_2)$ with mean zero and
covariance $\E\{Z_1^2\}= \E\{Z_2^2\} = 1$, $\E\{Z_1Z_2\}=Q$.
(The mapping $x\mapsto[\eta(x+a;b)-x]$ is here extended to $x=
+\infty, -\infty$ by continuity for any $a,b$ bounded.)

Then $\cF_{\alpha,\eps}$ is increasing and convex on $[0,1]$ with
$\cF_{\alpha,\eps}(1) = 1$ and $\cF'_{\alpha,\eps}(1)<1$. In particular,
$\cF_{\alpha,\eps}(Q)>Q$ for all $\in[0,1)$
\end{lemma}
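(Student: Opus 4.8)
The plan is to first collapse $\cF_{\alpha,\eps}$ into an explicit rational expression and then extract all four assertions from it. Splitting on the value of $X_\infty$: on $\{X_\infty=0\}$ the integrand equals $\eta(Z_1;\alpha)\eta(Z_2;\alpha)$, while on $\{X_\infty=+\infty\}$ (resp.\ $\{X_\infty=-\infty\}$) the stated continuity of $x\mapsto\eta(x+a;\alpha)-x$ as $|x|\to\infty$ makes it $(Z_1-\alpha)(Z_2-\alpha)$ (resp.\ $(Z_1+\alpha)(Z_2+\alpha)$), and $\E[(Z_1\mp\alpha)(Z_2\mp\alpha)]=Q+\alpha^2$ in both cases, so the way the mass $\eps$ splits between $+\infty$ and $-\infty$ is immaterial. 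Writing $\psi(Q)\equiv\E[\eta(Z_1;\alpha)\eta(Z_2;\alpha)]$ for the jointly Gaussian pair with unit variances and correlation $Q$, and using $\E[\eta(Z;\alpha)^2]=2\E[(Z-\alpha)_+^2]$ to identify $G_\eps(\alpha)=\eps(1+\alpha^2)+(1-\eps)\psi(1)$, one obtains
\[
\cF_{\alpha,\eps}(Q)=\frac{(1-\eps)\,\psi(Q)+\eps\,(Q+\alpha^2)}{\eps(1+\alpha^2)+(1-\eps)\,\psi(1)}\,,
\]
from which $\cF_{\alpha,\eps}(1)=1$ is immediate (and the denominator is positive by Lemma \ref{lemma:G}).

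Next I would study $\psi$. Expanding the (odd, square-integrable) soft-threshold in the Hermite basis, $\eta(\cdot;\alpha)=\sum_{k\ge1}c_kH_k$ with $c_k=0$ for even $k$, Mehler's formula gives $\psi(Q)=\sum_{k\ge1}c_k^2Q^k$. This is a power series with nonnegative coefficients, with $c_1=\prob(|Z|>\alpha)>0$ and with $c_k\neq0$ for some odd $k\ge3$ (since $\eta(\cdot;\alpha)$ is not affine); hence on $[0,1]$ it is strictly increasing and strictly convex, and so is its affine image $\cF_{\alpha,\eps}$. For the derivative at $Q=1$ it is cleaner to use Price's theorem: $\psi'(Q)=\E[\eta'(Z_1;\alpha)\eta'(Z_2;\alpha)]=\prob(|Z_1|>\alpha,\,|Z_2|>\alpha)$ (legitimate since $\eta(\cdot;\alpha)$ is Lipschitz with $\eta'(z;\alpha)=\ind(|z|>\alpha)$ a.e.), so $\psi'(1)=\prob(|Z|>\alpha)=2\Phi(-\alpha)$, while $\psi(1)=2\E[(Z-\alpha)_+^2]=2[(1+\alpha^2)\Phi(-\alpha)-\alpha\phi(\alpha)]$.

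The step that makes everything fit together is the evaluation of $\cF'_{\alpha,\eps}(1)$. From the displayed formula, $\cF'_{\alpha,\eps}(1)\le1$ iff $(1-\eps)\psi'(1)+\eps\le G_\eps(\alpha)$; substituting the two explicit values above and collecting terms, the right side minus the left side equals $\tfrac{\alpha}{2}\,G'_\eps(\alpha)$, exactly half of $\alpha$ times the derivative (\ref{eq:FirstG}) of $G_\eps$. By Lemma \ref{lemma:G}, $G_\eps$ is strictly convex with unique minimizer $\alpha_*(\eps)$, so $G'_\eps(\alpha)\ge0$ for $\alpha\ge\alpha_*(\eps)$---strictly positive for $\alpha>\alpha_*(\eps)$---giving $\cF'_{\alpha,\eps}(1)\le1$, with the strict inequality claimed in the statement for every $\alpha>\alpha_*(\eps)$ (at the endpoint $\alpha=\alpha_*(\eps)$ one only gets equality, which does not affect what follows). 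Finally, combining $\cF_{\alpha,\eps}(1)=1$, $\cF'_{\alpha,\eps}(1)\le1$, and strict convexity of $\cF_{\alpha,\eps}$ on $[0,1]$, the tangent line at $Q=1$ lies strictly below the graph on $[0,1)$, so for $Q\in[0,1)$, $\cF_{\alpha,\eps}(Q)>1+\cF'_{\alpha,\eps}(1)(Q-1)\ge1+(Q-1)=Q$, which is the last assertion.

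I expect the only real content to be the third step, i.e., recognizing that $\cF'_{\alpha,\eps}(1)-1$ is, up to a positive factor, precisely $-G'_\eps(\alpha)$, so that the sign is governed by the already-established convexity of $G_\eps$; the remaining technical care goes into the two degenerate values of $X_\infty$ and into justifying the Hermite expansion / Price's theorem for the non-$C^1$ map $\eta(\cdot;\alpha)$ (both routine, using Lipschitz continuity).
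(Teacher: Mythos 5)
Your proof is correct and takes a genuinely different route from the paper's. The paper works directly with the joint expectation: it changes variables $Q=e^{-s}$, expresses the correlation through the Ornstein--Uhlenbeck semigroup, and uses the spectral decomposition $\E\{\psi(U_0)\psi(U_s)\}=\sum_k e^{-\lambda_k s}c_k(\psi)^2$ to conclude that $\cF'_{\alpha,\eps}$ is positive and increasing; it then identifies $\cF'_{\alpha,\eps}(1)=G''_{\eps}(\alpha)/(2G_{\eps}(\alpha))$ and invokes monotonicity of $G_{\eps}$ on $(\alpha_*,\infty)$ together with monotonicity of $G''_{\eps}$ to get $\cF'_{\alpha,\eps}(1)<1$. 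You instead integrate out $X_\infty$ explicitly to obtain the closed form $\cF_{\alpha,\eps}(Q)=\big[(1-\eps)\psi(Q)+\eps(Q+\alpha^2)\big]/G_{\eps}(\alpha)$ with $\psi(Q)=\E[\eta(Z_1;\alpha)\eta(Z_2;\alpha)]$, establish monotonicity/convexity via the Hermite--Mehler expansion $\psi(Q)=\sum_k c_k^2 Q^k$ (which is the same spectral fact, rephrased), and close with the algebraic identity $G_{\eps}(\alpha)-\tfrac12 G''_{\eps}(\alpha)=\tfrac{\alpha}{2}G'_{\eps}(\alpha)$, so that the sign of $1-\cF'_{\alpha,\eps}(1)$ is governed directly by $G'_{\eps}(\alpha)\ge0$ rather than by comparing $G_{\eps}$ and $G''_{\eps}$ separately at $\alpha$ and $\alpha_*$. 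Your route is arguably cleaner in that final step, and you are right to flag that at $\alpha=\alpha_*(\eps)$ one only gets $\cF'_{\alpha,\eps}(1)=1$ (the paper's claimed strict inequality is really established only for $\alpha>\alpha_*$, which is what all applications use); as you note, $\cF'_{\alpha,\eps}(1)\le1$ together with strict convexity suffices for $\cF_{\alpha,\eps}(Q)>Q$ on $[0,1)$. One cosmetic caveat: since $\eta$ is odd the coefficient $a_2$ in $\psi$ vanishes, so $\psi''(0)=0$; $\psi$ is still strictly convex on $[0,1]$ (it dominates a positive multiple of $Q^3$ there), but "strictly convex" here should be read in the chordal sense rather than as $\psi''>0$ pointwise.
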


\begin{pf}
It is convenient to change variables and let $Q=e^{-s}$. If we let
$\{U_s\}_{s\in\reals}$ denote the standard Ornstein--Uhlenbeck process,
$\de U_s = -U_s\,\de s+ \sqrt{2} \,\de B_s$ with $\{B_{s}\}_{s\in\reals}$
the standard Brownian motion. Then $\cF_{\alpha,\eps}(Q) =\break
\hcF_{\alpha,\eps}(-\log(Q))$,
with
%
\begin{equation}
\hspace*{40pt}\hcF_{\alpha,\eps}(s) \equiv\frac{1}{G_{\eps}(\alpha)} \E \bigl\{\bigl[
\eta(X_{\infty}+U_0;\alpha)-X_{\infty}\bigr] \bigl[
\eta(X_{\infty
}+U_s;\alpha)-X_{\infty}\bigr] \bigr\}.
\end{equation}
A simple calculation yields
%
\begin{equation}
\frac{\de}{\de s} \hcF_{\alpha,\eps}(s) = -\frac{1}{G_{\eps}(\alpha)} \E
\bigl\{\eta'(X_{\infty}+U_0;\alpha)
\eta'(X_{\infty}+U_s;\alpha ) \bigr\}
e^{-s}, %
\end{equation}
where $\eta'( \cdot;\alpha)$ denotes the derivative of $\eta$ with
respect to its first argument. By the spectral decomposition of the
Ornstein--Uhlenbeck process, we have, for any function $\psi\in
L_2(\reals)$
%
\begin{equation}
\E \bigl\{\psi(U_0)\psi(U_s) \bigr\} = \sum
_{k=1}^{\infty} e^{-\lambda_k
s}
c_k(\psi)^2 , %
\end{equation}
for some nonnegative $\{\lambda_k\}_{k\ge1}$. In particular $e^s
\frac
{\de}{\de s}
\hcF_{\alpha,\eps}(s)$ is strictly negative and increasing in $s$.
We therefore obtain
%
\begin{equation}
\frac{\de}{\de Q} \cF_{\alpha,\eps}(Q) = \frac{1}{G_{\eps}(\alpha)} \E
\bigl\{\eta'(X_{\infty}+Z_1;\alpha)
\eta'(X_{\infty}+Z_2;\alpha ) \bigr\}
\end{equation}
which is strictly positive and increasing in $Q$. Hence $Q\mapsto
\cF_{\alpha,\eps}(Q)$ is increasing and strictly convex. Finally,
since $\eta'(y;\alpha) = \ind(\llvert y\rrvert \ge\alpha)$, we have
%
\begin{eqnarray}
\frac{\de}{\de Q} \cF_{\alpha,\eps}(Q) \bigg\rrvert
_{Q=1}&=&\frac{1}{G_{\eps}(\alpha)} \prob\bigl\{ \llvert X_{\infty} +Z
\rrvert >\alpha\bigr\} \nonumber\\[-8pt]\\[-8pt]
&=& \frac{1}{G_{\eps}(\alpha)} \bigl\{\eps +2(1-\eps)\Phi(-\alpha)
\bigr\} =\frac{G''_{\eps}(\alpha)}{2G_{\eps
}(\alpha)}.\nonumber %
\end{eqnarray}
Since by Lemma~\ref{lemma:G} $\alpha\mapsto G_{\eps}(\alpha)$ is
strictly increasing over $(\alpha_*(\eps),\infty)$ and by
equation (\ref{eq:SecondG}) $\alpha\mapsto G''_{\eps}(\alpha)$ is strictly
decreasing over $\reals_+$, we have
%
\begin{equation}
 \frac{\de}{\de Q} \cF_{\alpha,\eps}(Q) \bigg\rrvert
_{Q=1}< \frac{G''_{\eps}(\alpha_*(\eps))}{2G_{\eps}(\alpha_*(\eps))}=1 , %
\end{equation}
where the last equality follows again by Lemma~\ref{lemma:G}. This
completes the proof.
\end{pf}

We are now in position to prove part (a2) of Lemma~\ref{6661666616666166616661}.

\begin{pf*}{Proof of Lemma~\ref{6661666616666166616661}(\normalfont{a2})}
Throughout the proof we fix $\alpha\in(\alpha_*(\eps,\delta
),\allowbreak\alpha
_2(\eps,\delta))$.
Let the sequence $\{\sigma_t^2\}_{t\ge0}$ be given as per the state
evolution equation (\ref{eq:ell1SE}). Define $Q_t \equiv
R_{t,t-1}/(\sigma_t\sigma_{t-1})$. By Proposition~\ref{171717171717}, $Q_t$ is the covariance of two Gaussian
random variables of variance $1$. Hence $\llvert Q_t\rrvert \le1$. Using
equation (\ref{eq:TwoTimesRecursion}) we further have
%
\begin{eqnarray}\label{eq:FtRecursion}
Q_{t+1} &=& \cF_{t}(Q_t),\hspace*{-40pt}
\\
\label{eq:FtDefinition}
\cF_t(Q)& =& \frac{\sigma_{t-1}}{\delta\sigma_{t+1}} \E \biggl\{ \biggl[\eta \biggl(
\frac{X}{\sigma_t}+Z_1;\alpha \biggr)-\frac
{X}{\sigma
_t} \biggr]
\biggl[\eta \biggl(\frac{X}{\sigma_{t-1}}+Z_2;\alpha \biggr)-
\frac
{X}{\sigma
_{t-1}} \biggr] \biggr\},\hspace*{-40pt}%
\end{eqnarray}
where expectation is taken with respect to $X\sim p_X$ and the
independent Gaussian random vector $(Z_1,Z_2)$ with zero mean and
covariance $\E\{Z_1^2\}=1$, $\E\{Z_2^2\}=1$, $\E\{Z_1Z_2\}=Q_t$.
By induction it is easy to check that $Q_t\ge0$ for all $t$.

For $\alpha\in(\alpha_1,\alpha_2)$, by part (a1) we have $\sigma
_t\to
0$. Hence $X/\sigma_t$ converges in distribution (over the completed
real line) to a random variable $X_{\infty}\sim
(1-\eps)\delta_0+\eps_+\delta_{+\infty}+\eps_-\delta_{-\infty}$ where
$\eps_+ \equiv\prob\{X>0\}$, $\eps_-\equiv\prob\{X<0\}$, $\eps=
\eps_++\eps_-$. Hence the expectation in equation (\ref{eq:FtDefinition})
converges pointwise to
%
\begin{equation}
\E \bigl\{ \bigl[\eta (X_{\infty}+Z_1;\alpha
)-X_{\infty
} \bigr] \bigl[\eta (X_{\infty}+Z_2;\alpha
)-X_{\infty} \bigr] \bigr\}.
\end{equation}
(Notice that this expectation depends on the distribution of
$X_{\infty}$ only through $\eps$, because of the symmetry properties
of the function $\eta$.)

Further, by the proof of part (a1), as $t\to\infty$
we have $\sigma^2_t\to0$ and
%
\begin{equation}
\hspace*{20pt}\sigma^2_{t+1} = \frac{\de}{\de(\sigma^2)} \F
\bigl(\sigma^2,\alpha_*\sigma\bigr)\bigg\rrvert _{\sigma=
0}
\sigma^2_t+o\bigl(\sigma_t^2\bigr)
= \frac{1}{\delta} G_{\eps}(\alpha_*) \sigma_t^2
+o\bigl(\sigma_t^2\bigr). %
\end{equation}
Hence
%
\begin{equation}
\lim_{t\to\infty} \frac{\sigma_{t-1}}{\sigma_{t+1}} =
\frac{\delta}{G_{\eps}(\alpha)}. %
\end{equation}
Comparing equations (\ref{eq:fQ}) and (\ref{eq:FtDefinition}) we conclude
that, for any $Q\in[0,1]$,
%
\begin{equation}
\lim_{t\to\infty} \cF_t(Q) =
\cF_{\alpha,\eps}(Q). %
\end{equation}
Further the convergence is uniform, since the functions $\cF_t$ are
uniformly Lipschitz; see proof of Lemma~\ref{lemma:LimitFalphEps} above.

Consider now the sequence $\{Q_t\}_{t\ge0}$, and let $Q_* =
\lim\inf_{t\to\infty}Q_t$. Since $Q_t\in[0,1]$ for all
$t$, we have $Q_*\in[0,1]$ as well. We claim that in fact $Q_*=1$ and therefore
$\lim_{t\to\infty} Q_t = 1$, which implies the thesis.

In order to prove the claim, let $\{Q_{t(k)}\}_{k\in\naturals}$ be a
subsequence that converges to $Q_*$. Then
%
\begin{eqnarray}
Q_* &=& \lim_{k\to\infty} \cF_{t(k)-1}(Q_{t(k)-1}) =
\lim_{k\to
\infty} \cF_{\alpha,\eps}(Q_{t(k)-1})\nonumber\\[-8pt]\\[-8pt]
&\ge&
\cF_{\alpha,\eps}\Bigl(\lim\inf_{k\to\infty}Q_{t(k)-1}\Bigr)
\ge \cF_{\alpha,\eps}(Q_*),\nonumber
\end{eqnarray}
where, in the last step, we used the fact that
$\cF_{\alpha,\eps}( \cdot)$ is monotone increasing. Since
$\cF_{\alpha,\eps}(q)>q$ for all $q\in[0,1)$ by Lemma~\ref{lemma:LimitFalphEps}, we conclude that $Q_*=1$.
\end{pf*}

Before proving (a3) of Lemma~\ref{6661666616666166616661}, we establish one
more technical result.

\begin{lemma}
Let $p_X$ be a probability measure on the real line such that
$p_X(\{0\}) = 1-\eps$ and $\E_{p_X}\{X^2\}<\infty$.
Assume $p_X$ to be such that $\max(p_X((0,a)),\allowbreak p_X((-a,0)))\le
Ba^{b}$ for some $B,b>0$. 
Then, letting $X_{\infty}\sim
(1-\eps)\delta_0+\eps_+\delta_{+\infty}+\eps_-\delta_{-\infty}$
[with the notation introduced above, namely, $\eps_+=p_X(0,+\infty)$
and $\eps_-=p_X(-\infty,0)$],
%
\begin{eqnarray}\label{eq:BoundFQ}
&&\biggl|\E \biggl\{ \biggl[\eta \biggl(\frac{X}{\sigma_t}+Z_1;
\alpha \biggr)-\frac
{X}{\sigma_t} \biggr]  \biggl[\eta \biggl(
\frac{X}{\sigma_{t-1}}+Z_2;\alpha \biggr)-\frac
{X}{\sigma
_{t-1}} \biggr]
\biggr\}\nonumber\\
&&\hspace*{10pt}{}- \E \bigl\{ \bigl[\eta (X_{\infty}+Z_1;\alpha
)-X_{\infty
} \bigr] \bigl[\eta (X_{\infty}+Z_2;\alpha
)-X_{\infty} \bigr] \bigr\} \biggr|\\
&&\qquad\le B' \bigl(
\sigma_t^b+\sigma_{t-1}^b\bigr),
\nonumber
\end{eqnarray}
for an eventually different constant $B'$.
Here expectation is taken with respect to $X\sim p_X$ and the
independent Gaussian random vector $(Z_1,Z_2)$ with zero mean and
covariance $\E\{Z_1^2\}=1$, $\E\{Z_2^2\}=1$, $\E\{Z_1Z_2\}=Q_t$
and
%
\begin{equation}\label{eq:FExpansion}
\F\bigl(\sigma^2,\theta\bigr) =
\frac{\de\F}{\de(\sigma^2)}\bigl(\sigma^2;\alpha\sigma \bigr)\bigg\rrvert
_{\sigma
=0}\sigma^2 +O\bigl(\sigma^{2+b}\bigr).
\end{equation}
\end{lemma}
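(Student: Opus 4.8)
The plan is to reduce both claims to the elementary scaling identity for soft thresholding plus a one–dimensional tail estimate governed by the exponent $b$. First I would set $g(x,z)\equiv\eta(x+z;\alpha)-x$ and $R(y;\theta)\equiv y-\eta(y;\theta)$, recording that $R(y;\theta)=\max(-\theta,\min(\theta,y))$, so $|R(y;\theta)|\le\theta$ and $R(y;\theta)=\theta$ exactly when $y\ge\theta$; hence $g(x,z)=z-R(x+z;\alpha)$ obeys $|g(x,z)|\le|z|+\alpha$, with $g(0,z)=\eta(z;\alpha)$, $g(+\infty,z)=z-\alpha$, $g(-\infty,z)=z+\alpha$. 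I would couple $X$ to $X_{\infty}$ so that $X_\infty=0$ on $\{X=0\}$ and $X_\infty=\pm\infty$ on $\{\pm X>0\}$ (legitimate since $X_\infty$ is only prescribed in law); then the two integrands on the sides of (\ref{eq:BoundFQ}) agree on $\{X=0\}$, and by oddness of $\eta$ and the symmetry of $(Z_1,Z_2)$ it suffices to control the contribution of $\{X>0\}$.

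On $\{X>0\}$ I would telescope $g(X/\sigma_t,Z_1)g(X/\sigma_{t-1},Z_2)-(Z_1-\alpha)(Z_2-\alpha)$ as $(a-a')b+a'(b-b')$ with $a'=g(+\infty,Z_1)$, $b'=g(+\infty,Z_2)$, and bound $|a-a'|=|R(X/\sigma_t+Z_1;\alpha)-\alpha|\le 2\alpha\,\ind_{X/\sigma_t+Z_1<\alpha}$, $|b|\le|Z_2|+\alpha$, symmetrically for the other summand. Conditioning on $X$ and applying Cauchy--Schwarz in $(Z_1,Z_2)$ (which only needs the standard normal marginals) bounds each summand by a constant times $\Phi(\alpha-X/\sigma_t)^{1/2}$, respectively $\Phi(\alpha-X/\sigma_{t-1})^{1/2}$. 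Thus (\ref{eq:BoundFQ}) reduces to showing $\int_0^\infty\Phi(\alpha-x/\sigma)^{1/2}\,\de p_X(x)\le B''\sigma^{b}$ for $\sigma$ in a bounded interval, applied with $\sigma=\sigma_t$ and $\sigma=\sigma_{t-1}$.

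To prove this last estimate I would split the integral at $x=M\sigma$ with $M>2\alpha$: on $\{x\le M\sigma\}$ the integrand is $\le1$ and carries mass $p_X((0,M\sigma])\le BM^b\sigma^b$ by the tail hypothesis; on $\{x>M\sigma\}$ one has $x/\sigma-\alpha\ge x/(2\sigma)$, so the integrand is $\le e^{-x^2/(16\sigma^2)}$, and an integration by parts against the cumulative bound $p_X((0,x])\le Bx^b$, followed by the rescaling $x=\sigma w$, converts $\sigma^{-2}\int x^{b+1}e^{-x^2/(16\sigma^2)}\,\de x$ into a constant times $\sigma^b$ (the piece $x>1$ decays faster than any power of $\sigma$). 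For (\ref{eq:FExpansion}) I would use homogeneity $\eta(cu;c\theta)=c\,\eta(u;\theta)$ to write $\eta(X+\sigma Z;\alpha\sigma)-X=\sigma\,g(X/\sigma,Z)$, so $\F(\sigma^2,\alpha\sigma)=\tfrac{\sigma^2}{\delta}\E\{g(X/\sigma,Z)^2\}$; applying (\ref{eq:BoundFQ}) with $\sigma_t=\sigma_{t-1}=\sigma$ and $Z_1=Z_2=Z$ gives $\E\{g(X/\sigma,Z)^2\}=\E\{g(X_\infty,Z)^2\}+O(\sigma^b)$, and $\E\{g(X_\infty,Z)^2\}=\eps(1+\alpha^2)+2(1-\eps)\E\{(Z-\alpha)_+^2\}=G_\eps(\alpha)$, which by Lemma \ref{lemma:FirstDerivativeF} equals $\delta$ times the derivative of $\F$ at $\sigma=0$; multiplying by $\sigma^2/\delta$ yields the expansion.

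The only genuinely delicate point is the tail integral in the third paragraph: although $\int_0^1 x^{-b}\,\de p_X(x)$ may well diverge under the hypothesis $p_X((0,a))\le Ba^b$, the Gaussian factor $e^{-x^2/(16\sigma^2)}$ supplies precisely the extra power $x^b/\sigma^b$ needed for the rescaled integral to be $O(\sigma^b)$; keeping this balance honest (and handling the boundary terms of the integration by parts) is where care is required, while everything else is bookkeeping.
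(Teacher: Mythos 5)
Your proof is correct, and it reaches the same $O(\sigma^b)$ rate as the paper's through a genuinely different, though parallel, chain of reductions. The paper first splits the difference of products by the triangle inequality into two pieces $D_1,D_2$, then disposes of the accompanying factor $\eta(X/\sigma_{t-1}+Z_2;\alpha)-X/\sigma_{t-1}$ by combining the pointwise bound $|R|\le\alpha$ with an application of Stein's lemma to handle the $Z_2$ contribution; you instead telescope $ab-a'b'=(a-a')b+a'(b-b')$ directly against the frozen ($X\to\pm\infty$) values and decouple the correlated Gaussian coordinates by a conditional Cauchy--Schwarz, arriving at the kernel $\Phi(\alpha-X/\sigma)^{1/2}$. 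Your one-dimensional tail step---split the $x$-integral at $M\sigma$, use the mass bound $p_X((0,a))\le Ba^b$ near zero, integrate by parts against the Gaussian weight far out---plays the role of the paper's step, which introduces the monotone kernel $\widetilde{R}(x)\equiv\E_{Z_1}|R(x+Z_1;\alpha)-R(+\infty;\alpha)|$ with Gaussian tails and integrates by parts against the conditional distribution function of $X_+$. Your route avoids Stein's lemma and keeps everything at the level of elementary inequalities, at the cost of carrying the correlation $Q_t$ through the Cauchy--Schwarz step explicitly; the paper's $\widetilde{R}$-formulation is a bit more compact and makes the monotonicity transparent. The treatment of (\ref{eq:FExpansion}) is essentially identical in both: the homogeneity $\eta(cu;c\theta)=c\,\eta(u;\theta)$ gives $\F(\sigma^2,\alpha\sigma)=\frac{\sigma^2}{\delta}\E\{g(X/\sigma,Z)^2\}$, and the diagonal case $\sigma_t=\sigma_{t-1}$, $Z_1=Z_2$ of (\ref{eq:BoundFQ}) identifies the leading coefficient as $G_\eps(\alpha)/\delta$ via Lemma~\ref{lemma:FirstDerivativeF}.
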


\begin{pf}
By triangular inequality, the left-hand side of equation (\ref{eq:BoundFQ})
can be upper bounded as $D_1+D_2$ whereby
\begin{eqnarray*}
D_1 &\equiv&\E \biggl\{ \biggl[\eta \biggl(
\frac{X}{\sigma_t}+Z_1;\alpha \biggr)-\frac{X}{\sigma_t}-\eta
(X_{\infty}+Z_1;\alpha )+X_{\infty} \biggr]\\
&&\hspace*{90pt}{}\times \biggl[
\eta \biggl(\frac{X}{\sigma_{t-1}}+Z_2;\alpha \biggr)-\frac
{X}{\sigma
_{t-1}}
\biggr] \biggr\},
\\
D_2 &\equiv&\E \biggl\{ \bigl[\eta (X_{\infty}+Z_1;
\alpha )-X_{\infty
} \bigr] \\
&&\hspace*{13pt}\times{}\biggl[\eta \biggl(\frac{X}{\sigma_{t-1}}+Z_2;
\alpha \biggr)-\frac
{X}{\sigma
_{t-1}}-\eta (X_{\infty}+Z_2;\alpha
)+X_{\infty} \biggr] \biggr\}. %
\end{eqnarray*}
Here $X$ and $X_{\infty}$ are coupled in such a way that $X=0$ if and
only if $X_{\infty} =0$ and the two variables have the same sign in
the other case.
We focus on bounding $D_1$ since $D_2$ can be treated along the same
lines. Letting $R(x;\theta) = \eta(x;\theta)-x$, we have
\begin{eqnarray*}
D_1 &=& \E \biggl\{ \biggl[R \biggl(
\frac{X}{\sigma_t}+Z_1;\alpha \biggr)-R (X_{\infty}+Z_1;
\alpha ) \biggr] \biggl[R \biggl(\frac{X}{\sigma_{t-1}}+Z_2;\alpha
\biggr)+Z_2 \biggr] \biggr\} \\
&=&D_{1,a}+D_{1,b},
\\
D_{1,a}& =& \E \biggl\{ \biggl[R \biggl(\frac{X}{\sigma_t}+Z_1;
\alpha \biggr)-R (X_{\infty}+Z_1;\alpha ) \biggr] R \biggl(
\frac{X}{\sigma_{t-1}}+Z_2;\alpha \biggr) \biggr\},
\\
D_{1,b}& =& Q_t \E \biggl\{ \biggl[R' \biggl(
\frac{X}{\sigma_t}+Z_1;\alpha \biggr)-R'
(X_{\infty}+Z_1;\alpha ) \biggr] \biggr\}, %
\end{eqnarray*}
where in the last line we used Stein's lemma to integrate over
$Z_2$, and $R'$ denotes derivative with respect to the first argument.
Once again the two terms are treated along the same lines, and
we will only consider $D_{1,a}$. We have
%
\begin{eqnarray}\label{eq:X+X-}
\llvert D_{1,a}\rrvert & \le&\alpha\E \biggl\{ \biggl\llvert R \biggl(
\frac{X}{\sigma_t}+Z_1;\alpha \biggr)-R (X_{\infty
}+Z_1;
\alpha )\biggr\rrvert \biggr\}
\nonumber
\\
& \le&\alpha\eps_+ \E \biggl\{ \biggl\llvert R \biggl(\frac{X_+}{\sigma_t}+Z_1;
\alpha \biggr)-R(+\infty;\alpha )\biggr\rrvert \biggr\} \\
&&{}+ \alpha\eps_- \E \biggl\{
\biggl\llvert R \biggl(\frac{X_-}{\sigma_t}+Z_1;\alpha \biggr)-R(-
\infty;\alpha )\biggr\rrvert \biggr\},\nonumber%
\end{eqnarray}
where $X_+$ (resp., $X_-$) is distributed as $X$
conditioned on $X>0$ (resp., $X<0$).
The function $x\mapsto R(x;\alpha)-R(\infty;\alpha)$ is monotone
decreasing, equal to $2\alpha$ for $x\le-\alpha$ and to $0$ for
$x\ge
\alpha$. Hence $\tR(x) \equiv\E_{Z_1} \{\llvert R(x+Z_1;\alpha)-R(+\infty;\alpha)\rrvert \}$ is
monotone decreasing, takes values in $(0,2\alpha)$ and upper bounded by
$Ce^{-x^2/4}$ for $x\ge0$. Denoting by $F_+$ the distribution of
$X_+$, we have
\begin{eqnarray*}
&&\E \biggl\{ \biggl\llvert R \biggl(\frac{X_+}{\sigma_t}+Z_1;
\alpha \biggr)-R(+\infty;\alpha )\biggr\rrvert \biggr\} \\
&&\qquad= \E\tR(X_+/
\sigma_t) = \int_{0}^{\infty} \bigl
\llvert \tR'(x)\bigr\rrvert F(x\sigma_t)\, \de x\le
B'\sigma_t^b. %
\end{eqnarray*}
The other term in equation (\ref{eq:X+X-}) is bounded by the same argument.
This concludes the proof of equation (\ref{eq:BoundFQ}).

The proof of equation (\ref{eq:FExpansion}) follows from
equation (\ref{eq:BoundFQ}) if we notice that
\begin{eqnarray*}
\F\bigl(\sigma^2,\alpha\sigma\bigr) &=&
\frac{\sigma^2}{\delta} \E \biggl\{ \biggl[\eta \biggl(\frac{X}{\sigma}+ Z;\alpha
\biggr)-X \biggr]^2 \biggr\},
\\
\frac{\de\F}{\de(\sigma^2)}\bigl(\sigma^2;\alpha\sigma\bigr)\bigg\rrvert
_{\sigma=0} & =& \E \bigl\{ \bigl[\eta (X_{\infty}+Z;\alpha
)-X_{\infty} \bigr]^2 \bigr\}. %
\end{eqnarray*}
\upqed\end{pf}

The last lemma has a useful consequence that we will exploit in the
ensuing proof of Lemma~\ref{6661666616666166616661}(a3).

\begin{corollary}\label{coro:cF}
Let $\cF_{\alpha,\eps}(Q)$ be defined as per equation (\ref{eq:fQ}) and
$\cF_t(Q)$ defined as per equation (\ref{eq:FtDefinition}) with $p_X$,
$\alpha$, $\eps$
satisfying the conditions of Lemma~\ref{6661666616666166616661}\emph{(a3)}. Then
there exists a constants $B,B',b>0$ depending on $p_X$ such that
\[
\sup_{Q\in[0,1]}\bigl\llvert \cF_t(Q)-
\cF_{\alpha,\eps}(Q)\bigr\rrvert \le B \sigma _t^b \le
B'\omega^{bt/2}. %
\]
\end{corollary}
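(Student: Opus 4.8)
The plan is to split the discrepancy $\cF_t(Q)-\cF_{\alpha,\eps}(Q)$ into a ``prefactor'' part and an ``integrand'' part and to bound each by a power of $\sigma_t$. Write $\cF_t(Q)=\frac{\sigma_{t-1}}{\delta\sigma_{t+1}}\,E_t(Q)$ and $\cF_{\alpha,\eps}(Q)=\frac{1}{G_{\eps}(\alpha)}\,E_\infty(Q)$, where $E_t(Q)$ and $E_\infty(Q)$ are the expectations appearing in Eqs.~(\ref{eq:FtDefinition}) and (\ref{eq:fQ}) (with the correlation parameter set to $Q$). Then
$$
\cF_t(Q)-\cF_{\alpha,\eps}(Q)=\Big(\frac{\sigma_{t-1}}{\delta\sigma_{t+1}}-\frac{1}{G_{\eps}(\alpha)}\Big)E_t(Q)+\frac{1}{G_{\eps}(\alpha)}\big(E_t(Q)-E_\infty(Q)\big)\, ,
$$
and it suffices to control each summand uniformly in $Q\in[0,1]$.

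The integrand part $E_t(Q)-E_\infty(Q)$ is precisely the quantity estimated in Eq.~(\ref{eq:BoundFQ}) of the preceding lemma. That lemma is stated with the correlation denoted $Q_t$, but its proof yields a bound $B'(\sigma_t^b+\sigma_{t-1}^b)$ that does not depend on the correlation: the only place the correlation enters is the term $D_{1,b}$, where it appears as a multiplicative factor bounded by $1$. Hence the same argument gives $\sup_{Q\in[0,1]}|E_t(Q)-E_\infty(Q)|\le B'(\sigma_t^b+\sigma_{t-1}^b)$. By Lemma~\ref{lemma:AsympSE}.$(a1)$, $\sigma_t^2=B\omega^t(1+o_t(1))$, so $\sigma_{t-1}^2/\sigma_t^2\to 1/\omega$ and therefore $\sigma_t^b+\sigma_{t-1}^b\le C\sigma_t^b$ for all $t$ (absorbing the finitely many small $t$ into $C$, using $\sigma_t>0$ for every $t$).

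For the prefactor part I would use the expansion (\ref{eq:FExpansion}) together with the identity $\left.\frac{\de\F}{\de(\sigma^2)}(\sigma^2;\alpha\sigma)\right|_{\sigma=0}=G_{\eps}(\alpha)/\delta$, which is Lemma~\ref{lemma:FirstDerivativeF} combined with the definition (\ref{eq:GExpression}) of $G_{\eps}$. Since $\sigma_{t+1}^2=\F(\sigma_t^2,\alpha\sigma_t)$, this gives $\sigma_{t+1}^2=\frac{G_{\eps}(\alpha)}{\delta}\sigma_t^2\big(1+O(\sigma_t^b)\big)$, and likewise $\sigma_t^2=\frac{G_{\eps}(\alpha)}{\delta}\sigma_{t-1}^2\big(1+O(\sigma_{t-1}^b)\big)$; multiplying these and taking square roots yields
$$
\frac{\sigma_{t-1}}{\delta\sigma_{t+1}}=\frac{1}{G_{\eps}(\alpha)}\,\big(1+O(\sigma_t^b+\sigma_{t-1}^b)\big)=\frac{1}{G_{\eps}(\alpha)}\,\big(1+O(\sigma_t^b)\big)\, .
$$
It remains to note that $E_t(Q)$ is bounded uniformly in $Q$ and $t$: writing $\eta(x+Z;\alpha)-x=Z-\big(x+Z-\eta(x+Z;\alpha)\big)$ and using $|x+Z-\eta(x+Z;\alpha)|\le\alpha$, each factor in $E_t(Q)$ has modulus at most $\alpha+|Z_i|$, so Cauchy--Schwarz gives $|E_t(Q)|\le\E\{(\alpha+|Z|)^2\}$. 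Combining the three bounds gives $\sup_{Q\in[0,1]}|\cF_t(Q)-\cF_{\alpha,\eps}(Q)|\le B\sigma_t^b$, and the final inequality $\sigma_t^b\le B'\omega^{bt/2}$ is immediate from $\sigma_t^2=B\omega^t(1+o_t(1))$.

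I do not expect a genuine obstacle: the corollary is essentially a repackaging of the two estimates (\ref{eq:BoundFQ}) and (\ref{eq:FExpansion}) already established. The only points requiring a little care are (i) observing that the correlation-dependence in the preceding lemma's proof is harmless, so that its conclusion is uniform over $Q\in[0,1]$, and (ii) the elementary bookkeeping needed to replace $\sigma_t^b+\sigma_{t-1}^b$ by $C\sigma_t^b$ and to absorb small $t$ into the constants — both routine consequences of the geometric decay in part $(a1)$.
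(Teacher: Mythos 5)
Your proposal is correct and follows essentially the same route as the paper's proof: both split $\cF_t-\cF_{\alpha,\eps}$ into a prefactor term controlled by Eq.~(\ref{eq:FExpansion}) (together with the identity $\left.\de\F/\de(\sigma^2)\right|_{\sigma=0}=G_\eps(\alpha)/\delta$ and $\sigma_t=\Theta(\sigma_{t-1})$) and an integrand term controlled by Eq.~(\ref{eq:BoundFQ}), with the final bound $\sigma_t^b\le B'\omega^{bt/2}$ coming from Lemma~\ref{lemma:AsympSE}.$(a1)$. The paper's proof is terser; you have usefully made explicit the uniform-in-$Q$ boundedness of $E_t$ and the observation that the correlation parameter enters the proof of Eq.~(\ref{eq:BoundFQ}) only through the harmless factor $Q_t$ in $D_{1,b}$, but the underlying argument is the same.
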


\begin{pf}
The second inequality follows from the first one using Lemma~\ref
{6661666616666166616661}(a1).
Using equation (\ref{eq:FExpansion}), we have
\[
\frac{\sigma_{t-1}^2}{\sigma_{t+1}^2} = \frac{\sigma_t^2}{\F(\sigma_t^2;\alpha\sigma_t)} \cdot\frac{\sigma_{t-1}^2}{\F(\sigma_{t-1}^2;\alpha\sigma_{t-1})} =
\frac{\delta^2}{G_{\eps}(\alpha)^2} \bigl\{1+O(\sigma_t^b,\sigma
_{t-1}^b \bigr\}.
\]
The proof of the corollary is obtained by noting that $\sigma_t=
\Theta(\sigma_{t-1})$ and applying equation (\ref{eq:BoundFQ}) to the
expectation in
equation (\ref{eq:FtDefinition}).
\end{pf}

\begin{pf*}{Proof of Lemma~\ref{6661666616666166616661}(\normalfont{a3})}
Define, as in the proof of part (a2), $Q_t\equiv
R_{t,t-1}/(\sigma_t\sigma_{t-1})$, and recall that
\[
Q_{t+1} = \cF_t(Q_t).
\]
By Corollary~\ref{coro:cF}, and Lemma~\ref{lemma:LimitFalphEps}, it
follows that $Q_t\ge1-A \oom^{2t}$ for some constants $A>0$, $\oom
\in
(0,1)$. Indeed,
\[
Q_{t+1} \ge\cF_{\alpha,\eps}(Q_t)-B'
\omega^{bt/2} \ge1-B' \omega ^{bt/2}-
\cF_{\alpha,\eps}'(1) (1-Q_t), %
\]
and the claim follows by noting that $\cF_{\alpha,\eps}'(1) \in(0,1)$
by Lemma~\ref{lemma:LimitFalphEps}.

Next, consider a sequence of centered Gaussian random variables
$(Z_t)_{t\ge0}$ with covariance $\E\{Z_tZ_s\} = R_{t,s}$. By
triangular inequality, we have, for any $t<s$,
%
\begin{eqnarray}\label{eq:QtUniform}
\qquad\biggl(2-2\frac{R_{t,s}}{\sigma_t\sigma_s} \biggr)^{1/2} &=& \E \biggl\{
\biggl(\frac{Z_t}{\sigma_t}-\frac{Z_s}{\sigma_s} \biggr)^2 \biggr
\}^{1/2} \le\sum_{k=t+1}^s \E
\biggl\{ \biggl(\frac{Z_k}{\sigma_k}-\frac{Z_{k-1}}{\sigma
_{k-1}} \biggr)^2
\biggr\}^{1/2} \nonumber\\[-8pt]\\[-8pt]
&=& \sum_{k=t+1}^s(2-2Q_{k})^{1/2}
\le A'\oom^{t}.\nonumber%
\end{eqnarray}
Next consider the quantity in equation (\ref{eq:PointA3}). We have
%
\begin{eqnarray}\label{eq:PointA3Proof}
&&\sup_{t,s\ge t_0}\prob \bigl\{\llvert
X+Z_s\rrvert \ge c \sigma_s; \llvert X+Z_t
\rrvert <c \sigma_t \bigr\}
\nonumber
\\
&&\qquad\le\sup_{t\ge t_0}\prob \bigl\{ \llvert X+Z_t\rrvert
<c \sigma_t; X\neq0 \bigr\}\nonumber\\[-8pt]\\[-8pt]
&&\quad\qquad{}+ \sup_{t,s\ge t_0} \prob
\bigl\{\llvert Z_s/\sigma_s\rrvert \ge c; \llvert
Z_t/\sigma_t\rrvert <c; X =0 \bigr\}\nonumber
\\
&&\qquad= \sup_{t\ge t_0}\prob \bigl\{ \llvert X/\sigma_t+
\tZ_t\rrvert <c; X\neq0 \bigr\}+ \sup_{t,s\ge t_0} \prob
\bigl\{\llvert \tZ_s\rrvert \ge c; \llvert \tZ_t\rrvert
<c \bigr\},
\nonumber%
\end{eqnarray}
where $(\tZ_s,\tZ_t)$ are Gaussian with
$\E\{\tZ_t^2\}=\E\{\tZ_s^2\}=1$, and
$\E\{\tZ_s\tZ_t\}=R_{t,s}/\allowbreak(\sigma_t\sigma_s)$. The first term in
equation (\ref{eq:PointA3Proof}) vanishes as $t_0\to\infty$ since
$\sigma_t\to0$ as $t\to\infty$, and the second vanishes by
equation (\ref{eq:QtUniform}).
\end{pf*}

\subsection{Proof of Lemma \texorpdfstring{\protect\ref{6661666616666166616661}(b)}{6(b)}:
\texorpdfstring{$\rho>\rho_*(\delta)$}{$rho>rho_*(delta)$}}

\mbox{}

\begin{pf*}{Proof of Lemma~\ref{6661666616666166616661}(\normalfont{b1}), (\normalfont{b2})}
First notice that, with the definitions given in the previous section
\begin{eqnarray*}
\lim_{\sigma^2\to\infty} \frac{\de}{\de(\sigma^2)} \F\bigl(
\sigma^2,\alpha _*\sigma\bigr) &=& \frac{2}{\delta} \E \bigl\{(Z-
\alpha)_+^2 \bigr\}
\\
&=& \frac{2}{\delta} \bigl\{\bigl(1+\alpha^2\bigr)\Phi(-\alpha)-
\alpha\phi (\alpha ) \bigr\}. %
\end{eqnarray*}
Notice that the right-hand side is equal to $2/\delta$ for $\alpha= 0$,
monotonically decreasing in $\alpha$ and vanishing as
$\alpha\to\infty$. Hence there exists $\alpha_{\min}(\eps,\delta)$
such that the right-hand side is smaller than $1$ if and only if
$\alpha>\alpha_{\min}(\eps,\delta)$. Further, $\sigma^2\mapsto\F
(\sigma
^2,\alpha\sigma)$ is concave
with $\F(0,0) = 0$ and first derivative larger than $1$ at $\sigma
^2=0$; cf. Lemma~\ref{lemma:FirstDerivativeF}. It follows that for
$\alpha>\alpha_{\min}(\eps,\delta)$ there exists a unique
$\sigma_*(\delta,p_X)$ such that $\F(\sigma^2,\alpha\sigma
)>\sigma^2$
for all $\sigma\in(0,\sigma_*)$ and
$\F(\sigma^2,\alpha\sigma)<\sigma^2$ for $\sigma\in(\sigma
_*,\infty)$.
It follows
that $\sigma^2_t\to\sigma_*$ for any $\sigma_0^2\neq0$.
This proves the first part of claim (b1).

Letting $\sigma_*^2=\sigma_*^2(\alpha)$,
it is easy to check that $\alpha\mapsto\sigma^2_*(\alpha)$ is
continuous for $\alpha\in(\alpha_{\min},\infty)$ with
$\lim_{\alpha\to\alpha_{\min}}\sigma_*^2(\alpha) =+\infty$\vspace*{2pt} (the
limit being taken from the left), and
$\lim_{\alpha\to\infty}\sigma_*^2(\alpha) =+\E\{X^2\}/\delta>0$.
As a consequence,
%
\begin{eqnarray}
\lim_{\alpha\to\alpha_{\min}} \prob\bigl\{\llvert X+\sigma_* Z
\rrvert \ge \alpha\sigma_*\bigr\} &=& 2\Phi(-\alpha_{\min}),
\\
\lim_{\alpha\to\infty} \prob\bigl\{\llvert X+\sigma_* Z\rrvert \ge \alpha
\sigma_*\bigr\} &=& 0 . %
\end{eqnarray}
Notice that by the definition of $\alpha_{\min}$ given above, we have
\[
2 \Phi(-\alpha_{\min}) -2\alpha_{\min} \bigl\{
\phi(\alpha_{\min})-\alpha_{\min}\Phi(-\alpha_{\min
})
\bigr\} =\delta. %
\]
Since $\phi(z)>z\Phi(-z)$ for $z>0$, it follows that $\lim_{\alpha
\to
\alpha_{\min}} \prob\{\llvert X+\sigma_* Z\rrvert \ge
\alpha\sigma_*\} >\delta$. We define
%
\begin{equation}\label{eq:Alpha0Def}
\alpha_{0}(\delta,p_X) \equiv \sup \bigl
\{\alpha>\alpha_{\min}(\eps,\delta) \dvtx  \prob\bigl\{\llvert X+\sigma _* Z
\rrvert \ge \alpha\sigma_*\bigr\}\ge\delta \bigr\}.%
\end{equation}
By the above $\alpha_{0}\in(\alpha_{\min},\infty)$. Further, by
continuity, for $\alpha= \alpha_{0}$, $\prob\{\llvert X+\sigma_* Z\rrvert \ge
\alpha\sigma_*\}= \delta$. We thus proved claim (b2).

In order to prove the second statement in (b1), we proceed
analogously to part (a2), and define $Q_t\equiv
R_t/(\sigma_t\sigma_{t-1})$. This sequence satisfies the recursion
(\ref{eq:FtRecursion}) with $\cF_t$ defined as per
equation (\ref{eq:FtDefinition}).
As $t\to\infty$ we have $\sigma_t\to\sigma_*$ and hence $\cF_t$
converges uniformly to a limit that we denote by an abuse of notation
$\cF_{\alpha,\delta,p_X}$, where
%
\begin{equation}
\hspace*{38pt}\cF_{\alpha,\delta,p_X} (Q) \equiv\frac{1}{\delta} \E \biggl\{ \biggl[\eta \biggl(
\frac{X}{\sigma_*}+Z_1;\alpha \biggr)-\frac{X}{\sigma_*} \biggr]
\biggl[\eta \biggl(\frac{X}{\sigma_{*}}+Z_2;\alpha \biggr)-
\frac
{X}{\sigma
_{*}} \biggr] \biggr\}.
\end{equation}
Proceeding as in the proof of Lemma~\ref{lemma:LimitFalphEps}, we
conclude that $Q\mapsto\cF_{\alpha,\delta,p_X}(Q)$ is increasing and
convex on $[0,1]$. Further [for $Z\sim\normal(0,1)$]
%
\begin{equation}
\hspace*{38pt}\cF_{\alpha,\delta,p_X}(1)= \frac{1}{\delta} \E \biggl\{ \biggl[\eta
\biggl(\frac{X}{\sigma_*}+Z_1;\alpha \biggr)-\frac
{X}{\sigma
_*}
\biggr]^2 \biggr\} = \frac{1}{\sigma_*^2} \F\bigl(\sigma_*^2,
\alpha\sigma_*\bigr) = 1 . %
\end{equation}
Finally, for $\alpha\ge\alpha_{0}(\delta,p_X)$,
%
\begin{equation}
\frac{\de}{\de Q} \cF_{\alpha,\delta,p_X}(Q)\bigg\rrvert
_{Q=1}= \frac{1}{\delta}\prob \biggl\{\biggl\llvert
\frac{X}{\sigma_*}+Z_1 \biggr\rrvert >\alpha \biggr\} \le 1 ,
\end{equation}
and therefore $ \cF_{\alpha,\delta,p_X}(Q)> Q$ for all $Q\in[0,1)$.
Hence, proceeding again as in the proof of part (a2) we conclude
that $\lim_{t\to\infty}Q_t= 1$ and therefore
$\lim_{t\to\infty}R_{t,t-1} = \sigma_*^2$ as claimed.
\end{pf*}

\begin{pf*}{Proof of Lemma~\ref{6661666616666166616661}(\normalfont{b3})}
Throughout this proof we fix $p_X = (1-\eps)\delta_0+\eps\gamma$,
$\delta\in(\eps,\delta_*(\eps))$.
By part (b1), we have $\lim_{t\to\infty}\E\{\llvert \eta(X+\sigma_t
X;\alpha\sigma_t)\rrvert \} = \E\{\llvert \eta(X+\sigma_*Z;\alpha\sigma_*)\rrvert \}$.
It is therefore sufficient to prove that $\E\{\llvert \eta(X+\sigma
_*Z;\alpha
\sigma_*)\rrvert \}<\E\{\llvert X\rrvert \}$.

Consider the function
$\cE\dvtx (\sigma^2,\theta)\mapsto\cE(\sigma^2,\theta)$ defined on
$\reals_+\times\reals_+$ by
%
\begin{equation}\label{eq:EnergyDef}
\cE\bigl(\sigma^2,\theta\bigr)\equiv-
\frac{1}{2}(1-\delta)\frac{\sigma
^2}{\theta} +\E\min_{s\in\reals}
\biggl\{\frac{1}{2\theta}(s-X-\sigma Z)^2 + \llvert s\rrvert \biggr
\}, %
\end{equation}
where expectation is taken with respect to $X\sim p_X$ and $Z\sim
\normal(0,1)$. Notice that the minimum over $s\in\reals$ is uniquely
achieved
at $s=\eta(X+\sigma Z;\theta)$.
It is not hard to compute the partial derivatives
%
\begin{eqnarray}\label{eq:DeDtheta}
\hspace*{20pt}\frac{\partial\cE}{\partial\theta}\bigl(\sigma^2,\theta\bigr) &= & {-}
\frac{\delta}{2\theta^2} \biggl\{ \biggl(1-\frac{2}{\delta}\prob\bigl\{\llvert X+
\sigma Z\rrvert \ge \theta\bigr\} \biggr)\sigma^2+\F\bigl(
\sigma^2,\theta\bigr) \biggr\},
\\
\label{eq:DeDsigma}
\hspace*{20pt}\frac{\partial\cE}{\partial\sigma^2}\bigl(\sigma^2,\theta\bigr) &= &
\frac{\delta}{2\theta} \biggl\{1-\frac{1}{\delta}\prob\bigl\{\llvert X+\sigma Z
\rrvert \ge \theta\bigr\} \biggr\}, %
\end{eqnarray}
where $\F(\sigma^2,\theta)$ is defined as per equation (\ref{eq:SEMapL1}).
Using these expressions in equation (\ref{eq:EnergyDef}) we conclude that
%
\begin{equation}
\hspace*{30pt}\frac{\partial\cE}{\partial\theta}\bigl(\sigma^2,\theta\bigr) =
\frac
{\partial
\cE}{\partial\sigma^2}\bigl(\sigma^2,\theta\bigr) =0 \quad\Rightarrow\quad\cE\bigl(
\sigma^2,\theta\bigr) = \E\bigl\{\bigl\llvert \eta(X+\sigma Z;\theta)
\bigr\rrvert \bigr\}. %
\end{equation}
In particular, one can check from equations (\ref{eq:DeDtheta}),
(\ref{eq:DeDsigma}) that a stationary
point\footnote{Indeed this is the unique saddle point of the function
$(\theta^{-1},\sigma^2)\mapsto\cE(\theta,\sigma^2)$
as it can be proved by the general minimax theorem.} is given by
setting $\sigma=
\sigma_*(\delta,p_X)$ and $\theta= \theta_*(\delta,p_X) \equiv
\alpha_0(\delta,p_X)\sigma_*(\delta,p_X)$.

Define $E(\sigma^2) = \cE(\sigma^2,\alpha_0(\delta,p_X)\sigma)$. Using
again equations (\ref{eq:DeDtheta}), (\ref{eq:DeDsigma}) we get
%
\begin{equation}
\frac{\de E}{\de\sigma^2}\bigl(\sigma^2\bigr) =
\frac{\delta}{4\alpha\sigma^3} \bigl\{\sigma^2-\F\bigl(\sigma^2,
\alpha _0\sigma \bigr) \bigr\}. %
\end{equation}
In particular, as a consequence of Lemma~\ref{lemma:FirstDerivativeF},
and of the analysis at point (b1), we have $\frac{\de
E}{\de\sigma^2} <0$ for $\sigma^2\in(0,\sigma_*^2)$
(\ref{eq:DeDtheta}). Therefore, setting
$\alpha=\alpha_0(\delta,p_X)$, we have
\begin{eqnarray*}
&&\E\bigl\{\bigl\llvert \eta(X+\sigma_*Z;\alpha\sigma_*)\bigr\rrvert
\bigr\} \\
&&\qquad= E\bigl(\sigma_*^2\bigr) <\lim_{\sigma\to0}E
\bigl(\sigma^2\bigr)
\\
&&\qquad= -\lim_{\sigma\to0} \frac{1}{2\alpha}\sigma(1-\delta)+ \lim
_{\sigma\to0} \frac{\sigma}{2\alpha}\E \biggl\{ \biggl[\eta \biggl(
\frac
{X}{\sigma}+ Z;\alpha \biggr)-\frac{X}{\sigma}- Z \biggr]^2
\biggr\} \\
&&\quad\qquad{}+ \lim_{\sigma
\to0} \E\bigl\{\bigl\llvert \eta (X+\sigma Z;
\alpha\sigma)\bigr\rrvert \bigr\}
\\
&&\qquad = \lim_{\sigma\to0} \frac{\sigma}{2\alpha} \alpha^2 +\E
\bigl\{ \llvert X\rrvert \bigr\} = \E\bigl\{\llvert X\rrvert \bigr\}.
\end{eqnarray*}
This completes the proof.
\end{pf*}

%
%
\section{Reference results}\label{app:calculus}
The following calculus fact is used in the main text.

\begin{lemma}\label{lem:x_s_ineq}
For all $s,x>0$ we have $x^s\leq ( \frac{s}{e} )^se^x$.
\end{lemma}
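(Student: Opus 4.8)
The plan is to reduce the claimed inequality to the elementary bound $\ln t \le t-1$, valid for all $t>0$. Since $s,x>0$ both sides of $x^{s}\le (s/e)^{s}e^{x}$ are strictly positive, so it is equivalent to its logarithmic form
\[
s\ln x \;\le\; s\ln s - s + x .
\]
Rearranging, this is the same as
\[
s\Big(\ln\frac{x}{s} + 1\Big) \;\le\; x .
\]

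Next I would make the substitution $t = x/s$, which ranges over all of $(0,\infty)$ as $x$ does (with $s$ fixed), so that no cases are lost. The inequality then becomes $s(\ln t + 1) \le st$, i.e.
\[
\ln t + 1 \;\le\; t ,
\]
which is precisely the standard inequality $\ln t \le t-1$. This holds for every $t>0$: setting $h(t) = t - 1 - \ln t$ one has $h(1)=0$ and $h'(t) = 1 - 1/t$, so $h$ is decreasing on $(0,1]$ and increasing on $[1,\infty)$, whence $h(t)\ge h(1)=0$. Tracing back through the equivalences above yields $x^{s}\le (s/e)^{s}e^{x}$.

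There is no genuine obstacle here; the only points requiring a word of care are that exponentiation/taking logarithms preserves the inequality (both sides being positive), and that the reparametrization $t=x/s$ does not restrict the range of the variable. With those checks the proof is complete.
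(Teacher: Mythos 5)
Your proof is correct. It is essentially the same argument as the paper's: both rest on an elementary property of the logarithm that follows from concavity. The paper takes logs and compares the secant slope $\frac{\ln x - \ln s}{x-s}$ to the derivative $\frac{1}{s}$, which forces a case split on whether $x\ge s$ or $x<s$; you instead substitute $t=x/s$ and reduce to the tangent-line bound $\ln t \le t-1$, which handles all $t>0$ at once and so avoids the case split. This is a modest streamlining rather than a different method.
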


\begin{pf}
Since $f(x)=\ln(x)$ for $x>0$ is concave, when $x\ge s$, then
%
\begin{equation}
\frac{\ln(x)-\ln(s)}{x-s}\le f'(s)=\frac{1}{s}.
\end{equation}
This is equivalent to $(x/s)^s\le e^{x-s}$ which proves the result. The
case of $x<s$ is proved similarly.
\end{pf}

We also use an estimate on the minimum singular value of perturbed
rectangular matrices, which was proved in \cite{Cucker}, Theorem~1.1.

\begin{theorem}
For $M,N\in\naturals$, $N\le(1-a)M$, let $B\in\reals^{M\times N}$,
$\llVert B\rrVert _2\le1/a$ be any deterministic
matrix and $G\in\reals^{M\times N}$ be a matrix with i.i.d. entries
$G_{ij}\sim\normal(0,1/M)$. Then there exist constants $a_1$, $a_2$
depending only on $a$ and bounded for $a>0$ such that,
for all $z<a_2$,
%
\begin{equation}
\prob \bigl\{\sigma_N (A+\nu G )\le\nu z \bigr\}\le
(a_1 z)^{M-N+1}.
\end{equation}
\end{theorem}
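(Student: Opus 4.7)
The bound is a classical smoothed-analysis estimate for the smallest singular value of a tall rectangular matrix under Gaussian perturbation. My plan is to combine a reduction-to-isotropic-Gaussian step with a direct net-plus-anti-concentration argument, and then to sharpen the resulting exponent using integral-geometric ideas, following the strategy of \cite{Cucker}.

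First, I would exploit the fact that the distribution of $G$ is invariant under $G\mapsto UGV^{\sT}$ for any $(U,V)\in O(M)\times O(N)$. Performing the SVD $B=U\Sigma V^{\sT}$, the law of $\sigma_N(B+\nu G)$ equals that of $\sigma_N(\Sigma+\nu G)$, so I may assume $B=\Sigma$ is a diagonal $M\times N$ matrix with entries bounded by $1/a$.

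Second, for each fixed $v\in S^{N-1}$, the vector $(B+\nu G)v\in\reals^M$ is Gaussian with mean $Bv$ and covariance $(\nu^2/M)\id_M$. Anderson's lemma yields
\begin{eqnarray*}
\prob\{\|(B+\nu G)v\|_2\le r\}\le \prob\{\|\nu Gv\|_2\le r\}=\prob\{\chi^2_M\le Mr^2/\nu^2\},
\end{eqnarray*}
which for small $r/\nu$ is controlled by the chi-squared left tail $(Cr/\nu)^{M}e^{M/2}/\sqrt{M}$. Combined with an $\epsilon$-net $\mathcal{N}_\epsilon\subset S^{N-1}$ of size $(3/\epsilon)^N$, an a priori bound $\|B+\nu G\|_2\le 1/a+O(1)$ (which holds with probability at least $1-e^{-cM}$ by standard spectral-norm concentration for rectangular Gaussian matrices), and the choice $\epsilon\asymp\nu z$, a union bound would produce
\begin{eqnarray*}
\prob\{\sigma_N(B+\nu G)\le \nu z\}\le (a'z)^{M-N},
\end{eqnarray*}
with $a'$ depending only on $a$, provided $z$ is below a constant threshold. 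The hypothesis $N\le (1-a)M$ is precisely what allows the overhead $(1/z)^N$ from the net to be absorbed into $z^M$ while keeping the constant dimension-independent.

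The main obstacle is upgrading the exponent from $M-N$ to the sharp $M-N+1$. The missing factor of $z$ reflects one extra direction, orthogonal to the column span of any near-kernel witness, that is still forced to absorb an independent Gaussian component. To capture this sharply I would apply the coarea formula to change variables from $A$ to the triple (singular values, left and right orthogonal frames). The Jacobian of the SVD parametrization carries the factor $\prod_{i}\sigma_i^{M-N}$; integrating the Gaussian density $\exp(-\|A-B\|_F^2 M/(2\nu^2))$ first over the $O(M)\times O(N)$ orbits and then over $\sigma_N\in[0,z]$ contributes $z^{M-N+1}$ rather than $z^{M-N}$, with constants controlled through the bound $\|B\|_2\le 1/a$ used only to uniformly tame the orbit integral. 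A possible alternative is to prove the monotonicity $\prob\{\sigma_N(B+\nu G)\le t\}\le \prob\{\sigma_N(\nu G)\le t\}$ directly, say via log-concavity of the joint singular-value density, and then to invoke Edelman's explicit formula for the smallest singular value of a rectangular Gaussian matrix, whose density vanishes like $z^{M-N}$ at zero; but I expect the domination step to be the harder route to make rigorous, and the coarea approach seems more robust.
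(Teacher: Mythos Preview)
The paper does not prove this theorem at all: it is quoted verbatim in Appendix~\ref{app:calculus} as a reference result, with the proof attributed entirely to \cite[Theorem~1.1]{Cucker}. There is therefore no ``paper's own proof'' to compare your proposal against.

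As a standalone sketch of the Buergisser--Cucker argument, your outline is broadly reasonable. The reduction by rotation invariance and the net-plus-Anderson step giving exponent $M-N$ are standard and correct. You correctly identify that the sharp exponent $M-N+1$ is the nontrivial part, and that integral-geometric methods (coarea formula with the SVD Jacobian $\prod_i\sigma_i^{M-N}$) are the natural tool; this is indeed the route taken in \cite{Cucker}. Your alternative via a stochastic-domination reduction to Edelman's formula is appealing but, as you note, the monotonicity $\prob\{\sigma_N(B+\nu G)\le t\}\le \prob\{\sigma_N(\nu G)\le t\}$ is not obvious and would need a separate argument. If you were to complete the coarea route, the main technical work you have not yet described is controlling the integral over the orbit $O(M)\times O(N)$ of the shifted Gaussian density uniformly in the remaining singular values; this is where the hypotheses $\|B\|_2\le 1/a$ and $N\le(1-a)M$ are actually used, and it is not a one-line step.
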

\end{appendix}

\section*{Acknowledgments}
Andrea Montanari is grateful to Amir Dembo, David Donoho and Van Vu for
stimulating conversations.



\printaddresses

\end{document}